\author{Marcel Fenzl\footnote{University of Zurich, Winterthurerstrasse 190, 8057 Z{\"u}rich, Switzerland. \newline Email: \href{mailto:marcel.fenzl@math.uzh.ch}{marcel.fenzl@math.uzh.ch}} \and Gaultier Lambert\footnote{University of Zurich, Winterthurerstrasse 190, 8057 Z{\"u}rich, Switzerland. \newline G.L. research is supported by the SNSF Ambizione grant S-71114-05-01. \newline Email: \href{mailto:gaultier.lambert@math.uzh.ch}{gaultier.lambert@math.uzh.ch}}}
\title{Precise deviations for disk counting statistics of invariant determinantal processes}
\def\blfootnote{\gdef\@thefnmark{}\@footnotetext}
\newcommand{\subjclass}[1]{\blfootnote{\textup{2010} \textit{Mathematics Subject Classification.} #1}}
\setlist[enumerate,1]{label={(\roman*)}}
\theoremstyle{plain}
\newtheorem{theorem}{Theorem}[section]
\newtheorem{lemma}[theorem]{Lemma}
\newtheorem{corollary}[theorem]{Corollary}
\newtheorem{proposition}[theorem]{Proposition}
\newtheorem{assumption}{Assumptions}
\theoremstyle{definition}
\newtheorem{definition}[theorem]{Definition}
\theoremstyle{remark}
\newtheorem{remark}{Remark}
\DeclareMathOperator{\Betadist}{Beta}
\DeclareMathOperator{\Betaprimedist}{Beta^\prime}
\newcommand*{\LL}{\alpha} 
\newcommand*{\DD}{\rho} 
\newcommand*{\PP}{\mathrm{p}} 
\newcommand*{\RT}{\cR} 
\renewcommand{\i}{\mathrm{i}} 
\begin{document}

\maketitle


\subjclass{Primary 60F10, 60F17, 60G55; Secondary 60F05, 60B20, 82B31}

\begin{abstract}

We consider two-dimensional determinantal processes which are rotation-invariant and study the fluctuations of the number of points in disks.
Based on the theory of mod-phi convergence, we obtain Berry-Esseen as well as precise moderate to large deviation estimates for these statistics. These results are consistent with the Coulomb gas heuristic from the physics literature. We also obtain functional limit theorems for the stochastic process $(\# D_r)_{r>0}$ when the radius $r$ of the disk $D_r$ is growing in different regimes.
We present several applications to invariant determinantal processes, including the polyanalytic Ginibre ensembles, zeros of the hyperbolic Gaussian analytic function and other hyperbolic models.
As a corollary, we compute the precise asymptotics for the entanglement entropy of (integer) Laughlin states for all Landau levels.

\end{abstract}


\pagestyle{headings}
\pagenumbering{arabic}


\section{Introduction and results}

\subsection{Introduction}\label{sec:Introduction}

Determinantal point processes arise in several contexts in probability theory, such as random matrices, free fermions, zeros of Gaussian analytic functions, domino tilings, etc.\ \cite{TSZ08PPArbitraryDimension, B15DeterminantelPointProcesses}.
In particular, the eigenvalues of a random matrix with i.i.d.\ standard complex Gaussian entries form a determinantal process in $\bC$ which is known as the Ginibre ensemble \cite{G65StatisticalEnsembles}.
This process can also be viewed as a 2-dimensional gas of electrons at inverse temperature $\beta=2$ \cite{S18CoulombInteractions}.
Random systems such as determinantal processes or Coulomb gas, due to their built-in repulsion, exhibit remarkable \emph{hyperuniformity} or \emph{rigidity} properties, that is the fluctuations of the number of points in a given region is smaller than compared to a Poisson process with the same intensity.
In fact, a well-known principle from electrostatics introduced in \cite{MY80ChargeFluctuations} states that the variance of the number of points in a box should grow like the surface area instead of the volume as in case of i.i.d.\ random points.
Based on this physical reasoning, Jancovici, Lebowitz, Manificat \cite{JLM93LargeChangeFluctuations} predicted the large deviation asymptotics for the number of points in large balls.
We aim at rigorously verifying this conjecture for the Ginibre ensemble, as well as to obtain precise deviations for counting statistics.
Our method relies on the determinantal structure of the Ginibre ensemble combined with the theory of mod-phi convergence.
In \cref{sec:Applications}, we discuss other determinantal processes which also fit in our framework, including fermion point processes associated with higher Landau levels, the zero process of the hyperbolic Gaussian analytic function as well as other ensembles which are invariant (in law) under certain groups of Möbius transformations.
Our findings are summarized in \cref{sec:sum}.

It is easier to present our results in the infinite volume case. The (infinite) Ginibre point process is the microscopic scaling limit in the bulk of the Ginibre eigenvalue process.
It turns out that this limit is universal for many different ensembles of random matrices, including normal matrices with a general potential \cite{AHM11FluctuationsEigenvalues} and random matrices with general i.i.d.\ entries \cite{TV15UniversalityNonHermitian, CES19EdgeUniversality}.
Let us denote by $\xi$ this point process (viewed as a discrete random measure on $\bC$). $\xi$ is a determinantal process with kernel $K_0(z,w) = e^{z\bar{w}}$ with respect to the standard Gaussian measure $\frac{\dif \mu }{\dif m}(z) = \frac{1}{\pi }e^{-\abs{z}^2}$, where $m$ denotes the Lebesgue measure.
It follows that this process is translation-invariant on $\bC$ with intensity $1/\pi$.
Since it is also rotation-invariant, it has the following remarkable property which was first obtained by Kostlan \cite{K92SpectraGaussianMatrices}, see \cite{HKPV06DeterminantalProcesses} and \cite{D18PowersGinibre} for generalizations.

\begin{theorem}[\cite{K92SpectraGaussianMatrices}]\label{thm:Kostlan}
	The set of square-modulus of the points of $\xi $ has the same distribution as $\set{\Gamma_1,\Gamma_2,\dotsc}$, where $\Gamma_k$ are independent gamma-distributed random variables with shape $k$ and rate $1$.
\end{theorem}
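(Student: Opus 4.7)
The plan is to identify the law of the point process $\{|z|^2 : z \in \xi\}$ by computing its multiplicative functional and matching it with that of $\{\Gamma_1,\Gamma_2,\dots\}$. The starting point is the reproducing kernel expansion
\[
K_0(z,w) = e^{z\bar w} = \sum_{k=0}^{\infty} \frac{z^k \bar w^k}{k!} = \sum_{k=0}^{\infty} \phi_k(z)\overline{\phi_k(w)},
\qquad \phi_k(z) := \frac{z^k}{\sqrt{k!}},
\]
where $\{\phi_k\}_{k\ge 0}$ is an orthonormal family in $L^2(\mu)$. Thus $K_0$ is the orthogonal projector onto the Bargmann–Fock space, and the general theory of determinantal processes applies.

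The key identity I would invoke is the multiplicative functional formula: for a non-negative bounded radial function $h$ with compact support in $(0,\infty)$, writing $g(z) := h(|z|^2)$,
\[
\mathbb{E}\Biggl[\prod_{z \in \xi} \bigl(1 - g(z)\bigr)\Biggr] = \det\bigl(I - g K_0\bigr)_{L^2(\mu)},
\]
valid because $gK_0$ is trace class. The next step is to exploit rotation invariance. Writing $z = r e^{\i\theta}$, an explicit computation of the matrix elements gives
\[
\langle g \phi_k, \phi_j \rangle_{L^2(\mu)} = \frac{1}{\pi\sqrt{k!\,j!}} \int_0^{\infty}\!\!\int_0^{2\pi} h(r^2)\, r^{k+j}\, e^{\i(k-j)\theta}\, e^{-r^2}\, r\,\dif \theta\,\dif r,
\]
which vanishes unless $j=k$ by the angular integral. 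Hence the operator $gK_0$ is diagonal in the basis $\{\phi_k\}$ with eigenvalues
\[
\lambda_k \;=\; \frac{1}{k!}\int_0^{\infty} h(R)\, R^k\, e^{-R}\,\dif R \;=\; \mathbb{E}\bigl[h(\Gamma_{k+1})\bigr].
\]
Plugging back yields
\[
\mathbb{E}\Biggl[\prod_{z \in \xi}\bigl(1 - h(|z|^2)\bigr)\Biggr]
= \prod_{k=0}^{\infty}\bigl(1 - \mathbb{E}[h(\Gamma_{k+1})]\bigr)
= \mathbb{E}\Biggl[\prod_{k\ge 1}\bigl(1 - h(\Gamma_k)\bigr)\Biggr],
\]
where the last equality uses the independence of the $\Gamma_k$. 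Since multiplicative functionals of this form (for $h$ ranging over non-negative radial functions with compact support in $(0,\infty)$) characterise the law of a simple point process on $(0,\infty)$, this proves that $\{|z|^2 : z\in\xi\}$ and $\{\Gamma_1,\Gamma_2,\dotsc\}$ have the same distribution.

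The potentially delicate points are first the trace-class justification of the multiplicative functional identity — this is standard once one restricts $h$ to be bounded with compact support, so $g K_0$ is even Hilbert–Schmidt with a rapidly convergent trace — and second the passage from the family of multiplicative functionals to equality in law of point processes. The latter follows because the finite-dimensional correlation functions of both processes are determined by these functionals (or equivalently by void probabilities on bounded Borel sets), and the explicit product formula above already encodes the fact that the joint $n$-point intensity of the radii is the permanent whose expansion one would otherwise obtain by integrating out angles from the finite $N$-Ginibre density and passing to the limit $N\to\infty$.
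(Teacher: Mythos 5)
Your argument is correct, but it takes a genuinely different route from the paper's. The paper does not reprove Kostlan's theorem as such: it establishes the more general \cref{lem:ModulusPoints} in \cref{sec:ModulusPoints} by matching $n$-point correlation functions — the angular variables are integrated out of $\det_{n\times n}[K(z_i,z_j)]$ after a Cauchy--Binet expansion, orthogonality of the modes $e^{\i k\theta}$ kills all off-diagonal permutation terms, and the surviving permanental sums are identified with the correlation functions of the independent family; a Lenard-type uniqueness criterion with a growth bound on the $c_{n,k}$ then closes the argument. You instead match multiplicative (Laplace-type) functionals via the Fredholm determinant identity and the exact diagonalization of the compressed operator on the Fock space, which buys you a cheaper uniqueness step (void probabilities or Laplace functionals determine the law of a simple point process) at the price of justifying the trace-class determinant identity; both proofs ultimately rest on the same mechanism, namely rotation invariance plus orthogonality of the Fourier modes. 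Two points worth tightening in your write-up: (i) $gK_0$ is not literally diagonal in the basis $\set{\phi_k}$, since its range is not contained in the Fock space; what is diagonal is the compression $K_0gK_0$ to $\operatorname{ran}K_0$, and one passes to it via $\det(I-gK_0)=\det(I-K_0g)$ together with the equality of the nonzero spectra of $K_0g$ and $K_0gK_0|_{\operatorname{ran}K_0}$; (ii) the infinite product converges absolutely because, for $h$ supported in $\intcc{0,M}$, one has $\lambda_k \le \norm{h}_\infty \Pr{\Gamma_{k+1}\le M}$, which decays superexponentially in $k$.
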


We are interested in the fluctuations of the number of points in a disk $D_r=\set{\abs{z} < r}$ for a large radius $r>0$.
\cref{thm:Kostlan} shows that this counting statistic is given by $\xi (D_r) = \sum_{k=1}^\infty \ind_{\Gamma_k \le r^2}$ in law, from which one can easily deduce a central limit theorem:
\begin{equation}\label{eq:CLT}
	\pi ^\frac{1}{4} \frac{\xi (D_r) - r^2}{r^\frac{1}{2}} \xrightarrow[r\to \infty ]{\mathrm{d}} \Normaldist_{0,1},
\end{equation}
with $\Normaldist_{0,1}$ being a standard Gaussian random variable, see \cite{MY80ChargeFluctuations}.
The CLT \eqref{eq:CLT} can also be deduced from a general result of Soshnikov \cite{S02GaussianLimitDPP} for the fluctuations of linear statistics of determinantal processes.
In particular, the statistic $\xi (D_r)$ has variance
proportional to the surface area of $D_r$.
As for large deviations of $\xi (D_r)$, it is argued in \cite{JLM93LargeChangeFluctuations} by use of electrostatic arguments that for any $x\in \bR_+$,
\begin{align}\label{JLM93}
	- \log \Pr[\big]{\xi (D_r)-r^2 \ge xr^\gamma} =
	\begin{cases}
		\frac{x^2}{2}r^{2\gamma -1}\bigl(\sqrt{\pi }+o(1)\bigr),&\frac{1}{2}\le \gamma <1,\\
		\frac{x^3}{6}r^{3\gamma -2}\bigl(1+o(1)\bigr),&1<\gamma <2,\\
		\frac{(\gamma -2)x^2}{2}r^{2\gamma} \log r\bigl(1+o(1)\bigr),&2<\gamma.
	\end{cases}
\end{align}

In this paper, we rigorously establish \eqref{JLM93} and investigate the precise deviations of the statistics $\xi (D_r)$ at different scales complementing the above-mentioned results. A similar study is also done for the eigenvalues of large Ginibre random matrices and our findings agree inside the bulk. We also describe the finite-size effects occurring at the edge in \cref{sec:FiniteGinibre}.

Our analysis relies on the theory of mod-phi convergence, a concept which provides a unified framework to study deviations of certain random variables beyond the central limit theorem, see \cite{FMN16ModPhiConvergence}.
In particular, this allows us to makes precise the idea that $\xi (D_r) = \sum_{k=1}^\infty \ind_{\Gamma_k \le r^2}$ behaves like its mean $r^2$ plus a sum of $r$ independent, roughly identically distributed (re-centred) Bernoulli random variables. For a review of mod-phi convergence, we refer to \cref{sec:Background}.
As a result, we obtain the following precise asymptotics for the counting statistics $\xi(D_r)$ of the infinite Ginibre process.

\begin{proposition}\label{thm:GinibrePreciseAsymptotics}
	There exist two functions $I\colon\bR\to\bR_+$ with $I(0)=I'(0)=0$ and $I''>0$ and $\psi\colon\bR\to\bR_+$ with $\psi(0)=1$ such that it holds locally uniformly for $y\in\bR_+$,
	\begin{equation*}
		\Pr[\big]{\xi(D_r) - r^2 \ge r y}
		= e^{-rI(y)} \sqrt{\frac{I''(y)}{2\pi r}} \frac{\psi(I'(y))}{1-e^{-I'(y)}} \bigl(1+O(r^{-1})\bigr).
	\end{equation*}
\end{proposition}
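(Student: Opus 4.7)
The plan is to identify $\xi(D_r) - r^2$ as a sum of independent centred Bernoullis via \cref{thm:Kostlan}: setting $X_k := \ind_{\Gamma_k \le r^2}$ and $p_k := \Pr[\big]{\Gamma_k \le r^2} = \gamma(k,r^2)/\Gamma(k)$, the classical identity $p_k = \Pr[\big]{\mathrm{Poisson}(r^2) \ge k} = \sum_{j\ge k}e^{-r^2}r^{2j}/j!$ yields $\sum_{k\ge 1} p_k = r^2$ after swapping summation, and therefore the centred cumulant generating function takes the form
\begin{equation*}
\eta_r(t) := \log \mathbb{E}\bigl[e^{t(\xi(D_r) - r^2)}\bigr] = \sum_{k=1}^\infty \bigl[\log\bigl(1 + p_k(e^t-1)\bigr) - t\,p_k\bigr].
\end{equation*}

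I would then aim to prove lattice mod-$\phi$ convergence at speed $r$: namely that there exist analytic functions $\phi,\psi$ on a complex neighbourhood of $\bR$ with $\phi(0) = \phi'(0) = 0$, $\phi'' > 0$, $\psi(0) = 1$, and
\begin{equation*}
\mathbb{E}\bigl[e^{t(\xi(D_r) - r^2)}\bigr] = \exp\bigl(r\,\phi(t)\bigr)\,\psi(t)\,\bigl(1 + O(1/r)\bigr)
\end{equation*}
uniformly on compacta. Once this is done, the proposition is a direct application of the lattice Bahadur--Rao-type tail estimate of \cite{FMN16ModPhiConvergence} (recalled in \cref{sec:Background}), with $I$ being the Legendre transform of $\phi$. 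The prefactor $\sqrt{I''(y)/(2\pi r)}$ is the usual saddle-point Gaussian factor, and the geometric factor $(1 - e^{-I'(y)})^{-1}$ arises from summing $\sum_{n \ge 0} e^{-I'(y)\,n}$ produced by the integrality of $\xi(D_r)$.

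The technical heart is the asymptotic analysis of $\eta_r(t)$. The parameters $p_k$ transition from $1$ to $0$ inside a window of width $r$ about $k = r^2$, since $\Gamma_k$ has mean $k$ and variance $k$. After the substitution $k = r^2 + r u$ and insertion of the uniform Temme expansion of the regularised incomplete gamma function (with leading approximation $p_k \approx \Phi(-u)$), the sum becomes a Riemann sum whose continuum limit is
\begin{equation*}
\phi(t) = \int_{-\infty}^{\infty}\bigl[\log\bigl(1 + \Phi(-u)(e^t-1)\bigr) - t\,\Phi(-u)\bigr]\,\mathrm{d}u,
\end{equation*}
while $\log\psi(t)$ is produced by the Euler--Maclaurin correction to the Riemann sum together with the first subleading term in Temme's expansion. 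Differentiating twice at $t = 0$ recovers the CLT variance $\phi''(0) = \int_\bR \Phi(-u)(1-\Phi(-u))\,\mathrm{d}u = 1/\sqrt{\pi}$ of \eqref{eq:CLT}, and positivity of $\phi''$ on $\bR$---which delivers $I'' > 0$---is manifest from its integral representation.

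The main obstacle is to secure the $O(1/r)$ error uniformly for $t$ in a fixed compact strip of the complex plane, as this is what powers the precision in the moderate-to-large deviation regime. This requires simultaneous control of $p_k$ in the central Gaussian window $\abs{k-r^2} \lesssim r\log r$, where Temme's uniform expansion and careful Euler--Maclaurin suffice, and in the two tails $k \ll r^2$ and $k \gg r^2$, where the uniform bound $\abs{\log(1 + p_k(e^t-1)) - t\,p_k} \lesssim p_k(1-p_k)$ combined with super-exponential decay of $p_k(1-p_k)$ (via classical sharp estimates on $\gamma(a,x)/\Gamma(a)$) renders the tail contributions negligible. Patching these three regimes yields the mod-$\phi$ convergence at the precision needed to invoke the Bahadur--Rao-type theorem.
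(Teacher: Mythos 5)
Your proposal is correct and follows essentially the same route as the paper: reduce to a sum of independent Bernoullis via Kostlan's theorem, establish mod-phi convergence at speed $r$ with the limiting cumulant generating function $\Lambda_0(t)=\int_{\bR}\kappa_{\Phi_0(u)}(t)\,\dif u$ by combining a uniform expansion of $\Pr{\Gamma_k\le r^2}$ in the window $\abs{k-r^2}\lesssim r^{1+\epsilon}$ with Euler--Maclaurin corrections and super-exponential tail bounds on the incomplete gamma function, and then invoke the lattice precise-deviation theorem of \cite{FMN16ModPhiConvergence}. The only (immaterial) difference is that you obtain the profile $p_k\approx\Phi_0(u)$ from Temme's uniform expansion, whereas the paper derives the same expansion, including the $1/\sqrt{k}$ correction term that feeds into $\psi$, from the Edgeworth expansion for $\Gamma_k$ as a sum of $k$ i.i.d.\ exponentials.
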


$I$ is usually called the \emph{rate function} and its convex conjugate $\Lambda_0$, as well as $\psi_0$ are given explicitly in the statement of \cref{thm:GinibreTypeModPhi} below with $\LL=0$.
Since our estimates are uniform, \cref{thm:GinibrePreciseAsymptotics} covers the full regime $\gamma\le 1$ in \eqref{JLM93}, including correction terms. Moreover, this implies Berry-Esseen estimates and an extension of the CLT \eqref{eq:CLT} in the optimal window where the Gaussian approximation is valid (see \cref{cor:mod}).
We also verified that our expression for $I$ matches with the prediction from \cite[(2.9a) and (2.9b)]{JLM93LargeChangeFluctuations}.
This rate function as well as the asymptotics \eqref{JLM93} have also been obtained recently in the physics literature using different methods, \cite{LGCCKMS19ComplexGinibre}.
 The large deviation estimates from \eqref{JLM93} are verified in \cref{thm:ldp} below.
In fact, we show that these asymptotics hold for all Ginibre-type ensembles associated with higher Landau levels (see \cref{sec:Ginibre} for a physical motivation of these ensembles). \cref{thm:GinibrePreciseAsymptotics} describes the crossover from moderate to large deviations when $\gamma=1$. We also compute the rate function which appears for the critical value $\gamma=2$.
Some further motivations explaining the emergence of the different regimes and rate functions, as well as the differences between the different Landau levels, are given in \cref{sec:Ginibre}. Importantly, our method also allows us to obtain asymptotics of the entanglement entropy for counting statistics, see \cref{thm:arealaw}.

To put our results in perspective, there is an extensive literature on large deviation estimates for \emph{hyperuniform} two-dimensional point processes to compare with.
Most relevant in our context are the references \cite{BZ98LDCircularLaw, S06LDFermionPointProcess} on eigenvalues of the Ginibre ensemble, \cite{K06Overcrowding, NSV08JLMRandomComplexZeroes} for zeros of Gaussian analytic functions and \cite{ZZ10LDZerosRandomPolynomials, B16LargeDeviationsRandomPolynomials, BZ17UniversalLargeDeviations} for zeros of random polynomials, as well as \cite{ST05RandomComplexZeroes, GN19GaussianComplexZeroesForbiddenRegion} on the so-called hole probabilities of the Gaussian entire function.
We also refer to \cite[Chapter 7]{HKPV09ZerosOfGAF} and \cite{GN18PPHoleEventsLD} for a review of these results and further references.
For the Ginibre ensemble, it is also worth mentioning the rates of convergence to the circular law obtained recently in \cite{GJ18RateCircularLaw}.

We can also investigate the statistics $\bigl(\xi (D_r) \bigr)_{r\in\bR_+}$ as a stochastic process.
To do so, let us use the notation
\begin{equation*}
	X_r(t) = \pi ^\frac{1}{4}\frac{\xi (D_{rt})-(rt)^2}{r^\frac{1}{2}},\qquad t\in\bR_+.
\end{equation*}
The random variable $X_r(t)$ has mean 0 and it has finite variance. We obtain the following central limit theorem.
\begin{proposition}\label{thm:GinibreFCLTFDD}
	The following convergence in finite-dimensional distributions holds as $r\to\infty$:
		\begin{equation*}
			\bigl(X_r(t)\bigr)_{t\in\bR_+} \xrightarrow{\text{fdd}} \bigl(t^\frac{1}{2}N_t\bigr)_{t\in\bR_+},
		\end{equation*}
	where $(N_t)_{t\in \bR_+}$ is a white noise, i.e.\ $N_t$ are i.i.d.\ standard Gaussian random variables.
\end{proposition}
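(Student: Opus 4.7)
The plan is to exploit Kostlan's theorem (\cref{thm:Kostlan}) to couple the family $(\xi(D_{rt}))_{t>0}$ to a single collection of independent Gamma variables, and then to reduce the finite-dimensional convergence to a univariate CLT for bounded independent summands via the Cram\'er--Wold device. By \cref{thm:Kostlan}, we may assume $\xi(D_{rt})=\sum_{k\ge 1}\ind_{\Gamma_k\le (rt)^2}$ simultaneously for every $t>0$, with $(\Gamma_k)_{k\ge 1}$ independent and $\Gamma_k$ gamma-distributed with shape $k$. Setting $p_k(t):=\mathbb{P}(\Gamma_k\le (rt)^2)$, the identity $\mathbb{E}[\xi(D_{rt})]=(rt)^2=\sum_k p_k(t)$ yields the exact centred decomposition
\begin{equation*}
X_r(t)=\frac{\pi^{1/4}}{r^{1/2}}\sum_{k\ge 1}\bigl(\ind_{\Gamma_k\le (rt)^2}-p_k(t)\bigr).
\end{equation*}

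Fix $0<t_1<\dots<t_n$ and $\lambda_1,\dots,\lambda_n\in\bR$. By the Cram\'er--Wold device it suffices to show that
\begin{equation*}
L_r:=\sum_{i=1}^n\lambda_i X_r(t_i)=\frac{\pi^{1/4}}{r^{1/2}}\sum_{k\ge 1}Y_k,\qquad Y_k:=\sum_{i=1}^n\lambda_i\bigl(\ind_{\Gamma_k\le (rt_i)^2}-p_k(t_i)\bigr),
\end{equation*}
converges in distribution to $\Normaldist_{0,\sigma^2}$ with $\sigma^2=\sum_i\lambda_i^2 t_i$. Since the $Y_k$ are independent, centred, and uniformly bounded by $2\sum_i|\lambda_i|$, the Lindeberg--Feller CLT reduces the task to verifying $\mathrm{Var}[L_r]\to\sigma^2$.

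Expanding, $\mathrm{Var}[L_r]=\pi^{1/2}r^{-1}\sum_{i,j}\lambda_i\lambda_j\,\mathrm{Cov}(\xi(D_{rt_i}),\xi(D_{rt_j}))$. The diagonal terms are provided by the marginal CLT \eqref{eq:CLT}, which gives $\mathrm{Var}[\xi(D_{rt_i})]\sim rt_i/\sqrt{\pi}$ and hence the target $\sigma^2$. The crux is therefore the decorrelation estimate: for $0<s<t$,
\begin{equation*}
\mathrm{Cov}\bigl(\xi(D_{rs}),\xi(D_{rt})\bigr)=\sum_{k\ge 1}p_k(s)\bigl(1-p_k(t)\bigr)=o(r).
\end{equation*}
The key observation is that $(rt)^2-(rs)^2=\Theta(r^2)$, while in the range $k=O(r^2)$ where the summand can be non-negligible the standard deviation $\sqrt{k}$ of $\Gamma_k$ is only $O(r)$. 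Hence for every such $k$ at least one of the events $\{\Gamma_k\le (rs)^2\}$ or $\{\Gamma_k>(rt)^2\}$ forces $\Gamma_k$ to deviate by $\Omega(r)$ standard deviations from its mean $k$, and standard Chernoff bounds for the Gamma distribution give $p_k(s)(1-p_k(t))\le\exp(-cr^2)$ uniformly in $k$; the whole sum is therefore $\exp(-\Omega(r^2))$, which is plenty for $o(r)$.

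The main obstacle is this decorrelation estimate; once in hand, the Lindeberg--Feller CLT delivers the univariate limit and Cram\'er--Wold upgrades it to joint convergence, identifying the finite-dimensional limit of $(X_r(t))_{t>0}$ with the white-noise process $(t^{1/2}N_t)_{t>0}$.
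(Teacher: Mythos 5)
Your proposal is correct and shares the paper's overall skeleton: couple via Kostlan's \cref{thm:Kostlan}, apply Cram\'er--Wold, invoke a CLT for sums of independent bounded centred variables (the paper's \cref{lem:FCLTFDD} does this with cumulant bounds, which for uniformly bounded independent summands is interchangeable with Lindeberg--Feller), and reduce everything to covariance asymptotics. Where you genuinely diverge is in how those covariances are handled. The paper first establishes the Gaussian profile $\Pr{\Gamma_k\le R}=\Phi_0\bigl(\tfrac{k-R}{\sqrt{R}}\bigr)+\dotsb$ (Edgeworth expansion plus tail bounds, i.e.\ \cref{NA}), and then a single Euler--Maclaurin/dominated-convergence computation (\cref{lem:CovarianceAsymptotic} inside the proof of \cref{thm:FCLTPlanar}) delivers simultaneously the diagonal asymptotics $\Var{\xi(D_{rt})}\sim rt/\sqrt{\pi}$ and the vanishing of the off-diagonal covariance; \cref{sec:CovarianceGinibre} gives an alternative exact Bessel-function route. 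You instead kill the off-diagonal terms by a bespoke Chernoff argument for the Gamma tails, which is more elementary and quantitatively stronger (exponential rather than $o(r)$ decay), and this part is sound. Two points need tightening. First, the diagonal asymptotics are not actually delivered by the distributional statement \eqref{eq:CLT} you cite: convergence in law does not yield $\Var{\xi(D_r)}\sim r/\sqrt{\pi}$, so at this step you must either compute $\sum_k p_k(1-p_k)$ from the Gaussian profile of $p_k$ (a Riemann sum giving $r\int_\bR\Phi_0(x)(1-\Phi_0(x))\dif x=r/\sqrt{\pi}$, which is what the paper does) or quote the explicit formula of \cref{lem:GinibreCovariance}. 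Second, a bound $p_k(s)(1-p_k(t))\le e^{-cr^2}$ that is merely \emph{uniform in $k$} does not bound the infinite sum; you need the complementary observation that for $k\gg r^2$ the lower-tail estimate improves to $p_k(s)\le e^{-ck}$ (e.g.\ from $\Pr{\Gamma_k\le R}\le e^{k-R}(R/k)^k$ as in \cref{lem:GinibreBounds}), which restores summability and gives the claimed $e^{-\Omega(r^2)}$ total. Both fixes are one line each, so I regard the argument as a valid, somewhat more hands-on alternative to the paper's general-framework proof.
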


This lack of correlations is interesting, as it is in contrast with what happens if the points where independent. If $\cP$ is a homogeneous Poisson point process with intensity $1/\pi$ on $\bC$, then we verify for $r\to\infty$ that
\begin{equation*}
	\biggl(\frac{\cP(D_{rt}) - (rt)^2}{r}\biggr)_{t\in \bR_+} \xrightarrow{\cS(\bR_+)} (W_t)_{t\in \bR_+},
\end{equation*}
where $(W_t)_{t\in \bR_+}$ is a standard Brownian motion and the convergence holds with respect to the Skorokhod topology\footnote{Recall that a sequence of càdlàg functions $X_n \colon \bR \to \bR$ is said to converge to a càdlàg function $X\colon \bR \to \bR$ in the Skorokhod topology if there exists a sequence of strictly increasing, continuous functions $(w_n)_{n\in\bN}$ such that for all compact sets $B\subseteq \bR$ it holds $\sup_{t\in B} \abs{w_n(t)-t} \to 0$ and $\sup_{t\in B} \abs{X_n(w_n(t)) - X(t)} \to 0$.}.
This difference lies in the \emph{rigidity} of the Ginibre configurations. Indeed, the Coulomb gas heuristic suggests that only the particles lying near the boundary of $D_r$ contribute significantly to the fluctuations of $\xi (D_r)$.
Therefore, the statistics $\xi (D_{r})$ for disks which are macroscopically separated become independent in the limit.
This also suggests that in order to obtain a non-trivial limit, we should let the radius vary in a microscopic way.
We obtain the following functional central limit theorem.

\begin{proposition}\label{thm:GinibreFCLTSkorokhod}
	The following weak convergence holds true with respect to the Skorokhod topology as $r\to\infty$:
	\begin{equation*}
		\Bigl(X_r\Bigl(1+\frac{t}{r}\Bigr)\Bigr)_{t\in\bR} \xrightarrow{\cS(\bR)} (G_t)_{t\in \bR },
	\end{equation*}
	where $(G_t)_{t\in \bR }$ is a stationary, centred Gaussian process with covariance kernel given by
	\begin{equation*}
		\Ex{G_sG_t}
		=e^{-(t-s)^2} - 2\sqrt{\pi }(t-s)\Pr[\big]{\Normaldist_{0,1} \ge \sqrt{2}(t-s)}
		\qquad\text{for }s\le t.
	\end{equation*}
\end{proposition}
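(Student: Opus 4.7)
The strategy is to reduce to a sum of independent Bernoulli random variables via \cref{thm:Kostlan}, compute the limiting covariance using a Gaussian approximation for the gamma distribution, invoke the multivariate Lindeberg–Feller central limit theorem for the finite-dimensional distributions, and establish Skorokhod tightness through a fourth-moment bound.

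By \cref{thm:Kostlan}, the process $t \mapsto \xi(D_{r+t})$ has the same law as $t \mapsto \sum_{k \ge 1} \ind_{\Gamma_k \le R_r(t)}$, where $R_r(t) := (r+t)^2$ and $(\Gamma_k)_{k \ge 1}$ are independent. Writing $F_k$ for the distribution function of $\Gamma_k$ and noting that $\Ex{\xi(D_{r+t})} = R_r(t)$, we obtain the joint identity in law
\begin{equation*}
	X_r(1+t/r) = \frac{\pi^{1/4}}{\sqrt{r}} \sum_{k \ge 1} Z_k^{(r)}(t), \qquad Z_k^{(r)}(t) := \ind_{\Gamma_k \le R_r(t)} - F_k(R_r(t)),
\end{equation*}
where the processes $(Z_k^{(r)})_{k \ge 1}$ are independent. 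For $s \le t$, independence then gives
\begin{equation*}
	\Ex{X_r(1+s/r) X_r(1+t/r)} = \frac{\sqrt{\pi}}{r} \sum_{k \ge 1} F_k(R_r(s))\bigl(1 - F_k(R_r(t))\bigr).
\end{equation*}
To pass to the limit, the plan is to apply the uniform Berry–Esseen bound $F_k(k + \sqrt{k}\,x) = \Phi(x) + O(k^{-1/2})$ (valid since $\Gamma_k$ is a sum of $k$ i.i.d.\ exponentials) after the substitution $k = r^2 + ru$, which gives $(R_r(s) - k)/\sqrt{k} = 2s - u + O(r^{-1})$ uniformly on compact $u$-ranges and lets us treat the sum as a Riemann sum with mesh $1/r$; Gaussian tails of $\Phi$ and $1-\Phi$ control the contribution of $\abs{u}$ large. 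This yields convergence of the covariance to $\sqrt{\pi}\int_\bR \Phi(2s-u)(1-\Phi(2t-u))\,du$. Substituting $v = 2s-u$ and using $Y - X \sim \Normaldist(0,2)$ for $X,Y$ i.i.d.\ standard Gaussians, a direct computation identifies this integral with $e^{-\tau^2} - 2\sqrt{\pi}\tau\,\Pr[\big]{\Normaldist_{0,1} \ge \sqrt{2}\tau}$ where $\tau = t - s$, matching the stated covariance. The joint finite-dimensional CLT then follows from the multivariate Lindeberg–Feller theorem applied to the independent bounded vectors $(Z_k^{(r)}(t_j))_{j=1}^m$; the Lindeberg condition is trivial since $\abs{Z_k^{(r)}(t_j)} \le 1 \ll \sqrt{r}$.

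For tightness on any compact interval $[-N,N]$ (valid for $r > N$), the above covariance satisfies $\Ex{(G_t-G_s)^2} = O(\abs{t-s})$ as $\abs{t-s}\to 0$, so the limit $G$ has continuous paths and it suffices to show $C$-tightness. Rosenthal's inequality applied to the independent centered sum $\sum_k (Z_k^{(r)}(t) - Z_k^{(r)}(s))$, together with the bound $\sum_k \operatorname{Var}(Z_k^{(r)}(t) - Z_k^{(r)}(s)) = O(r\abs{t-s})$ obtained from the covariance calculation, gives
\begin{equation*}
	\Ex[\big]{(X_r(1+t/r) - X_r(1+s/r))^4} \le C\bigl(\abs{t-s}^2 + \abs{t-s}/r\bigr).
\end{equation*}
For $\abs{t-s} \ge 1/r$ this is the Kolmogorov fourth-moment criterion; for $\abs{t-s} < 1/r$ the process makes only $O(1)$ jumps in expectation each of size $\pi^{1/4}/\sqrt{r}$, which are uniformly negligible. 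Together these yield Skorokhod tightness, and combined with the finite-dimensional convergence prove the claim.

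The main obstacle is the covariance step: establishing the Gaussian approximation for $F_k$ uniformly across the relevant range of $k$ and $x$, and justifying the Riemann-sum limit including the tails, demands a careful quantitative local CLT. Once this is in hand, the explicit integral identity producing the precise closed form of the covariance is a pleasant consistency check, and both the finite-dimensional CLT and the tightness argument proceed along standard lines.
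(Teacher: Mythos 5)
Your outline follows the same skeleton as the paper's proof: reduce to a sum of independent Bernoulli variables via \cref{thm:Kostlan}, obtain the covariance limit from a Berry--Esseen approximation of the gamma laws plus a Riemann-sum argument, prove the finite-dimensional CLT by a routine argument for triangular arrays of bounded independent summands (you use Lindeberg--Feller where the paper bounds joint cumulants in \cref{lem:FCLTFDD}; both are fine), and establish tightness through fourth moments. Two points of comparison are worth recording. First, your derivation of the closed-form covariance via $\int_\bR \Pr{X>v}\Pr{Y>2\tau-v}\,\dif v=\Ex{(X+Y-2\tau)_+}$ with $X+Y\sim\Normaldist(0,2)$ is slicker than the paper's route, which goes through modified Bessel function asymptotics in \cref{sec:CovarianceGinibre}; your identity is a clean alternative to that appendix.

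Second, the tightness step is the one place where your argument is not yet complete. The single-increment bound $\Ex{(X_r(1+t/r)-X_r(1+s/r))^4}\le C(\abs{t-s}^2+\abs{t-s}/r)$ cannot by itself satisfy a Kolmogorov--Chentsov criterion at all scales: for $\abs{t-s}\ll 1/r$ the term $\abs{t-s}/r$ is only linear in $\abs{t-s}$, which is unavoidable for a pure-jump process. You acknowledge this and propose to treat $\abs{t-s}<1/r$ by noting that only $O(1)$ jumps of size $r^{-1/2}$ occur, but controlling the Skorokhod modulus requires this to hold \emph{uniformly} over all $O(rN)$ microscale windows in $[-N,N]$, which needs a maximal inequality or a union bound with exponential (Poisson-type) tail estimates on the number of jumps per window — not just an expectation bound. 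The paper sidesteps this entirely by verifying the product-increment criterion $\Ex{\abs{X(t)-X(s)}^2\abs{X(u)-X(t)}^2}\le C(u-s)^{1+2\epsilon}$ of \cite[Theorem VI.4.1]{JS03StochasticProcesses}: in \cref{thm:FCLTSkorokhod} the case $(u-s)^{1/2+\epsilon}<\tfrac{1}{2C\Sigma_R}$ is handled by showing the fourth joint cumulant of the two increments is non-positive, which is exactly the mechanism that kills the problematic microscale contribution. You should either adopt that product-moment criterion or supply the missing uniform control over the microscale oscillations; with that repair the rest of your argument goes through.
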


The proofs of \cref{thm:GinibrePreciseAsymptotics}, \cref{thm:GinibreFCLTFDD} and \cref{thm:GinibreFCLTSkorokhod} follow from general results presented in \cref{sec:MainResults} which apply to rotation-invariant determinantal processes.
The applications of our general results to the finite and infinite Ginibre ensembles are given in \cref{sec:GinibreProof}.

\begin{remark} \label{rk:GP}
The process $(G_t)_{t\in\bR}$ has the same regularity as Brownian motion. By our general results, we find that its covariance kernel is also given by
	\begin{equation*}
		\Ex{G_sG_t} = \sqrt{\pi } \int_{\bR} \Pr{\Normaldist_{0,1} \ge x-2s}\Pr{\Normaldist_{0,1}\le x-2t} \dif x
		\qquad\text{for }s\le t.
	\end{equation*}
	The formula presented in \cref{thm:GinibreFCLTSkorokhod} follows from an exact computation (see \cref{sec:CovarianceGinibre}).
\end{remark}

\begin{remark}
A classical result asserts that the (asymptotic) fluctuations of the Ginibre eigenvalues are described by an $H^1$-valued Gaussian process \cite{RV07CircularLawAndGFF,RV07ComplexDeterminantalProcesses}.
This means that for any function $f \in H^1 \cap L^1(\mu)$,
\begin{equation} \label{smoothclt}
	\xi(f_\rho) - \int_\bC f_\rho(z) \frac{\dif m(z)}{\pi} \xrightarrow[\rho\to \infty ]{\mathrm{d}} \norm{f}_{H^1(\bC)} \Normaldist_{0,1},
\qquad\text{where }f_\rho(z)= f\Bigl(\frac{z}{\rho}\Bigr)
\end{equation}
and $\norm{f}_{H^1(\bC)}^2 = \frac{1}{4\pi} \int_\bC \abs{\nabla f(z)}^2 \dif m(z)$. If one considers $f = \ind_{\bD} * \phi_\rho$, where $\phi_\rho$ is an approximate $\delta$-function at microscopic scale $\rho^{-1/2}$, then one expects from \eqref{smoothclt} that $ (\xi(D_\rho)-\rho^2) \sim \sqrt{\rho}\Normaldist_{0,1}$ as $\rho\to\infty$. Similarly, by considering the test function $f = \sum_{j=1}^m \alpha_j \ind_{D_{t_j}}* \phi_\rho$ with $0<t_1 <\cdots < t_m$ and $\alpha_1,\dotsc, \alpha_m\in\bR$ for $m\in\bN$, one can informally recover the multi-dimensional central limit theorem given in \cref{thm:GinibreFCLTFDD}.
\end{remark}


\subsection{Background and notation}\label{sec:Background}

\paragraph{Notation.}
Throughout this article, we denote by $\bR_+ = \intco{0,\infty} $, $\bN_0 = \set{0,1,2,\dotsc}$ and the horizontal strip by $\bS = \set{z\in\bC : \abs{\Im{z}} < \pi }$. Let the disk of radius $r>0$ in the complex plane be $D_r = \set{z\in\bC \given \abs{z} \le r}$. For binomial coefficients, we use the convention: $\binom{n}{m} = 0$ if $n<m$. We use $c_\alpha,C_\alpha$ for numerical constants depending on $\alpha$ which may change throughout chains of inequalities.

As in the introduction, $\Normaldist_{0,1}$ stands for a standard Gaussian random variable, and we let $\Phi_0(x) = \Pr{\Normaldist_{0,1}\ge x}$ be its probability tail function.

We denote the cumulant generating function of a centred Bernoulli distribution with parameter $p$ by
\begin{equation} \label{def:kappa}
	\kappa_p(z) = \log \Ex[\big]{e^{z(\Bernoullidist_p-p)}} = \log\bigl(1+p(e^z-1)\bigr)-pz
\end{equation}
for $p\in\intcc{0,1}$ and $z\in\bS$ and by $\dot\kappa_p(z) = \diff{\kappa_p(z)}{p}$, etc.\ its derivatives. For a real-valued random variable $X$, let $\varkappa^{(q)}(X)$ be its $q$-th cumulant.

A function $f\colon\bR\to\bC$ is called absolutely continuous if $f\in W^{1,1}(\bR)$, that is $f\in L^1(\bR)$ and it has a weak derivative $f'\in L^1(\bR)$.
Notice that any function $f\in W^{1,1}(\bR)$ is continuous and satisfies $\abs{f(x)-f(x')} \le \int_{x_0}^\infty \abs{f'(y)}\dif y$ for all $x,x'\ge x_0$. Since this integral converges to 0 as $x_0\to\infty$, this shows that $f(x)$ has a finite limit as $x\to +\infty$. Since $f\in L^1(\bR)$, this limit is 0 and the same holds as $x\to-\infty$. This shows that $f\in C_0(\bR)$ for any $f\in W^{1,1}(\bR)$ and we will use this fact throughout the rest of the paper.

\paragraph{Determinantal point processes.}

Determinantal processes is a class of point processes which has been introduced by Macchi to describe free fermions \cite{M75CoincidenceApproach}. Such processes are characterized by the property that all their correlation functions (with respect to a reference measure $\mu$) are of the form
\begin{equation*}
	\det_{i,j\in\set{1,\dotsc,n}} \bigl[ K(x_i, x_j) \bigr], \qquad n\in\bN.
\end{equation*}
The function $K$ is called the correlation kernel. If it defines a locally trace-class (integral) operator $\mathrm{K}$, then this operator characterizes the law of the process, see \cref{rk:lawuniqueness} below. A short discussion on point processes and their correlation functions is given at the beginning of \cref{sec:ModulusPoints}, and we refer to \cite{S00DeterminantalRandomPointFields,ST03RandomPointFieldsI,J06RandomMatrices,HKPV06DeterminantalProcesses,B15DeterminantelPointProcesses} as introductions to determinantal processes, including several examples. For instance, the eigenvalues of unitary invariant ensembles like the GUE and the CUE and the eigenvalues of normal random matrices are determinantal processes \cite{AGZ09RandomMatrices, AHM11FluctuationsEigenvalues}.
In particular, the correlation kernel of the $n\times n$ Ginibre ensemble is $K_N(z,w) = \sum_{k=0}^{N-1} \frac{z^k\bar{w}^k}{k!}$ with respect to $\frac{\dif \mu }{\dif m}(z) = \frac{1}{\pi }e^{-\abs{z}^2}$ on $\bC$.

In the present article, we are interested in certain two-dimensional processes which are rotation-invariant such as the \emph{(polyanalytic) Ginibre ensembles} whose kernels with respect to the Gaussian measure $\frac{\dif \mu }{\dif m}(z) = \frac{1}{\pi }e^{-\abs{z}^2}$ are given by
\begin{equation*}
	K_\LL(z,w) = L_\LL\bigl(\abs{z-w}^2\bigr)e^{z\bar{w}} , \qquad z,w\in\bC,\ \alpha\in\bN_0,
\end{equation*}
where $L_\LL$ denotes the orthonormal Laguerre polynomial of degree $\LL$. These ensembles generalize the infinite Ginibre point process ($\LL=0$). They describe the thermodynamic limit of two-dimensional free fermions in a uniform magnetic field in the Landau level $\LL$, see \cite{Z06MatrixModels}.

Another example that we consider for comparison purposes is the zeros set of the \emph{hyperbolic Gaussian analytic function}
$F(z)= \sum_{k=0}^\infty a_k z^k$ with $(a_k)_{k\in\bN_0}$ being a sequence of i.i.d.\ standard complex Gaussian random variables.
In \cite{PV05ZerosGaussianPowerSeries} it is shown that $\set{z\in\bD : F(z)=0 }$ defines a simple point process which is determinantal with kernel $K(z,w) = \frac{1}{\pi (1-z\bar{w})^2}$ with respect to the Lebesgue measure $m$ on the unit disk $\bD = \set{z\in\bC: \abs{z} < 1}$.

We focus on rotation-invariant determinantal processes because our analysis relies on the fact that counting statistics for nested disks centred at the origin are so-called \emph{simultaneously observable}, see \cite[Proposition 2.8]{ST03RandomPointFieldsII} or \cite[Proposition 9]{HKPV06DeterminantalProcesses} or \cref{lem:ModulusPoints}.
Namely, if $\xi$ is a rotation-invariant determinantal process in $\bC$ associated with a locally trace-class, self-adjoint operator $\mathrm{K}$, then for any $r_\ell>\dotsb>r_1>0$, it holds
\begin{equation}\label{eq:TS}
	\bigl( \xi(D_{r_k}) \bigr)_{i=1}^\ell
	\overset{\mathrm{d}}{=} \biggl( \sum_{k \in\bN} \ind_{U_k \le \lambda_k(r_i)}\biggr)_{i=1}^\ell ,
\end{equation}
where $(U_k)_{k\in\bN}$ is a sequence of i.i.d.\ random variables which are uniform in $\intcc{0,1}$ and $(\lambda_k(r))_{k\in\bN}$ denotes the eigenvalues of the operator $\mathrm{K}|_{D_r}$.
In particular, it follows from the general theory that $\lambda_k(r)\in\intcc{0,1}$ for all $k\in\bN$ and $r>0$, see \cite[Theorem 3]{S00DeterminantalRandomPointFields} or \cite[Theorem 22]{HKPV06DeterminantalProcesses}.

\paragraph{Mod-phi convergence.}
The concept of mod-phi convergence was first introduced in \cite{KN10ModPoissonConvergence,JKN11ModGaussianConvergence,KN12ModGaussianConvergence} and further developed in the series of works \cite{FMN16ModPhiConvergence,FMN17ModPhiConvergenceIII,FMN19ModPhiConvergenceII} with the goal of providing a unified framework to obtain precise information on a sequence satisfying a central limit theorem, such as Berry-Esseen estimates and precise moderate to large deviation estimates. By now, the theory of mod-phi convergence has been successfully applied in different contexts such as asymptotic combinatorics \cite{FMN18ModGaussianModuliSpaces}, in connection to the Ising model \cite{MN15ModGaussianIsing} and determinants of classical random matrix ensembles \cite{DHR19RandomDeterminants}. Intuitively, the idea is to compare a sequence of random variables $(X_n)_{n\in\bN}$ with a sum of independent, identically distributed random variables at the level of cumulant generating functions to obtain precise tail estimates.
As we are interested in counting statistics, we introduce the concept of mod-phi convergence for discrete random variables following the approach of \cite{FMN16ModPhiConvergence}.

\begin{definition}\label{def:ModPhi}
	The sequence of $\bZ$-valued random variables $(X_n)_{n\in\bN}$ converges in the mod-phi sense with parameters $(t_n)_{n\in\bN}$, normalizing \emph{distribution} $\nu$ on $\bR$ and limiting function $\psi$ (analytic in the strip $\bS = \set{z\in\bC \given \abs{\Im{z}} < \pi }$) if as $t_n\to \infty$, it holds locally uniformly on $\bS$,
	\begin{equation*}
		\frac{\Ex[\big]{e^{zX_n}}}{e^{t_n \Lambda (z)}} \xrightarrow[n\to \infty ]{} \psi (z),
	\end{equation*}
where $\Lambda (z) = \log \int_\bR e^{zx} \dif \nu(x)$ denotes the cumulant generating function of $\nu$.
We also assume that the following technical condition holds: there exists $C, \epsilon>0$ such that for all $x\in\intcc{-\epsilon,\epsilon}$ and all $\delta>0$,
\begin{equation}\label{ass:DecayCGF}
	\max_{y\in\intcc{-\pi ,\pi }\setminus\intoo{-\delta ,\delta }} \abs[\big]{\exp\bigl(\Lambda (x+\i y)-\Lambda (x)\bigr)} \le 1-C\delta ^2.
\end{equation}
\end{definition}

In contrast to the works mentioned above, the \emph{distributions} $\nu$ observed in this article are discrete signed measures, which is a rather surprising fact.
To fix the normalization, we assume that the total mass of~$\nu$ is 1 in which case $\psi(0)=1$.
In \cref{def:ModPhi}, we implicitly assume that $\int_\bR e^{zx} \abs{\nu}(\dif x) <\infty$ for all $z>0$ where $\abs{\nu}$ denotes the variation of $\nu$ and that $\log(\int_\bR e^{zx} \dif \nu(x))$ is well-defined for the principal branch of $\log$ in the strip $\bS$. Then $\Lambda$ is an analytic function in this strip and
\begin{equation*}
	\Lambda'(0) = \lim_{n\to\infty} \frac{\Ex{X_n}}{t_n}
	\qquad\text{and}\qquad
	\Lambda''(0) = \lim_{n\to\infty} \frac{\Var{X_n}}{t_n}.
\end{equation*}

We always assume that $\Lambda''(0)>0$. Then we deduce from the above mod-phi convergence the following precise moderate and large deviation estimates, \cite[Section 3]{FMN16ModPhiConvergence}.
\begin{theorem}\label{thm:ModPhiConclusions}
	Let $(X_n)_{n\in\bN}$ be a sequence of centred real-valued random variables converging in the mod-phi sense of \cref{def:ModPhi}.
	Let $I:\bR\to\intcc{0,\infty}$ be the convex conjugate of $\Lambda $. Then, the following asymptotic holds locally uniformly for $y\in \bR_+$, as $n\to \infty $,
	\begin{equation} \label{eq:Mod}
		\Pr[\big]{X_n \ge t_ny} = e^{-t_n I(y)} \sqrt{\frac{I''(y)}{2\pi t_n}}\frac{\psi(I'(y))}{1-e^{-I'(y)}}\bigl(1+O(t_n^{-1})\bigr).
	\end{equation}
\end{theorem}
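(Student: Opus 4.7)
This is essentially \cite[Theorem 3.2.2]{FMN16ModPhiConvergence} transposed to the setting where the normalising measure $\nu$ lives on $\bZ$ and may be signed; the signedness plays no structural role beyond ensuring analyticity of $\Lambda$ on the strip $\bS$, which is guaranteed by \cref{def:ModPhi}. The plan is the classical saddle-point argument: exponentially tilt to the correct mean, apply a local central limit theorem to the tilted law via Fourier inversion, and sum a geometric series.

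\textbf{Exponential tilt.} Fix $y>0$ in a compact set and let $h = I'(y) > 0$, so that $\Lambda'(h)=y$, $I(y)=hy-\Lambda(h)$ and $I''(y)=1/\Lambda''(h)$. Write $M_n(z)=\mathbb{E}[e^{zX_n}]$ and $m_n=\lceil t_n y\rceil$, and define the tilted law by $\widetilde{\mathbb{P}}_n(X_n=j)=e^{hj}\mathbb{P}(X_n=j)/M_n(h)$. Then
\begin{equation*}
	\mathbb{P}(X_n \ge t_n y) = M_n(h)\, e^{-h m_n} \sum_{k \ge 0} e^{-hk}\, \widetilde{\mathbb{P}}_n(X_n = m_n+k).
\end{equation*}
Mod-phi convergence yields $M_n(h)=e^{t_n\Lambda(h)}\psi(h)(1+o(1))$ and $e^{-h m_n}=e^{-h t_n y}(1+O(t_n^{-1}))$, so the explicit prefactor already contributes the predicted factor $e^{-t_n I(y)}\psi(I'(y))(1+o(1))$.

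\textbf{Local CLT and summation.} By Fourier inversion on $\bZ$,
\begin{equation*}
	\widetilde{\mathbb{P}}_n(X_n = j) = \frac{1}{2\pi} \int_{-\pi}^{\pi} e^{-\i j\theta}\, \frac{M_n(h+\i\theta)}{M_n(h)}\, \dif\theta.
\end{equation*}
Split the integral at $|\theta|=\delta_n$ with $\delta_n = t_n^{-1/2}\log t_n$. On $\{|\theta|\ge\delta_n\}$, hypothesis \eqref{ass:DecayCGF} together with mod-phi convergence bounds the integrand uniformly by $(1-C\delta_n^2)^{t_n}$, which is super-polynomially small. On $\{|\theta|<\delta_n\}$, substitute $\theta = u/\sqrt{t_n\Lambda''(h)}$, Taylor-expand $\Lambda(h+\i\theta)-\Lambda(h) = \i y\theta-\frac{1}{2}\Lambda''(h)\theta^2 + O(\theta^3)$, and use the quantitative mod-phi asymptotic $M_n(h+\i\theta)/M_n(h) = e^{t_n(\Lambda(h+\i\theta)-\Lambda(h))}\psi(h+\i\theta)/\psi(h)(1+O(t_n^{-1}))$ to reduce the integral to a standard Gaussian. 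This yields, uniformly in $k\in\bZ$ with $|k|\le\sqrt{t_n}\log t_n$,
\begin{equation*}
	\widetilde{\mathbb{P}}_n(X_n = m_n+k) = \frac{1}{\sqrt{2\pi t_n\Lambda''(h)}} \exp\!\Bigl(-\frac{k^2}{2 t_n\Lambda''(h)}\Bigr) \bigl(1+O(t_n^{-1})\bigr).
\end{equation*}
Since $h>0$ is bounded below on compact $y$-intervals, the geometric factor $e^{-hk}$ localises the sum at $k=O(1)$, where the Gaussian factor equals $1+O(t_n^{-1})$; the tail $k\gtrsim\sqrt{t_n}\log t_n$ contributes $O(e^{-c\log^2 t_n})$. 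Summing the resulting geometric series,
\begin{equation*}
	\sum_{k\ge 0} e^{-hk}\, \widetilde{\mathbb{P}}_n(X_n=m_n+k) = \frac{1}{(1-e^{-h})\sqrt{2\pi t_n\Lambda''(h)}}\bigl(1+O(t_n^{-1})\bigr).
\end{equation*}
Combining with the prefactor and using $I''(y)=1/\Lambda''(h)$ gives \eqref{eq:Mod}.

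\textbf{Main obstacle.} The delicate step is upgrading the qualitative mod-phi convergence $M_n e^{-t_n\Lambda}\to\psi$ to the quantitative form $M_n(h+\i\theta)/M_n(h) = (\cdots)(1+O(t_n^{-1}))$, uniformly in the small-$\theta$ window and locally uniformly in $y$. This is obtained by a Cauchy integral argument applied to the analytic family $z\mapsto M_n(z)\,e^{-t_n\Lambda(z)}$ on the full strip $\bS$, which is precisely why \cref{def:ModPhi} requires analyticity and local uniform convergence on all of $\bS$ rather than merely near the real saddle point. The remaining ingredients (the tail cutoff via \eqref{ass:DecayCGF}, the Gaussian integral, and the geometric summation) are standard saddle-point manipulations.
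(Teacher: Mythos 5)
Your proposal follows essentially the same route as the paper, which does not write out a proof but defers to \cite[Theorem 3.2.2]{FMN16ModPhiConvergence} with the remark that for $\bZ$-valued variables ``the proof (which is based on Fourier's inversion formula) goes through directly''; your tilt--local-CLT--geometric-series decomposition is exactly that argument (in \cite{FMN16ModPhiConvergence} the three steps are packaged into the single contour integral $\frac{1}{2\pi}\int_{-\pi}^{\pi}\frac{M_n(h+\i\theta)\,e^{-(h+\i\theta)t_ny}}{1-e^{-(h+\i\theta)}}\dif\theta$, but the computation is the same), with \eqref{ass:DecayCGF} killing the non-central part of the Fourier integral just as you describe. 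Two caveats. First, $e^{-hm_n}=e^{-ht_ny}(1+O(t_n^{-1}))$ requires $t_ny\in\bZ$; otherwise an oscillating factor $e^{-h\fracpart{t_ny}}$ survives — this integrality convention is implicit here as in \cite{FMN16ModPhiConvergence}. Second, the error term $O(t_n^{-1})$ cannot be extracted by the mechanism you propose: a Cauchy integral argument upgrades the locally uniform convergence of $z\mapsto M_n(z)e^{-t_n\Lambda(z)}$ on $\bS$ to convergence together with derivatives (which is what gives uniformity in the small-$\theta$ window), but it cannot manufacture a rate out of the rateless convergence in \cref{def:ModPhi}; without a quantitative hypothesis on the mod-phi convergence the conclusion of your argument is $1+o(1)$, which is what \cite{FMN16ModPhiConvergence} actually prove and what all the applications in the paper use.
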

$I$ is usually called the \emph{rate function}.
Recall that $I(y) = \sup_{x\in\bR}\set{ xy-\Lambda (x)}$ is convex on $\bR$ and that $I\ge 0$ since $\Lambda(0)=0$ because of our normalization. The RHS of \eqref{eq:Mod} needs to be interpreted as $0$ whenever $I(y)= \infty$.
The main difference with \cite[Theorem 3.2.2]{FMN16ModPhiConvergence} is that we only require that $\Lambda(z)$ exists for $z\in\bS$, and we assume the technical condition \eqref{ass:DecayCGF} (which is sufficient for the result in \cite{FMN16ModPhiConvergence}).
Since we are restricted to $\bZ$-valued random variables, the proof (which is based on Fourier's inversion formula) goes through directly.
In the formulation of \cref{thm:ModPhiConclusions}, we assume that $y>0$, but one can also obtain the left tail by applying the results to the sequence $(-X_n)_{n\in\bN}$.

Note that compared to usual large deviation estimates, the asymptotic \eqref{eq:Mod} holds for
$\Pr{X_n \ge t}$, rather than its logarithm, uniformly for all $t\in\bR$ up to the order $t_n$.
Thus, if we normalize the random variables $X_n$, we deduce the following results.

\begin{corollary} \label{cor:mod}
 Let $\widetilde{X}_n = \frac{X_n}{\sqrt{t_n\Lambda''(0)}}$.
Under the assumptions of \cref{thm:ModPhiConclusions}, it holds
\begin{itemize}
		\item \emph{Berry-Esseen estimate:} There exists a constant $C>0$ such that
		\begin{equation*}
			\sup_{x\in \bR } \abs[\big]{\Pr[\big]{\widetilde{X}_n \ge x} - \Pr[\big]{\Normaldist_{0,1} \ge x}} \le \frac{C}{\sqrt{t_n}}.
		\end{equation*}
		\item \emph{Extended central limit theorem\footnote{In general, the condition $x_n=o(t_n^{1/6})$ for the extended CLT is optimal.}:} It holds for any sequence $x_n=o(t_n^{1/6})$ that
		\begin{equation*}
			\Pr[\big]{\widetilde{X}_n\ge x_n} = \Pr[\big]{\Normaldist_{0,1} \ge x_n}\bigl(1+o(1)\bigr).
		\end{equation*}
		\item \emph{Precise moderate deviations:} If $x_n \ge 0$ is any sequence with $x_n=o(\sqrt{t_n})$, then
		\begin{equation*}
			\Pr[\big]{\widetilde{X}_n \ge x_n} = \frac{\exp\Bigl(-t_n I\Bigl(\frac{x_n}{\sqrt{I''(0)t_n}}\Bigr)\Bigr)}{\sqrt{2\pi }x_n}\bigl(1+o(1)\bigr).
		\end{equation*}
\end{itemize}
\end{corollary}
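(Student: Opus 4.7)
The three estimates are standard consequences of mod-phi convergence, and the strategy is to apply Theorem \ref{thm:ModPhiConclusions} together with Taylor expansions near the origin. The common algebraic input is the Legendre duality identity $I''(0) = 1/\Lambda''(0)$, which follows from $I'(y)$ being the inverse function of $\Lambda'$ combined with $\Lambda'(0)=0$ (since $X_n$ is centred, so that $\Lambda'(0) = \lim \Ex{X_n}/t_n = 0$). This identity explains why the normalisation $\widetilde{X}_n = X_n/\sqrt{t_n\Lambda''(0)}$ is compatible with the natural scale $\sqrt{t_n I''(0)}$ appearing in the rate function expansions below.

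\textbf{Precise moderate deviations.} Set $y_n := x_n/\sqrt{t_n I''(0)}$. Since $x_n = o(\sqrt{t_n})$, we have $y_n\to 0$, and $t_n y_n = x_n\sqrt{t_n\Lambda''(0)}$, matching the rescaling in $\widetilde{X}_n$. Applying \eqref{eq:Mod} with $y = y_n$ (legal by local uniformity around $0$) gives
\begin{equation*}
\Pr[\big]{\widetilde{X}_n\ge x_n} = e^{-t_n I(y_n)}\sqrt{\frac{I''(y_n)}{2\pi t_n}}\frac{\psi(I'(y_n))}{1-e^{-I'(y_n)}}\bigl(1+o(1)\bigr).
\end{equation*}
Continuity of $I''$ and $\psi$ at $0$ yields $I''(y_n)\to I''(0)$ and $\psi(I'(y_n))\to \psi(0)=1$, while $1-e^{-I'(y_n)} = I'(y_n)(1+o(1)) = I''(0)\,y_n(1+o(1))$. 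Combining these,
\begin{equation*}
\sqrt{\frac{I''(y_n)}{2\pi t_n}}\frac{\psi(I'(y_n))}{1-e^{-I'(y_n)}} = \frac{1}{\sqrt{2\pi}\sqrt{t_n I''(0)}\, y_n}\bigl(1+o(1)\bigr) = \frac{1}{\sqrt{2\pi}\, x_n}\bigl(1+o(1)\bigr),
\end{equation*}
which gives the claimed asymptotic.

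\textbf{Extended CLT.} Expand $I(y_n) = \tfrac{1}{2}I''(0)y_n^2 + O(y_n^3)$ using $I(0)=I'(0)=0$. Then $t_n I(y_n) = \tfrac{1}{2}x_n^2 + O(x_n^3/\sqrt{t_n})$, and under the assumption $x_n = o(t_n^{1/6})$ the error is $o(1)$, so $e^{-t_n I(y_n)} = e^{-x_n^2/2}(1+o(1))$. Combining with the Gaussian tail $\Pr{\Normaldist_{0,1}\ge x_n}\sim e^{-x_n^2/2}/(\sqrt{2\pi}\,x_n)$ (for $x_n\to\infty$) and the moderate-deviation statement yields the extended CLT when $x_n\to\infty$; for bounded sequences it reduces to the CLT, which is itself a consequence of the Berry-Esseen estimate below.

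\textbf{Berry-Esseen.} Apply Esseen's smoothing inequality to $\widetilde{X}_n$ with cutoff $T = c\sqrt{t_n}$. For $|t|\le \epsilon\sqrt{t_n}$, the mod-phi hypothesis gives $\phi_n(t) := \Ex[\big]{e^{\i t\widetilde{X}_n}} = e^{t_n\Lambda(\i s_n)}\psi(\i s_n)(1+o(1))$ with $s_n = t/\sqrt{t_n\Lambda''(0)}$. Taylor expanding $\Lambda$ (using $\Lambda(0)=\Lambda'(0)=0$, $\Lambda''(0)=1/I''(0)$) yields $t_n\Lambda(\i s_n) = -t^2/2 + O(|t|^3/\sqrt{t_n})$ and $\psi(\i s_n) = 1 + O(|t|/\sqrt{t_n})$, so that
\begin{equation*}
\bigl|\phi_n(t) - e^{-t^2/2}\bigr| \le C e^{-t^2/4}\frac{1+|t|^3}{\sqrt{t_n}}.
\end{equation*}
For $\epsilon\sqrt{t_n}\le |t|\le T$, condition \eqref{ass:DecayCGF} (applied with $x=0$ and $\delta = s_n$) gives $|e^{t_n\Lambda(\i s_n)}| \le (1-Cs_n^2)^{t_n}$, which is exponentially small in $t_n$. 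Dividing each bound by $|t|$ and integrating, together with the standard $O(1/T)$ tail term, yields $\sup_x|\Pr(\widetilde{X}_n\ge x) - \Pr(\Normaldist_{0,1}\ge x)| = O(t_n^{-1/2})$. The main technical obstacle is patching together the Gaussian regime and the intermediate range, which is exactly what condition \eqref{ass:DecayCGF} is designed to handle.
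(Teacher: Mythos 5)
Your proposal is correct and follows the same route the paper takes: the paper deduces \cref{cor:mod} directly from \cref{thm:ModPhiConclusions} (citing \cite{FMN16ModPhiConvergence} without writing out details), and your moderate-deviation and extended-CLT arguments are exactly the Taylor expansions of $I$ around $0$ together with the Legendre identity $I''(0)=1/\Lambda''(0)$ that this deduction rests on, while your Berry-Esseen part reproduces the Esseen smoothing argument from \cite{FMN16ModPhiConvergence} that the citation covers (the only caveat, inherited from the paper's own statement of \cref{thm:ModPhiConclusions} with its $O(t_n^{-1})$ error, is that the $O(t_n^{-1/2})$ rate tacitly uses a quantitative speed for the convergence $\psi_n\to\psi$, not just the plain locally uniform convergence of \cref{def:ModPhi}).
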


\subsection{Main results}\label{sec:MainResults}

We take a general viewpoint and consider a rotation-invariant determinantal process $\xi$ on $\bC$ with a Hermitian-symmetric correlation kernel $K$ (with respect to a Radon measure $\mu$).
Let us assume that $K(z,w)=\sum_{k\in\bZ} \sum_{j =1}^{\ell_k} \rho_{k,j}(\abs{z}) z^k \rho_{k,j}(\abs{w})\bar{w}^k$ for $z,w\in\bC$ with $\rho_{k,j} \colon \bR_+ \to \bR$ normalized such that for any $k\in\bZ$: $\int_{\bC} \rho_{k,i} \rho_{k,j} (\abs{z}) \abs{z}^{2k} \dif \mu(z) = \ind_{i=j}$ for all $ i,j\in\set{1, \dotsc, \ell_k}$---this set being empty if $\ell_k=0$.
These conditions imply that the operator $\mathrm{K}$ with (integral) kernel $K$ is a projection.
Let $\mathcal{Y} = \set{ (k,j) \given j\in\set{1,\dotsc, \ell_k}, k\in\bZ }$ and observe that for any $r>0$, the operator $\mathrm{K}|_{D_r}$ with kernel $K$ acting on $L^2(D_r,\mu)$ has eigenvalues
\begin{equation}\label{eq:Eig}
	\lambda_{u}(r)
	=\int_{\abs{z} < r} \rho_{k,j}^2\bigl(\abs{z}\bigr) \abs{z}^{2k} \dif \mu(z) \in \intcc{0,1} , \qquad u=(k,j)\in\mathcal{Y}.
\end{equation}
We also assume that $\operatorname{Tr}\mathrm{K}|_{D_r} = \sum_{u\in \mathcal{Y}} \lambda_{u}(r)<\infty$ for all $r>0$ so that the operator $\mathrm{K}$ is locally trace-class.
Then, we obtain the following lemma which is modelled after \cref{thm:Kostlan} in \cite{K92SpectraGaussianMatrices}, see also \cite[Section 4.1]{PV05ZerosGaussianPowerSeries}.
Note that it immediately implies the identity \eqref{eq:TS}.

\begin{lemma} \label{lem:ModulusPoints}
	Denote by $\set{Z_u}_{u\in \mathcal{Y}}$ the atoms of $\xi$ and let $(X_u)_{u\in \mathcal{Y}}$ be an array of independent random variables with laws $(\lambda_u)_{u\in \mathcal{Y}}$ as in \eqref{eq:Eig}.
	The two point processes $\set{\abs{Z_u}}_{u\in \mathcal{Y}}$ and $\set{X_u}_{u\in \mathcal{Y}}$ on $\bR_+$ have the same distribution.
\end{lemma}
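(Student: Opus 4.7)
The plan is to combine the spectral decomposition of the restricted operators $\{\mathrm{K}|_{D_r}\}_{r>0}$ with the standard coupling for projection determinantal processes, and then recognize the resulting representation as the counting function of the independent family $\{X_u\}_{u\in\mathcal{Y}}$.

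First I would verify that the functions $\phi_u(z) := \rho_{k,j}(|z|)\,z^k$ (indexed by $u=(k,j)\in\mathcal{Y}$) form an orthonormal basis of the range of $\mathrm{K}$, with $K(z,w)=\sum_u \phi_u(z)\overline{\phi_u(w)}$. The normalization hypothesis provides orthonormality within each angular sector, while orthogonality across distinct sectors $k\neq k'$ comes from rotation invariance of $\mu$ (the angular integral of $e^{\i(k-k')\theta}$ vanishes). Applying $\mathrm{K}|_{D_r}$ to $\phi_u$ and expanding $K$ in this basis, the same angular argument forces $k=k'$, reducing the resulting inner product on $D_r$ to the radial pairing of $\rho_{k,j}$ with $\rho_{k,j'}$ against $s^{2k}\dif\mu_{\mathrm{rad}}$ on $[0,r]$. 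Under the structural assumption that these radial functions remain mutually orthogonal on every $[0,r]$ (i.e.\ the decomposition of $K$ listed above is adapted to each restriction), this pairing is diagonal and one reads off $\mathrm{K}|_{D_r}\phi_u=\lambda_u(r)\phi_u$ with the eigenvalue \eqref{eq:Eig}. The trace hypothesis $\sum_u\lambda_u(r)<\infty$ ensures that this accounts for the full non-zero spectrum.

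Second, I would invoke the representation theorem for projection determinantal processes (see \cite[Proposition~2.8]{ST03RandomPointFieldsII} or \cite[Proposition~9]{HKPV06DeterminantalProcesses}): since the compressions $\{\mathrm{P}_{D_r}\mathrm{K}\mathrm{P}_{D_r}\}_{r>0}$ share the common eigenbasis $\{\phi_u\}_{u\in\mathcal{Y}}$, their counting statistics are \emph{simultaneously observable}. Concretely, one can couple $\xi$ with i.i.d.\ uniform $[0,1]$ variables $(U_u)_{u\in\mathcal{Y}}$ so that $\xi(D_r)=\sum_{u\in\mathcal{Y}}\ind_{U_u\le\lambda_u(r)}$ for every $r>0$ simultaneously, which is precisely \eqref{eq:TS}.

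Finally, by \eqref{eq:Eig} the map $r\mapsto\lambda_u(r)$ is the distribution function of $X_u$, so $\ind_{U_u\le\lambda_u(r)}$ has the same joint law in $r$ as $\ind_{X_u\le r}$. This identifies the counting functions $r\mapsto\#\{u:|Z_u|\le r\}=\xi(D_r)$ and $r\mapsto\#\{u:X_u\le r\}$ in finite-dimensional distribution, and since both $\{|Z_u|\}_{u\in\mathcal{Y}}$ and $\{X_u\}_{u\in\mathcal{Y}}$ are locally finite point processes on $\bR_+$, equality of their counting functions on the half-intervals $[0,r]$ is enough to conclude equality of the two point processes. I expect the main technical obstacle to be the first step: cleanly extracting from the hypotheses on $K$ and the rotation invariance of $\mu$ that $\{\phi_u\}$ is a simultaneous eigenfamily of the whole family $\{\mathrm{K}|_{D_r}\}_{r>0}$ with the claimed eigenvalues; once this structural fact is secured, the remainder is a direct application of standard determinantal-process machinery.
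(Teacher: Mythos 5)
Your argument is correct in substance but takes a genuinely different route from the paper's. The paper never diagonalizes the compressions $\mathrm{K}|_{D_r}$: it works directly with correlation functions, disintegrating $\dif\mu = \dif\widehat{\mu}(r)\,\frac{\dif\theta}{2\pi}$, applying the Cauchy--Binet formula to write $\det[K_{\mathcal{Y}}]$ as a sum of finite-rank determinants $\det[K_{S_n}]$, and using the angular orthogonality $\int_\bT e^{\i(k-k')\theta}\frac{\dif\theta}{2\pi}=\delta_{kk'}$ to kill the off-diagonal permutation pairs; the resulting radial intensity functions are exactly those of the independent family $(X_u)_{u\in\mathcal{Y}}$, and one concludes via uniqueness of a point process with given, sufficiently slowly growing correlation functions (this is where the local trace-class hypothesis enters). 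You instead take the simultaneous-observability identity \eqref{eq:TS} as input and reconstruct the law of the radial process from its joint counting function, whereas the paper goes the other way and obtains \eqref{eq:TS} as a corollary of the lemma. Your route is shorter given the cited black boxes, but the ``structural assumption'' you flag in the first step is genuinely extra: when some $\ell_k\ge 2$, nothing in the hypotheses forces $\rho_{k,i}$ and $\rho_{k,j}$ to remain orthogonal on every $D_r$, so the compressions need not be diagonal in the basis $\set{\phi_u}$ and the quantities \eqref{eq:Eig} need not be their actual eigenvalues; the simultaneous-observability propositions you invoke then do not apply as stated. This costs nothing for the paper's applications, where $\ell_k\le 1$ throughout, and it is worth noting that the paper's own step ``only $\sigma=\tau$ contributes'' quietly uses the same injectivity of $u\mapsto k$ on each $S_n$, so the restriction is shared. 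Your remaining steps --- identifying $r\mapsto\lambda_u(r)$ as the distribution function of $X_u$, passing from joint counts on nested disks to counts on half-open annuli and hence to equality of the two point processes, with local finiteness supplied by $\sum_u\lambda_u(r)<\infty$ --- are sound.
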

Since it is more general than previous results, the proof of \cref{lem:ModulusPoints} is given in \cref{sec:ModulusPoints} for completeness.
We use this result to study the law of the counting statistic $( \xi(D_r) )_{r\in\bR_+}$ as a stochastic process.
For simplicity, we assume that $\ell_k = \ind_{k \ge-\alpha}$ with $\alpha\in \bN_0$ and denote by $T(r) = \Ex{\xi(D_r)}$. Then, we define the random variables for $k\ge -\alpha$,
\begin{equation} \label{def:Gamma}
	\Gamma_{k+\alpha+1} = T(X_k),
\end{equation}
where the sequence $(X_k)_{k\ge -\alpha}$ is as in \cref{lem:ModulusPoints}.
This transformation corresponds to \emph{unfolding}\footnote{By definition, note that $\Ex{ \sum_{k\in\bN} \ind_{\Gamma_k\le R}} =R$ for all $R>0$, so that the densities of the $\Gamma_k$ form a partition of unity. In particular $(\Gamma_k)_{k\in\bN}$ cannot be any sequence of absolutely continuous positive random variables.} the point process $\xi$ so that the random variable $\Gamma_k$ is \emph{located} around $k$.
If the random variables $\Gamma_k$ have a first moment, this means that $\Ex{\Gamma_k} = k + O(1)$ for large $k$. However, for the hyperbolic ensembles considered in \cref{sec:hyperbolic}, the \emph{radii} $\Gamma_k$ do not have a well-defined mean.
As for the applications to the invariant processes discussed in \cref{sec:Applications}, this naturally takes into account the geometry at hand.
In the following, we study the process
\begin{equation} \label{def:Xi}
	\Xi_R = \sum_{k\in\bN} \ind_{\Gamma_k\le R} - R \overset{\mathrm{d}}{=} \xi(D_r) - \Ex{\xi(D_r)},
	\qquad \text{with } r=T^{-1}(R).
\end{equation}

Our goal is to show that the random variable $\Xi_R $ converges as $R\to\infty$ in the mod-phi sense under the following conditions on the sequence $(\Gamma_k)_{k\in\bN}$.

\begin{assumption} \label{NA}
Assume there exists $\vartheta\in\set{0,1}$ and an increasing continuous function $\Sigma\colon \bR_+ \to \bR_+$ such that $\Sigma_R \to \infty$ as $R\to\infty$ so that it holds for any $k\in\bN$,
\begin{equation} \label{cond}
	\Pr{\Gamma_k \le R} = \Phi\Bigl(\frac{k-\vartheta R}{\Sigma_R}\Bigr) + \frac{1}{\Sigma_R} \Psi\Bigl(\frac{k-\vartheta R}{\Sigma_R}\Bigr) + O\bigl( \theta_{R,k}\bigr),
\end{equation}
	where the errors satisfy $\sum_{k\in\bN} \theta_{R,k} \to 0 $ as $R\to\infty$, $\Phi(x) = \Pr{Z>x}$ for an absolutely continuous real-valued random variable $Z$ with $\Ex{\abs{Z}}<\infty$ and
	$\Psi \in W^{1,1}(\bR)$. If $\vartheta=1$, the function $\Sigma_R = o(R)$ and we further assume that there exists $m>1$ such that both $\Ex{\abs{Z}^m}<\infty$ and $\Sigma_R^{m+1} = o(R^m)$.
\end{assumption}

These assumptions mean that $(k,R)\mapsto \Pr{\Gamma_k \le R}$ has a \emph{smooth profile} up to a small error $ \theta_{R,k}$ and if $k\ge R + \Sigma_R^{1+\epsilon}$, \eqref{cond} is to be interpreted as a tail-bound for the random variables $\Gamma_k$: $\Pr{\Gamma_k \le R} =O\bigl( \theta_{R,k}\bigr)$.
In particular, this implies that only the random variables $\Gamma_k$ for $k$ inside a window centred around $R$ contribute significantly to \eqref{def:Xi}. These assumptions have been tuned to control the asymptotics of the cumulant general function of $\Xi_R$ in order to apply the theory of mod-phi convergence.
In our applications, the parameter $\vartheta$ distinguishes between the \emph{Euclidean} $(\vartheta=1)$ and \emph{hyperbolic} $(\vartheta=0)$ models for which the \emph{radii} $\Gamma_k$ behave differently, reflecting the underlying geometry of the process.
In particular, in case $\vartheta =1$, the asymptotics \eqref{cond} show that the random variables $\frac{\Gamma_k-k}{\Sigma_k}$ is statistically approximated by $Z$ for large $k$ and \cref{lem:A} below provide technical conditions which allow to verify that the \cref{NA} hold for several instances of determinantal processes.
In case $\vartheta=0$, the limiting random variable $Z>0$ so that $\Phi(x)=1$ and $\Psi(x) =0$ for all $x\le 0$.

\begin{remark} \label{rk:BV}
In \cref{NA}, instead of the condition $\Psi \in W^{1,1}(\bR)$, if we assume that we can decompose
$\Psi = \Psi_{AC} + \Psi_{M} $ where $\Psi_{AC} \in W^{1,1}(\bR)$ and there exists an open interval $J\subseteq \bR$ such that $\Psi_M\in L^1(J)$ is monotone and $\Psi_M = 0$ on $\bR\setminus J$, then the results presented below remain true.
This more general hypothesis will be useful when we investigate the properties of the hyperbolic ensembles in \cref{sec:hyperbolic}.
\end{remark}

One can also fit in our framework, the finite-size version of the point process $\xi$ obtained by truncating the correlation kernel, although our assumptions become slightly more technical. For any $N\in\bN_0$, we let $\xi^{(N)}$ be the determinantal process with correlation kernel $K^{(N)}(z,w)=\sum_{k<N-\alpha} \rho_{k,1}(\abs{z}) z^k \rho_{k,1}(\abs{w})\bar{w}^k$ for $z,w\in\bC$, and we define the processes\footnote{By the general theory, the point process $\xi^{(N)}$ has exactly $N$ points. Moreover, according to \eqref{def:Xi}, we have $\Xi_R =\Xi^{(\infty)}_R$.}
\begin{equation} \label{def:XiN}
	\Xi^{(N)}_R = \sum_{k=1}^{N} \bigl(\ind_{\Gamma_k\le R} - \Pr{\Gamma_k\le R} \bigr) \overset{\mathrm{d}}{=} \xi^{(N)}(D_r) - \Ex[\big]{\xi^{(N)}(D_r)},
	\qquad \text{with } r=T^{-1}(R).
\end{equation}

Note that our normalization is such that $ \sum_{k\le N} \Pr{\Gamma_k\le R} <R $ for all $R>0$, this bound being sharp for $R$ in the \emph{bulk} (i.e.~for $R \ll N$).
In the sequel, for $N<\infty$, we choose a sequence $R=R(N) \to\infty$ so that one of the following conditions holds as $N\to\infty$,
\begin{equation} \label{Ncond}
i)\ \Sigma_R\Phi(\tfrac{N-\vartheta R}{\Sigma_R}) \to 0 , \qquad
ii)\ \frac{N-\vartheta R}{\Sigma_R} = a^+ +o(\Sigma_R^{-1}) \text{ with } a^+ \in\bR, \qquad
iii)\ \Sigma_R \bigl(1-\Phi(\tfrac{N-\vartheta R}{\Sigma_R}) \bigr) \to 0.
\end{equation}

Note that in case $i)$, $\tfrac{N-\vartheta R}{\Sigma_R} \to a^+=+\infty$ and this corresponds to the \emph{bulk} in the sense that the statistical behaviour of the random variables $\Xi^{(N)}_R$ and $\Xi_R$ are the same for large $N$.
In case $iii)$, we must have $\tfrac{N-\vartheta R}{\Sigma_R} \to a^+=-\infty$ and it corresponds to the regime where the fluctuations of $\Xi^{(N)}_R$ are asymptotically negligible, so we are not going to emphasize on this case. Finally, $ii)$ is the \emph{edge} regime where interesting finite-size effects occur\footnote{In case $ii)$, for a given $a^+\in\bR$, the condition \eqref{Ncond} is interpreted as an implicit equation for $R(N)$. Then, because $\Sigma$ is increasing, the radius $R(N)$ has a well-defined asymptotics up to order 1 as $N\to\infty$.}.
In the remainder of this section, with $a^+$ as above if $N<\infty$ and $a^+ =+\infty$ if $N=\infty$, we denote
\begin{equation}\label{interval}
	I = \intoo{a^-,a^+}
	\qquad\text{with}\qquad
	a^- =
	\begin{cases}
		0 & \text{if $\vartheta=0$,}\\
		-\infty & \text{if $\vartheta=1$.}
	\end{cases}
\end{equation}

In \cref{sec:ModPhiGeneralProof}, we prove the following general convergence result.
\begin{theorem}\label{thm:ModPhiGeneral}
	Under the \cref{NA} and \eqref{Ncond}, the random variable $\Xi_R^{(N)}$ converges as $R\to\infty$ in the mod-phi sense of \cref{def:ModPhi} with speed $\Sigma_R$ with respect to a cumulant generating function
	\begin{equation} \label{MGF}
		 \Lambda (z) = \int_I \kappa_{\Phi(x)}(z) \dif x
	\end{equation}
	and with limiting function
	\begin{equation} \label{eq:PsiF}
		\psi (z) = \exp \biggl( \int_I \Psi(x)\dot\kappa_{\Phi(x)}(z) \dif x + \frac{\kappa_{\Phi(\tau)}(z)}{2} \biggr).
	\end{equation}
	The functions $\Lambda$ and $\psi$ are analytic in the strip $\bS = \set{z\in\bC \given \abs{\Im{z}} < \pi }$ and $I$ is given by \eqref{interval}.
\end{theorem}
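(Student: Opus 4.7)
By \cref{lem:ModulusPoints} the radii $(\Gamma_k)_{k\ge 1}$ of \eqref{def:Gamma} are independent, so $\Xi^{(N)}_R$ is a centred sum of $N$ independent Bernoullis with parameters $p_k(R):=\Pr{\Gamma_k\le R}$. Its cumulant generating function therefore factors exactly as
\begin{equation*}
 \log\Ex{e^{z\Xi^{(N)}_R}} = \sum_{k=1}^N \kappa_{p_k(R)}(z), \qquad z\in\bS,
\end{equation*}
with $\kappa_p$ as in \eqref{def:kappa}. The whole theorem therefore reduces to the locally uniform expansion
\begin{equation*}
 \sum_{k=1}^N \kappa_{p_k(R)}(z) = \Sigma_R\,\Lambda(z) + \log\psi(z) + o(1) \qquad (R\to\infty),
\end{equation*}
together with \eqref{ass:DecayCGF}; analyticity of $\Lambda,\psi$ on $\bS$ will then follow from uniform integrability of the integrands there.

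The first step is to inject the smooth profile from \cref{NA} into each $p_k(R)$ and Taylor expand $p\mapsto\kappa_p(z)$ around $\Phi(x_k)$, with $x_k:=(k-\vartheta R)/\Sigma_R$. Since $\kappa_p(z)$ vanishes at $p\in\set{0,1}$ and $\dot\kappa_p(z),\ddot\kappa_p(z)$ are bounded uniformly in $p\in\intcc{0,1}$ on compact subsets of $\bS$, this yields
\begin{equation*}
 \kappa_{p_k(R)}(z) = \kappa_{\Phi(x_k)}(z) + \frac{\Psi(x_k)}{\Sigma_R}\dot\kappa_{\Phi(x_k)}(z) + O\bigl(\Sigma_R^{-2}\bigr) + O(\theta_{R,k}).
\end{equation*}
The sum $\sum_k \theta_{R,k}$ is $o(1)$ by hypothesis, while the $O(\Sigma_R^{-2})$ Taylor remainder, summed over the effective window of $k$'s of width $O(\Sigma_R)$, also contributes $o(1)$.

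The second step is to lift the two surviving sums to integrals over $I$ via Euler--Maclaurin, exploiting $x_k-x_{k-1}=\Sigma_R^{-1}$. Setting $f(x):=\kappa_{\Phi(x)}(z)$, the bound $\kappa_p(z)=O(\min\set{p,1-p})$ (for $z$ in a compact subset of $\bS$) together with $\Ex{\abs{Z}}<\infty$ shows that $f$ decays like $\Phi$ at $+\infty$ and like $1-\Phi$ at $-\infty$, hence $f\in L^1(I)$; similarly $\Psi\dot\kappa_{\Phi(\cdot)}(z)\in L^1(I)$ since $\Psi\in W^{1,1}(\bR)$ and $\dot\kappa$ is bounded. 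Euler--Maclaurin then produces
\begin{equation*}
 \sum_k \kappa_{\Phi(x_k)}(z) = \Sigma_R\int_I \kappa_{\Phi(x)}(z)\dif x + \frac{\kappa_{\Phi(\tau)}(z)}{2} + o(1),
\end{equation*}
and analogously $\frac{1}{\Sigma_R}\sum_k \Psi(x_k)\dot\kappa_{\Phi(x_k)}(z) \to \int_I \Psi(x)\dot\kappa_{\Phi(x)}(z)\dif x$. The boundary correction $\kappa_{\Phi(\tau)}/2$ survives only at finite endpoints of $I$ at which $\Phi$ avoids $\set{0,1}$: at $\pm\infty$ it disappears, whereas in the edge regime \eqref{Ncond}$(ii)$ the truncation $k\le N$ produces a non-trivial contribution at $\tau=a^+$.

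The main obstacle is tail control. In the Euclidean case $\vartheta=1$, indices $k$ with $\abs{x_k}$ large yield $p_k(R)$ either near $0$ or near $1$; the moment bound $\Ex{\abs{Z}^m}<\infty$ combined with the size condition $\Sigma_R^{m+1}=o(R^m)$ is exactly what is needed to show that the contribution of such $k$'s to both the Taylor expansion and the Euler--Maclaurin remainder is $o(1)$ uniformly on compacts of $\bS$. Verification of \eqref{ass:DecayCGF} reduces to the pointwise estimate $\abs{1+p(e^{x+\i y}-1)}\le \abs{1+p(e^x-1)}$ for $p\in\intoo{0,1}$ and $y\in\intoo{-\pi,\pi}$, with a strict gap of order $\delta^2$ uniform on any subinterval of $I$ where $\Phi$ is bounded away from $\set{0,1}$; integration of this gap over such a subinterval yields the required bound on $\exp\bigl(\Lambda(x+\i y)-\Lambda(x)\bigr)$. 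A last subtlety under \cref{rk:BV} is the monotone-but-non-$W^{1,1}$ piece of $\Psi$, for which the Riemann sum estimate must be replaced by a bounded-variation argument on the interval $J$.
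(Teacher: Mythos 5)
Your plan is correct and follows essentially the same route as the paper's proof: exact factorization of the cumulant generating function over the independent Bernoulli indicators, a Taylor expansion of $p\mapsto\kappa_p(z)$ around $\Phi(x_k)$ controlled by the uniform bounds on $\dot\kappa_p,\ddot\kappa_p$ (\cref{lem:BoundCGFBernoulli}, \cref{lem:TaylorExpansion}), Euler--Maclaurin to pass to the integrals with the boundary term $\kappa_{\Phi(a^+)}(z)/2$ (\cref{lem:ModPhiEM1}, \cref{lem:ModPhiEM2}), tail control via the moment condition and $\Sigma_R^{m+1}=o(R^m)$, and a direct verification of \eqref{ass:DecayCGF}. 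The one sub-step you gloss over is the oscillatory Euler--Maclaurin remainder $\int(\fracpart{u}-\tfrac12)f'(u)\dif u$ for the leading sum: its naive bound is $\norm{f'}_{L^1}=O(1)$, not $o(1)$, which would contaminate the order-one term $\log\psi$; the paper disposes of it by approximating $\Phi'$ in $L^1$ by a smooth compactly supported function and integrating by parts once more, and some such Riemann--Lebesgue-type argument is needed in your write-up as well.
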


As explained in \cref{sec:Background}, this allows us to deduce precise asymptotic results like Berry-Esseen estimates as well as precise moderate and large deviations for the (normalized) counting statistics $\Xi_R/\sqrt{\Sigma_R \Lambda''(0)}$ of our determinantal process (see \cref{thm:ModPhiConclusions} and \cref{cor:mod}).
Note that under \cref{NA}, by Fubini's theorem, we have
\begin{equation}\label{eq:BoundExpectationZ}
	\int_\bR \Phi(x)\bigl(1-\Phi(x)\bigr) \dif x
	= \int_\bR \biggl( \int_x^\infty \Phi'(v) \dif v \int_{-\infty}^x \Phi'(u) \dif u \biggr) \dif x
	= \int_{u\le v} \Phi'(u) \Phi'(v) (v-u) \dif u \dif v
	\le 2\Ex{\abs{Z}}
\end{equation}
and $\Ex{\abs{Z}} < \infty$.
This shows that the function $\Lambda$ satisfies
\begin{equation} \label{var}
\Lambda'' (0) = \int_I \Var{\ind_{Z\le x}}\dif x = \int_I \Phi (x)\bigl(1-\Phi (x)\bigr) \dif x <\infty.
\end{equation}
A similar computation yields that $\Lambda$ is convex on $\bR$
and that as $R\to\infty$
\begin{equation}\label{eq:VarXi}
	\Var[\big]{\Xi_R^{(N)}}= \Sigma_R \Lambda''(0) + O(1).
\end{equation}
Let us also emphasize that $a^+=\infty$ if $N=\infty$, while $\Lambda''(0)$ depends on our choice of sequence $R(N)$ through \eqref{Ncond} and \eqref{interval} when $N<\infty$.

Our setup also allows us to study $( \xi(D_{r}) )_{r\in\bR_+}$ as a stochastic process which is where we need to distinguish between the case $\vartheta =0,1$.
For instance, in the \emph{Euclidean setting}, we observe non-trivial correlations only in a \emph{microscopic regime}.

\begin{theorem}\label{thm:FCLTPlanar}
	Assume \cref{NA} with $\vartheta=1$, that $\sqrt{\frac{\Sigma_{tR}}{\Sigma_R}} \to g_t$ pointwise as $R\to\infty$ where $g\in C(\bR_+)$ and that if $N<\infty$, $R(N)$ satisfies the conditions \eqref{Ncond} and \eqref{interval}. Then, it holds as $R\to\infty$, 
	\begin{equation*}
		\biggl( \frac{\Xi_{tR}^{(N)}}{ \sqrt{\Sigma_R}} \biggr)_{t\in\bR_+}
		\xrightarrow[]{\text{fdd}} \bigl(\sigma_t W_t\bigr)_{t\in \bR_+},
	\end{equation*}
	where $(W_t)_{t\in \bR_+}$ is a Gaussian white noise and $\sigma_t^2 = g_t^2 \int_{-\infty}^{a^+(t)} \Phi (x)(1-\Phi (x)) \dif x$ with $a^+(t)= \infty$ for $N=\infty$ and $a^+(t) = \lim_{N\to\infty} \frac{N-tR}{\Sigma_{tR}}$ for $N<\infty$ for all $t>0$.
	Moreover, it also holds as $R\to \infty$, 
	\begin{equation*}
		\biggl( \frac{\Xi_{R+t\Sigma_R}^{(N)}}{ \sqrt{\Sigma_R}} \biggr)_{t\in\bR}\xrightarrow[]{\cS(\bR)} (G_t)_{t\in \bR},
	\end{equation*}
	where $(G_t)_{t\in \bR}$ is a centred Gaussian process with kernel
	\begin{equation*}
		\Cov{G_s}{G_t} = \int_{I} \Phi(x-s)\bigl(1-\Phi(x-t)\bigr) \dif x , \qquad \text{for } s\le t.
	\end{equation*}
\end{theorem}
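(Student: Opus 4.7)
The strategy combines the mod-phi convergence from \cref{thm:ModPhiGeneral} with direct joint-cumulant computations, exploiting independence of the $\Gamma_k$.

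\emph{First claim (white noise).} Since the $(\Gamma_k)$ are independent, the joint Laplace transform factorizes over $k$:
\begin{equation*}
	\Ex[\big]{\exp\bigl(\textstyle\sum_{i=1}^m z_i \Xi_{t_i R}^{(N)}/\sqrt{\Sigma_R}\bigr)} = \prod_{k=1}^N \Ex[\big]{\exp\bigl(\textstyle\sum_{i=1}^m z_i (\ind_{\Gamma_k \le t_i R} - p_{k,i})/\sqrt{\Sigma_R}\bigr)},
\end{equation*}
with $p_{k,i} = \Pr{\Gamma_k \le t_i R}$. For macroscopically separated $0 < t_1 < \dotsb < t_m$ and $\Sigma_{tR} = o(R)$ from \cref{NA}, each index $k$ lies within the transition window $\abs{k - t_i R} \lesssim \Sigma_{t_i R}$ of at most one $t_i R$; for the remaining $j \ne i$, $p_{k,j}$ equals $0$ or $1$ up to error $O(\theta_{t_j R, k})$, so the centred indicators $\ind_{\Gamma_k \le t_j R} - p_{k,j}$ vanish at the relevant scale. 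Splitting $\sum_k \log \Ex[\cdots]$ according to the window each $k$ belongs to, the joint log-MGF decouples asymptotically into $\sum_i \log \Ex[\big]{e^{z_i \Xi_{t_i R}^{(N)}/\sqrt{\Sigma_R}}}$; applying \cref{thm:ModPhiGeneral} at each scale $t_i R$ with $\Sigma_{t_i R}/\Sigma_R \to g_{t_i}^2$ identifies the limit as independent centred Gaussians with variances $\sigma_{t_i}^2$.

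\emph{Second claim, finite-dimensional distributions.} Independence of $(\Gamma_k)$ and the identity $\ind_{\Gamma_k \le a}(1 - \ind_{\Gamma_k \le b}) = \ind_{a < \Gamma_k \le b}$ for $a \le b$ give
\begin{equation*}
	\Cov[\big]{\Xi_{R+s\Sigma_R}^{(N)}}{\Xi_{R+t\Sigma_R}^{(N)}} = \sum_{k=1}^N \Pr{\Gamma_k \le R + s\Sigma_R}\bigl(1 - \Pr{\Gamma_k \le R + t\Sigma_R}\bigr), \qquad s \le t.
\end{equation*}
Substituting \cref{NA}, changing variables $x = (k-R)/\Sigma_R$, and using $\Sigma_{R+\tau\Sigma_R}/\Sigma_R \to 1$ (a consequence of the continuity of $\Sigma$ and $\Sigma_R = o(R)$), the Riemann sum of mesh $1/\Sigma_R$ converges, after division by $\Sigma_R$, to $\int_I \Phi(x-s)(1-\Phi(x-t))\dif x$; the finite-size truncation $k \le N$ corresponds after the change of variables to the interval $I$ of \eqref{interval}. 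Gaussianity of the finite-dimensional marginals then follows by applying \cref{thm:ModPhiGeneral} to any linear combination $\sum_i \alpha_i \Xi_{R + t_i \Sigma_R}^{(N)}/\sqrt{\Sigma_R}$, which is itself a sum of independent bounded random variables indexed by $k$ and whose cumulant generating function has the mod-phi structure of \eqref{MGF}.

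\emph{Second claim, tightness.} The main technical obstacle is tightness in the Skorokhod topology, since our processes are càdlàg piecewise constant while the limit $G$ is continuous. We apply Billingsley's four-point moment criterion: setting $Y_R(\tau) = \Sigma_R^{-1/2} \Xi_{R + \tau \Sigma_R}^{(N)}$, it suffices to show
\begin{equation*}
	\Ex[\big]{(Y_R(t) - Y_R(s))^2 (Y_R(u) - Y_R(t))^2} \le C(u-s)^2, \qquad s \le t \le u,
\end{equation*}
for $s, t, u$ in any fixed compact interval. Setting $B_k = \ind_{R+s\Sigma_R < \Gamma_k \le R+t\Sigma_R}$ and $C_k = \ind_{R+t\Sigma_R < \Gamma_k \le R+u\Sigma_R}$, we have $B_k C_k \equiv 0$ and the Bernoullis are independent across $k$. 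The mixed fourth moment expands over pair structures of the index quadruple; using the change of variables $x = (k-R)/\Sigma_R$ together with $\int(\Phi(x-t) - \Phi(x-s))\dif x = t-s$ (and likewise for $C_k$), the dominant term is a product of the two increment variances bounded by $(t-s)(u-t)$, with corrections of order $\Sigma_R^{-1}(u-s)$ coming from the diagonal pairings. An AM-GM estimate upgrades this to the required $(u-s)^2$ bound. Combined with the finite-dimensional convergence and continuity of $G$, this yields Skorokhod convergence (which here coincides with uniform convergence on compacts). The cases $N < \infty$ are handled identically, with condition \eqref{Ncond}(iii) excluded by hypothesis.
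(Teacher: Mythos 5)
Your overall architecture (covariance asymptotics via Riemann sums, Gaussianity of marginals, a four-point moment bound for tightness) matches the paper's, but the tightness step contains a genuine gap. You correctly identify that the mixed fourth moment decomposes into a product of increment variances plus a diagonal (joint-cumulant) correction, and you correctly estimate that correction as $O\bigl(\Sigma_R^{-1}(u-s)\bigr)$. But $\Sigma_R^{-1}(u-s)\le C(u-s)^2$ only when $u-s\gtrsim \Sigma_R^{-1}$; for increments smaller than the mesh $\Sigma_R^{-1}$ the correction \emph{dominates} $(u-s)^2$, and no AM--GM manipulation can repair this, since AM--GM only controls the product-of-variances term $(t-s)(u-t)\le\frac{1}{4}(u-s)^2$. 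The paper closes exactly this hole by a case distinction: when $(u-s)^{1/2+\epsilon}<\frac{1}{2C\Sigma_R}$, all the increment probabilities $\lambda_{k,u}-\lambda_{k,s}$ are at most $\frac{1}{2}$, and one checks from the explicit formula
\begin{equation*}
-6b^2c^2+2b^2c+2bc^2-bc \le -bc(1-2b)(1-2c)\le 0, \qquad b=\lambda_{k,t}-\lambda_{k,s},\ c=\lambda_{k,u}-\lambda_{k,t},
\end{equation*}
that the fourth joint cumulant is \emph{nonpositive}, so it can simply be discarded; in the complementary regime the crude bound $\Sigma_R^{-1}\le 2C(u-s)^{1/2+\epsilon}$ suffices. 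Without this sign argument (or some substitute, e.g.\ restricting the criterion to increments above the mesh and handling small increments separately), your moment bound does not hold uniformly in $s\le t\le u$ and tightness is not established.

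Two smaller points. First, for the Gaussianity of the finite-dimensional marginals you invoke \cref{thm:ModPhiGeneral} for the linear combination $\sum_i\alpha_i\Xi^{(N)}_{R+t_i\Sigma_R}/\sqrt{\Sigma_R}$; that theorem applies to a single counting statistic (a sum of centred Bernoullis), not to linear combinations of indicators at several radii, so it cannot be cited as stated. The conclusion is nonetheless correct and follows from a Lindeberg-type argument: the paper's \cref{lem:FCLTFDD} bounds the $q$-th cumulant of the linear combination by $C_q\Sigma_R^{-(q-2)/2}$ times the variance, using only boundedness and independence over $k$. Second, your white-noise argument via decoupling of the joint Laplace transform into disjoint transition windows is a legitimately different (and workable) route from the paper's, which simply shows the limiting covariance is $\sigma_t^2\ind_{s=t}$ (the overlap $\sum_k\Pr{\Gamma_k\le sR}\Pr{\Gamma_k>tR}$ being $o(\Sigma_R)$ for $s<t$) and then kills all higher cumulants; the paper's version avoids having to track the cross-terms between nested indicators inside each window.
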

Let us emphasize that the Gaussian process $(G_t)_{t\in \bR}$ is stationary if and only if $I=\bR$, which corresponds to looking at microscopic fluctuations in the bulk.

On the other hand, in the \emph{hyperbolic setting}, as the area of the disk $D_r$ is comparable to the size of its boundary, we obtain non-trivial \emph{macroscopic} fluctuations as the following functional limit theorem shows.
\begin{theorem}\label{thm:FCLTHyperbolic}
	Assume \cref{NA} with $\vartheta=0$ and that $R(N)$ satisfies the conditions \eqref{Ncond} and \eqref{interval} if $N<\infty$. Then, it holds as $R\to\infty$, 
		\begin{equation*}
		\biggl(\frac{\Xi_{tR}^{(N)}}{\sqrt{\Sigma_R}}\biggr)_{t\in\bR_+}
		 \xrightarrow{\cS(\bR_+)} (G_t)_{t\in \bR_+},
	\end{equation*}
	where $(G_t)_{t\in \bR_+}$ is a centred Gaussian process with kernel
	\begin{equation*}
		\Cov{G_s}{G_t} = \int_{I} \Phi\Bigl(\frac{x}{s}\Bigr)\Bigl(1-\Phi\Bigl(\frac{x}{t}\Bigr)\Bigr) \dif x , \qquad \text{for } 0<s\le t.
	\end{equation*}
\end{theorem}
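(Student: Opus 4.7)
My plan is to prove finite-dimensional distribution convergence first, and then tightness in the Skorokhod topology $\cS(\bR_+)$; combining both yields the stated functional limit theorem. By \cref{lem:ModulusPoints}, one starts from the representation
\begin{equation*}
\Xi^{(N)}_R = \sum_{k=1}^N \bigl(\ind_{\Gamma_k \le R} - \Pr[\big]{\Gamma_k \le R}\bigr)
\end{equation*}
as a centred sum of independent Bernoullis. Crucially, for $s \le t$ the pointwise inequality $\ind_{\Gamma_k \le sR} \le \ind_{\Gamma_k \le tR}$ yields the identity $\Cov{\ind_{\Gamma_k \le sR}}{\ind_{\Gamma_k \le tR}} = \Pr[\big]{\Gamma_k \le sR}\bigl(1 - \Pr[\big]{\Gamma_k \le tR}\bigr)$, which is the basic building block for the covariance of the limiting Gaussian process $G$.

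For the finite-dimensional convergence, fix $0 < t_1 < \dotsb < t_m$ and $\theta \in \bR^m$, and expand the joint log-moment generating function of $(\Xi^{(N)}_{t_j R}/\sqrt{\Sigma_R})_{j=1}^m$ using independence across $k$. This becomes a sum over $k$ of log-MGFs of random variables bounded by $\norm{\theta}_1/\sqrt{\Sigma_R}$, so a Taylor expansion in the spirit of the proof of \cref{thm:ModPhiGeneral} produces the leading quadratic contribution
\begin{equation*}
\frac{1}{\Sigma_R} \sum_{j \le j'} \theta_j \theta_{j'} \sum_k \Pr[\big]{\Gamma_k \le t_j R}\bigl(1 - \Pr[\big]{\Gamma_k \le t_{j'} R}\bigr) + O\bigl(\Sigma_R^{-1/2}\bigr).
\end{equation*}
Substituting the asymptotic expansion from \cref{NA} with $\vartheta = 0$, namely $\Pr[\big]{\Gamma_k \le tR} = \Phi(k/\Sigma_{tR}) + \Sigma_{tR}^{-1}\Psi(k/\Sigma_{tR}) + O(\theta_{tR,k})$, and performing the change of summation variable $x = k/\Sigma_R$ (for which the hyperbolic scaling $\Sigma_{tR}/\Sigma_R \to t$ inherent to the setting must be invoked) converts the inner sum into a Riemann sum whose limit is $\int_I \Phi(x/t_j)\bigl(1-\Phi(x/t_{j'})\bigr) \dif x$. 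This identifies the claimed Gaussian covariance.

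For tightness in $\cS(\bR_+)$, the limit covariance is jointly continuous in $(s,t)$, so the limit process $G$ has continuous sample paths; it therefore suffices to verify Billingsley's moment criterion
\begin{equation*}
\Ex[\big]{(\Xi^{(N)}_{tR} - \Xi^{(N)}_{sR})^2 (\Xi^{(N)}_{uR} - \Xi^{(N)}_{tR})^2} \le C \Sigma_R^2 (u-s)^2, \qquad 0 \le s \le t \le u,
\end{equation*}
which after rescaling by $\Sigma_R^{-1}$ becomes Billingsley's criterion with exponent $\alpha = 2 > 1$. By Cauchy--Schwarz the estimate reduces to a fourth-moment bound on each increment, and since each increment is a sum of independent centred variables bounded by $1$, the elementary inequality $\Ex{Y^4} \le 3\Var{Y}^2 + \Var{Y}$ applies. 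The variance estimate $\Var[\big]{\Xi^{(N)}_{tR} - \Xi^{(N)}_{sR}} \le \sum_k \Pr[\big]{\Gamma_k \in (sR, tR]} \le C\Sigma_R(t-s)$ then follows from the same Riemann-sum reduction applied to the increment.

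The main technical obstacle will be making the Riemann-sum reductions fully quantitative and uniform in $(s,t)$ on compact subsets of $\bR_+$ bounded away from the edge $a^+$, with explicit control of the error term $\theta_{R,k}$ and the sub-leading $\Sigma_R^{-1}\Psi$ correction coming from \cref{NA}. A secondary subtlety is the treatment of the edge regime $N < \infty$ with $a^+ < \infty$: one must check that the finite-size truncation remains compatible with the monotone structure of the indicators and does not produce spurious boundary contributions inconsistent with the continuity of the limit covariance at the endpoint.
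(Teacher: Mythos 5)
Your finite-dimensional part is correct and is essentially the paper's own route: independence across $k$, the identity $\Cov{\ind_{\Gamma_k\le sR}}{\ind_{\Gamma_k\le tR}}=\Pr{\Gamma_k\le sR}\bigl(1-\Pr{\Gamma_k\le tR}\bigr)$ for nested events, insertion of the expansion from \cref{NA}, and a Riemann-sum/change-of-variables argument using $\Sigma_{tR}/\Sigma_R\to t$. The paper does exactly this via \cref{lem:FCLTFDD} (Cram\'er--Wold plus vanishing of higher cumulants, which is the cumulant version of your log-MGF Taylor expansion) and \cref{lem:CovarianceAsymptotic}.

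The tightness step, however, has a genuine gap. You reduce $\Ex{A^2B^2}$ with $A=\Xi_{tR}^{(N)}-\Xi_{sR}^{(N)}$, $B=\Xi_{uR}^{(N)}-\Xi_{tR}^{(N)}$ to individual fourth moments via Cauchy--Schwarz and then use $\Ex{Y^4}\le 3\Var{Y}^2+\Var{Y}$. The linear term $\Var{Y}$ is not a harmless remainder here: writing $A=\sum_k X_k$ with $X_k$ the centred indicator of $\set{sR<\Gamma_k\le tR}$ and $p_k=\Pr{sR<\Gamma_k\le tR}$, one has $\Ex{A^4}\ge\sum_k\Ex{X_k^4}$, which is of order $\sum_k p_k=\RT(t)-\RT(s)\sim\Sigma_R(t-s)$, i.e.\ genuinely linear in the increment. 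Cauchy--Schwarz therefore yields, after normalizing by $\Sigma_R^{-2}$, a contribution of order $\Sigma_R^{-1}\sqrt{(t-s)(u-t)}\le\Sigma_R^{-1}(u-s)$, and for $u-s<\Sigma_R^{-1}$ this exceeds $C(u-s)^{\beta}$ for every $\beta>1$ by an unbounded factor; the criterion requires uniformity over all $s\le t\le u$, so the bound fails exactly at small scales. This is the reason the criterion is stated with the product $\Ex{A^2B^2}$ rather than with $\Ex{A^4}$: one must exploit that each $\Gamma_k$ can lie in at most one of the two disjoint windows $(sR,tR]$ and $(tR,uR]$, so that the diagonal contribution $\Ex{X_k^2Y_k^2}$ to $\Ex{A^2B^2}$ is of order $p_kq_k$ (quadratic in the increments) instead of linear. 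The paper's \cref{thm:FCLTSkorokhod} implements this by expanding $\Ex{A^2B^2}=\Var{A}\Var{B}+2\Cov{A}{B}^2+\varkappa(A,A,B,B)$ and showing that the joint fourth cumulant is non-positive when the increments are small (smaller than a power of $\Sigma_R^{-1}$) and is dominated by the variance bounds otherwise. Your tightness argument needs to be replaced by such a direct computation of the mixed moment; the surrounding ingredients (the variance bound $\Var{A}\le\RT(t)-\RT(s)\le C\Sigma_R(t-s)$, which is condition \eqref{eq:MeanCondition} with $\epsilon=\tfrac12$, and your remark about the edge regime $N<\infty$) are fine.
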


In this case, we see that the Gaussian process $(G_{e^{-t}})_{t\in \bR}$ is stationary if and only if $I=\bR_+$.


\section{Applications}\label{sec:Applications}

In this section we present different determinantal processes which fall in the framework of \cref{sec:MainResults}.
In \cref{sec:FiniteGinibre}, we discuss the finite Ginibre ensembles and present the \emph{edge effect} which occurs because of the \emph{finite} number of particles.
Using our method, we can also consider finite versions of the Ginibre-type ensembles and hyperbolic ensembles discussed below and observe similar boundary effects.
In \cref{sec:Ginibre}, we introduce the Ginibre-type ensembles which are generalizations of the (infinite) Ginibre ensemble coming from the physical description of electrons. Our goal is to compare the deviations for counting statistics, as well as the entanglement entropy, for these different ensembles. These sections expand the results presented in \cref{sec:Introduction}.
In \cref{sec:hyperbolic}, we apply our results to the zeros of the hyperbolic Gaussian analytic function and other hyperbolic ensembles.
Our goal is to put in perspective what happens in different geometric settings by comparing
\emph{hyperbolic} and \emph{Euclidean} models. Finally, our findings are summarized in \cref{sec:sum}.

\subsection{The Ginibre ensemble}\label{sec:FiniteGinibre}

The Ginibre ensemble introduced in \cite{G65StatisticalEnsembles} is the prototypical example of a non-Hermitian random matrix. It consists of an $N\times N$ matrix filled with i.i.d.\ standard complex Gaussian entries and its eigenvalue process, denoted by $\xi ^{(N)}$, is a determinantal process with correlation kernel
\begin{equation} \label{def:KN}
	K_N(z,w) =\sum_{k=0}^{N-1} \frac{(z\bar{w})^k}{k!}
\end{equation}
with respect to the Gaussian measure $\frac{\dif \mu }{\dif m}(z) = \frac{1}{\pi }e^{-\abs{z}^2}$ on $\bC$.
It is well-known that the point process $\xi ^{(N)}$ is invariant by rotation and that it distributes uniformly on the disk $D_{\sqrt{N}}$ for large $N$, this is known as the circular law, see e.g.\ \cite{C15BoltzmannRandomMatrices}.
It is plain that $K_N(z,w) \to e^{z\bar{w}}$ as $N\to\infty$, so that the (infinite) Ginibre ensemble as presented in \cref{sec:Introduction} is the local scaling limit of the Ginibre eigenvalue process.
In fact, \eqref{def:KN} is a finite rank approximation of the Ginibre kernel $K_0$ and as such, by applying \cref{lem:ModulusPoints}, we find that according to \eqref{def:Gamma} with $T(r)=r^2$, $\Gamma_k$ are gamma-distributed random variables with shape $k$ and rate $1$ for $k\in \set{1,\dotsc, N}$ and ``non-existent" otherwise.
In this section, we study the counting statistics $(\xi ^{(N)}(D_{\sqrt{\gamma N}}))_{\gamma\in\intcc{0,1}}$ as $N\to\infty$ and observe that there is an \emph{edge effect} for $\gamma =1$.
We let for $\gamma>0$,
\begin{equation*} 
	\Xi_N(\gamma)
	= \xi ^{(N)}\bigl(D_{\sqrt{\gamma N}}\bigr) - \Ex[\big]{ \xi ^{(N)}\bigl(D_{\sqrt{\gamma N}}\bigr) } \overset{\mathrm{d}}{=} \sum_{k=1}^{N} \bigl( \ind_{\Gamma_{k}\le \gamma N} - \Pr{\Gamma_{k}\le \gamma N} \bigr).
\end{equation*}

We have $ \Ex{ \xi ^{(N)}(D_{\sqrt{\gamma N}}) } = \gamma N +o_C(1)$ uniformly for all $\gamma < 1- C\sqrt{\frac{\log N}{N}} $ if $C>0$ is sufficiently large. However, at the edge, after rescaling $\gamma =1+t/\sqrt{N}$, it holds uniformly for all $|t| \le C\sqrt{\frac{\log N}{N}} $,
$ \Ex{ \xi ^{(N)}(D_{\sqrt{\gamma N}}) } = N\ind_{t<0} + \sqrt{N}f(t) +g(t) + o_C(1)$ for two smooth functions $f$ and $g$. Hence, we cannot use the crude approximation
$\Ex{ \xi ^{(N)}(D_{\sqrt{\gamma N}}) } = \min\set{\gamma,1}N +O(\sqrt{N})$ to study the fluctuations of the process $(\Xi_N(\gamma) )_{\gamma>0}$.

\begin{proposition}
\label{thm:FiniteGinibreModPhi}
	In the bulk, let $I_\infty=\bR$ for $\gamma \in \intoo{0,1}$.
	At the edge, let $I_t = \intoc{-\infty, t}$ for $\gamma = 1- \frac{t}{\sqrt{N}}+\frac{t^2}{2N}$ and $t\in \bR $.
	In both cases, the sequence $(\Xi_N(\gamma))_{N\in\bN}$ converges in the mod-phi sense at speed $\sqrt{\gamma N}$ with cumulant generating function of the form \eqref{MGF} and with limiting function of the form \eqref{eq:PsiF}, where $I=I_t$, $\Phi =\Phi_0$ being the tail distribution function of $\Normaldist_{0,1}$ and $\Psi = \Psi_0 = \frac{2-5x^2}{6\sqrt{2\pi}}e^{-\frac{x^2}{2}}$.
\end{proposition}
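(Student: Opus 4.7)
The strategy is to apply \cref{thm:ModPhiGeneral} to both bulk and edge configurations via the Kostlan-type representation of \cref{lem:ModulusPoints}. With $T(r)=r^2$, the unfolded radii $(\Gamma_k)_{k=1}^N$ from \eqref{def:Gamma} are independent with $\Gamma_k\sim\mathrm{Gamma}(k,1)$, and $\Xi_N(\gamma)$ coincides with the centred counting statistic $\Xi_R^{(N)}$ of \eqref{def:XiN} at $R=\gamma N$. Since $\mathrm{Gamma}(k,1)$ has variance $k$, in the relevant scaling $k\sim R$ the natural choice is $\vartheta=1$ and $\Sigma_R=\sqrt{R}$, yielding the announced mod-phi speed $\sqrt{\gamma N}$.

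The main technical task is to verify \cref{NA} by a uniform Edgeworth expansion of $\Pr{\Gamma_k\le R}$. Writing $\Gamma_k$ as a sum of $k$ i.i.d.\ $\mathrm{Exp}(1)$ variables (standardized third cumulant $2/\sqrt{k}$), the classical Edgeworth expansion gives, with $\varphi(x)=(2\pi)^{-1/2}e^{-x^2/2}$,
\begin{equation*}
\Pr[\big]{\Gamma_k \le R} = \Phi_0\Bigl(\tfrac{k-R}{\sqrt{k}}\Bigr) - \frac{1}{3\sqrt{k}}\biggl(\Bigl(\tfrac{R-k}{\sqrt{k}}\Bigr)^{\!2}-1\biggr)\varphi\Bigl(\tfrac{R-k}{\sqrt{k}}\Bigr) + O(1/k),
\end{equation*}
uniformly for $|k-R|\le\sqrt{R}\log R$. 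Substituting $z=(k-R)/\sqrt{R}$ and Taylor-expanding $(R-k)/\sqrt{k}=-z+z^2/(2\sqrt{R})+O(z^3/R)$ then converts this into
\begin{equation*}
\Pr[\big]{\Gamma_k \le R} = \Phi_0(z) + \frac{\Psi_0(z)}{\sqrt{R}} + O(\theta_{R,k}),
\end{equation*}
where $\Psi_0$ has the announced shape ($\varphi$ times a quadratic polynomial). Outside the moderate window, Chernoff-type tail bounds for $\mathrm{Gamma}(k,1)$ control the remainder by a super-polynomially small quantity, so $\sum_{k=1}^N\theta_{R,k}\to 0$. The remaining hypotheses in \cref{NA} are met because $\Phi_0$ is the tail of $\Normaldist_{0,1}$ (finite moments of all orders) and $\Sigma_R=\sqrt{R}=o(R)$ satisfies $\Sigma_R^{m+1}=o(R^m)$ for any $m>1$.

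It remains to identify which alternative of \eqref{Ncond} applies. In the bulk, $\gamma\in(0,1)$ is fixed, so $(N-R)/\sqrt{R}=(1-\gamma)\sqrt{N/\gamma}\to+\infty$ and $\sqrt{R}\,\Phi_0\bigl((1-\gamma)\sqrt{N/\gamma}\bigr)\to 0$ by Gaussian decay: this is case $i)$ of \eqref{Ncond} with $a^+=+\infty$, whence $I_\infty=\bR$. At the edge, plugging $\gamma=1-t/\sqrt{N}+t^2/(2N)$ and expanding $\sqrt{R}=\sqrt{\gamma N}=\sqrt{N}-t/2+O(N^{-1/2})$ gives $(N-R)/\sqrt{R}=t+O(1/N)=t+o(\Sigma_R^{-1})$, which is case $ii)$ with $a^+=t$, so $I_t=\intoc{-\infty,t}$. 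In both regimes \cref{thm:ModPhiGeneral} then delivers mod-phi convergence at speed $\sqrt{\gamma N}$ with cumulant generating function \eqref{MGF} and limiting function \eqref{eq:PsiF} built from $\Phi=\Phi_0$ and $\Psi=\Psi_0$ on $I=I_t$.

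The principal obstacle is enforcing the summability condition $\sum_k\theta_{R,k}\to 0$, since the naïve $O(1/k)$ Edgeworth remainder is not summable over $k\in\set{1,\dotsc,N}$. One must combine a refined Edgeworth estimate keeping Gaussian decay in $z$ on the moderate window $|k-R|\le\sqrt{R}\log R$ with classical Chernoff-type bounds for $\mathrm{Gamma}(k,1)$ outside it. An additional subtlety at the edge is that the integrals in \eqref{MGF}--\eqref{eq:PsiF} are truncated to $I_t=\intoc{-\infty,t}$, so one must track precisely where the Edgeworth expansion is evaluated relative to the cutoff $k=N$; this is exactly what forces the tight $o(\Sigma_R^{-1})$ accuracy required in case $ii)$ of \eqref{Ncond}.
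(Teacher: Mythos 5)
Your overall approach matches the paper's: represent $\Xi_N(\gamma)$ via Kostlan's theorem as a sum of independent centred Bernoulli's with success probabilities $\Pr{\Gamma_k\le\gamma N}$ for $\Gamma_k\sim\Gammadist(k,1)$, verify \cref{NA} by an Edgeworth expansion on a moderate window complemented by Chernoff-type tail bounds, and then invoke \cref{thm:ModPhiGeneral}; the identification of case (i) (bulk) and case (ii) (edge, with $a^+=t$) of \eqref{Ncond} is also done correctly. Two points, however, deserve attention. First, you state that ``$\Psi_0$ has the announced shape'' but never actually compute it; this is a genuine step, since the Edgeworth expansion naturally lands at $\Phi_0\bigl(\tfrac{k-R}{\sqrt{k}}\bigr)+\tfrac{1}{3\sqrt{k}}\Phi_0'''\bigl(\tfrac{k-R}{\sqrt{k}}\bigr)$ in the variable $(k-R)/\sqrt{k}$, and passing to the variable $x=(k-R)/\sqrt{R}$ required by \cref{NA} (with $\Sigma_R=\sqrt{R}$) produces, after the Taylor expansion you mention but do not carry out, an additional correction proportional to $x^2\Phi_0'(x)$; the paper encapsulates exactly this bookkeeping in \cref{lem:A}, which produces $\Psi = \pm\tfrac{x^2}{2}\Phi'+\Upsilon$, and the quadratic polynomial in $\Psi_0$ cannot be read off from the Edgeworth term $\Upsilon=\tfrac13\Phi_0'''$ alone. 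Second, the worry in your last paragraph is unfounded: the naïve $O(1/k)$ Edgeworth remainder \emph{is} summable once restricted to the moderate window $\abs{k-R}\le R^{1/2+\epsilon}$, since $\sum_{\abs{k-R}\le R^{1/2+\epsilon}}O(1/k)=O(R^{\epsilon-1/2})\to0$; no refined estimate with Gaussian decay in the remainder is required, and the paper indeed proceeds with the crude $O(1/k)$ bound combined with \cref{lem:GinibreBounds} outside the window.
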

The proof of \cref{thm:FiniteGinibreModPhi} is given in \cref{sec:GinibreProof} and it relies on the fact that $\Gamma_k$ are infinitely divisible together with an Edgeworth expansion to obtain the approximation \eqref{cond}.
One could also use the steepest descent method to obtain these asymptotics and it naturally explains why the error function $\Phi_0$ and the correction $\Psi_0$ arise.
By \cref{cor:mod}, this mod-phi convergence implies Berry-Esseen estimates and precise moderate to large deviations for the normalized counting statistics $(\gamma N)^{-\frac14}\Xi_N(\gamma)$ including at the edge of the circular law.

We also obtain functional central limit theorems as in \cref{thm:FCLTPlanar}.

\begin{proposition}\label{thm:FiniteGinibreFCLTFDD}
	It holds as $N\to\infty$ that
	\begin{equation*}
		\bigl(N^{-\frac14}\Xi_N(t) \bigr)_{t\in \bR_+} \xrightarrow{\text{fdd}} \bigl(\sigma_t N_t\bigr)_{t\in\bR_+},
	\end{equation*}
	where $(N_t)_{t\in \bR_+}$ is a Gaussian white noise and $\sigma_t^2=\sqrt{\frac{t}{\pi}}$ if $t<1$, $\sigma_1^2=\frac{1}{2\sqrt{\pi}}$ and $\sigma_t^2=0$ if $t>1$.
\end{proposition}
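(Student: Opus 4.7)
My plan is to use the Cramér-Wold device to reduce the finite-dimensional convergence to a one-dimensional CLT for arbitrary linear combinations. Fix $0\le t_1<t_2<\dotsb<t_m$ and scalars $\xi_1,\dotsc,\xi_m\in\bR$; writing $\sigma^2:=\sum_i\xi_i^2\sigma_{t_i}^2$, it suffices to show that
\begin{equation*}
Y_N:=N^{-1/4}\sum_{i=1}^m \xi_i\,\Xi_N(t_i) \xrightarrow[N\to\infty]{\mathrm{d}} \sigma\,\Normaldist_{0,1}.
\end{equation*}
The key structural observation is that, setting $B_k^{(i)}:=\ind_{\Gamma_k\le t_i N}-\Pr{\Gamma_k\le t_iN}$, one has $\sum_i\xi_i\,\Xi_N(t_i)=\sum_{k=1}^N Y_k^{(N)}$ with $Y_k^{(N)}:=\sum_{i=1}^m\xi_i B_k^{(i)}$. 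Since the $\Gamma_k$ are independent, the $Y_k^{(N)}$ form a triangular row of independent, uniformly bounded summands, to which I plan to apply Lyapunov's criterion.

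The first step is to pin down the asymptotic covariance structure. The marginal variances come directly from \cref{thm:FiniteGinibreModPhi} combined with \eqref{eq:VarXi}: for $t<1$ the bulk interval $I=\bR$ gives, via \eqref{eq:BoundExpectationZ} and $\Ex{|\Normaldist_{0,1}-\Normaldist_{0,1}'|}=2/\sqrt{\pi}$, $\Lambda''(0)=\int_\bR \Phi_0(x)(1-\Phi_0(x))\,\dif x=1/\sqrt{\pi}$, hence $\Var{\Xi_N(t)}=\sqrt{tN/\pi}+O(1)$ and $\sigma_t^2=\sqrt{t/\pi}$. For $t=1$ the edge regime of \cref{thm:FiniteGinibreModPhi} (with $a^+=0$) gives $I=\intoc{-\infty,0}$, and the symmetry $\Phi_0(-x)=1-\Phi_0(x)$ halves the previous integral, yielding $\sigma_1^2=1/(2\sqrt{\pi})$. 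For $t>1$, a direct Chernoff bound on the upper tail of $\Gamma_k$ (which is gamma-distributed with shape $k$ and rate $1$) shows $\sum_{k=1}^N\Pr{\Gamma_k>tN}=O(e^{-c(t)N})$, so $\Var{\Xi_N(t)}=O(e^{-c(t)N})$ and $N^{-1/4}\Xi_N(t)\to 0$ in $L^2$, consistent with $\sigma_t^2=0$.

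The main technical point is the vanishing of all off-diagonal covariances. By independence of the $\Gamma_k$, for $s<t$,
\begin{equation*}
\Cov{\Xi_N(s)}{\Xi_N(t)}=\sum_{k=1}^N \Pr{\Gamma_k\le sN}\bigl(1-\Pr{\Gamma_k\le tN}\bigr).
\end{equation*}
Using the Gaussian approximation $\Pr{\Gamma_k\le rN}\approx\Phi_0\bigl((k-rN)/\sqrt{rN}\bigr)$ underlying \cref{thm:FiniteGinibreModPhi}, the two factors have transition windows of width $\sqrt{N}$ centred at $k\approx sN$ and $k\approx tN$ respectively; since these windows are separated by the macroscopic distance $(t-s)N$, for every $k\in\set{1,\dotsc,N}$ at least one of the factors is exponentially small in $N$, yielding $\Cov{\Xi_N(s)}{\Xi_N(t)}=O(e^{-c(t-s)^2 N})$. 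Cases with $t=1$ or $t>1$ are absorbed by the same argument or by Cauchy-Schwarz together with the variance estimates of the previous step. I expect this exponential decoupling to be the main obstacle, since it requires uniform control of $\Pr{\Gamma_k\le rN}$ across the full range of $k$; it is precisely what enforces the diagonal (white-noise) structure of the limit, mirroring the rigidity already visible in \cref{thm:GinibreFCLTFDD}. Combined with the marginals, $\Var{Y_N}\to\sigma^2$.

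It then remains to apply Lyapunov's CLT. Setting $s_N^2:=\sum_k \Var{Y_k^{(N)}}=\sqrt{N}\,\sigma^2(1+o(1))$ and using the uniform bound $|Y_k^{(N)}|\le\sum_i|\xi_i|$, one gets $\sum_k \Ex{|Y_k^{(N)}|^3}\le(\sum_i|\xi_i|)\,s_N^2$, so the Lyapunov ratio is $O(1/s_N)=O(N^{-1/4})\to 0$. Hence $\sum_k Y_k^{(N)}/s_N\to\Normaldist_{0,1}$ in distribution; dividing by $N^{1/4}$ gives $Y_N\to\sigma\,\Normaldist_{0,1}$, and the Cramér-Wold theorem delivers the claimed finite-dimensional convergence of $(N^{-1/4}\Xi_N(t))_{t\in\bR_+}$ to a Gaussian white noise with variance profile $\sigma_t^2$.
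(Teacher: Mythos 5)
Your proposal is correct and follows essentially the same route as the paper: the paper's Lemma \ref{lem:FCLTFDD} likewise reduces to one dimension via Cramér--Wold, establishes the limit covariance $\sigma_t^2\ind_{s=t}$ (via \cref{lem:CovarianceAsymptotic} and dominated convergence, where you use direct Chernoff bounds on the gamma tails to decouple the transition windows), and concludes with a triangular-array CLT for bounded independent summands (the paper bounds all cumulants of order $\ge 3$ where you invoke Lyapunov's third-moment condition — an equivalent step). The variance constants $\sigma_t^2$ in the three regimes $t<1$, $t=1$, $t>1$ are computed exactly as in \cref{sec:GinibreProof}.
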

\begin{proposition}\label{thm:FiniteGinibreFCLTSkorokhod}
	For any $\gamma\in\intoc{0,1}$, it holds as $N\to\infty$ that
	\begin{equation*}
		\bigl( N^{-\frac{1}{4}} \Xi_N\bigl(\gamma+ \tfrac{t}{\sqrt{N}}\bigr)\bigr)_{t\in \bR} \xrightarrow{\cS(\bR )} \bigl( G_t\bigr)_{t\in \bR },
	\end{equation*}
	where $(G_t)_{t\in \bR }$ is a centred Gaussian process with covariance kernel
	\begin{equation*}
		\Cov{G_s}{G_t}
		= \gamma^\frac{1}{4} \int_{I} \Pr{\Normaldist_{0,1} > x-s} \Pr{\Normaldist_{0,1} \le x-t} \dif x
	\end{equation*}
	for $s\le t$, where $I=\bR$ if $\gamma\in (0,1)$ (i.e.\ in the bulk) and $I=\intoc{-\infty,0}$ if $\gamma=1$ (i.e.\ at the edge).
\end{proposition}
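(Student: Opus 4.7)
The plan is to apply the microscopic Skorokhod FCLT given by \cref{thm:FCLTPlanar} to the finite Ginibre ensemble, using \cref{thm:FiniteGinibreModPhi} as the input that identifies the profile function $\Phi=\Phi_0$ and the correction $\Psi=\Psi_0$ required by \cref{NA}. Since $\Gamma_k$ is a $\mathrm{Gamma}(k,1)$ random variable, the classical CLT together with an Edgeworth expansion yields $\Pr{\Gamma_k\le R}\simeq \Phi_0((k-R)/\sqrt{R})$ up to $\Psi_0$-type corrections, which identifies the natural scale $\Sigma_R=\sqrt{R}$; this is consistent with \eqref{eq:VarXi}, since $\Lambda''(0)=1/\sqrt{\pi}$ in this setup and $\Var{\xi(D_r)}\sim r/\sqrt{\pi}$ in the bulk.

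I would first fix the base radius $R_0=\gamma N$, so that $\Sigma_{R_0}=\sqrt{\gamma N}$, and verify the conditions \eqref{Ncond}. For $\gamma\in(0,1)$ one has $(N-R_0)/\Sigma_{R_0}=(1-\gamma)\sqrt{N/\gamma}\to+\infty$, placing us in case (i) with $a^+=+\infty$ and $I=\bR$. For $\gamma=1$ one has $(N-R_0)/\Sigma_{R_0}=0$, which is case (ii) with $a^+=0$ and $I=\intoc{-\infty,0}$, matching the intervals stated in the proposition. Moreover, $\sqrt{\Sigma_{tR}/\Sigma_R}=t^{1/4}$ converges to a continuous limit, so the hypothesis $g\in C(\bR_+)$ of \cref{thm:FCLTPlanar} is satisfied with $g_t=t^{1/4}$.

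Next, I would rewrite the perturbed radius in the form demanded by the Skorokhod FCLT, namely $R(t)=(\gamma+t/\sqrt{N})N = R_0+\tau(t)\Sigma_{R_0}$ with $\tau(t)=t/\sqrt{\gamma}$. Applying \cref{thm:FCLTPlanar} in this framework then gives
\begin{equation*}
\biggl(\frac{\Xi^{(N)}_{R(t)}}{\sqrt{\Sigma_{R_0}}}\biggr)_{t\in\bR}
\xrightarrow{\cS(\bR)} \bigl(G_{\tau(t)}\bigr)_{t\in\bR}
\quad\text{with}\quad
\Cov{G_\sigma}{G_\tau}=\int_I \Phi_0(x-\sigma)\bigl(1-\Phi_0(x-\tau)\bigr)\dif x.
\end{equation*}
Since $\sqrt{\Sigma_{R_0}}=\gamma^{1/4}N^{1/4}$, we have $N^{-1/4}\Xi_N(\gamma+t/\sqrt{N}) = \gamma^{1/4}\cdot\Xi^{(N)}_{R(t)}/\sqrt{\Sigma_{R_0}}$, so the limit is $\gamma^{1/4}G_{t/\sqrt\gamma}$, whose covariance reduces to the form stated in the proposition after a linear change of variable in the integral over $I$; one may also cross-check the diagonal values against $\sigma_t^2$ from \cref{thm:FiniteGinibreFCLTFDD} as a sanity check.

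The main obstacle is the edge case $\gamma=1$, where the window of active labels meets the particle truncation at $k=N$: the Gaussian profile $\Phi_0((k-R)/\sqrt{k})$ together with the Edgeworth correction $\Psi_0$ must be controlled uniformly up to $k=N$, and the boundary case (ii) of \eqref{Ncond} has to be checked carefully so that \cref{thm:FCLTPlanar} still applies. This uniformity is exactly what \cref{thm:FiniteGinibreModPhi} establishes at the edge; once granted, the argument above is unchanged, except that the integration domain $I$ shrinks from $\bR$ to the half-line $\intoc{-\infty,0}$, producing the finite-size effect visible in the covariance kernel of the limiting Gaussian process.
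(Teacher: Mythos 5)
Your route is the same as the paper's: verify \cref{NA} for the gamma radii via the Edgeworth expansion and the tail bounds of \cref{lem:GinibreBounds}, check \eqref{Ncond}--\eqref{interval} for $R_0=\gamma N$ (case (i) in the bulk, case (ii) with $a^+=0$ at the edge), and then invoke the microscopic part of \cref{thm:FCLTPlanar} with $R(t)=R_0+\tfrac{t}{\sqrt{\gamma}}\Sigma_{R_0}$. That derivation is correct.

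One caveat on your last step: the limit you obtain, $\gamma^{1/4}G_{t/\sqrt{\gamma}}$, has covariance $\sqrt{\gamma}\int_I \Phi_0\bigl(x-\tfrac{s}{\sqrt{\gamma}}\bigr)\bigl(1-\Phi_0\bigl(x-\tfrac{t}{\sqrt{\gamma}}\bigr)\bigr)\dif x$, and no linear change of variable turns this into the displayed kernel $\gamma^{1/4}\int_I \Phi_0(x-s)\bigl(1-\Phi_0(x-t)\bigr)\dif x$ unless $\gamma=1$ (substituting $x\mapsto x/\sqrt{\gamma}$ produces the tail function of $\sqrt{\gamma}\,\Normaldist_{0,1}$, not of $\Normaldist_{0,1}$). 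Your expression is in fact the correct one: on the diagonal it gives $\sqrt{\gamma/\pi}$, matching $\sigma_\gamma^2$ in \cref{thm:FiniteGinibreFCLTFDD} and \eqref{eq:VarXi}, whereas the kernel printed in the proposition gives $\gamma^{1/4}/\sqrt{\pi}$. So the discrepancy lies in the statement (the prefactor and the time arguments are misprinted for $\gamma\in(0,1)$), not in your argument; you should simply not assert that a change of variables reconciles the two.
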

Note that as expected, these results are consistent with those presented in \cref{sec:Introduction} for the (infinite) Ginibre ensemble, except for the \emph{edge effects}. In particular, by \cref{rk:GP}, the Gaussian process $(G_t)_{t\in \bR }$ observed in the bulk is the same as in \cref{thm:GinibreFCLTSkorokhod}.

\subsection{Ginibre-type ensembles} \label{sec:Ginibre}

These point processes, also known as polyanalytic Ginibre ensembles, are generalizations of the (infinite) Ginibre ensemble which arises from the physics of two-dimensional (spinless) electrons subject to a constant (perpendicular) magnetic field.
This problem is for instance relevant to the description of the quantum Hall effect, see \cite{E13QuantumHallEffects,R19LaughlinFunction} and reference therein.
As explained in \cite{HH13PolyanalyticGinibre, S15GinibreType, APRT17WeylHeisenbergEnsemble}, this system is described by the \emph{Landau Hamiltonian} whose spectrum consists of eigenvalues with infinite multiplicity;
the corresponding eigenspaces $\cH_\LL$ for $\LL\in\bN_0$ are called \emph{Landau levels} and they can be identified with $\LL$-analytic Bargmann-Fock spaces\footnote{
That is, we have the spectral decomposition $L^2(\mu) \cong \bigoplus_{\LL=0}^{\infty} \cH_\LL$ with $\frac{\dif \mu }{\dif m}(z) = \frac{1}{\pi }e^{-\abs{z}^2}$ on $\bC$, $\cH_0 = \operatorname{span} \set{ z^j \given j \in \bN_0 }$ in $L^2(\mu)$ and
 $ \cH_\LL = \set{ e^{\abs{z}^2} \partial_z^{\LL}(\phi e^{-\abs{z}^2}) \given \phi\in \cH_0} $ for $\LL\in\bN$. Hence, it holds $\overline{\partial_z}^{\LL+1}(\phi) = 0$ for any $\phi\in \cH_\LL$ which motivates the name polyanalytic Ginibre ensemble introduced in \cite{HH13PolyanalyticGinibre}.}.
In particular, the ground state $\LL=0$ of the \emph{Landau Hamiltonian} corresponds to the Hilbert space $ \cH_0$ of all entire functions in $L^2(\mu)$ whose reproducing kernel is the Ginibre kernel $K_0(z,w)= e^{z\bar{w}}$ for $z,w\in\bC$.
In general, the Hilbert space $\cH_\LL$ has a reproducing kernel
\begin{equation} \label{Kalpha}
	K_\LL(z,w) = \sum_{k=-\LL}^{\infty} \phi_{\LL,k}(z) \overline{\phi_{\LL,k}(w)}
	= L_\LL\bigl(\abs{z-w}^2\bigr)e^{z\bar{w}} , \qquad z,w \in\bC,
\end{equation}
where $L_\LL$ denotes the orthonormal Laguerre polynomial of degree $\LL\in\bN_0$.
The Ginibre-type ensembles, denoted by $\xi_\LL$, refers to the determinantal point process on $\bC$ with correlation kernel $K_\LL$. They describe the thermodynamic limit of an infinite system of electrons which are confined in the $\LL$-th Landau level.
Since $L_\LL(0)=1$ for all $\LL\in\bN_0$, these ensembles are homogeneous with intensity $1/\pi$.
Moreover, it is easy to check from the second expression in \eqref{Kalpha} that they are also translation and rotation invariance on $\bC$.

As for the first expression in \eqref{Kalpha}, the modes are given by $\phi_{\LL,k}(z) = L_{\LL}^{(k)}(\abs{z}^2) z^k$,
where $L_{\LL}^{(k)}$ are the orthonormal generalized Laguerre polynomials\footnote{
It is straightforward to check that $\partial_z^{\LL}(z^j e^{-\abs{z}^2}) = \widetilde{L}_{\LL}^{(j-\LL)}(\abs{z}^2) z^{j-\LL} e^{-\abs{z}^2} $ for any $j, \LL \in\bN_0$ where $ \widetilde{L}_{\LL}^{(\beta)}(x) = x^{-\beta} e^{x} \frac{\dif^\LL}{\dif x^\LL} (x^{\beta+\LL} e^{-x})$ is a polynomial of degree $\LL$ for any $\beta\in\bR$. Hence, if we normalize $L_{\LL}^{(\beta)} = c_{\alpha,\beta} \widetilde{L}_{\LL}^{(\beta)}$ with some constant $c_{\alpha,\beta}>0$ appropriately, it holds $\int_{\bR_+} L_{\LL}^{(\beta)}(x)^2 x^\beta e^{-x} dx =1$ for $\beta \ge -\LL$. This shows that for any $\LL\in\bN_0$, $(\phi_{\LL,k} )_{k=-\LL}^{\infty}$ is an orthonormal basis of the Hilbert space $\cH_\LL$.
It turns out that for $\beta=0$, $L_{\LL}^{(0)} = L_{\LL}$ for all $\LL\in\bN_0$ are the classical Laguerre polynomials.}
of degree $\LL\in \bN_0$. This correspondence is not entirely obvious and can be found e.g.~in \cite[Section 2]{HH13PolyanalyticGinibre} or \cite[Section 2]{S15GinibreType}.

Following the convention from \cref{sec:MainResults}, we let $R = T(r) = r^2$ and observe that the square-modulus of the points of $\xi_\alpha$ have law given by
\begin{equation} \label{lawGamma}
	\Pr[\big]{\Gamma_{k}^{(\LL)} \le R} = \int_0^R L_\LL^{(k-\LL-1)}(x)^2 x^{k-\LL-1} e^{-x} \dif x , \qquad k\in\bN.
\end{equation}
For the Ginibre ensemble ($\LL=0$) we recover that $\Gamma_{k}^{(0)}$ are gamma-distributed random variables as in \cref{thm:Kostlan}.
In \cref{lem:GinibreTypeBE}, we show that these random variables have the following asymptotic property: If we let $\widetilde\Gamma_{k}^{(\LL)} = \frac{\Gamma_{k-\LL}^{(\LL)}-k}{\sqrt{k}} $ so that $\Ex{\widetilde\Gamma_{k}^{(\LL)}} = 0$, then it holds for any $\beta>0$, as $k\to\infty$,
\begin{equation} \label{GFexpansion}
	\Ex[\Big]{e^{i x\widetilde\Gamma_{k}^{(\LL)}}}
	= \Ex[\Big]{e^{i x Z_\LL}} \Bigl(1-\frac{\i x^3}{3\sqrt{k}}\Bigr) + O_\beta\Bigl(\frac{1}{k(1+x^2)^\beta}\Bigr).
\end{equation}
For $\LL\in\bN_0$, let $h_\LL(x) = H_\LL(x) e^{-\frac{x^2}{4}}/(2\pi)^{-\frac{1}{4}}$ be the Hermite functions, that is $H_\LL$ are Hermite polynomials of degree $\LL$ which are orthonormal with respect to the weight $e^{-\frac{x^2}{2}}/\sqrt{2\pi}$ on $\bR$.
Then, the random variables $Z_{\LL}$ which appear on the RHS of \eqref{GFexpansion} have probability density function $h_\LL^2$:
\begin{equation} \label{phialpha}
\Phi_\LL(x) = \Pr{ Z_\LL >x} = \int_x^{\infty} h_\LL^2(t) \dif t.
\end{equation}
Note that $\Phi_0(x) = \Pr{\Normaldist_{0,1} > x}$ corresponds to the standard \emph{error function}. For $\alpha\in\bN_0$, $Z_\alpha$ can be interpreted as the position of a quantum particle confined by a harmonic trap in the $\LL$-th excited state.
This connection between the point processes associated with different Landau levels and the harmonic oscillator is rather surprising to us.
Note also that for $\LL\ge 1$, unlike gamma random variables, $\Gamma_{k}^{(\LL)}$ are not infinitely divisible, since otherwise $\widetilde\Gamma_{k}^{(\LL)}$ would become asymptotically Gaussian for large $k$, see \eqref{GFexpansion}.

We define for $\alpha \in \bN_0$ and $R\in\bR_+$,
\begin{equation} \label{Xialpha}
	\Xi_R^{(\LL)}
	= \sum_{k\in \bN} \ind_{\Gamma_{k}^{(\LL)}\le R} - R
	\overset{\mathrm{d}}{=} \xi_\LL(D_r) - \Ex[\big]{ \xi_\LL(D_r)}
	\qquad \text{with } r=\sqrt{R}.
\end{equation}
From \cref{thm:ModPhiGeneral} and the asymptotics \eqref{GFexpansion}, we can infer that the random variables $\Xi_R^{(\LL)}$ converge as $R\to\infty$ in the mod-phi sense with cumulant generating functions of the form \eqref{MGF} which are associated to the tail functions $\Phi_\LL$ given by \eqref{phialpha}. We prove the following result in \cref{sec:GinibreTypeProof}.

\begin{proposition}
\label{thm:GinibreTypeModPhi}
For any $\LL \in \bN_0$, the random variable $\Xi_R^{(\LL)}$ converges in the mod-phi sense at speed $\sqrt{R}$ with the cumulant generating function
	\begin{equation*}
		\Lambda_\LL (z) = \int_\bR \log\Bigl(1+\Phi_\LL(x)(e^z-1)\Bigr)-\Phi_\LL(x)z \dif x ,
	\end{equation*}
	and limiting function
	$\displaystyle \psi_\alpha (z) = \exp\biggl(\int_\bR \Bigl(\frac{e^z-1}{1+\Phi_\LL(x)(e^z-1)} - z\Bigr) \Psi_\LL(x) \dif x \biggr)$
	where $\Psi_{\LL} = \frac{x^2}{2}\Phi_\LL' + \frac{1}{3}\Phi_\LL'''$.
\end{proposition}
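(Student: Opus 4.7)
The plan is to deduce this mod-phi convergence from the general \cref{thm:ModPhiGeneral} applied to the sequence of \emph{radii} $(\Gamma_k^{(\LL)})_{k\in\bN}$ with laws given by \eqref{lawGamma}. Since $\xi_\LL$ is the infinite Ginibre-type process, we sit in the bulk regime ($N=\infty$), so \cref{NA} has to be checked with parameters $\vartheta=1$, speed $\Sigma_R=\sqrt{R}$, limiting tail $\Phi=\Phi_\LL$, correction $\Psi=\Psi_\LL$, and $I=\bR$ in \eqref{interval}. The boundary term $\kappa_{\Phi_\LL(\tau)}(z)/2$ in \eqref{eq:PsiF} then vanishes identically because $\Phi_\LL(+\infty)=0$ and $\kappa_0\equiv 0$, which explains why it is absent from the announced formula for $\psi_\LL$. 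Moreover, $Z_\LL$ has Schwartz density $h_\LL^2$, so $\Ex{\abs{Z_\LL}^m}<\infty$ for every $m$ and the ancillary hypotheses of \cref{NA} are automatic.

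The main step is the verification of the profile \eqref{cond}, whose crucial input is the characteristic function expansion \eqref{GFexpansion} for $\widetilde\Gamma_k^{(\LL)}=(\Gamma_{k-\LL}^{(\LL)}-k)/\sqrt{k}$. I would establish this expansion separately (as \cref{lem:GinibreTypeBE}) from the explicit Laguerre form of the density \eqref{lawGamma}, via an integral representation combined with a steepest-descent or moment/cumulant analysis; the identification of the limiting density $h_\LL^2$ reflects the classical large-degree link between generalized Laguerre polynomials and Hermite functions. Given \eqref{GFexpansion}, Fourier inversion converts the factor $1-\i x^3/(3\sqrt{k})$ into an Edgeworth-type third-derivative correction for the density of $\widetilde\Gamma_k^{(\LL)}$, and integration from $y$ to $\infty$ yields a tail expansion of the form $\Pr{\widetilde\Gamma_k^{(\LL)}>y}=\Phi_\LL(y)+c\,\Phi_\LL'''(y)/\sqrt{k}+O(1/k)$, uniformly on compacts in $y$, with polynomial decay away from the origin furnished by the Schwartz decay of $h_\LL^2$ and its derivatives. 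To reach the form required by \eqref{cond}, I would use the identity $\Pr{\Gamma_k^{(\LL)}\le R}=\Pr{\widetilde\Gamma_{k+\LL}^{(\LL)}\le y}$ with $y=(R-k-\LL)/\sqrt{k+\LL}$; introducing $u=(k-R)/\sqrt{R}$ and Taylor expanding gives $y=-u+O(1/\sqrt{R})$, whence a second Taylor expansion of $\Phi_\LL$ around $-u$, together with the symmetry $\Phi_\LL(-y)=1-\Phi_\LL(y)$ (because $h_\LL^2$ is even) and the evenness of $\Phi_\LL'''$, combines the curvature contribution from the change of variables (a multiple of $x^2\Phi_\LL'(x)$) with the Edgeworth correction (a multiple of $\Phi_\LL'''(x)$) to produce the stated profile $\Psi_\LL(x)=\tfrac{x^2}{2}\Phi_\LL'(x)+\tfrac{1}{3}\Phi_\LL'''(x)$.

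The remaining task is the summable control of the error $\theta_{R,k}$. Inside the bulk window $\abs{k-R}\le C\sqrt{R\log R}$, the pointwise error produced by the two Taylor expansions and the Edgeworth remainder is $O(R^{-1}(1+u^2)^{-\beta})$ by \eqref{GFexpansion} with $\beta$ chosen large, so $\sum_k \theta_{R,k}=O(R^{-1/2})$. Outside this window, super-exponential tail bounds for $\Gamma_k^{(\LL)}$ (deducible from the analytic extension of $\Ex{e^{z\Gamma_k^{(\LL)}}}$ into the strip $\bS$, which simultaneously supplies the technical decay condition \eqref{ass:DecayCGF} needed for $\Lambda_\LL$) make the remaining contribution negligible. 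With \cref{NA} verified, \cref{thm:ModPhiGeneral} directly yields the announced mod-phi convergence with the displayed $\Lambda_\LL$ and $\psi_\LL$. The main obstacle is \eqref{GFexpansion} itself: establishing the explicit coefficient $-\i x^3/(3\sqrt{k})$ with uniform polynomial decay in $x$ across all Landau levels $\LL$ requires delicate large-index asymptotics of $L_\LL^{(k-\LL-1)}(x)^2 x^{k-\LL-1}e^{-x}$; once that lemma is secured, the remainder of the proof is essentially Taylor-bookkeeping combined with dominated-convergence-type estimates.
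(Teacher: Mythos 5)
Your proposal is correct and follows essentially the same route as the paper: verify \cref{NA} (with $\vartheta=1$, $\Sigma_R=\sqrt{R}$, $\Phi=\Phi_\LL$) by combining the Edgeworth-type expansion \eqref{GFexpansion} obtained by Fourier inversion with the $\frac{k-R}{\sqrt{k}}\to\frac{k-R}{\sqrt{R}}$ Taylor correction producing the $\tfrac{x^2}{2}\Phi_\LL'$ term (this is exactly \cref{lem:A}), plus tail bounds outside the bulk window, and then invoke \cref{thm:ModPhiGeneral}. The only cosmetic differences are that the paper derives \eqref{GFexpansion} from Shirai's explicit Laplace-transform formula \eqref{eq:GinibreTypeMGF} rather than steepest descent, and obtains the off-window tails by comparing the Laguerre densities to ordinary gamma densities (\cref{lem:GinibreTypeBounds}) rather than via a Chernoff bound, while \eqref{ass:DecayCGF} is already checked for the limiting $\Lambda$ inside the proof of \cref{thm:ModPhiGeneral} and needs no separate argument.
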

By \cref{cor:mod}, this implies an (extended) central limit theorem together with a Berry-Esseen bound and precise moderate deviations for the counting statistics $\xi_\LL(D_{r})$ (after appropriate normalization).

\begin{remark} \label{rk:CE19}
	In connection with our results, counting statistics for fermions in the lowest Landau level $\LL=0$ of a magnetic Laplacian have been recently considered in \cite{CE19EntanglementEntropy} in a more general geometric setting.
	These ensembles correspond to determinantal processes on a K\"ahler manifold $M$ and the law of the statistic for the number of points $N_A$ in a subset $A \subseteq M$ can be described in terms of the eigenvalues of a Toeplitz operator $T_A$.
	By semi-classical methods, if $A$ has a smooth boundary, the authors obtain two-term Weyl asymptotics for the operator $T_A$ and discuss applications to computing the \emph{entanglement entropy}. They show that the corrections are universal and described in terms of the \emph{error function} $\Phi_0$ from \eqref{phialpha}.
	This also allows us to obtain asymptotic expansions of the cumulants $\varkappa^{(q)}(N_A)$ for a general set $A$, see \cite[Theorem 1.6]{CE19EntanglementEntropy}.
	In particular, there results also apply to the Ginibre ensemble in which case, with our conventions, one has for $A\subseteq \bD$ and any $q\in\bN$, as $N\to\infty$,
	\begin{equation} \label{CE19}
	\begin{aligned}
		\Ex[\big]{\xi ^{(N)} (\sqrt{N}A)} &= N\frac{m(A)}{\pi } + O(1),\\
		\varkappa^{(2q)}\bigl(\xi ^{(N)}(\sqrt{N}A)\bigr) &= \sqrt{N} \frac{\Vol(\partial A)}{2\pi } \int_{-\infty }^\infty \varkappa^{(2q)}\bigl(\ind_{U \le \Phi_0(t)}\bigr) \dif t + O\bigl(N^{-\frac{1}{2}}\bigr),\\
		\varkappa^{(2q+1)}\bigl(\xi^{(N)} (\sqrt{N}A)\bigr) &= O(1),
	\end{aligned}
	\end{equation}
	where $U$ is a random variable uniform in $\intcc{0,1}$.
	If $A$ is a disk, these results are consistent with the mod-phi convergence from \cref{thm:FiniteGinibreModPhi}\footnote{There is a typo in the asymptotics of \cite[Theorem 1.6]{CE19EntanglementEntropy} for odd cumulants. We have shown instead that $\varkappa ^{(2q+1)}(\xi (\sqrt{N}A))$ is of order 1 as $N\to\infty$ for any $q\in\bN$.}. However, since the eigenvalues of the operator are explicit in the case of a disk, our results are more precise. We also obtain for any $\gamma\in(0,1)$,
	\begin{equation*}
		\varkappa^{(2q+1)}\bigl(\xi (D_{\gamma N})\bigr) = -\frac{2}{3} \int_{-\infty }^\infty t \varkappa^{(2q+1)}\bigl(\ind_{U \le \Phi_0 (t)}\bigr) \dif t + O\bigl(N^{-\frac{1}{2}}\bigr).
	\end{equation*}
	In fact, our method gives in principle all order asymptotic expansions of the cumulants in terms of $\Phi_0$, see \cref{sec:GinibreProof}.
	Let us also point out that using the asymptotics \eqref{CE19} to deduce mod-phi convergence for counting statistics in a general set $A\subseteq \bD$ seems out of reach as the estimates from \cite{CE19EntanglementEntropy} lack control on the growth of the cumulants.
\end{remark}

Entanglement is a crucial property of quantum system which for instance plays a key role in quantum communication and information theory, \cite{HHHH09QuantumEntanglement}.
The question of quantifying (for large $N$) the \emph{entanglement entropy} for basic quantum system is an ongoing problem in (quantum) statistical physics, see e.g.\ \cite{LSS14RenyiEntenglementEntropies} and reference therein.
This has been achieved recently for different ensembles of free fermions using semi-classical methods based on the work of Widom, see e.g.\ the contribution of Gioev \cite{G06SzegoLimitTheorem}, Sobolev and co-authors \cite{HLS11FermionicEntanglement,S17QuasiClasicalAsymptotics} or \cite{CE19EntanglementEntropy}, as well as, for further references, \cite{E13UniversalityTrappedFermions, LMS18Entanglement} in the physics literature.
In \cref{thm:arealaw}, we are interested in investigating the precise asymptotics for the so-called (bipartite) entanglement entropies, denoted $S_\LL^\beta(D_r)$ of a disk $D_r$, for (integer) Laughlin states.
By exploiting the connection with the Ginibre $\alpha$-type process, we give a probabilistic proof of the precise asymptotics of the entanglement entropy when the radius $r$ is large. Our results are consistent with the well-known \emph{area law} for fermionic ensembles and to be compared with \cite[Theorem 1.8]{CE19EntanglementEntropy} in the context of \cref{rk:CE19}. Moreover, this confirms that the entanglement entropies also depend non-trivially on the index $\LL$ of the Landau levels.

\begin{theorem} \label{thm:arealaw}
	For any $\LL\in\bN_0$ and $\beta>\frac{1}{2}$, it holds as $r\to\infty$,
	\begin{equation*}
		S_\LL^\beta(D_r)
		= \operatorname{Tr}\bigl[f_\beta(\mathrm{K}_\LL|_{D_{r}})\bigr]
		= r^\frac{1}{4} \int_\bR f_\beta\bigl(\Phi_\LL(x)\bigr) \dif x + o(1) ,
	\end{equation*}
	where $f_\beta(x) = \frac{\log(x^\beta+(1-x)^\beta)}{(1-\beta)}$ if $\beta\neq 1$ and $f_1(x) = \lim\limits_{\beta\to1} f_\beta(x) = -x \log(x) - (1-x)\log(1-x)$ for $x\in\intcc{0,1}$.
\end{theorem}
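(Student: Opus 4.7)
The plan is spectral. By \eqref{eq:Eig} combined with \eqref{lawGamma}, the operator $\mathrm{K}_\LL|_{D_r}$ has eigenvalues $\lambda_k(r) = \Pr[\Gamma_k^{(\LL)} \le r^2]$ for $k \ge 1$, so that
\begin{equation*}
	\operatorname{Tr}\bigl[f_\beta(\mathrm{K}_\LL|_{D_r})\bigr] = \sum_{k\ge 1} f_\beta\bigl(\lambda_k(r)\bigr),
\end{equation*}
and the claim will follow by recognizing the right-hand side as a Riemann sum for the stated integral, with a change of variable $x_k = (k - r^2)/r$ whose successive values are $1/r$-separated.

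The first step is to feed into this sum the precise two-term asymptotic for $\lambda_k(r)$ that underlies the verification of \cref{NA} for the Ginibre-type ensembles, namely
\begin{equation*}
	\lambda_k(r) = \Phi_\LL\Bigl(\tfrac{k-r^2}{r}\Bigr) + \tfrac{1}{r}\Psi_\LL\Bigl(\tfrac{k-r^2}{r}\Bigr) + O(\theta_{k,r}),\qquad \sum_{k} \theta_{k,r} = o(1),
\end{equation*}
which itself follows from an Edgeworth-type inversion of \eqref{GFexpansion} against the density $h_\LL^2$. Since $f_\beta \in C^2((0,1))$, a Taylor expansion of $f_\beta$ around $\Phi_\LL(x_k)$, combined with the Riemann-sum interpretation of $\sum_k (\cdot)$ with spacing $1/r$, converts the leading term of the sum into the claimed integral (up to the prefactor arising from the spacing). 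The $\frac{1}{r}\Psi_\LL$-correction, after summation, itself becomes a Riemann sum for $\int_\bR f'_\beta(\Phi_\LL(x))\Psi_\LL(x)\,dx$, which is $O(1)$ and hence contributes only subleading to the final asymptotic; the remainder $\sum_k\theta_{k,r}$ is absorbed in the $o(1)$ error.

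The main obstacle is to control the tails. For $|k - r^2|/r$ large, $\lambda_k(r)$ is exponentially close to either $0$ or $1$, and $f_\beta$ has unbounded derivatives at those endpoints, so the Taylor expansion cannot be used naively. Simultaneously, on the integral side, the decay of $f_\beta(\Phi_\LL(x))$ at $\pm\infty$ is driven by the polynomial $\times$ Gaussian decay of $h_\LL^2$, and a short computation shows $f_\beta(\Phi_\LL(x)) \lesssim \Phi_\LL(x)^{\min(\beta,1)}$ in the tail $x \to +\infty$ (and analogously $x\to -\infty$ with $1-\Phi_\LL$ in place of $\Phi_\LL$). This is precisely where the hypothesis $\beta > \tfrac{1}{2}$ enters: it guarantees the integrability of $f_\beta(\Phi_\LL(\cdot))$ on $\bR$. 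To match this on the sum side, I would use a Chernoff bound applied to $\Gamma_k^{(\LL)}$---exploiting the Laplace-transform estimates already produced in the proof of \cref{thm:GinibreTypeModPhi}---to obtain quantitative sub-Gaussian bounds $\lambda_k(r) \le e^{-c(k-r^2)^2/r^2}$ for $k \gg r^2$ and $1 - \lambda_k(r) \le e^{-c(k-r^2)^2/r^2}$ for $k \ll r^2$, then use the explicit estimates $|f_\beta(\lambda)| \lesssim \lambda^{\min(\beta,1)} + (1-\lambda)^{\min(\beta,1)}$.

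Putting the bulk Riemann-sum limit together with these tail estimates, the bulk part converges to the integral by dominated convergence (the pointwise approximation error is dominated by a summable envelope), and the tail part is $o(1)$; this yields the announced area-law asymptotic. The scaling with $r$ is consistent with $|\partial D_r| \propto r$, as expected for a two-dimensional fermionic ensemble.
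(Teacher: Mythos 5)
Your overall strategy---the spectral identity $S_\LL^\beta(D_r)=\sum_{k}f_\beta(\lambda_k(r))$ with $\lambda_k(r)=\Pr{\Gamma_k^{(\LL)}\le R}$, $R=r^2$, the two-term eigenvalue asymptotics, truncation to a window around $k\approx R$, and a Riemann-sum limit---is the same as the paper's. However, there are two genuine gaps.

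First, your treatment of the $\frac{1}{\sqrt{R}}\Psi_\LL$ correction does not give the stated conclusion. After summation over the window (spacing $1/\sqrt{R}$ in the variable $x_k=\frac{k-R}{\sqrt{R}}$), this correction converges to $\int_\bR \Psi_\LL(u)\,f'_\beta\bigl(\Phi_\LL(u)\bigr)\dif u$, which is generically of order $1$. The theorem asserts an error $o(1)$ after the leading term of order $\sqrt{R}$, so an $O(1)$ contribution is not ``subleading''---it would destroy the statement. The paper closes this by observing that $\Psi_\LL=\frac{x^2}{2}\Phi'_\LL+\frac13\Phi'''_\LL$ is even while $x\mapsto f'_\beta(\Phi_\LL(x))$ is odd (using $f_\beta(x)=f_\beta(1-x)$ and the evenness of $h_\LL^2$), so that this integral vanishes identically. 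Without this cancellation your argument only yields $S_\LL^\beta(D_r)=\sqrt{R}\int_\bR f_\beta(\Phi_\LL)\dif x+O(1)$.

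Second, you have misplaced the role of the hypothesis $\beta>\frac12$. Integrability of $f_\beta(\Phi_\LL(\cdot))$ on $\bR$ holds for \emph{every} $\beta>0$, since $\abs{f_\beta(\lambda)}\le C\min\{\lambda^{\Delta_\beta},(1-\lambda)^{\Delta_\beta}\}$ for any H\"older exponent $\Delta_\beta<\beta\wedge1$ and $\Phi_\LL$, $1-\Phi_\LL$ have Gaussian tails. The real constraint is that $f_\beta$ is only $\Delta_\beta$-H\"older (its derivative blows up at $0$ and $1$), so when you replace $\lambda_k(r)$ by its expansion with error $\theta_{R,k}=O(1/R)$ uniformly on the window, the per-term error after applying $f_\beta$ is $O(R^{-\Delta_\beta})$, not $O(\theta_{R,k})$; summed over the $\sim R^{\frac12+\epsilon}$ indices of the window this is $O(R^{\frac12+\epsilon-\Delta_\beta})$, which is $o(1)$ precisely because one can take $\Delta_\beta>\frac12+\epsilon$, i.e.\ because $\beta>\frac12$. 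Your claim that ``the remainder $\sum_k\theta_{k,r}$ is absorbed in the $o(1)$ error'' silently assumes $f_\beta$ is Lipschitz, which it is not. Relatedly, in the Taylor expansion of $f_\beta$ inside the window you must control $f''_\beta(x)\le C(x^{\Delta_\beta-2}+(1-x)^{\Delta_\beta-2})$ at intermediate points where $\Phi_\LL(x_k)$ can be as small as $e^{-R^{2\epsilon}/2}$; this requires the quantitative bound $\abs{\Psi_\LL(x)}\le\frac{\sqrt R}{2}\Phi_\LL(x)(1-\Phi_\LL(x))$ on the window. Your tail truncation (Chernoff bounds plus $f_\beta(\lambda)\le C\lambda^{\Delta_\beta}$ near $0$) is sound and matches the paper.
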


\cref{thm:GinibreTypeModPhi} describes both typical and moderate deviations for counting statistics $	\Xi_R^{(\LL)}$ of the Ginibre-type ensembles. These fluctuations, which corresponds to
the regime $\gamma \in \intcc{\frac{1}{2}, 1}$ in the JLM predictions \eqref{JLM93}, are governed by the harmonic oscillator tail probability functions $\Phi_\alpha$ depending on which Landau level is considered.
Let us now present our results for large deviations, i.e.~the case where $\gamma>1$ in \eqref{JLM93}.
In contrast with moderate deviations, this regime is governed by the exponential tail of the random variables $\Gamma_{k}^{(\LL)}$, see \eqref{lawGamma}.
This explains why large deviations are \emph{universal} for these ensemble.
To present our results, we rely on the following general conditions.

\begin{assumption}\label{ass:LD}
 Let $\Theta:\bR_+ \to\bR_+$ be a function such that $\frac{\Theta_R}{\sqrt{R \log R}} \to \infty$ as $R\to\infty$---this condition sets apart the large deviation regime.
We further assume that $\Theta$ is a regularly varying function, so that
$\gamma = 2 \lim\limits_{R\to\infty} \tfrac{\log \Theta_R}{\log R} $ exists\footnote{This normalization comes from \eqref{JLM93} and the fact that according to \eqref{Xialpha}, $r=\sqrt{R}$. Our convention is that $\gamma=\infty$ if $\Theta_R$ grows faster than any power of $R$.}. We define for $R>0$ and $t \in\bR_+$,
\begin{equation} \label{J}
	v_R =
	\begin{cases}
		\frac{\Theta_R^2}{R}&\text{if }\frac{\Theta_R}{R} \to 0,\\
		\Theta_R&\text{if }\frac{\Theta_R}{R} \to 1 , \\
		\Theta_R\log \Theta_R&\text{if }\frac{\Theta_R}{R} \to \infty
	\end{cases}
	\quad\text{and}\quad
	J_\gamma(t) =
	\begin{cases}
		\frac{t^2}{2} &\text{if }\frac{\Theta_R}{R} \to 0,\\
		(1+t)\log(1+t)-t&\text{if }\frac{\Theta_R}{R} \to 1,\\
		t (1-\frac{2}{\gamma})&\text{if } \frac{\Theta_R}{R} \to \infty.
	\end{cases}
\end{equation}
\end{assumption}

Like \eqref{JLM93}, \eqref{J} distinguishes between three different large deviation regimes.
The following Lemma shows that these regimes are distinguished by the tail behaviour of a gamma random variable as the shape $k\to\infty$.

\begin{lemma} \label{thm:gammaJ}
For any $\alpha\in\bN_0$, under the \cref{ass:LD}, it holds
\begin{equation*}
- \lim_{R\to\infty}\frac{1}{v_R} \log\Pr[\big]{\Gamma^{(\alpha)}_{R+ t\Theta_R} \le R} = J_\gamma(t).
\end{equation*}
\end{lemma}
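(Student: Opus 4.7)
The plan is to first establish the result for $\alpha = 0$---where $\Gamma_k^{(0)}$ is gamma-distributed with shape $k$ and rate $1$ by \cref{thm:Kostlan}---and then to deduce the general case by showing that the Laguerre polynomial factor appearing in the density \eqref{lawGamma} contributes only a polynomial correction in $k$, which is negligible on the scale $v_R$.

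For $\alpha = 0$, the integrand $x^{k-1} e^{-x}/\Gamma(k)$ is monotonically increasing on $[0,R]$ when $k > R$, so Laplace's method at the right endpoint combined with Stirling's formula gives
\begin{equation*}
-\log \Pr[\big]{\Gamma_k^{(0)} \le R} = k\log(k/R) - (k-R) + O(\log k).
\end{equation*}
Substituting $k = R + t\Theta_R$ and expanding $k\log(1 + t\Theta_R/R) - t\Theta_R$ in the three regimes of \cref{ass:LD} directly yields $v_R J_\gamma(t)$: the quadratic Taylor expansion gives $t^2\Theta_R^2/(2R)$ when $\Theta_R/R \to 0$; the scaling $\Theta_R \sim R$ gives $R\bigl[(1+t)\log(1+t) - t\bigr]$; and when $\Theta_R/R \to \infty$ the leading contribution is $t\Theta_R(\log\Theta_R - \log R)$, which after division by $v_R = \Theta_R \log\Theta_R$ tends to $t(1 - 2/\gamma)$ by the definition of $\gamma$.

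For general $\alpha \in \bN_0$, the explicit formula $\widetilde{L}_\alpha^{(\beta)}(x) = \sum_{j=0}^\alpha \binom{\alpha+\beta}{\alpha-j}(-x)^j/j!$ for the unnormalised Laguerre polynomial, together with the normalisation $L_\alpha^{(\beta)} = \widetilde{L}_\alpha^{(\beta)}\sqrt{\alpha!/\Gamma(\alpha+\beta+1)}$, allows one to expand $L_\alpha^{(k-\alpha-1)}(x)^2$ as a polynomial of degree $2\alpha$ in $x$ with coefficients polynomial in $k$. Integrating against $x^{k-\alpha-1}e^{-x}$ gives
\begin{equation*}
\Pr[\big]{\Gamma_k^{(\alpha)} \le R} = \alpha!\sum_{m=0}^{2\alpha} C_m(k)\, \frac{\Gamma(k - \alpha + m)}{\Gamma(k)}\, \Pr[\big]{\Gamma_{k-\alpha+m}^{(0)} \le R},
\end{equation*}
which readily yields an upper bound of the form $\Pr[\big]{\Gamma_k^{(\alpha)}\le R} \le C_\alpha \mathrm{poly}(k)\cdot \Pr[\big]{\Gamma_{k-\alpha}^{(0)}\le R}$. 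For the matching lower bound, I would use the leading-order asymptotic $\widetilde{L}_\alpha^{(k-\alpha-1)}(x) = (k-x)^\alpha/\alpha! + O(k^{\alpha-1})$ valid uniformly on compact subsets of $[0, k)$, to exhibit a sub-interval of $[0, R]$ of length $\sim R/(k-R)$ adjacent to the endpoint $R$ on which the squared Laguerre polynomial is bounded below by $(k-R)^{2\alpha}/\Gamma(k)$ up to a multiplicative constant; combined with a Laplace estimate of $\int x^{k-\alpha-1}e^{-x}\dif x$ near $x = R$, this produces a lower bound matching the upper bound up to polynomial factors in $k$ and $k-R$.

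Combining the two bounds yields $\bigl\lvert \log\Pr[\big]{\Gamma_k^{(\alpha)}\le R} - \log\Pr[\big]{\Gamma_k^{(0)}\le R}\bigr\rvert = O(\log k)$, and the hypothesis $\Theta_R/\sqrt{R\log R} \to \infty$ from \cref{ass:LD} ensures $v_R \gg \log R \gtrsim \log k$ in each of the three regimes, so this error is absorbed into $o(v_R)$ and the limit is independent of $\alpha$. The main obstacle is the lower bound, since the Laguerre polynomial has $\alpha$ sign changes and the term-by-term expansion above may involve cancellations; this is overcome by using the concentration of the zeros of $L_\alpha^{(k-\alpha-1)}$ near $x=k$ at scale $\sqrt{k}$ (a consequence of the leading-order asymptotic above), together with the estimate $k - R \gg \sqrt{k}$ valid in each regime (which follows from $\Theta_R \gg \sqrt{R\log R}$), to guarantee that $x = R$ lies safely in the non-vanishing regime of the Laguerre polynomial.
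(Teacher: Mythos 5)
Your proposal is correct and follows essentially the same route as the paper's (deliberately sketched) argument: for $\alpha=0$ the two-sided gamma tail bounds of \cref{lem:GinibreBounds} give $-\log\Pr{\Gamma_k^{(0)}\le R}=k\log(k/R)-(k-R)+O(\log k)$, the three-regime expansion of $R\,J_2(t\Theta_R/R)$ is exactly \cref{lem:JLMDeviationPrinciples}, and the reduction of $\alpha\ge1$ to $\alpha=0$ via polynomial control of the Laguerre factor is the content of \cref{lem:GinibreTypeBounds}. The one place you go beyond the paper is the lower bound on $\Pr{\Gamma_k^{(\alpha)}\le R}$ for $k\ge R$, which \cref{lem:GinibreTypeBounds} does not provide; your localization argument (the zeros of $L_\alpha^{(k-\alpha-1)}$ sit at $k+O(\sqrt{k})$ while $k-R=t\Theta_R\gg\sqrt{k}$, so the squared polynomial is $\gtrsim(k-R)^{2\alpha}$ near $x=R$, where the Laplace mass of $x^{k-\alpha-1}e^{-x}$ concentrates) is sound and fills a genuine gap in the paper's one-line justification. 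Only caveat: the $O(\sqrt{k})$ localization of the zeros does not follow from the crude expansion $\widetilde{L}_\alpha^{(k-\alpha-1)}(x)=(k-x)^\alpha/\alpha!+O(k^{\alpha-1})$ alone (that only gives $O(k^{1-1/\alpha})$, which can exceed $\Theta_R$ in the first regime for $\alpha\ge 2$); you should instead invoke the Hermite rescaling limit $\sqrt{k!}\,L_\alpha^{(k)}(k+x\sqrt{k})\to(-1)^\alpha H_\alpha(x)$ already cited in \cref{sec:GinibreTypeProof}.
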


We skip the proof of \cref{thm:gammaJ} as it follows easily from the bounds of \cref{lem:GinibreBounds} below when $\alpha=0$.
In order to extend the result to $\alpha\ge1$, one can compare the tails of different $\Gamma^{(\alpha)}_k$ random variables using the estimates from \cref{lem:GinibreTypeBounds} below.
That being said, let us observe that for $\Theta_R = x R^{\frac\gamma2}$ with $x>0$ and $\gamma>1$, the \emph{rate function} on the RHS of \eqref{JLM93} is given by
$\int_0^x J_\gamma(t) \dif t$.
Then, using the representation \eqref{Xialpha}, it is possible to obtain the following large deviation estimates for counting statistics of the Ginibre-type ensembles.
The proof is the same for all regimes and it is given in \cref{sec:JLMProof}.

\begin{theorem} \label{thm:ldp}
For any $\alpha\in\bN_0$ and any $x>0$, under the \cref{ass:LD}, it holds
\[
\lim_{R\to\infty}\frac{-1}{v_{r^2}\Theta_{r^2}} \log \Pr[\big]{\xi_\alpha (D_r)\ge r^2 + x\Theta_{r^2}}
=\lim_{R\to\infty}\frac{-1}{v_R\Theta_R}\log\Pr[\big]{\Xi_R^{(\LL)} \ge x\Theta_R}
= \int_0^x J_\gamma(t) \dif t.
\]
\end{theorem}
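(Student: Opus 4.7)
The plan is to exploit the Kostlan-type representation of \cref{lem:ModulusPoints} which identifies, with $R=r^2$,
\begin{equation*}
\Pr[\big]{\xi_\LL(D_r)\ge r^2+x\Theta_R} = \Pr[\big]{\Xi_R^{(\LL)}\ge x\Theta_R} = \Pr[\big]{\textstyle\sum_{k\ge 1} B_k \ge R+x\Theta_R},
\end{equation*}
where $(B_k)_{k\in\bN}$ are independent Bernoullis of parameters $p_k(R)=\Pr[\big]{\Gamma_k^{(\LL)}\le R}$. This reduces the problem to a sharp tail estimate for a sum of independent, heterogeneous Bernoullis, which I treat by a classical Cram\'er tilt. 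The inputs are uniform versions of \cref{thm:gammaJ}: for $k=R+t\Theta_R$ with $t$ in the relevant range, $\log p_k(R) = -v_R J_\gamma(t)(1+o(1))$, together with a complementary estimate showing that $1-p_k(R)=\Pr[\big]{\Gamma_k^{(\LL)}>R}$ is exponentially small in $v_R$ when $k=R-s\Theta_R$, $s>0$. Both follow from sharp bounds for gamma variables (\cref{lem:GinibreBounds}) for $\LL=0$ combined with the comparison estimates of \cref{lem:GinibreTypeBounds} for general $\LL\in\bN_0$.

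For the lower bound, by independence, restricting to the event $\{\Gamma_k^{(\LL)}\le R$ for every $k\le M := \lceil R+x\Theta_R\rceil\}$ yields
\begin{equation*}
\Pr[\big]{\Xi_R^{(\LL)}\ge x\Theta_R} \ge \prod_{k=1}^{M} p_k(R).
\end{equation*}
Taking logs and splitting at $k=R$: the sum over $R<k\le M$ is a Riemann sum converging to $-v_R\Theta_R\int_0^x J_\gamma(t)\dif t$, while for $k\le R$ the complementary tail bound gives $\sum_{k\le R} \log p_k(R) = -(1+o(1))\sum_{k\le R}(1-p_k(R)) = o(v_R\Theta_R)$, since the latter sum is of order $\Theta_R/\sqrt{v_R}$ and $v_R\to\infty$.

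For the upper bound, I apply Chernoff with the optimal tilt $\theta^\ast=v_R J_\gamma(x)$:
\begin{equation*}
\Pr[\big]{\Xi_R^{(\LL)}\ge x\Theta_R} \le \exp\biggl(-\theta^\ast(R+x\Theta_R) + \sum_{k\ge 1}\log\bigl(1+p_k(R)(e^{\theta^\ast}-1)\bigr)\biggr).
\end{equation*}
The cumulant sum is estimated in three regimes of $k$: for $k\le R$, $p_k(R)\approx 1$ gives $\log(1+p_k(R)(e^{\theta^\ast}-1))\approx\theta^\ast$ and the total is $\theta^\ast R + o(v_R\Theta_R)$; for $R<k\le R+x\Theta_R$, the tilted parameter $p_k(R)e^{\theta^\ast}$ exceeds $1$ so $\log(1+p_k(R)(e^{\theta^\ast}-1))\simeq \theta^\ast-v_R J_\gamma((k-R)/\Theta_R)$, contributing by Riemann summation $\theta^\ast x\Theta_R - v_R\Theta_R\int_0^x J_\gamma(t)\dif t$; for $k>R+x\Theta_R$, $p_k(R)e^{\theta^\ast}$ decays exponentially in $v_R(J_\gamma(t)-J_\gamma(x))$ and the total contribution is $o(v_R\Theta_R)$ precisely because the hypothesis $\Theta_R/\sqrt{R\log R}\to\infty$ ensures $v_R\gg\log(\Theta_R/\text{typical scale})$. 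Cancelling the $\theta^\ast(R+x\Theta_R)$ terms leaves exactly $-v_R\Theta_R\int_0^x J_\gamma(t)\dif t + o(v_R\Theta_R)$, matching the lower bound.

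The main technical obstacle is upgrading the pointwise statement of \cref{thm:gammaJ} to sharp one-sided uniform tail bounds on $p_k(R)$ across the full range of $k$; in particular an upper bound $p_k(R)\le \exp(-v_R J_\gamma(t)(1-o(1)))$ uniform for $t\ge x$, required to handle the tail regime $k>R+x\Theta_R$ of the Chernoff argument, and an exponential upper bound for $1-p_k(R)$ on the left of $R$. For $\LL=0$ this reduces to classical Cram\'er-Chernoff bounds for gamma distributions; for $\LL\ge 1$ the comparison estimates in \cref{lem:GinibreTypeBounds} transfer those bounds to $\Gamma_k^{(\LL)}$. The dependence on $\gamma$ enters only through the rate $J_\gamma$ and the speed $v_R$, so once these uniform estimates are secured the Riemann-sum argument works identically in all three regimes of \eqref{J}.
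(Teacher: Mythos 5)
Your proposal is correct in substance, and it splits naturally into two halves of unequal novelty. The lower bound on the probability is essentially the paper's argument (its \cref{prop:JLMUB}): include the event that $\Gamma_k^{(\LL)}\le R$ for all $k\le R+x\Theta_R$, factor by independence, extract the Riemann sum $\prod_{R<k\le R+x\Theta_R}\Pr{\Gamma_k^{(\LL)}\le R}$ via \cref{lem:JLMDeviationPrinciples}, and show the block $k\le R$ contributes $o(v_R\Theta_R)$; your version is if anything slightly leaner since you do not also condition on $\Gamma_k^{(\LL)}>R$ for $k>R+x\Theta_R$. The upper bound is where you genuinely diverge: you use a Chernoff/exponential-tilting bound with tilt $\theta^\ast=v_RJ_\gamma(x)$ and a three-regime analysis of the cumulant sum, whereas the paper avoids tilting altogether and instead writes $\Pr{\Xi_R^{(\LL)}\ge x\Theta_R}$ as a union over subsets $I$ with $\abs{I}\ge R+x\Theta_R$, replaces each $\Pr{\Gamma_k^{(\LL)}\le R}$ by the monotone dominating function $\PP(k)$ of \eqref{eq:DefPk}, and controls the combinatorial entropy by counting configurations $(r_1,r_2,r_3,m)$. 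Your route is the more standard large-deviation argument and makes the role of the rate function transparent (the tilt is chosen so that $p_k e^{\theta^\ast}$ crosses $1$ exactly at $k=R+x\Theta_R$); the paper's route trades the analytic bookkeeping of the tilted cumulant sum for a cruder but self-contained counting estimate. Both rest on the same inputs, namely the one-sided uniform tail bounds from \cref{lem:GinibreBounds} and \cref{lem:GinibreTypeBounds} and the uniform convergence in \cref{lem:JLMDeviationPrinciples}.

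One point in your Chernoff step deserves care. For $\LL\ge1$ the comparison bounds of \cref{lem:GinibreTypeBounds} carry polynomial prefactors, so effectively $p_k\le R^{\beta}e^{-v_RJ_\gamma(t)(1+o(1))}$ with $\beta>0$. In the window $t\in\intcc{x,x+\delta}$ the quantity $p_ke^{\theta^\ast}$ is then only bounded by $R^{\beta}e^{o(v_R)}$, not by something small, so the linearization $\log(1+y)\le y$ would produce a contribution of order $\Theta_R R^{\beta}$, which is \emph{not} $o(v_R\Theta_R)$ in general. You must instead keep the logarithm, i.e.\ bound $\log\bigl(1+p_k(e^{\theta^\ast}-1)\bigr)\le\theta^\ast+\log p_k+O(\log R)$ there, and then the per-term error $O(\log R)$ is absorbed precisely because the hypothesis $\Theta_R/\sqrt{R\log R}\to\infty$ gives $\log R=o(v_R)$. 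With that adjustment (and the exact bound $\log(1+p_k(e^{\theta^\ast}-1))\le\theta^\ast$ for $k\le R$, which costs nothing), the cancellation you describe goes through in all three regimes of \eqref{J}.
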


This proves \eqref{JLM93}, including the \emph{transition} which occurs when $
\gamma=2$.
This particular regime has already been treated by Shirai, and we recover the rate function $I(x+1) = \int_0^x J_2( t) \dif t= \frac{1}{4} (2 (x+1)^2 \log (x+1)- x(3x+2))$
from \cite[Theorem 1.1]{S06LDFermionPointProcess}.
The same proof can be adapted to compute the leading asymptotics of $\Pr{\Xi_R^{(\LL)} \le x\Theta_R}$ for $x<0$ for $\gamma\le 2$ and it should be noticed that a trivial cut-off occurs when $\gamma=2$ and implies that $J_2(x) = \infty$ for $x \le -1$.

Let us conclude this section by stating our functional central limit theorems for counting statistics. We define the processes for $R>0$ and $t\in\bR_+$,
\begin{equation*}
	\widehat\Xi_R^{(\LL)}(t) = R^{-\frac{1}{4}} \Xi_{tR}^{(\LL)}.
\end{equation*}

By applying \cref{thm:FCLTPlanar}, we obtain the following results which describe the global, respectively microscopic, fluctuations of the processes $\widehat\Xi_R^{(\LL)}$.
These results generalize \cref{thm:GinibreFCLTFDD} and \cref{thm:GinibreFCLTSkorokhod} given in the introduction in the context of the Ginibre ensemble ($\LL=0$) to Ginibre-type ensembles associated with higher Landau levels.
Let us emphasize again that the moderate deviations rate function in \eqref{eq:Mod} as well as the correlation structure of the underlying Gaussian process (at the microscopic level) depend non-trivially on the index $\LL$.
\begin{proposition}
\label{thm:GinibreTypeFCLT}
	For any fixed $\LL \in \bN_0$, it holds as $R\to\infty$ that
	\begin{equation*}
		\bigl(\widehat\Xi_R^{(\LL)}(t)\bigr)_{t\in\bR_+} \xrightarrow{\text{fdd}} \bigl(\sigma_t N_t\bigr)_{t\in\bR_+},
	\end{equation*}
	where $(N_t)_{t\in \bR_+}$ is a Gaussian white noise and $\sigma_t^2 = \sqrt{t}\int_\bR \Phi_\LL(x)(1-\Phi_\LL(x)) \dif x$. Moreover, as $R\to\infty$,
	\begin{equation*}
		\Bigl(\widehat\Xi_R^{(\LL)}\Bigl(1+\frac{t}{R}\Bigr)\Bigr)_{t\in \bR_+} \xrightarrow{\cS(\bR )} \bigl(G_t^{(\LL)}\bigr)_{t\in \bR },
	\end{equation*}
	where, for $\Phi_\LL$ is as in \eqref{phialpha}, $(G_t^{(\LL)})_{t\in \bR }$ is a centred Gaussian process with kernel
	\begin{equation*}
		\Cov[\big]{G_s^{(\LL)}}{G_t^{(\LL)}} = \int_\bR \Phi_\LL(x-s)\bigl(1-\Phi_\LL(x-t)\bigr) \dif x,
		\qquad\text{for } s\le t.
	\end{equation*}
\end{proposition}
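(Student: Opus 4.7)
The plan is to derive this proposition as a direct consequence of \cref{thm:FCLTPlanar} applied to the Ginibre-type process $\xi_\LL$, invoking the verification of \cref{NA} that also underlies \cref{thm:GinibreTypeModPhi}. From \eqref{Xialpha} we have $T(r) = r^2$, so we take $\vartheta = 1$, $\Sigma_R = \sqrt{R}$, limit tail function $\Phi = \Phi_\LL$ from \eqref{phialpha} (corresponding to the random variable $Z_\LL$ with density $h_\LL^2$), and correction $\Psi = \Psi_\LL = \frac{x^2}{2}\Phi_\LL' + \frac{1}{3}\Phi_\LL'''$ as in \cref{thm:GinibreTypeModPhi}. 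To verify \eqref{cond}, I would invert the characteristic function expansion \eqref{GFexpansion} by an Edgeworth-type Fourier argument: the $-\i x^3/(3\sqrt{k})$ term in the characteristic function of $\widetilde\Gamma_k^{(\LL)}$ contributes the $\frac{1}{3}\Phi_\LL'''$ summand, while rewriting the argument from $(k-R)/\sqrt{k}$ into $(k-R)/\sqrt{R}$ (together with the index shift $k \mapsto k-\LL$ built into the definition of $\widetilde\Gamma_k^{(\LL)}$) accounts for the $\frac{x^2}{2}\Phi_\LL'$ summand.

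Once \cref{NA} is in place, the first (finite-dimensional) part follows from the first conclusion of \cref{thm:FCLTPlanar}. Indeed $\sqrt{\Sigma_{tR}/\Sigma_R} = t^{1/4} =: g_t$, and since we are in the bulk with $N = \infty$ we have $a^+(t) = \infty$ for all $t$, so $I = \bR$ and
\begin{equation*}
\sigma_t^2 = g_t^2 \int_\bR \Phi_\LL(x)\bigl(1-\Phi_\LL(x)\bigr) \dif x = \sqrt{t} \int_\bR \Phi_\LL(x)\bigl(1-\Phi_\LL(x)\bigr) \dif x,
\end{equation*}
matching the stated value. For the Skorokhod part, probing at the microscopic window $R + t\Sigma_R = R + t\sqrt{R}$, the second conclusion of \cref{thm:FCLTPlanar} immediately yields a centred Gaussian process on $I = \bR$ with covariance $\int_\bR \Phi_\LL(x-s)\bigl(1-\Phi_\LL(x-t)\bigr) \dif x$ for $s \le t$, exactly as claimed. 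In both cases, the identification of the constants is mechanical once the general theorem is invoked.

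The main obstacle is establishing \eqref{cond} with an error $\theta_{R,k}$ summable in $k$. The local asymptotics (for $|k-R| = O(\sqrt{R}\log R)$) come from \eqref{GFexpansion} together with Fourier inversion, but because the density $h_\LL^2$ of $Z_\LL$ is highly oscillatory (Hermite polynomials of degree $\LL$) rather than Gaussian, one cannot just quote a standard Edgeworth expansion. Combined with the fact that $\widetilde\Gamma_k^{(\LL)}$ is no longer infinitely divisible for $\LL \ge 1$, this forces a quantitative Fourier inversion with uniform control of the remainder $O_\beta(k^{-1}(1+x^2)^{-\beta})$ from \eqref{GFexpansion}; this is the technical crux. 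For $k$ far from $R$, one separately needs sharp tail bounds on $\Pr{\Gamma_k^{(\LL)} \le R}$ analogous to those in \cref{thm:gammaJ}, ensuring that the contribution of those $k$ to the summed error is negligible. Once these estimates are secured, the remainder of the argument is a bookkeeping exercise inside the general framework of \cref{thm:FCLTPlanar}.
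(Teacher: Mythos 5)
Your proposal follows the paper's proof exactly: the paper verifies \cref{NA} for the Ginibre $\alpha$-type radii (with $\vartheta=1$, $\Sigma_R=\sqrt{R}$, $\Phi=\Phi_\LL$) by combining the quantitative Fourier inversion of the explicit characteristic function (\cref{lem:GinibreTypeBE}, which is precisely the Edgeworth-type argument you identify as the technical crux) with the gamma-comparison tail bounds of \cref{lem:GinibreTypeBounds}, funnels both through \cref{lem:A}, and then invokes \cref{thm:FCLTPlanar} with $g_t=t^{1/4}$ and $I=\bR$ exactly as you do. Your identification of the constants ($\sigma_t^2$, the microscopic window $R+t\sqrt{R}$, and the covariance kernel) matches the paper's.
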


As a final remark, let us briefly explain what happens if one considers the superposition of several Landau levels.

\begin{remark}
A model which might be physically more relevant than considering \emph{pure Landau levels} is to consider a state with all levels of degree $\le \alpha$ being filled.
This corresponds to a determinantal process $\xi_{\le \alpha}$ on $\bC$ with correlation kernel
\begin{equation} \label{Kalpha2}
K_{\le \alpha}(z,w) = \sum_{\beta\le \alpha} K_{\beta}(z,w) = \sqrt{\alpha+1} L_\LL^{(1)}\bigl(\abs{z-w}^2\bigr)e^{z\bar{w}} , \qquad z,w \in\bC.
\end{equation}
$L_\LL^{(1)}$ is a generalized Laguerre polynomial of degree $\alpha\in\bN$.
Observe that $\xi_{\le \alpha}$ does not correspond to the superposition of independent copies of $(\xi_\beta)_{\beta \le \alpha}$, but according to \eqref{Kalpha2} and \cref{lem:ModulusPoints},
the radii of the point process $\xi_{\le \alpha}$ have the same law as the collection of random variables $(\Gamma_{k}^{(\beta)})_{k\in\bN, \beta\le \alpha}$.
In particular, these random variables do not fit within \cref{NA}. However, according to \eqref{def:Xi} with $T(r) = (\alpha+1) r^2$,
$\Xi_R^{(\le \alpha)} = \sum_{k\in\bN, \beta \le \alpha} \ind_{\Gamma_{k}^{(\beta)} \le R} - (\alpha+1) R = \Xi_R^{(0)} + \cdots + \Xi_R^{(\alpha)}$ and the process $( \Xi_R^{(\beta)})_{\beta\le \alpha}$ are independent.
Using this decomposition and the results from \cref{sec:MainResults}, we obtain that for any $\alpha\in\bN_0$, the random variable $ \Xi_R^{(\le \alpha)}$ converges as $R\to\infty$ in the mod-phi sense of \cref{def:ModPhi} with speed $\sqrt{R}$, cumulant generating function $	\Lambda_{\le\alpha} $ and limiting function $\psi_{\le\alpha}$ given respectively by
\[
\Lambda_{\le\alpha} = \Lambda_0+\dotsb + \Lambda_\alpha
\qquad\text{and}\qquad
\psi_{\le\alpha} = \psi_0\dotsm\psi_\alpha.
\]
Similarly, we obtain functional limit theorems for the process $ \Xi_R^{(\le \alpha)}$ which are analogous to \cref{thm:GinibreTypeFCLT}. Specifically in the microscopic regime, we observe a centred Gaussian process $G^{(\le\alpha)}$ which is just the superposition of independent copies of
$(G^{(\beta)})_{\beta\le \alpha}$.
\end{remark}

\subsection{Hyperbolic ensembles and the zero set of hyperbolic Gaussian analytic function}
\label{sec:hyperbolic}

The Ginibre ensemble is often compared to the zero set $\zeta_1$ of the
\emph{hyperbolic Gaussian analytic function}
$F(z)= \sum_{k=0}^\infty a_k z^k$ with $(a_k)_{n\in \bN_0}$ being a sequence of i.i.d.\ standard complex Gaussian random variables.
The point process $\zeta_1$ turns out to be determinantal and its correlation kernel $K_1$ is the Bergman kernel for the unit disk $\bD = \set{z\in\bC : \abs{z} < 1 }$.
In particular, $\zeta_1$ is invariant (in law) under all linear fractional transformations of $\bD$ which preserve $\nu$, i.e.\ the group $\operatorname{SU}(1,1)$, and its intensity is the invariant measure $\dif\nu (z) = \frac{\dif m(z)}{\pi(1-\abs{z}^2)^2}$ of the \emph{Poincar\'e disk}\footnote{The Poincar\'e disk is a model of 2-dimensional hyperbolic geometry on $\bD$ with metric tensor $\dif\mathrm{s}^2 = \frac{\dif z\dif \overline{z}}{2\pi(1-\abs{z}^2)^2}$. This motivates the name hyperbolic Gaussian analytic function.}. This point process also describes the outliers of random perturbations of certain (infinite) Toeplitz matrices, see \cite{BC16OutlierEigenvalues,BZ19OutliersRandomPermutations}.

It turns out that there is a 1-parameter family of determinantal processes on $\bD$ which generalizes the zero set of the hyperbolic GAF that we call \emph{hyperbolic ensembles} as in \cite[Chapter 3]{K06ZerosRandomAnalyticFunctions}.
These point processes are also invariant under the action of $\operatorname{SU}(1,1)$ and they are associated with Bargmann-Fock spaces of analytic functions. Namely, for any $\DD>0$, the reproducing kernel of the space $\cH_0 \cap L^2\bigl(\bD, \frac{\DD \dif m(z)}{\pi(1-\abs{z}^2)^{1-\DD}}\bigr)$ is
\begin{equation*}
	K_\DD(z,w) = \frac{1}{(1-z\bar{w})^{\DD+1}} = \sum_{k\in\bN_0} \binom{k+\DD}{\DD} z^k\bar{w}^k
\end{equation*}
and we let $\zeta_\DD$ be the determinantal process on $\bD$ with kernel $ K_\DD$ with respect to
$\frac{\DD \dif m(z)}{\pi(1-\abs{z}^2)^{1-\DD}}$.
By \cite[Theorem 4]{K09RandomAnalyticFunctions}, when $\DD \in \bN$, $\zeta_\DD$ corresponds to the zero set of matrix-valued GAF with i.i.d.\ coefficients from the $\DD\times\DD$ Ginibre ensembles.
These hyperbolic ensembles have been studied in the thesis \cite{K06ZerosRandomAnalyticFunctions} and a central limit theorem for smooth linear statistics has been established in \cite{RV07ComplexDeterminantalProcesses}.
Let us record that the process $\zeta_\rho$ has intensity $\dif\nu_\rho (z) = \frac{\DD\dif m(z)}{\pi(1-\abs{z}^2)^2}$
and the parameter $\rho$ is related to the Gauss curvature of the corresponding hyperbolic disk:
$\mathcal{K} = - \nu_\rho^{-1} \Delta \log \nu_\rho= -2\pi/\rho $ where $\Delta = \partial_z\partial_{\overline{z}}$ denotes the (complex) Laplacian.
In particular as the curvature tends to 0, it holds that the push-forward
$(\sqrt{\rho}z)_\# \zeta_\rho$ converges in distribution to the Ginibre ensemble $\xi_0$ as as $\rho \to\infty$.
Since these processes are rotation-invariant, we can study the distributions of their counting statistics in growing (hyperbolic) disks using the formalism from \cref{sec:MainResults}.
We let $T(r) = \frac{\DD r^2}{1-r^2}$ for $r\in\intco{0,1}$ and after \emph{unfolding} the process according to \eqref{def:Gamma}, the modulus of the points have laws given by
\begin{equation} \label{GammaHyper}
	\Pr[\big]{\Gamma_{k}^{(\DD)} \le R}
	= \frac{\Gamma(k+\DD)}{\Gamma(k)\Gamma(\DD)} \int_0^{\frac{R}{\DD}} x^{k-1} (1+x)^{-k-\DD} \dif x, \qquad k\in\bN , \rho>0, R\in\bR_+,
\end{equation}
that is $\Gamma_{k}^{(\DD)}$ are $\DD\Betaprimedist(k,\DD)$-distributed\footnote{Recall that a random variable is $\DD\Betaprimedist(k,\DD)$-distributed if it is the image by $T$ of a $\Betadist(k,\DD)$ random variable with parameters $k,\DD>0$.}.
Then, according to \eqref{def:Xi}, we let for $\DD>0$
\begin{equation*}
	\Xi_R^{(\DD)} = \sum_{k\in\bN} \ind_{\Gamma_k^{(\DD)}\le R} - R \overset{\mathrm{d}}{=} \zeta_\DD(D_r) - \Ex[\big]{\zeta_\DD(D_r)},
	\qquad \text{with } r=\sqrt{\tfrac{R}{\DD+R}}.
\end{equation*}

In \cref{sec:HyperbolicProof} we verify that the random variables $(\Gamma_{k}^{(\DD)})_{k\in\bN}$ satisfy \cref{NA} with $\vartheta =0$, $\Sigma_R=R$ and $\Phi_\DD(x) = \Pr{Y_\rho\le x}$ where $Y_\DD$ is a gamma-distributed random variable with shape $\DD$ and rate $\DD$. Hence, by applying \cref{thm:ModPhiGeneral}, we obtain the following mod-phi convergence result.
\begin{proposition}\label{thm:HyperbolicModPhi}
	For any $\DD> 0$, the random variables $\Xi_R^{(\DD)}$ converges in the mod-phi sense at speed $R$ with cumulant generating function of the form \eqref{MGF} with $I=\bR_+$ and $\Phi(x)=\Phi_\DD(x) = \Pr{Y_\DD>x}$ for $x>0$.
	The limiting function is of the form \eqref{eq:PsiF} with $\Psi(x) = \Psi_\DD(x) = \frac{\rho^{\rho} }{2\Gamma(\DD)} \bigl( \rho x +\rho-1 \bigr) x^{\rho-1} e^{-\rho x}\ind_{x> 0}$.
\end{proposition}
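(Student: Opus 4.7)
The strategy is to apply \cref{thm:ModPhiGeneral}, which reduces the proposition to verifying \cref{NA} for the sequence $(\Gamma_k^{(\DD)})_{k\in\bN}$ with $\vartheta=0$, $\Sigma_R = R$, profile $\Phi=\Phi_\DD$ the tail of $Y_\DD\sim\mathrm{Gamma}(\DD,\DD)$, and correction $\Psi=\Psi_\DD$ as stated. The core task is therefore to derive the two-term asymptotic expansion
\begin{equation*}
	\Pr[\big]{\Gamma_k^{(\DD)}\le R} = \Phi_\DD(k/R) + \frac{1}{R}\Psi_\DD(k/R) + O\bigl(\theta_{R,k}\bigr),
\end{equation*}
uniformly for $k\in\bN$ with a summable error $\sum_k\theta_{R,k}\to 0$ as $R\to\infty$.

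The starting point is the beta-gamma identity: since $\Gamma_k^{(\DD)}\sim\DD\Betaprimedist(k,\DD)$, one has the representation $\Gamma_k^{(\DD)}\overset{\mathrm{d}}{=}\DD Y_k/\widetilde Y_\DD$ with $Y_k\sim\mathrm{Gamma}(k,1)$ and $\widetilde Y_\DD\sim\mathrm{Gamma}(\DD,1)$ independent. Conditioning on $\widetilde Y_\DD$ and changing variable to $y=\DD u$, I would write
\begin{equation*}
	\Pr[\big]{\Gamma_k^{(\DD)}\le R} = \int_0^\infty \Pr[\big]{Y_k\le (R/\DD) y}\,\tfrac{y^{\DD-1}e^{-y}}{\Gamma(\DD)}\dif y = \int_0^\infty \Pr[\big]{Y_k\le R u}\,\tfrac{\DD^\DD u^{\DD-1}e^{-\DD u}}{\Gamma(\DD)}\dif u.
\end{equation*}
Since $Y_k$ is a sum of $k$ i.i.d.\ exponentials, a classical Edgeworth expansion gives, in the regime $Ru \asymp k$,
\begin{equation*}
	\Pr[\big]{Y_k\le Ru} = \ind_{Ru\ge k} + \frac{1}{\sqrt{k}} p\Bigl(\tfrac{Ru-k}{\sqrt{k}}\Bigr)\phi_0\Bigl(\tfrac{Ru-k}{\sqrt{k}}\Bigr) + O\bigl(k^{-1}(1+((Ru-k)/\sqrt{k})^2)^{-\beta}\bigr),
\end{equation*}
where $\phi_0$ is the Gaussian density and $p$ is an explicit cubic polynomial coming from the third cumulant of $Y_k$. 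Plugging this in, the leading indicator term yields $\int_{k/R}^\infty \rho^\rho u^{\rho-1}e^{-\rho u}/\Gamma(\rho)\dif u = \Phi_\DD(k/R)$. The subleading term is obtained by integrating the Gaussian correction against the smooth density $\DD^\DD u^{\DD-1}e^{-\DD u}/\Gamma(\DD)$ via a saddle-point computation localized at $u=k/R$; a short calculation using integration by parts to move a derivative onto the gamma density should produce exactly $\frac{1}{R}\Psi_\DD(k/R)$ with the stated expression involving the factor $\DD x+\DD-1$.

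For the error $\theta_{R,k}$, I would split into three regions: $k\le R^{1-\epsilon}$ (where $\Phi_\DD(k/R)\to 1$ and the beta-prime tail is easy), $k\in[R^{1-\epsilon}, R^{1+\epsilon}]$ (where the Edgeworth bound above gives the needed $O(1/k)$ error, integrable against the Gaussian weight), and $k\ge R^{1+\epsilon}$ (where Chernoff bounds on $Y_k$ give super-polynomial decay). In each case one checks $\sum_k\theta_{R,k}=o(1)$. A minor technical point: the stated $\Psi_\DD$ contains the indicator $\ind_{x>0}$ and is not globally in $W^{1,1}(\bR)$ due to the jump at $0$ (or a Sobolev-endpoint issue when $\DD<1$). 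I would therefore invoke \cref{rk:BV} to decompose $\Psi_\DD=\Psi_{AC}+\Psi_M$, with $\Psi_M$ supported near $0$ and monotone there and $\Psi_{AC}\in W^{1,1}(\bR)$.

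Finally, the technical condition \eqref{ass:DecayCGF} for $\Lambda_\DD(z)=\int_0^\infty\kappa_{\Phi_\DD(x)}(z)\dif x$ reduces, because $\Phi_\DD$ is a smooth strictly decreasing bijection from $\bR_+$ onto $(0,1)$, to a lower bound on $\int_0^1 |1+p(e^z-1)|^2 \dif p$ on the closed strip, which follows by a direct convexity/modulus argument. The main obstacle is the clean identification of the $1/R$ correction: one must verify that after integration the Edgeworth polynomial recombines with the gamma density derivatives to give precisely $\frac{\DD^\DD}{2\Gamma(\DD)}(\DD x+\DD-1)x^{\DD-1}e^{-\DD x}$, and that the expansion remains uniform down to $k$ of order $1$, where the Gaussian approximation of $Y_k$ degenerates and must be complemented by direct estimates on the beta-prime distribution function.
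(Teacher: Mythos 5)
Your overall reduction --- verify \cref{NA} with $\vartheta=0$, $\Sigma_R=R$, profile $\Phi_\DD$ and correction $\Psi_\DD$, then invoke \cref{thm:ModPhiGeneral} --- is exactly the paper's, but your route to the expansion of $\Pr{\Gamma_k^{(\DD)}\le R}$ is genuinely different. The paper (\cref{lem:HyperbolicExpansion}) works directly with the incomplete-beta integral \eqref{GammaHyper}: it writes the complementary probability as $\frac{\Gamma(k+\DD)}{\Gamma(k)\Gamma(\DD)}\int_{R/\DD}^\infty(\tfrac{x}{1+x})^k\frac{\dif x}{x(1+x)^\DD}$, expands $(\tfrac{x}{1+x})^k=e^{-k/x}(1+\tfrac{k}{2x^2}+\dotsb)$ and $\Gamma(k+\DD)/(k^\DD\Gamma(k))$, and substitutes $u=k/x$; this produces the two-term expansion with an explicit error $O_\DD(k^{-2}\wedge R^{-(2\wedge\DD)})$ valid uniformly for \emph{all} $k\in\bN$, so no region splitting and no central limit theorem input is needed. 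Your beta--gamma representation is a legitimate alternative, and your appeal to \cref{rk:BV} is exactly what the paper does for $\DD<1$ (for $\DD\ge1$ one has $\Psi_\DD\in W^{1,1}(\bR)$ directly, since $\Psi_\DD(0^+)=0$ there --- there is no jump).

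There is, however, a concrete error in the expansion you display. The formula $\Pr{Y_k\le Ru}=\ind_{Ru\ge k}+\frac{1}{\sqrt k}p(\cdot)\phi_0(\cdot)+O(\cdot)$ is not a valid asymptotic: near $Ru=k$ the left-hand side differs from the indicator by an $O(1)$ quantity, namely $\Pr{\Normaldist_{0,1}\le\tfrac{Ru-k}{\sqrt k}}-\ind_{Ru\ge k}$, and it is precisely this term that produces the \emph{entire} $1/R$ correction: integrating it against the $\Gammadist(\DD,\DD)$ density $g$ over the $u$-window of width $\sqrt k/R$ and using $\int v\,(\Pr{\Normaldist_{0,1}\le v}-\ind_{v\ge0})\dif v=-\tfrac12$ gives $-\tfrac{k}{2R^2}g'(k/R)$. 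The Edgeworth skewness term $\tfrac{1}{3\sqrt k}\Phi_0'''$, to which you attribute the correction, contributes only $O(R^{-2})$ after integration because $\int\Phi_0'''(v)\dif v=\int v\,\Phi_0'''(v)\dif v=0$. The cleanest repair of your route is to write $\Pr{\Gamma_k^{(\DD)}\le R}=\Ex{\Phi_\DD(Y_k/R)}$ and Taylor-expand $\Phi_\DD$ to second order using $\Ex{Y_k}=k$ and $\Var{Y_k}=k$, which yields $\Psi_\DD(x)=\tfrac{x}{2}\Phi_\DD''(x)=-\tfrac{x}{2}g'(x)=\tfrac{\DD^\DD}{2\Gamma(\DD)}(\DD x-\DD+1)x^{\DD-1}e^{-\DD x}$. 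Note the sign: this agrees with the integral expression appearing in the paper's proof of \cref{lem:HyperbolicExpansion} but not with the closed form $(\DD x+\DD-1)$ in the statement (test $\DD=2$, $k=R$: the exact probability is $3e^{-2}+2e^{-2}/R+O(R^{-2})$, whereas the displayed factor would predict $6e^{-2}/R$), so do not expect your computation to recombine to the stated expression. Finally, tighten your error bookkeeping in the middle region: a pointwise $O(1/k)$ Edgeworth error is not summable over the $\asymp R$ values $k\in[R^{1-\epsilon},R^{1+\epsilon}]$; you must use the localization in the CLT variable to gain the factor $\sqrt k/R$ from the $u$-integration, making the per-$k$ error $O(R^{-3/2})$ and the total $O(R^{-1/2})$.
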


Hence, by \cref{thm:ModPhiConclusions}, we obtain precise moderate and large deviations for counting statistics of these hyperbolic ensembles.
Note that in \cite{FMN16ModPhiConvergence}, the authors show that $\Xi_R^{(1)}/ R^{\frac{1}{3}}$ converges in the mod-Gaussian sense with speed $R^\frac{1}{3}$ and limiting function $\psi (z) = \exp(\frac{z^3}{36})$. It turns out that this mod-Gaussian convergence is a consequence of our \cref{thm:HyperbolicModPhi}.

By applying \cref{thm:FCLTHyperbolic}, we also obtain a functional central limit theorem for counting statistics in hyperbolic disks.

\begin{proposition}
\label{thm:HyperbolicFCLT}
	For any $\DD> 0$, it holds as $R\to\infty$ that
	\begin{equation*}
		\bigl(R^{-\frac{1}{2}} \Xi_{tR}^{(\DD)}\bigr)_{t\in \bR_+} \xrightarrow{\cS(\bR_+)} \bigl(G_t^{(\DD)}\bigr)_{t\in\bR_+}
	\end{equation*}
	where $G^{(\DD)}$ is a centred Gaussian process with covariance kernel given by
	\begin{equation*}
		\Cov[\big]{G_s^{(\DD)}}{G_t^{(\DD)}}
		= \int_{\bR_+} \Pr[\Big]{Y_\rho > \frac{x}{s}} \Pr[\Big]{Y_\rho \le \frac{x}{t}} \dif x
		\qquad\text{for }0<s\le t.
	\end{equation*}
\end{proposition}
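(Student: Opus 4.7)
The plan is to apply the general hyperbolic functional CLT \cref{thm:FCLTHyperbolic} directly to the sequence of radii $(\Gamma_k^{(\rho)})_{k\in\bN}$ defined in \eqref{GammaHyper}. The only input needed is that these $\rho\Betaprimedist(k,\rho)$-distributed variables satisfy \cref{NA} with $\vartheta=0$, and this verification has already been carried out for \cref{thm:HyperbolicModPhi} with $\Sigma_R=R$, $\Phi=\Phi_\rho$ (the tail function of $Y_\rho$), and $\Psi=\Psi_\rho$ (the latter to be understood via the monotone decomposition of \cref{rk:BV} when $\rho<1$, in order to accommodate the $x^{\rho-1}$ singularity at the origin). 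Since we treat the infinite ensemble, $N=\infty$, so $a^+=+\infty$; combined with $a^-=0$ (forced by $\vartheta=0$), this gives $I=\bR_+$ in \eqref{interval}.

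Feeding these parameters into \cref{thm:FCLTHyperbolic} directly yields weak convergence of $\bigl(R^{-1/2}\Xi_{tR}^{(\rho)}\bigr)_{t\in\bR_+}$ in the Skorokhod topology on $\bR_+$ to a centred Gaussian process $G^{(\rho)}$ with covariance kernel
\begin{equation*}
\Cov{G_s^{(\rho)}}{G_t^{(\rho)}} = \int_{\bR_+}\Phi_\rho(x/s)\bigl(1-\Phi_\rho(x/t)\bigr)\,\dif x, \qquad 0<s\le t.
\end{equation*}
Substituting $\Phi_\rho(y)=\Pr{Y_\rho>y}$ and $1-\Phi_\rho(y)=\Pr{Y_\rho\le y}$ then produces exactly the probabilistic expression announced in \cref{thm:HyperbolicFCLT}.

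At the level of \cref{thm:HyperbolicFCLT} itself the argument is therefore essentially mechanical, and the substantive difficulty lies upstream in verifying the two-term expansion required by \cref{NA}, namely
\begin{equation*}
\Pr{\Gamma_k^{(\rho)}\le R}=\Phi_\rho(k/R)+R^{-1}\Psi_\rho(k/R)+O(\theta_{R,k})
\end{equation*}
uniformly, with a summable error $\theta_{R,k}$. A natural route, carried out when proving \cref{thm:HyperbolicModPhi}, is to use the representation $\Gamma_k^{(\rho)}\overset{\mathrm{d}}{=}\rho U_k/V_\rho$ with independent $U_k\sim\Gamma(k,1)$ and $V_\rho\sim\Gamma(\rho,1)$, and to combine a quantitative Edgeworth expansion for $U_k/k$ with integration against the density of $V_\rho$; the limiting tail $\Phi_\rho$ emerges because $\rho/V_\rho\overset{\mathrm{d}}{=}1/Y_\rho$, while the first Edgeworth correction reproduces $\Psi_\rho$. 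Granted this expansion, \cref{thm:HyperbolicFCLT} follows by pure abstract invocation of the general machinery.
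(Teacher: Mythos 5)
Your reduction is exactly the paper's: Proposition \ref{thm:HyperbolicFCLT} is obtained by feeding the verification of \cref{NA} (with $\vartheta=0$, $\Sigma_R=R$, $\Phi=\Phi_\rho$ the tail of $Y_\rho\sim\Gammadist(\rho,\rho)$, and the monotone decomposition of \cref{rk:BV} for $\rho<1$) into the general \cref{thm:FCLTHyperbolic} with $N=\infty$, hence $I=\bR_+$, and then rewriting $\Phi_\rho$ and $1-\Phi_\rho$ as the two probabilities in the covariance kernel. The only divergence is your aside on how the two-term expansion in \cref{NA} is established: the paper's \cref{lem:HyperbolicExpansion} Taylor-expands the explicit $\Betaprimedist(k,\rho)$ integral \eqref{GammaHyper} directly (substituting $u=k/x$ and expanding the Gamma-ratio), rather than using the representation $\Gamma_k^{(\rho)}\overset{\mathrm{d}}{=}\rho U_k/V_\rho$ with an Edgeworth expansion for $U_k$; since you only cite that verification rather than rely on your sketch of it, this does not affect the correctness of the argument for the present proposition.
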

\begin{remark}
	For $\DD=1$, we have $\Phi_1(x) = e^{-x}$ which immediately implies the explicit closed formula $\Cov{G_s^{(1)}}{G_t^{(1)}} = \frac{2s^2}{(s+t)}$ for $0<s\le t$.
\end{remark}

%

\subsection{Summary and conclusions} \label{sec:sum}

The following table summarizes our main findings for the invariant determinantal processes considered in \cref{sec:MainResults}.
\\[5pt]
\begin{tabular}{c|c|c|c|c|c}
 & Kernel & $\frac{\dif \nu}{\dif m}(z)$ & $\Gamma_k$ & $\Sigma_r$ & $Z$\\
\hline
Ginibre ensemble & $\frac{1}{\pi} e^{z\bar{w}-\frac{\abs{z}^2+\abs{w}^2}{2}}$ & $\frac{1}{\pi}$ & $\Gammadist(k,1)$ & $r$ &$\Normaldist(0,1)$ \\
Ginibre $\alpha$-type & $\frac{L_\LL(\abs{z-w}^2)}{\pi} e^{z\bar{w}-\frac{\abs{z}^2+\abs{w}^2}{2}}$ & $\frac{1}{\pi}$ & $\Gammadist_\LL(k,1)$ & $r$ &$\Normaldist_\LL(0,1)$\\
Hyperbolic GAF & $\frac{1}{\pi(1-z\bar{w})^{2}}$ & $\frac{1}{\pi (1-\abs{z}^2)^2}$ & $\Betaprimedist(k,1)$ & $\frac{\DD r^2}{1-r^2}$ &$\Expdist(1)$ \\
Hyperbolic ensembles & $\frac{\DD(1-\abs{z}^2)^{\DD-1}(1-\abs{w}^2)^{\DD-1}}{\pi(1-z\bar{w})^{\DD+1}}$ & $\frac{\DD}{\pi (1-\abs{z}^2)^2}$ & $\DD\Betaprimedist(k,\DD)$ & $\frac{\DD r^2}{1-r^2}$ & $\Gammadist(\DD,\DD)$
\end{tabular}\\[5pt]

The first two columns collect the kernel of the respective model (with respect to the Lebesgue measure) as well as the density of the first intensity measure. These densities correspond to the invariant measure on the respective space, i.e.\ on the complex plane or the hyperbolic disk.

For all models, we have established that the point count statistic in a disk of radius $r$ converges as $r\to\infty$, in the mod-$\phi$ sense at speed $\Sigma_r$ with respect to a cumulant generating function
	\begin{equation} \label{MGF2}
		 \Lambda (z) = \int_{\bR} \bigl( \log\bigl(1+\Phi(x)(e^z-1)\bigr)-\Phi(x)z \bigr) \dif x ,
	\end{equation}
where $\Phi(x) = \Pr{Z >x}$ is the probability tail function of the random variable $Z$ depicted in the last column of the table, see \cref{thm:ModPhiGeneral}.
For all the above ensembles, the speed $\Sigma_r$ is proportional to the surface (with respect to the invariance measure $\frac{\dif \nu}{\dif m}$) of the boundary of the disk of radius $r>0$.
This corroborates the \emph{area law} coming from the Coulomb gas heuristic.
Moreover, by \cref{cor:mod}, the function \eqref{MGF2} characterizes the typical fluctuations and moderate deviations of the counting statistics.
Our finding shows that there is a \emph{universal} structure underlying these asymptotic fluctuations which is governed by the random variable $Z$.
These mod-phi convergence results are obtained from the fact that the modulus of the points of the point process, after an unfolding transformation, are independent random variables $\Gamma_k$ with explicit distribution as given in the third column of the table.

In the case of the Ginibre $\alpha$-type ensembles, we have also described completely the large deviation regime for the point count statistic, see \cref{thm:ldp}, as well as the precise asymptotics for the entanglement entropy, see \cref{thm:arealaw}.

Finally, our analysis also allows us to obtain functional limit theorems for point count statistics in the form \cref{thm:FCLTPlanar} or \cref{thm:FCLTHyperbolic}. It turns out that with the correct scaling (which depends on the geometry), the underlying correlations are again completely governed by the probability distribution $\Phi$ of the random variable $Z$ found in the last column of the table.

\paragraph{Acknowledgement.}
G. L. wishes to thank Luis Daniel Abreu, Jonas Jalowy and Gregory Schehr for useful comments on the first version of this paper and for pointing out references.


\section{Proofs of main results} \label{sec:Pmain}

In the sequel, we rely on the notation from \cref{sec:Background} and write $\Xi_R = \Xi_R^{(N)}$ according to \eqref{def:XiN}, omitting the dependency on $N$.
Note that this is consistent with \eqref{def:Xi} in case $N=\infty$.
First in \cref{sec:lemA}, we provide some general (technical) conditions under which our main hypothesis (\cref{NA} with $\vartheta=1$) hold.
In \cref{sec:proofA}, we show that these conditions arise naturally in the \emph{Euclidean setting} and explain how to verify them (based on Fourier analysis).
In \cref{sec:ModPhiGeneralProof}, we prove our mod-phi result by computing the asymptotics of the cumulant generating function of the random variable $\Xi_R$.
Then, in \cref{sec:FCLT}, we give the proofs of our \emph{macroscopic} and \emph{microscopic} central limit theorems. The arguments are based on the convergence of the joint cumulants of $\Xi_R$ and can be mostly reduced to covariance computations.
It should be emphasized that these proofs rely strongly on the fact that $\Xi_R$ is a sum of independent Bernoulli random variables and our methods apply to general such sums; they do not necessarily need to come from counting statistics of determinantal processes.

\subsection{On \texorpdfstring{\cref{NA}}{Assumptions \ref*{NA}}} \label{sec:lemA}
Several proofs in \cref{sec:Pmain} rely on comparing sums to integrals.
To keep track on the errors in these \emph{Riemann sum approximations}, we use the following Euler-Maclaurin formula: For integers $L_0<L_1$ and any absolutely continuous function $f\colon\bR\to\bC$, it holds
\begin{equation}\label{eq:EulerMaclaurin}
	\sum_{k=L_0+1}^{L_1} f(k) = \int_{L_0}^{L_1} f(u)\dif u + \frac{f(L_1)-f(L_0)}{2}+ \int_{L_0}^{L_1} \Bigl(\fracpart{u}-\frac{1}{2}\Bigr) f'(u) \dif u,
\end{equation}
where $f'$ stands for the weak derivative of $f$ and $\fracpart{u}\in\intco{0,1}$ denotes the fractional part of $u\in\bR$.
Formula \eqref{eq:EulerMaclaurin} follows from a direct integration by parts which is justified by using the absolute continuity of $f$. In particular, we are also allowed to choose $L_0=-\infty$ and $L_1=\infty$ in which case $f(\pm\infty) = \lim_{L\to\pm\infty} f(L) = 0$.
Because $f(x) \le \int_{-\infty}^x f'(u) \dif u$ by absolute continuity, it holds that $\abs{f(x)} \le \norm{f'}_{L^1}$ which shows for all $-\infty\le L_0<L_1\le\infty$, as $R\to\infty$,
\begin{equation}\label{eq:EulerMaclaurinReduced}
	\sum_{k=L_0+1}^{L_1} f\Bigl(\frac{k}{R}\Bigr)
	= \int_{L_0}^{L_1} f\Bigl(\frac{u}{R}\Bigr) \dif u + O\bigl(\norm{f'}_{L^1}\bigr).
\end{equation}

The next lemma shows that if the random variables $\widetilde\Gamma_k = \frac{\Gamma_k-k}{\Sigma_k}$ are statistically approximated by $Z$ for large $k\in\bN$, then under the some technical conditions on the smoothness and tails of $Z$, \cref{NA} are satisfied with $\vartheta=1$.
We have not tried to optimize these conditions because we believe that they suffice for applications.
Note that the normalization of $\widetilde\Gamma_k$ is rather natural as it corresponds to the CLT scaling and it is expected in the \emph{Euclidean setting}.

\begin{lemma} \label{lem:A}
	Let $\Phi(x) = \Pr{Z>x}$ be as in \cref{NA}.
	Assume that there exists $0<\epsilon<\frac{1}{10}$ such that $\Ex{\abs{Z}^{m_\epsilon}}<\infty$ with $m_\epsilon > 3+\frac{1}{2\epsilon}$ and it holds as $R\to\infty$,
	\begin{align}
\notag		\Pr{\Gamma_k > R} &\le \eta_{R,k}
		&&\text{for } k<R-R^{\frac{1}{2}+\epsilon},\\
\label{Eexp}		\Pr{\Gamma_k \le R}
		&= \Phi\Bigl(\tfrac{k-R}{\sqrt{k}}\Bigr) + \frac{1}{\sqrt{k}} \Upsilon\Bigl(\tfrac{k-R}{\sqrt{k}}\Bigr) + o\bigl( \eta_{R,k}\bigr)
		&&\text{for } |k-R| \le R^{\frac{1}{2}+\epsilon} \\
\notag		\Pr{\Gamma_k \le R} &\le \eta_{R,k}
		&&\text{for }k>R+R^{\frac{1}{2}+\epsilon},
	\end{align}
	where the errors $\eta_{R,k}$ satisfy $\sum_{k=1}^\infty \eta_{R,k} \to 0$ as $R\to\infty$.
	We further assume that
	\begin{itemize}
		\item $\Phi\in\cC^2(\bR)$ and there exists $\delta>0$ such that $x\mapsto x^{3+\delta}\Phi''(x)$ is uniformly bounded.
		\item $\Upsilon \in W^{1,1}(\bR)$ is $\alpha$-Hölder continuous for $\alpha>2\epsilon$.
	\end{itemize}
	Then, $(\Gamma_k)_{k\in\bN}$ satisfy \cref{NA} with $\Sigma_R=\sqrt{R}$, $\Phi(x)=\Phi(x)$ and $\Psi(x) = \frac{x^2}{2}\Phi'(x) + \Upsilon(x)$ for all $x\in\bR$.
\end{lemma}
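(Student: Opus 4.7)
The plan is to split the sum over $k\in\bN$ into three regimes --- a bulk regime $|k-R|\le R^{1/2+\epsilon}$ and the two tails $k<R-R^{1/2+\epsilon}$ and $k>R+R^{1/2+\epsilon}$ --- and treat each separately before assembling the errors. In the bulk the hypothesis \eqref{Eexp} is directly applicable, and the desired asymptotic should be obtained by converting the $\sqrt k$-normalization of \eqref{Eexp} into the $\sqrt R$-normalization required by \cref{NA}, via a Taylor expansion; the resulting second-order correction will be exactly the $\tfrac{x^2}{2}\Phi'(x)$ contribution to $\Psi$. In the two tails, the bounds $\Pr{\Gamma_k>R}\le\eta_{R,k}$ and $\Pr{\Gamma_k\le R}\le\eta_{R,k}$ control the true probability, so the task reduces to showing that the ansatz $\Phi(y_k)+\Psi(y_k)/\sqrt R$ is correspondingly close to $1$ or $0$ in a summable sense.

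\textbf{Bulk expansion.} Setting $y=(k-R)/\sqrt R$ and $\tilde y=(k-R)/\sqrt k$ with $|y|\le R^\epsilon$, elementary algebra gives
\begin{equation*}
	\tilde y = y-\frac{y^2}{2\sqrt R}+O\Bigl(\frac{|y|^3}{R}\Bigr),\qquad \frac{1}{\sqrt k}=\frac{1}{\sqrt R}+O\Bigl(\frac{|y|}{R}\Bigr).
\end{equation*}
A second-order Taylor expansion of $\Phi$ at $y$ (permitted by $\Phi\in\cC^2(\bR)$) combined with the $\alpha$-Hölder estimate on $\Upsilon$ then rewrites \eqref{Eexp} as
\begin{equation*}
	\Pr{\Gamma_k\le R}=\Phi(y)+\frac{1}{\sqrt R}\Bigl(\tfrac{y^2}{2}\Phi'(y)+\Upsilon(y)\Bigr)+\theta_{R,k}^{\mathrm b},
\end{equation*}
where $\theta_{R,k}^{\mathrm b}$ collects: the second-order Taylor remainder $(\tilde y-y)^2\|\Phi''\|_{\infty}$ of order $R^{4\epsilon-1}$; the Hölder remainder $R^{-1/2}|\tilde y-y|^{\alpha}$ of order $R^{-(1+\alpha)/2+2\alpha\epsilon}$; a cubic residual $|y|^3|\Phi'(y)|/R$; and the $o(\eta_{R,k})$ error intrinsic to \eqref{Eexp}. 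Summing over the $O(R^{1/2+\epsilon})$ bulk indices yields totals of order $R^{-1/2+5\epsilon}$ and $R^{-\alpha/2+\epsilon(1+2\alpha)}$, both $o(1)$ thanks to $\epsilon<\tfrac{1}{10}$ and $\alpha>2\epsilon$; the cubic term sums to $O(R^{-1/2})$ by a Riemann-sum comparison together with $\Ex{|Z|^3}<\infty$, which follows from $m_\epsilon>3$.

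\textbf{Tails and verification of $\Psi\in W^{1,1}$.} In each tail, the Markov estimate $\Pr{|Z|\ge t}\le c\, t^{-m_\epsilon}$ (from $\Ex{|Z|^{m_\epsilon}}<\infty$) bounds $\Phi(y_k)$ or $1-\Phi(y_k)$; a Riemann-sum comparison based on \eqref{eq:EulerMaclaurinReduced} then produces
\begin{equation*}
	\sum_{|k-R|>R^{1/2+\epsilon}}\bigl(\Phi(y_k)\ind_{y_k>0}+(1-\Phi(y_k))\ind_{y_k<0}\bigr)\le c R^{1/2-\epsilon(m_\epsilon-1)}=o(1),
\end{equation*}
precisely because $m_\epsilon>1+\tfrac{1}{2\epsilon}$, which is implied by the stronger hypothesis $m_\epsilon>3+\tfrac{1}{2\epsilon}$. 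For the contribution of $\Psi/\sqrt R$, one first checks that $\Psi=\tfrac{x^2}{2}\Phi'+\Upsilon$ belongs to $W^{1,1}(\bR)$ as demanded by \cref{NA}: this rests on $\Upsilon\in W^{1,1}$, the integrability of $x^2\Phi'$ (guaranteed by $\Ex{Z^2}<\infty$), and the tail bound $|x^{3+\delta}\Phi''(x)|\le c$ which makes $x^2\Phi''$ integrable. Dominated convergence then gives $\tfrac{1}{\sqrt R}\sum_{|k-R|>R^{1/2+\epsilon}}|\Psi(y_k)|=\int_{|y|>R^\epsilon}|\Psi(y)|\dif y+o(1)=o(1)$.

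\textbf{Main obstacle.} The delicate point is the coupling between the three quantitative parameters $\epsilon$, $\alpha$ and $m_\epsilon$: the Hölder exponent must beat $2\epsilon$ so that the displacement $|\tilde y-y|\sim y^2/\sqrt R$ is absorbed after aggregating over the $O(R^{1/2+\epsilon})$ bulk indices, while the moment of $Z$ must exceed $1+\tfrac{1}{2\epsilon}$ so that the tail of $\Phi$ integrates to $o(1)$ over the outer regimes; and $\epsilon$ must be small enough that the second-order Taylor remainder for $\Phi$ also sums to $o(1)$. The hypotheses have been calibrated precisely so that all these constraints hold simultaneously, after which the three regimes combine into a single error sequence $\theta_{R,k}$ with $\sum_k\theta_{R,k}\to 0$, giving \cref{NA} with the stated $\Phi$, $\Sigma_R=\sqrt R$ and $\Psi(x)=\tfrac{x^2}{2}\Phi'(x)+\Upsilon(x)$.
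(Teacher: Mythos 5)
Your proposal is correct and follows essentially the same route as the paper's proof: the same three-regime decomposition, the same Taylor/H\"older conversion of the $\sqrt{k}$-normalization in \eqref{Eexp} into the $\sqrt{R}$-normalization of \cref{NA} in the bulk (with the same error exponents $R^{-1/2+5\epsilon}$ and $R^{-(1+\alpha)/2+2\alpha\epsilon}$ after summation), and the same moment-tail and Euler--Maclaurin control of the two outer regimes, together with the verification that $\Psi\in W^{1,1}(\bR)$. The only (harmless) variation is that you handle the tail contribution of $\tfrac{x^2}{2}\Phi'$ jointly with $\Upsilon$ via $\Psi\in W^{1,1}$ and a Riemann-sum comparison, whereas the paper uses the pointwise decay $\abs{\Phi'(x)}\le C\abs{x}^{-m_\epsilon}$ (this is where its hypothesis $m_\epsilon>3+\tfrac{1}{2\epsilon}$ enters); note also that with $\tilde y-y=-\tfrac{y^2}{2\sqrt{R}}+O(\abs{y}^3/R)$ as you correctly write, the linear Taylor term is $-\tfrac{y^2}{2\sqrt{R}}\Phi'(y)$, so your displayed bulk expansion carries the same sign convention as the paper's equation \eqref{eq:ExpansionPhi} rather than the one your own expansion of $\tilde y$ produces.
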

\begin{remark}
	If $\frac{\Gamma_k-k}{\Sigma_k}$ converges to $Z$ in distribution with $\Sigma_R = o(\sqrt{R})$, the above lemma still holds true with the only change that $\Psi(x) = \Upsilon(x)$ for all $x\in\bR$ and $\epsilon$ small enough. In case $\Sigma_R \gg \sqrt{R}$, there is an extra contribution appearing which is of larger order than $\frac{1}{\sqrt{k}}\Upsilon(\frac{k-R}{\sqrt{k}})$.
\end{remark}

\begin{proof}
	Denote
	\begin{equation*} 
		\theta_{R,k} =
		\begin{cases}
			\eta_{R,k} + 1-\Phi\bigl(\frac{k-R}{\sqrt{R}}\bigr) + \frac{1}{\sqrt{R}} \abs[\big]{\bigl(\frac{k-R}{\sqrt{R}}\bigr)^2\Phi'\bigl(\frac{k-R}{\sqrt{R}}\bigr)} + \frac{1}{\sqrt{R}} \abs[\big]{\Upsilon\bigl(\frac{k-R}{\sqrt{R}}\bigr)}&\text{if }k<R-R^{\frac{1}{2}+\epsilon},\\
			\eta_{R,k} + \abs[\big]{\Phi\bigl(\frac{k-R}{\sqrt{k}}\bigr) - \Phi\bigl(\frac{k-R}{\sqrt{R}}\bigr) - \frac{(k-R)^2}{R^{\frac{3}{2}}}\Phi'\bigl(\frac{k-R}{\sqrt{R}}\bigr)} + \abs[\big]{\frac{1}{\sqrt{k}}\Psi\bigl(\frac{k-R}{\sqrt{k}}\bigr) - \frac{1}{\sqrt{R}}\Psi\bigl(\frac{k-R}{\sqrt{R}}\bigr)}&\text{if } | k- R| \le R^{\frac{1}{2}+\epsilon},\\
			\eta_{R,k} + \Phi\bigl(\frac{k-R}{\sqrt{R}}\bigr) + \frac{1}{\sqrt{R}} \abs[\big]{\bigl(\frac{k-R}{\sqrt{R}}\bigr)^2\Phi'\bigl(\frac{k-R}{\sqrt{R}}\bigr)} + \frac{1}{\sqrt{R}} \abs[\big]{\Upsilon\bigl(\frac{k-R}{\sqrt{R}}\bigr)}&\text{if }k>R+R^{\frac{1}{2}+\epsilon}
		\end{cases}
	\end{equation*}
	and notice that for this choice we have \eqref{cond}. We need to show that $\sum_{k\in\bN} \theta_{R,k} \to 0$ as $R\to\infty$ which is done below. The additional conditions on $\Phi$ in \cref{NA} are satisfied since $\frac{\Sigma_R^{m_\epsilon+1}}{R^{m_\epsilon}} = R^{\frac{1-m_\epsilon}{2}} \to 0$ as $R\to\infty$.
	Because $\Ex{\abs{Z}^2}<\infty$, the function $x\mapsto x^2\Phi'(x)$ is in $L^1(\bR)$. Because $x\mapsto x^{3+\delta}\Phi''(x)$ is uniformly bounded and continuous, $x\mapsto x^2\Phi''(x)$ is also in $L^1$. This implies that $x\mapsto x^2\Phi'(x)$ is absolutely continuous. As $\Upsilon\in W^{1,1}(\bR)$ by assumption, we conclude that $\Psi$ satisfies the additional conditions in \cref{NA}.

	Let us start by showing that $\sum_{k>R+R^{\frac{1}{2}+\epsilon}} \theta_{R,k} \to 0$ as $R\to\infty$.
	First, notice that for all $x>0$ it holds
	\begin{equation*}
		\abs{x}^k \Phi(x)
		= \abs{x}^k \int_x^\infty \abs{\Phi'(t)} \dif t
		\le \Ex[\big]{\abs{Z}^k}.
	\end{equation*}
	Hence, the moment condition $\Ex{\abs{Z}^{m_\epsilon}}<\infty$ implies that $\Phi(x) \le C_\epsilon x^{-m_\epsilon}$ for some constant $C_\epsilon>0$ and all $x\in\bR_+$.
	Therefore, we have that
	\begin{equation}\label{eq:CLTABdd1}
		\sum_{k=R+R^{\frac{1}{2}+\epsilon}}^\infty \Phi\Bigl(\frac{k-R}{\sqrt{R}}\Bigr)
		\le C_\epsilon R^{\frac{m_\epsilon}{2}} \sum_{k=R^{\frac{1}{2}+\epsilon}}^\infty k^{-m_\epsilon}
		\le C_\epsilon R^\frac{m_\epsilon}{2} \int_{R^{\frac{1}{2}+\epsilon}-1}^\infty x^{-m_\epsilon} \dif x
		\le C_\epsilon R^{\frac{1}{2}+\epsilon-\epsilon m_\epsilon}
	\end{equation}
	and the right-hand side converges to zero as soon as $m_\epsilon>1+\frac{1}{2\epsilon}$. Because $\Phi''$ is bounded, $\Phi'$ is Lipschitz continuous. Using that $\Ex{\abs{Z}^{m_\epsilon}} < \infty$, this implies that $x\mapsto \abs{x^{m_\epsilon} \Phi'(x)}$ is uniformly bounded and therefore there exists a constant $C_\epsilon>0$ such that $\abs{\Phi'(x)} \le C_\epsilon \abs{x}^{-m_\epsilon}$ and all $x\in\bR$. This shows that
	\begin{equation}\label{eq:CLTABdd2}
		\sum_{k=R+R^{\frac{1}{2}+\epsilon}}^\infty \abs[\bigg]{\Bigl(\frac{k-R}{\sqrt{R}}\Bigr)^2 \Phi'\Bigl(\frac{k-R}{\sqrt{R}}\Bigr)}
		\le C_\epsilon R^{\frac{m_\epsilon}{2}-1} \sum_{k=R^{\frac{1}{2}+\epsilon}}^\infty k^{-m_\epsilon+2}
		\le C_\epsilon R^{\frac{1}{2}+3\epsilon-\epsilon m_\epsilon}
	\end{equation}
	which converges to zero for $m_\epsilon>3+\frac{1}{2\epsilon}$. To control the sum corresponding to $\Upsilon$, we apply Euler-Maclaurin's formula \eqref{eq:EulerMaclaurinReduced} to get
	\begin{equation}\label{eq:CLTABdd3}
		\sum_{k=R+R^{\frac{1}{2}+\epsilon}}^\infty \frac{1}{\sqrt{R}} \abs[\Big]{\Upsilon\Bigl(\frac{k-R}{\sqrt{R}}\Bigr)}
		= \int_{R+R^{\frac{1}{2}+\epsilon}}^\infty \frac{1}{\sqrt{R}} \abs[\Big]{\Upsilon\Bigl(\frac{x-R}{\sqrt{R}}\Bigr)} \dif x + O(R^{-\frac{1}{2}})
		= \int_{R^\epsilon}^\infty \abs{\Upsilon(y)} \dif y + O(R^{-\frac{1}{2}}),
	\end{equation}
	where we used the change of variables $y = \frac{x-R}{\sqrt{R}}$. The right-hand side converges to zero as $R\to\infty$ since $\Upsilon \in L^1(\bR)$. Combining \eqref{eq:CLTABdd1}, \eqref{eq:CLTABdd2} and \eqref{eq:CLTABdd3} with the assumption that $\sum_{k\in\bN} \eta_{R,k} \to 0$ as $R\to\infty$ yields that $\sum_{k>R+R^{\frac{1}{2}+\epsilon}} \theta_{R,k} \to 0$ as $R\to\infty$.

	To control $\sum_{k=1}^{R-R^{\frac{1}{2}+\epsilon}} \theta_{R,k}$, the same arguments as for \eqref{eq:CLTABdd2} and \eqref{eq:CLTABdd3} can be applied to show that $\sum_{k=1}^{R-R^{\frac{1}{2}+\epsilon}} \abs{(\frac{k-R}{\sqrt{R}})^2 \Phi'(\frac{k-R}{\sqrt{R}})} + \frac{1}{\sqrt{R}} \abs{\Upsilon(\frac{k-R}{\sqrt{R}})} \to 0$ as $R\to\infty$. The analogue statement for \eqref{eq:CLTABdd1} follows from the bound
	\begin{equation*}
		\abs{x}^k (1-\Phi(x))
		= \abs{x}^k \int_{-\infty}^x \abs{\Phi'(t)} \dif t
		\le \Ex[\big]{\abs{Z}^k}
	\end{equation*}
	for all $x<0$.

	Let us now treat the case corresponding to $k\in\intcc{R-R^{\frac{1}{2}+\epsilon},R+R^{\frac{1}{2}+\epsilon}}$. A Taylor expansion argument shows that there exists $x\in\intcc{\frac{k-R}{\sqrt{k}},\frac{k-R}{\sqrt{R}}}$ such that
	\begin{equation*}
		\Phi\Bigl(\frac{k-R}{\sqrt{k}}\Bigr)
		= \Phi\Bigl(\frac{k-R}{\sqrt{R}}\Bigr) + \Phi'\Bigl(\frac{k-R}{\sqrt{R}}\Bigr) \Bigl(\frac{k-R}{\sqrt{R}}-\frac{k-R}{\sqrt{k}}\Bigr) + \frac{1}{2}\Phi''(x)\Bigl(\frac{k-R}{\sqrt{R}}-\frac{k-R}{\sqrt{k}}\Bigr)^2.
	\end{equation*}
	To control the second and third term, note that it holds uniformly for all $k\in\intcc{R-R^{\frac{1}{2}+\epsilon},R+R^{\frac{1}{2}+\epsilon}}$ as $R\to\infty$ that
	\begin{equation}\label{eq:ExpansionSqrt}
		\frac{1}{\sqrt{k}}
		= \frac{1}{\sqrt{R}} - \frac{k-R}{2R^\frac{3}{2}} + O\biggl(\frac{(k-R)^2}{R^\frac{5}{2}}\biggr)
		= \frac{1}{\sqrt{R}} - \frac{k-R}{2R^\frac{3}{2}} + O\bigl(R^{-\frac{3}{2}+2\epsilon}\bigr).
	\end{equation}
	Because $\Phi'$ and $\Phi''$ are uniformly bounded, we obtain that
	\begin{equation}\label{eq:ExpansionPhi}
		\Phi\Bigl(\frac{k-R}{\sqrt{k}}\Bigr)
		= \Phi\Bigl(\frac{k-R}{\sqrt{R}}\Bigr) + \frac{(k-R)^2}{2R^\frac{3}{2}} \Phi'\Bigl(\frac{k-R}{\sqrt{R}}\Bigr) + O\bigl(R^{-1+4\epsilon}\bigr).
	\end{equation}
	Next, by combining the $\alpha$-Hölder continuity of $\Upsilon$ with \eqref{eq:ExpansionSqrt} there exists a constant $C>0$ such that for all $k\in\intcc{R-R^{\frac{1}{2}+\epsilon},R+R^{\frac{1}{2}+\epsilon}}$ it holds
	\begin{equation}\label{eq:ExpansionPsi}
		\abs[\Big]{\frac{1}{\sqrt{k}} \Upsilon\Bigl(\frac{k-R}{\sqrt{k}}\Bigr) - \frac{1}{\sqrt{R}} \Upsilon\Bigl(\frac{k-R}{\sqrt{R}}\Bigr)}
		\le \frac{C}{\sqrt{R}} \abs[\Big]{\frac{k-R}{\sqrt{k}} - \frac{k-R}{\sqrt{k}}}^\alpha + \frac{\norm{\Upsilon}_\infty \abs{k-R}}{R^\frac{3}{2}}
		\le C R^{-\frac{1+\alpha}{2}+2\epsilon\alpha}
	\end{equation}
	Since the error terms in \eqref{eq:ExpansionPhi} and \eqref{eq:ExpansionPsi} are $o(R^{-\frac{1}{2}-\epsilon})$ for $\epsilon<\frac{1}{10}$ and $\alpha\ge 2\epsilon$, we conclude that $\sum_{k=R-R^{\frac{1}{2}+\epsilon}}^{R+R^{\frac{1}{2}+\epsilon}} \theta_{R,k} \to 0$ as $R\to\infty$.
\end{proof}

\subsection{Proof of \texorpdfstring{\cref{thm:ModPhiGeneral}}{Theorem \ref*{thm:ModPhiGeneral}}}
\label{sec:ModPhiGeneralProof}

\Cref{thm:ModPhiGeneral} is a statement on mod-phi convergence for sums of independent Bernoulli random variables.
The general strategy of the proof consists in showing that the cumulant generating function of the random variable $\Xi_R$ can be approximated by $ \Sigma_R \Lambda + \psi$ using \eqref{cond} and a Riemann sum argument.

Recall that $\kappa_p$ denotes the cumulant generating function of a Bernoulli random variable with parameter $p\in[0,1]$, \eqref{def:kappa}.
We begin by giving uniform bounds on $\kappa_p$ and its derivatives.
\begin{proposition}\label{lem:BoundCGFBernoulli}
	The functions $(p,z)\mapsto \kappa_p(z)$, $(p,z)\mapsto \dot\kappa_p(z)$ and $(p,z)\mapsto \ddot\kappa_p(z)$ are analytic in $(0,1) \times \bS$.	Moreover, for any compact set $A\subseteq \bS$, they are bounded uniformly in $\intcc{0,1}\times A$, i.e.\ there exists a constant $C_A>0$ such that
	\begin{align*}
		\sup_{z\in A}\sup_{p\in\intcc{0,1}} \set[\big]{ \abs[\big]{\kappa_p(z)}, \abs[\big]{\dot\kappa_p(z)}, \abs[\big]{\ddot\kappa_p(z)} } &\le C_A.
	\end{align*}
	In particular, for any $p\in\intcc{0,1}$ it holds
	\begin{equation*}
		\sup_{z\in A} \abs{\kappa_p(z)} \le 2C_Ap(1-p).
	\end{equation*}
\end{proposition}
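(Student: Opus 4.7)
The proof proceeds in three steps corresponding to the three assertions. For analyticity of $\kappa_p$, $\dot\kappa_p$, $\ddot\kappa_p$ on $(0,1)\times\bS$, I would show that the denominator $1+p(e^z-1)$ never vanishes there, after which holomorphy follows from the chain rule together with the principal-branch logarithm. Writing $z=x+iy$ with $|y|<\pi$, the imaginary part $pe^x\sin y$ of the denominator vanishes only at $y=0$, and at $y=0$ the real part $1-p+pe^x$ is strictly positive for every $p\in[0,1]$. Hence $1+p(e^z-1)$ avoids the slit $(-\infty,0]$, the principal branch of $\log$ applies consistently, and the formulas for $\dot\kappa_p(z)$ and $\ddot\kappa_p(z)$, being rational in the non-vanishing denominator, are likewise holomorphic.

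For the uniform bounds on $[0,1]\times A$, I would invoke compactness. Because $A$ is compact inside the open strip $\bS$, the continuous non-vanishing function $1+p(e^z-1)$ admits a positive lower bound on the compact product $[0,1]\times A$, while $e^z-1$ is bounded above on $A$. The three functions then extend continuously to $[0,1]\times A$ and attain finite suprema, giving a common bound $C_A$.

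For the refined inequality $|\kappa_p(z)|\le 2C_A p(1-p)$, I would exploit that $\kappa_p(z)$ vanishes at both endpoints of $[0,1]$ in $p$: trivially $\kappa_0(z)=0$, and $\kappa_1(z)=\log e^z-z=0$ because $|\Im z|<\pi$ places $e^z$ strictly in the domain of the principal logarithm, so $\log e^z=z$. Fixing $z\in A$ and setting $f(p)=\kappa_p(z)$, the boundary conditions $f(0)=f(1)=0$ together with two integrations by parts yield the Green's-function identity
\[
	f(p)=-\int_0^1 G(p,s)\,f''(s)\,ds,
\]
where $G(p,s)=s(1-p)$ for $s\le p$, $G(p,s)=p(1-s)$ for $s>p$, and $\int_0^1 G(p,s)\,ds=p(1-p)/2$. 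Using $|f''(s)|=|\ddot\kappa_s(z)|\le C_A$ from the previous step yields $|\kappa_p(z)|\le\tfrac12 C_A p(1-p)\le 2C_A p(1-p)$.

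The only delicate point is the non-vanishing of $1+p(e^z-1)$ and the identity $\log e^z=z$, both of which fail on the closure $|\Im z|=\pi$: for $y=\pi$ the denominator vanishes at $p=1/(1+e^x)$. The openness of the strip $\bS$ is therefore essential, but once it is used the rest of the argument is elementary, namely a continuity-plus-compactness estimate combined with a standard Dirichlet Green's-function inequality for functions vanishing at both endpoints of $[0,1]$.
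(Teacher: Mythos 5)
Your proof is correct and follows exactly the route the paper sketches (the paper omits the proof, hinting only at direct computation of the derivatives and the vanishing of $\kappa_p$ at the endpoints $p=0,1$ — note the paper's "$\kappa_p(0)=\kappa_p(1)=0$" is a typo for $\kappa_0(z)=\kappa_1(z)=0$, which is the fact you correctly use). Your Green's-function step even yields the sharper constant $\tfrac12 C_A\,p(1-p)$, and you rightly flag that the non-vanishing of $1+p(e^z-1)$ and the identity $\log e^z=z$ depend on $A$ being compactly contained in the open strip $\bS$.
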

We omit the proof which follows from the definition of $\kappa_p$ and the computation of its derivatives.
In particular, for the second bound, note that $\kappa_p(0) = \kappa_p(1)=0$ for any $p\in[0,1]$.

Recall \eqref{def:Xi} and \eqref{def:XiN} and that $\Gamma_k$ are independent non-negative random variables.
Let $\lambda_k = \Pr{\Gamma_k\le R}$ for $k\in\bN$ (we omit the dependency on $R$) so that for $z\in\bS$
\begin{equation*}
	\log \Ex[\big]{e^{z\Xi_R}} = \sum_{k=1}^N \kappa_{\lambda_k}(z).
\end{equation*}
Recall that in the infinite case we have $N=\infty$ and in the finite $N$ case we choose $R$ according to \eqref{interval}.
We now apply the asymptotic expansion \eqref{cond}.
\begin{proposition}\label{lem:TaylorExpansion}
	Let $A\subseteq\bS$ be any compact set. As $R\to\infty$, uniformly for all $z\in A$ it holds
	\begin{equation} \label{eq:TaylorExpansion}
		\sum_{k=1}^N \kappa_{\lambda_k}(z)
		= \sum_{k=1}^N \kappa_{\Phi\bigl(\frac{k-\vartheta R}{\Sigma_R}\bigr)}(z) + \frac{1}{\Sigma_R} \sum_{k=1}^N \Psi\Bigl(\tfrac{k-\vartheta R}{\Sigma_R}\Bigr) \dot\kappa_{\Phi\bigl(\frac{k-\vartheta R}{\Sigma_R}\bigr)}(z) + o_A(1).
	\end{equation}
\end{proposition}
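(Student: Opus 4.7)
The plan is a first-order Taylor expansion of the analytic map $p \mapsto \kappa_p(z)$ around the reference value $p_k := \Phi(\tfrac{k-\vartheta R}{\Sigma_R})$, together with a second-order remainder, followed by term-by-term control using \cref{lem:BoundCGFBernoulli} and \cref{NA}. Writing $\epsilon_k := \lambda_k - p_k$ and using the mean-value form, there exists $\xi_k \in [0,1]$ between $\lambda_k$ and $p_k$ such that
\[
\kappa_{\lambda_k}(z) = \kappa_{p_k}(z) + \epsilon_k \dot\kappa_{p_k}(z) + \tfrac{1}{2}\epsilon_k^2 \ddot\kappa_{\xi_k}(z).
\]
Summing over $k \le N$ and peeling off the two leading pieces of \eqref{eq:TaylorExpansion}, what remains is a linear error, coming from the gap between $\epsilon_k$ and its principal part $\Sigma_R^{-1}\Psi(\tfrac{k-\vartheta R}{\Sigma_R})$, plus the quadratic remainder $\tfrac{1}{2}\sum_k \epsilon_k^2 \ddot\kappa_{\xi_k}(z)$. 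Both need to be shown to be $o_A(1)$ uniformly for $z \in A$.

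For the linear error, I would use $\sup_{p \in [0,1]}|\dot\kappa_p(z)| \le C_A$ from \cref{lem:BoundCGFBernoulli} and write $\epsilon_k = \Sigma_R^{-1}\Psi(\tfrac{k-\vartheta R}{\Sigma_R}) + r_{R,k}$ with $|r_{R,k}| = O(\theta_{R,k})$ by \eqref{cond}, so that $|\sum_k r_{R,k}\dot\kappa_{p_k}(z)| \le C_A\sum_k \theta_{R,k}$, which vanishes by \cref{NA}. For the quadratic remainder, the bound $\sup_{p \in [0,1]}|\ddot\kappa_p(z)| \le C_A$ reduces the task to proving $\sum_{k=1}^N \epsilon_k^2 = o(1)$. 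Since $|r_{R,k}| \le 2$ (as a difference of probabilities), $r_{R,k}^2 \le 2|r_{R,k}| = O(\theta_{R,k})$ and so $\sum_k r_{R,k}^2 = o(1)$. The remaining piece $\Sigma_R^{-2}\sum_k \Psi(\tfrac{k-\vartheta R}{\Sigma_R})^2$ is handled by the Euler-Maclaurin estimate \eqref{eq:EulerMaclaurinReduced}: since $\Psi \in W^{1,1}(\bR) \subset C_0(\bR) \subset L^\infty(\bR)$, the function $\Psi^2$ also lies in $W^{1,1}(\bR)$, so this sum equals $\Sigma_R^{-1}\|\Psi\|_{L^2}^2 + O(\Sigma_R^{-2}) = o(1)$.

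The main obstacle is largely bookkeeping. Uniformity in $z \in A$ is immediate from the uniform-in-$p$ bounds of \cref{lem:BoundCGFBernoulli}. The finite-$N$ cases of \eqref{Ncond} and \eqref{interval} truncate every sum at $k \le N$, but all estimates above are monotone in the range of summation, so the arguments go through unchanged; in the edge regime $ii)$ of \eqref{Ncond} the truncation $N-\vartheta R \approx a^+\Sigma_R$ is still compatible with a cut-off Euler-Maclaurin, again yielding $O(\Sigma_R^{-1})$. The variant from \cref{rk:BV}, where $\Psi = \Psi_{AC}+\Psi_M$ has a monotone component $\Psi_M \in L^1(J)$ on some interval $J$, will be covered by bounding the monotone Riemann sum $\sum_k \Psi_M(\tfrac{k-\vartheta R}{\Sigma_R})^2$ directly against $\Sigma_R\|\Psi_M\|_{L^2(J)}^2$ plus an $O(\|\Psi_M\|_{L^\infty(J)}\|\Psi_M\|_{L^1(J)})$ error from telescoping; this is essentially the only point in the proof where the smoothness hypothesis on $\Psi$ is really used.
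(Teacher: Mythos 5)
Your proof is correct and follows essentially the same route as the paper's: a second-order Taylor expansion of $p\mapsto\kappa_p(z)$ about $\Phi\bigl(\tfrac{k-\vartheta R}{\Sigma_R}\bigr)$, with the linear error controlled by $\sum_k\theta_{R,k}\to0$ and the quadratic remainder by the uniform bounds of \cref{lem:BoundCGFBernoulli} together with the boundedness and integrability of $\Psi$. The only difference is cosmetic: you make the quadratic term explicit via an $L^2$ Riemann sum, where the paper simply invokes the uniform boundedness of $\Psi$.
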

\begin{proof}
	By Taylor expansion of $p\mapsto \kappa_p(z)$, it holds for any $z\in \bS$ that
	\begin{align*}
		\sum_{k=1}^N \kappa_{\lambda_k}(z)
		= \sum_{k=1}^N \kappa_{\Phi (x_k)}(z) + \sum_{k=1}^N \bigl(\lambda_k-\Phi (x_k)\bigr) \dot\kappa_{\Phi (x_k)}(z)
		+ \sum_{k=1}^N \frac{\bigl(\lambda_k-\Phi (x_k)\bigr)^2}{2} \ddot\kappa_{p_k}(z)
	\end{align*}
	with $x_k=\frac{k-\vartheta R}{\Sigma_R}$ and $p_k\in \intcc{\lambda_k, \Phi (x_k)}$ for all $k$. By \cref{lem:BoundCGFBernoulli}, $\dot\kappa_p$ is bounded locally uniformly, so that by the expansion \eqref{cond} it holds
	\begin{align*}
		\sum_{k=1}^N \bigl(\lambda_k-\Phi (x_k)\bigr) \dot\kappa_{\Phi (x_k)}(z)
		= \frac{1}{\Sigma_R} \sum_{k=1}^N \Psi(x_k) \dot\kappa_{\Phi (x_k)}(z) + o_A(1)
	\end{align*}
	locally uniformly for $z\in\bS$. Since $\Psi$ is uniformly bounded on $\bR$, we also have
	\begin{equation*}
		\sum_{k=1}^N \frac{\bigl(\lambda_k-\Phi (x_k)\bigr)^2}{2} \ddot\kappa_{p_k}(z)
		= o_A(1)
	\end{equation*}
	locally uniformly for $z\in\bS$. This proves the claim.
\end{proof}
Let us obtain the asymptotics of the terms on the RHS of \eqref{eq:TaylorExpansion}. We use Euler-Maclaurin's formula \eqref{eq:EulerMaclaurin} to control the error terms in the Riemann sum approximation.
\begin{proposition}\label{lem:ModPhiEM1}
	Let $A\subseteq\bS$ be any compact set. As $R\to\infty$, uniformly for all $z\in A$ it holds
	\begin{equation*}
		\sum_{k=1}^N \kappa_{\Phi \bigl(\frac{k-\vartheta R}{\Sigma_R}\bigr)}(z)
		= \Sigma_R \int_I \kappa_{\Phi (x)}(z) \dif x + \frac{\kappa_{\Phi(a^+)}(z)}{2} + o_A(1).
	\end{equation*}
\end{proposition}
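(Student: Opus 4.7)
The plan is to apply the Euler-Maclaurin formula \eqref{eq:EulerMaclaurin} with $L_0 = 0$, $L_1 = N$, to the absolutely continuous function $g(u) = \kappa_{\Phi((u-\vartheta R)/\Sigma_R)}(z)$, which yields
\[
\sum_{k=1}^{N} g(k) = \int_0^N g(u) \dif u + \frac{g(N)-g(0)}{2} + \int_0^N \Bigl(\fracpart{u}-\tfrac{1}{2}\Bigr) g'(u) \dif u ,
\]
and to show that the first term produces $\Sigma_R \int_I \kappa_{\Phi(x)}(z)\dif x$, the second produces the advertised boundary term, and the third is a $o_A(1)$ oscillatory error.

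For the integral, I substitute $x = (u - \vartheta R)/\Sigma_R$ to get $\Sigma_R \int_{a^-_R}^{a^+_R} \kappa_{\Phi(x)}(z) \dif x$, where $a^-_R = -\vartheta R/\Sigma_R$ and $a^+_R = (N - \vartheta R)/\Sigma_R$ converge to the endpoints $a^\pm$ of $I$. The bound $|\kappa_p(z)| \le 2 C_A\, p(1-p)$ from \cref{lem:BoundCGFBernoulli}, together with \eqref{eq:BoundExpectationZ}, shows that $\kappa_{\Phi(\cdot)}(z) \in L^1(\bR)$ uniformly for $z \in A$. In the $\vartheta=1$ case one then controls $\Sigma_R \int_{-\infty}^{-R/\Sigma_R}|\kappa_{\Phi(x)}(z)|\dif x$ via the tail estimate $\int_{-\infty}^{-M}(1-\Phi(x))\dif x \le M^{1-m} E[|Z|^m \ind_{|Z| \ge M}]$ combined with the moment assumption $\Sigma_R^{m+1} = o(R^m)$; the other endpoint is handled analogously. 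In case $ii)$ for finite $N$, the condition $(N-\vartheta R)/\Sigma_R = a^+ + o(\Sigma_R^{-1})$ from \eqref{Ncond} gives that $\Sigma_R \int_{a^+}^{a^+_R}|\kappa_{\Phi(x)}(z)|\dif x = o(1)$ since $\kappa_{\Phi(\cdot)}(z)$ is locally bounded uniformly in $z \in A$.

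The boundary term requires noting that $\Phi(a^-) = 1$ in both regimes (since $Z > 0$ a.s.\ when $\vartheta = 0$ and $\Pr(Z > -\infty) = 1$ when $\vartheta = 1$), and therefore $\kappa_{\Phi(a^-)}(z) = \kappa_1(z) = 0$. Continuity of $p \mapsto \kappa_p(z)$ and the tail estimates give $g(0) = \kappa_{\Phi(a^-_R)}(z) = o(1)$, while $g(N) \to \kappa_{\Phi(a^+)}(z)$ uniformly on $A$ by \cref{lem:BoundCGFBernoulli} and continuity. This accounts for the $\kappa_{\Phi(a^+)}(z)/2$ term in the statement.

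The main obstacle is the Euler-Maclaurin remainder $\int_0^N (\fracpart{u}-\tfrac12) g'(u) \dif u$. By the chain rule $g'(u) = \Sigma_R^{-1} \Phi'(x(u)) \dot\kappa_{\Phi(x(u))}(z)$, and after substitution this remainder becomes
\[
\int_{a^-_R}^{a^+_R} \bigl(\fracpart{\vartheta R + \Sigma_R v} - \tfrac{1}{2}\bigr)\, h_z(v) \dif v ,
\qquad h_z(v) = \Phi'(v) \dot\kappa_{\Phi(v)}(z) ,
\]
where $h_z \in L^1(\bR)$ uniformly for $z \in A$, because $\Phi$ is absolutely continuous and $\dot\kappa$ is locally bounded. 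The multiplier is bounded by $1/2$ and has vanishing mean on periods of length $1/\Sigma_R$. A Riemann-Lebesgue type argument then forces the integral to vanish as $\Sigma_R \to \infty$: for $h_z \in C^1_c$, one integration by parts against a bounded primitive yields an $O(\Sigma_R^{-1})$ bound; for general $h_z \in L^1$, approximate in $L^1$ by $C^1_c$ functions, where the approximation can be taken uniform over the compact family $\{h_z : z \in A\}$ by continuity of $z \mapsto h_z$ in $L^1$. Making this oscillatory estimate precise and uniform in $z \in A$ is the technical heart of the proof; everything else reduces to elementary tail control.
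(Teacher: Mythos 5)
Your proof is correct and follows essentially the same route as the paper: Euler--Maclaurin applied to $u\mapsto\kappa_{\Phi((u-\vartheta R)/\Sigma_R)}(z)$, tail control of the rescaled integral via the moment bound on $Z$ together with \cref{lem:BoundCGFBernoulli}, the observation $\kappa_{\Phi(a^-)}(z)=\kappa_1(z)=0$ for the boundary term, and a smoothing plus integration-by-parts (Riemann--Lebesgue type) argument for the oscillatory remainder. The only cosmetic difference is that the paper approximates the $z$-independent factor $\Phi'$ alone by a smooth compactly supported function, which makes uniformity in $z\in A$ automatic, whereas you approximate the full family $h_z$ and must additionally invoke compactness of $A$ and continuity of $z\mapsto h_z$ in $L^1$.
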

\begin{proof}
	We apply Euler-Maclaurin's formula \eqref{eq:EulerMaclaurin} to $f(u) = \kappa_{\Phi (\frac{u-\vartheta R}{\Sigma_R})}(z)$ to obtain
	\begin{equation*}
		\sum_{k=1}^N \kappa_{\Phi \bigl(\frac{k-\vartheta R}{\Sigma_R}\bigr)}(z)
		= \int_0^N f(u)\dif u + \frac{f(N)+f(0)}{2} + \int_0^N \Bigl(\fracpart{u}-\frac{1}{2}\Bigr) f'(u) \dif u.
	\end{equation*}
	In the above formula, $f'\in L^1(\bR)$ is the weak derivative of $f$ which exists since $\Phi$ is absolutely continuous by assumption and $\kappa$ is analytic. In the following, we treat all three summands separately.

	First by a change of variable $x = \frac{u-\vartheta R}{\Sigma_R}$ we get
	\begin{equation*}
		 \int_0^N f(u)\dif u
		 = \Sigma_R \int_{-\frac{\vartheta R}{\Sigma_R}}^{\frac{N-\vartheta R}{\Sigma_R}} \kappa_{\Phi (x)}(z) \dif x.
	\end{equation*}
	If $\vartheta=1$, then since $\Ex{\abs{Z}^m}<\infty$ a similar argument as used in \eqref{eq:BoundExpectationZ} we have that $\abs{x}^m\Phi(x)(1-\Phi(x)) \le 2\Ex{\abs{Z}^m} < \infty$ $x\in\bR$. \Cref{lem:BoundCGFBernoulli} then yields that
	\begin{equation*}
		\abs[\bigg]{\Sigma_R \int_{-\infty}^{-\frac{R}{\Sigma_R}} \kappa_{\Phi(x)}(z) \dif x}
		\le 2C_A\Sigma_R \int_{-\infty}^{-\frac{R}{\Sigma_R}} \Phi(x)\bigl(1-\Phi(x)\bigr) \dif x
		\le 2C_A\Ex{\abs{Z}^m} \frac{\Sigma_R^{m+1}}{R^m}
		= o_A(1)
	\end{equation*}
	uniformly for $z\in A$.
	In case $N<\infty$, notice for the upper bound that by \cref{lem:BoundCGFBernoulli} and by the assumptions on the speed of convergence in \eqref{interval} it holds
	\begin{equation*}
		\abs[\bigg]{\Sigma_R \int_\frac{N-\theta R}{\Sigma_R}^{a^+} \kappa_{\Phi(x)}(z) \dif x}
		\le 2C_A \Sigma_R \int_\frac{N-\theta R}{\Sigma_R}^{a^+} \Phi(x)\bigl(1-\Phi(x)\bigr) \dif x
		= o_A(1).
	\end{equation*}
	Hence, we conclude that uniformly for $z\in A$ it holds
	\begin{equation*}
		\Sigma_R \int_{-\frac{\vartheta R}{\Sigma_R}}^\frac{N-\vartheta R}{\Sigma_R} \kappa_{\Phi(x)}(z) \dif x
		= \Sigma_R \int_I \kappa_{\Phi(x)}(z) \dif x + o_A(1).
	\end{equation*}

	For the second term, we have $\Phi(-\frac{\vartheta R}{\Sigma_R}) \to \Phi(a^-)$ and $\Phi (\frac{N-\vartheta R}{\Sigma_R}) \to \Phi(a^+)$ because $\Phi$ is the probability tail function of an absolutely continuous random variable. Moreover, \eqref{cond} evaluated at $k=1$ shows that $\Phi(a^-)=1$. Since $p \mapsto \kappa_p(z)$ is continuous (uniformly for all $z\in A$ by \cref{lem:BoundCGFBernoulli}), this implies that $f(0) \to \kappa_{\Phi(a^-)}(z) = \kappa_1(z) = 0$ and $f(N) \to \kappa_{\Phi(a^+)}(z)$ as $R\to \infty$.
	Hence, we showed that uniformly for all $z\in A$, as $R\to\infty$, it holds
	\begin{equation*}
		\sum_{k=1}^N \kappa_{\Phi \bigl(\frac{k-\vartheta R}{\Sigma_R}\bigr)}(z)
		= \Sigma_R \int_{I} \kappa_{\Phi (x)}(z) \dif x + \frac{\kappa_{\Phi(a^+)}(z)}{2} +
		 \int_0^N \Bigl(\fracpart{u}-\frac{1}{2}\Bigr)f'(u) \dif u + o_A(1).
	\end{equation*}

	Finally, let us investigate the last term
	\begin{equation*}
		\int_0^N \Bigl(\fracpart{u}-\frac{1}{2}\Bigr)f'(u) \dif u
		= \int_{-\frac{\vartheta R}{\Sigma_R}}^\frac{N-\vartheta R}{\Sigma_R} \Bigl(\fracpart[\big]{\Sigma_R x + \vartheta R}-\frac{1}{2}\Bigr) \dot\kappa_{\Phi(x)}(z) \Phi'(x) \dif x,
	\end{equation*}
	where we again used the change of variables $x=\frac{u-\vartheta R}{\Sigma_R}$. The idea is to apply another integration by parts to conclude that the term is vanishing. Since $\Phi'\in L^1(\bR)$ is not necessarily absolutely continuous this is not possible, and we approximate it first by a more regular function. Fix $\epsilon>0$ and let $g$ be a smooth function with compact support such that $\norm{\Phi' - g}_{L^1} \le \epsilon$. Then,
	\begin{align*}
		\MoveEqLeft \int_{-\frac{\vartheta R}{\Sigma_R}}^\frac{N-\vartheta R}{\Sigma_R} \Bigl(\fracpart[\big]{\Sigma_R x+\vartheta R}-\frac{1}{2}\Bigr) \dot\kappa_{\Phi(x)}(z) g(x) \dif x\\
		&= \frac{1}{12\Sigma_R} \biggl(\dot\kappa_{\Phi(\frac{N-\vartheta R}{\Sigma_R})}(z) g\Bigl(\frac{N-\vartheta R}{\Sigma_R}\Bigr) - \dot\kappa_{\Phi(-\frac{\vartheta R}{\Sigma_R})}(z) g\Bigl(-\frac{\vartheta R}{\Sigma_R}\Bigr)\biggr)\\
		&\qquad - \frac{1}{2\Sigma_R} \int_{-\frac{\vartheta R}{\Sigma_R}}^\frac{N-\vartheta R}{\Sigma_R} \Bigl(\fracpart[\big]{\Sigma_Rx+\vartheta R}^2 - \fracpart[\big]{\Sigma_R x+\vartheta R} - \frac{1}{6}\Bigr) \Bigl(\ddot\kappa_{\Phi(x)}(z) \Phi'(x)g(x) + \dot\kappa_{\Phi(x)}(z) g'(x)\Bigr) \dif x
	\end{align*}
	which clearly converges to zero uniformly for all $z\in A$ as $g$ has compact support and all other involved functions are (locally uniformly) bounded. Finally, \cref{lem:BoundCGFBernoulli} implies that, uniformly for $z\in A$,
	\begin{align*}
		\MoveEqLeft \abs[\bigg]{\int_{-\frac{\vartheta R}{\Sigma_R}}^\frac{N-\vartheta R}{\Sigma_R} \Bigl(\fracpart[\big]{\Sigma_R x+\vartheta R}-\frac{1}{2}\Bigr) \dot\kappa_{\Phi(x)}(z) g(x) \dif x - \int_{-\frac{\vartheta R}{\Sigma_R}}^\frac{N-\vartheta R}{\Sigma_R} \Bigl(\fracpart[\big]{\Sigma_R x+\vartheta R}-\frac{1}{2}\Bigr) \dot\kappa_{\Phi(x)}(z) \Phi'(x) \dif x}\\
		&\le \frac{C_A}{2} \norm[\big]{g-\Phi'}_{L^1}
		\le \frac{C_A}{2} \epsilon.
	\end{align*}
	This proves the local uniform convergence to zero for the last term in the Euler-Maclaurin formula and hence concludes the proof.
\end{proof}
\begin{proposition}\label{lem:ModPhiEM2}
	Let $A\subseteq\bS$ be any compact set. As $R\to\infty$, uniformly for all $z\in A$ it holds
	\begin{align*}
		\frac{1}{\Sigma_R} \sum_{k=1}^N \Psi\Bigl(\frac{k-\vartheta R}{\Sigma_R}\Bigr) \dot\kappa_{\Phi\bigl(\frac{k-\vartheta R}{\Sigma_R}\bigr)}(z)
		= \int_I \Psi(x) \dot\kappa_{\Phi(x)}(z) \dif x + o_A(1).
	\end{align*}
\end{proposition}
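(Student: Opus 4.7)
The plan is to mimic the proof of \cref{lem:ModPhiEM1} but apply Euler-Maclaurin's formula \eqref{eq:EulerMaclaurin} to the function $g(u) = \Psi\bigl(\tfrac{u-\vartheta R}{\Sigma_R}\bigr)\, \dot\kappa_{\Phi\bigl(\frac{u-\vartheta R}{\Sigma_R}\bigr)}(z)$, which is absolutely continuous since $\Psi \in W^{1,1}(\bR)$, $\Phi$ is absolutely continuous (it is the tail of a random variable with density), and $p\mapsto \dot\kappa_p(z)$ is smooth in $(0,1)$ by \cref{lem:BoundCGFBernoulli}. After dividing by $\Sigma_R$, this splits the sum into three contributions: the main Riemann integral $\frac{1}{\Sigma_R}\int_0^N g(u)\dif u$, boundary terms $\frac{g(N)-g(0)}{2\Sigma_R}$, and an error term $\frac{1}{\Sigma_R}\int_0^N(\fracpart{u}-\tfrac12) g'(u)\dif u$.

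For the main term, the substitution $x=(u-\vartheta R)/\Sigma_R$ yields $\int_{-\vartheta R/\Sigma_R}^{(N-\vartheta R)/\Sigma_R}\Psi(x)\dot\kappa_{\Phi(x)}(z)\dif x$. By \cref{lem:BoundCGFBernoulli} the integrand is bounded pointwise by $C_A \lvert\Psi(x)\rvert$ uniformly in $z\in A$, and since $\Psi \in L^1(\bR)$, dominated convergence shows this converges to $\int_I \Psi(x)\dot\kappa_{\Phi(x)}(z)\dif x$: indeed the lower endpoint tends to $a^-$ (either $0$ if $\vartheta=0$, or $-\infty$ since $\Sigma_R=o(R)$ if $\vartheta=1$) and the upper endpoint tends to $a^+$ by \eqref{Ncond}. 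The boundary terms are $O(1/\Sigma_R)$ uniformly in $z\in A$, since $\Psi\in W^{1,1}(\bR)\subset C_0(\bR)$ is bounded and $|\dot\kappa_p(z)|\le C_A$ by \cref{lem:BoundCGFBernoulli}.

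For the error term, $\lvert\fracpart{u}-\tfrac12\rvert\le\tfrac12$ and the chain rule gives
\begin{equation*}
g'(u) = \frac{1}{\Sigma_R}\biggl[\Psi'\Bigl(\tfrac{u-\vartheta R}{\Sigma_R}\Bigr)\dot\kappa_{\Phi\bigl(\frac{u-\vartheta R}{\Sigma_R}\bigr)}(z) + \Psi\Bigl(\tfrac{u-\vartheta R}{\Sigma_R}\Bigr)\ddot\kappa_{\Phi\bigl(\frac{u-\vartheta R}{\Sigma_R}\bigr)}(z)\Phi'\Bigl(\tfrac{u-\vartheta R}{\Sigma_R}\Bigr)\biggr].
\end{equation*}
After another change of variables $x=(u-\vartheta R)/\Sigma_R$ and invoking \cref{lem:BoundCGFBernoulli} to bound $\dot\kappa, \ddot\kappa$ on $A$, the error term is controlled uniformly for $z\in A$ by
\begin{equation*}
\frac{C_A}{2\Sigma_R}\Bigl(\|\Psi'\|_{L^1(\bR)} + \|\Psi\|_\infty\,\|\Phi'\|_{L^1(\bR)}\Bigr) = O(1/\Sigma_R),
\end{equation*}
which is $o_A(1)$ since $\Sigma_R\to\infty$. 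Both $\|\Psi'\|_{L^1}$ and $\|\Phi'\|_{L^1}=1$ are finite by assumption.

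This is essentially routine and simpler than \cref{lem:ModPhiEM1}; the prefactor $1/\Sigma_R$ automatically absorbs the boundary contributions so no separate smoothing argument for the oscillatory term is needed. The only subtlety I anticipate concerns the variant considered in \cref{rk:BV}, where $\Psi = \Psi_{AC}+\Psi_M$ with $\Psi_M$ monotone and merely $L^1$ on an interval $J$. The absolutely continuous part is handled exactly as above; for the monotone part, $\Psi_M'$ is a finite signed measure on $J$, so the Euler-Maclaurin error term should be interpreted as a Riemann-Stieltjes integral against $\dif\Psi_M$ (or, equivalently, $\Psi_M$ can be replaced by a smooth approximation $\Psi_M^{(\epsilon)}$ with $\|\Psi_M^{(\epsilon)}-\Psi_M\|_{L^1}\le\epsilon$, and the residual contribution is bounded uniformly in $z\in A$ by $C_A\epsilon$). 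In either case the conclusion is $o_A(1)$, which will complete the proof.
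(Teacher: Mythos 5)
Your proof is correct and follows essentially the same route as the paper: the paper applies its "reduced" Euler--Maclaurin formula \eqref{eq:EulerMaclaurinReduced} to $f(u)=\Psi\bigl(\tfrac{u-\vartheta R}{\Sigma_R}\bigr)\dot\kappa_{\Phi(\frac{u-\vartheta R}{\Sigma_R})}(z)$, which packages exactly the boundary and oscillatory contributions you bound by hand into $O_A(\norm{f'}_{L^1}/\Sigma_R)=o_A(1)$, and then passes to the limit in the integral using the uniform bound on $\dot\kappa$ and $\Psi\in L^1(\bR)$. Your observation that the $1/\Sigma_R$ prefactor removes the need for the smoothing argument used in \cref{lem:ModPhiEM1}, and your remark on the monotone part under \cref{rk:BV}, both match the paper.
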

\begin{proof}
	The proof follows as the one for \cref{lem:ModPhiEM1}. Recall that by \cref{lem:BoundCGFBernoulli} $\dot\kappa_{\Phi(x)}(z)$ and $\ddot\kappa_{\Phi(x)}(z)$ are uniformly bounded for all $z\in A$ and all $x\in\bR$. Hence, by applying the Euler-Maclaurin formula \eqref{eq:EulerMaclaurinReduced} with $f(u) = \Psi(\frac{u-\vartheta R}{\Sigma_R}) \dot\kappa_{\Phi(\frac{u-\vartheta R}{\Sigma_R})}(z)$ we get
	\begin{equation*}
		\frac{1}{\Sigma_R} \sum_{k=1}^N \Psi\Bigl(\frac{k-\vartheta R}{\Sigma_R}\Bigr) \dot\kappa_{\Phi\bigl(\frac{k-\vartheta R}{\Sigma_R}\bigr)}(z)
		= \frac{1}{\Sigma_R}\int_0^N f(u) \dif u + O_A\Bigl(\frac{\norm{f'}_{L^1}}{\Sigma_R}\Bigr)
		= \frac{1}{\Sigma_R}\int_0^N f(u) \dif u + o_A(1)
	\end{equation*}
	uniformly for $z\in A$.
	For the first term, the change of variables $x = \frac{u-\vartheta R}{\Sigma_R}$ implies
	\begin{equation*}
		\frac{1}{\Sigma_R} \int_0^N f(u) \dif u
		= \int_{-\frac{\vartheta R}{\Sigma_R}}^\frac{N-\vartheta R}{\Sigma_R} \Psi(x) \dot\kappa_{\Phi(x)}(z) \dif x.
	\end{equation*}
	Because $\dot\kappa_p$ is locally uniformly bounded, $\Psi \in L^1(\bR)$ and $\intoo{-\frac{\vartheta R}{\Sigma_R},\frac{N-\vartheta R}{\Sigma_R}} \to I$ as $R\to\infty$ by \cref{NA}, we conclude using \cref{lem:BoundCGFBernoulli} that
	\begin{equation*}
		\int_{-\frac{\vartheta R}{\Sigma_R}}^\frac{N-\vartheta R}{\Sigma_R} \Psi(x) \dot\kappa_{\Phi(x)}(z) \dif x
		= \int_{I} \Psi(x) \dot\kappa_{\Phi(x)}(z) \dif x + o_A(1)
	\end{equation*}
	uniformly for all $z\in A$. This proves the claimed uniform asymptotic expansion.
\end{proof}
\begin{proof}[Proof of \cref{thm:ModPhiGeneral}]
	The theorem follows immediately by combining \cref{lem:TaylorExpansion}, \cref{lem:ModPhiEM1} and \cref{lem:ModPhiEM2}. Analyticity of $\Lambda$ and $\psi$ follow from the analyticity of $z\mapsto\kappa_p(z)$ for all $p\in\intcc{0,1}$ together with the locally uniform bounds on $\kappa$ and $\dot\kappa$ from \cref{lem:BoundCGFBernoulli} by Morera's theorem: Since $\sup_{z\in A} \int_I \abs{\dot\kappa_{\Phi(x)}(z)} \dif x \le 2C_A \int_I \Phi(x)(1-\Phi(x)) \dif x < \infty$ for all compact sets $A\subseteq \bS$ (see \eqref{eq:BoundExpectationZ}), we can apply Fubini's theorem to get
	\begin{equation*}
		\oint_\gamma \Lambda(z) \dif z = \int_I \oint_\gamma \kappa_{\Phi(x)}(z) \dif z \dif x = 0
	\end{equation*}
	for all closed paths $\gamma$. This shows that $\Lambda$ is analytic. The same proof applies to show the analyticity of $\psi$.
	It only remains to prove that the function $\Lambda$ indeed satisfy \eqref{ass:DecayCGF}. A direct calculation shows for any $z\in\bR$ and $y\in\intcc{-\pi,\pi}$,
	\begin{align*}
		\abs[\Big]{\exp\bigl(\Lambda (x+iy)-\Lambda (x)\bigr)}
		&= \exp\Bigl(\int_I \log\abs[\big]{1+\Phi (u)\bigl(e^{x+iy}-1\bigr)} - \log\abs[\big]{1+\Phi (u)\bigl(e^x-1\bigr)} \dif u\Bigr)\\
		&= \exp\biggl(\frac{1}{2} \int_I \log\biggl(1-\frac{2\Phi (u)\bigl(1-\Phi (u)\bigr)e^x\bigl(1-\cos y\bigr)}{\bigl(1+\Phi (u)(e^x-1)\bigr)^2} \biggr)\dif u\biggr)\\
		&\le \exp\Bigl(-\bigl(1-\cos y\bigr) e^{-|x|} \int_I \Phi (u)\bigl(1-\Phi (u)\bigr) \dif u\Bigr)\\
		&\le \exp\Bigl(- \frac{ y^2 e^{-|x|}}{12} \int_{I} \Phi (u)\bigl(1-\Phi (u)\bigr) \dif u\Bigr),
	\end{align*}
	where we used that $1- \cos(y) \ge y^2/12$ for all $y\in [-\pi,\pi]$. Since $\int_{I} \Phi (u)\bigl(1-\Phi (u)\bigr) \dif u < \infty$ this proves the condition \eqref{ass:DecayCGF}.
\end{proof}
\begin{remark}
	When working under the more general assumptions from \cref{rk:BV}, we cannot apply the Euler-Maclaurin formula to $\Psi_M$ any more. Instead, monotonicity allows us to apply a Riemann sum argument directly.
\end{remark}


\subsection{Proofs of \texorpdfstring{\cref{thm:FCLTPlanar} and \cref{thm:FCLTHyperbolic}}{Theorem \ref*{thm:FCLTPlanar} and Theorem \ref*{thm:FCLTHyperbolic}}}
\label{sec:FCLT}

Let $J\subseteq\bR$ be a compact interval and (with a slight abuse of notation) $\RT_R=\RT\colon J \to\bR_+$ be a function increasing in $J$ such that $\lim_{R\to\infty} \RT(t) = \infty$ for all $t\in J$; we use $\RT(t) = tR$ and $\RT(t) = R+t\Sigma_R$ later. Let us denote
\begin{equation} \label{Xihat}
	\widehat{\Xi}_R(t)
	= \frac{\Xi_{\RT(t)}^{(N)}}{ \sqrt{\Sigma_{R}}}
	= \frac{\sum_{k=1}^N \ind_{\Gamma_k \le \RT(t)} - \Pr{\Gamma_k \le \RT(t)}}{ \sqrt{\Sigma_{R}}}.
\end{equation}

Note that we omitted the dependency on $N$. The processes $\widehat{\Xi}_R$ are asymptotically normalized. If $N<\infty$, we again choose a sequence $R=R(N)$ which satisfies one of the conditions \eqref{Ncond} and the interval $I$ is given by~\eqref{interval}.

In the context of \cref{thm:FCLTPlanar} and \cref{thm:FCLTHyperbolic}, $\widehat{\Xi}_R(t) $ represents the fluctuations of the number of points in disks of radius $T^{-1}(\RT(t))$; see \eqref{def:XiN}.
The convergence of $(\widehat{\Xi}_R)_{R\in\bR_+}$ as a stochastic process relies on a general central limit theorem for sums of independent Bernoulli random variables which follows basically just from the convergence of covariances.

Within this section we denote $\lambda_{k,t} = \Pr{\Gamma_k \le \RT(t)}$ for $k\in\bN$ and $t\in J$ and rely on the property: $\Var{\ind_{\Gamma_k \le \RT(t)}} = \lambda_{k,t}(1-\lambda_{k,t}) $. Let us first state a multivariate central limit theorem for the processes~$\widehat{\Xi}_R$.
The argument bears similarity with \cite[Lemma 2]{S02GaussianLimitDPP}.
\begin{lemma}\label{lem:FCLTFDD}
	Assume that there exists a function $c\colon J\times J \to\bR $ such that for all $s,t\in J$
	\begin{equation*}
		\lim_{R\to\infty} \Cov[\big]{\widehat{\Xi}_R(s)}{\widehat{\Xi}_R(t)} = c(s,t).
	\end{equation*}
	Then, it holds as $R\to\infty$ that
	\begin{equation*}
		\bigl(\widehat{\Xi}_R(t)\bigr)_{t\in J} \xrightarrow[]{\text{fdd}} \bigl(G_t\bigr)_{t\in J},
	\end{equation*}
	where $(G_t)_{t\in J}$ is a centred Gaussian process with covariance kernel $\Cov{G_s}{G_t} = c(s,t)$ for all $s,t\in J$.
\end{lemma}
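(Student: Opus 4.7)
The plan is to reduce the claim to a one-dimensional central limit theorem for sums of independent bounded Bernoulli-type summands via the Cramér-Wold device. Pick any $m\in\bN$, $t_1,\dotsc,t_m\in J$ and coefficients $a_1,\dotsc,a_m\in\bR$. Then
\begin{equation*}
	\sum_{i=1}^m a_i \widehat{\Xi}_R(t_i)
	= \frac{1}{\sqrt{\Sigma_R}} \sum_{k=1}^N Y_{k,R},
	\qquad
	Y_{k,R} = \sum_{i=1}^m a_i\bigl(\ind_{\Gamma_k\le\RT(t_i)}-\lambda_{k,t_i}\bigr),
\end{equation*}
is a sum of independent centred random variables (independence follows from the independence of the $\Gamma_k$ guaranteed by \cref{lem:ModulusPoints}), each uniformly bounded: $\abs{Y_{k,R}}\le 2\sum_{i}\abs{a_i}=:C$. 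By the Cramér-Wold theorem, it suffices to show that this linear combination converges in distribution to $\Normaldist_{0,\sigma^2}$ with $\sigma^2=\sum_{i,j=1}^m a_ia_j c(t_i,t_j)$.

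By hypothesis on the covariance function, bilinearity gives
\begin{equation*}
	s_R^2 := \Var\biggl(\sum_{i=1}^m a_i \widehat{\Xi}_R(t_i)\biggr)
	= \sum_{i,j=1}^m a_ia_j \Cov[\big]{\widehat{\Xi}_R(t_i)}{\widehat{\Xi}_R(t_j)}
	\xrightarrow[R\to\infty]{} \sigma^2.
\end{equation*}
If $\sigma^2=0$, the random variable converges to $0$ in $L^2$ hence in distribution; otherwise I would verify Lyapunov's condition with exponent $3$ for the triangular array $(Y_{k,R}/\sqrt{\Sigma_R})_{k\le N}$. Using $\abs{Y_{k,R}}\le C$ and $\mathbb{E}[Y_{k,R}^2]\le C\mathbb{E}[\abs{Y_{k,R}}]\le C^2$, one gets
\begin{equation*}
	\frac{1}{\Sigma_R^{3/2}} \sum_{k=1}^N \Ex[\big]{\abs{Y_{k,R}}^3}
	\le \frac{C}{\sqrt{\Sigma_R}} \cdot \frac{1}{\Sigma_R}\sum_{k=1}^N \Ex[\big]{Y_{k,R}^2}
	= \frac{C\, s_R^2}{\sqrt{\Sigma_R}}
	\xrightarrow[R\to\infty]{} 0,
\end{equation*}
since $s_R^2\to\sigma^2<\infty$ and $\Sigma_R\to\infty$. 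The Lindeberg-Feller CLT then yields $\sum_i a_i\widehat{\Xi}_R(t_i)\xrightarrow{d}\Normaldist_{0,\sigma^2}$, and the Cramér-Wold device concludes the convergence of finite-dimensional distributions to the centred Gaussian process with covariance kernel $c$.

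There is no serious obstacle here: the key structural input is that \cref{lem:ModulusPoints} represents $\widehat{\Xi}_R(t)$ as a sum of independent Bernoulli fluctuations, so that any linear combination remains a sum of independent uniformly bounded centred variables. The boundedness makes Lindeberg's condition automatic once the variance stabilises, so the only genuinely needed hypothesis is the assumed convergence of covariances. The same strategy (via Cramér-Wold and Lindeberg-Feller) is standard for linear statistics of determinantal processes with bounded one-particle kernels, cf.\ \cite{S02GaussianLimitDPP}.
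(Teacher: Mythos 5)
Your proof is correct. The skeleton is the same as the paper's: reduce to a one-dimensional statement via Cram\'er--Wold, note that any linear combination $\sum_i a_i\widehat{\Xi}_R(t_i)$ is a normalized sum of independent, centred, uniformly bounded variables $Y_{k,R}$, and observe that its variance converges by bilinearity of the covariance hypothesis. Where you differ is the finishing step: you verify Lyapunov's condition of order $3$ (which is immediate from $\abs{Y_{k,R}}\le C$ and $\Sigma_R\to\infty$) and invoke the Lindeberg--Feller CLT, whereas the paper shows directly via the moment--cumulant formula that every joint cumulant of order $q\ge 3$ is $O(\Sigma_R^{-(q-2)/2})$ and hence vanishes. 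The two routes are equivalent in strength here; your Lyapunov argument is slightly more economical, while the paper's cumulant bound fits the cumulant machinery it reuses immediately afterwards (e.g.\ the fourth joint cumulant estimate in the tightness proof of \cref{thm:FCLTSkorokhod}). You also correctly dispose of the degenerate case $\sigma^2=0$ by $L^2$ convergence, which the paper handles implicitly. The only point worth spelling out in the case $N=\infty$ is that the row of the triangular array is countably infinite, but since $\sum_k \Var(Y_{k,R})<\infty$ the sum converges in $L^2$ and the characteristic function factorizes, so the Lyapunov argument goes through unchanged.
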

\begin{proof}
	Fix $n\in\bN$. We show for any sequence $t_1 \le \dotsb \le t_n$, $t_i\in J$, and for any choice $a_1,\dotsc,a_n\in\bR$ that as $R\to\infty$ it holds
	\begin{equation*}
		a_1\widehat{\Xi}_R(t_1) + \dotsb + a_n\widehat{\Xi}_R(t_n) \xrightarrow{\mathrm{d}} a_1G_{t_1} + \dotsb + a_nG_{t_n}.
	\end{equation*}
	Then, the result follows by the Cramèr-Wold device. Clearly, the mean of both sides is equal to zero. The convergence of the variance follows from the assumed convergence of covariances by bilinearity.
	Finally, we show that the higher order cumulants converge to zero. First, by additivity of cumulants on independent random variables, notice for $q\in\bN$, $q\ge 3$, that
	\begin{equation*}
		\varkappa^{(q)}\bigl(a_1\widehat{\Xi}_R(t_1) + \dotsb + a_n\widehat{\Xi}_R(t_n)\bigr)
		= \sum_{k=1}^N \varkappa^{(q)}\biggl(\sum_{i=1}^n \frac{a_i}{\sqrt{\Sigma_R}} \bigl(\ind_{\Gamma_k\le \RT(t_i)} - \lambda_{k,t_i}\bigr)\biggr).
	\end{equation*}
	Denote by $\cQ_q$ the set of all set partitions of $\set{1,\dotsc,q}$. The moment-cumulant formula then implies
	\begin{align*}
		\MoveEqLeft \abs[\bigg]{ \varkappa^{(q)}\biggl(\sum_{i=1}^n \frac{a_i}{\sqrt{\Sigma_R}} \bigl(\ind_{\Gamma_k\le \RT(t_i)} - \lambda_{k,t_i}\bigr)\biggr) }\\
		&= \abs[\Bigg]{ \sum_{\Pi\in\cQ_q} (-1)^{\abs{\Pi}-1} \bigl(\abs{\Pi}-1\bigr)! \prod_{\pi\in\Pi} \Ex[\bigg]{\biggl(\sum_{i=1}^n \frac{a_i}{\sqrt{\Sigma_R}} \bigl(\ind_{\Gamma_k\le \RT(t_i)} - \lambda_{k,t_i}\bigr)\biggr)^{\abs{\pi}}} }\\
		&\le \sum_{\Pi\in\cQ_q} \bigl(\abs{\Pi}-1\bigr)! \biggl(\sum_{i=1}^n \frac{\abs{a_i}}{\sqrt{\Sigma_R}}\biggr)^{q-2} \Var[\bigg]{\sum_{i=1}^n \frac{a_i}{\sqrt{\Sigma_R}} \bigl(\ind_{\Gamma_k\le \RT(t_i)} - \lambda_{k,t_i}\bigr)},
	\end{align*}
	where in the first inequality we used that all blocks $\pi\in\Pi$ have size at least two (since the involved random variables are centred) and that $\sum_{i=1}^n \frac{a_i}{\sqrt{\Sigma_R}} (\ind_{\Gamma_k\le \RT(t_i)} - \lambda_{k,t_i})$ is bounded by $\sum_{i=1}^n \frac{\abs{a_i}}{\sqrt{\Sigma_R}}$. Next, notice that $\sum_{\Pi\in\cQ_q}(\abs{\Pi}-1)! \le 2^q q!$ because the left-hand side can be bounded by the number of ordered set partitions. Also notice that $\Cov{\ind_{\Gamma_k \le \RT(t_i)}}{\ind_{\Gamma_k \le \RT(t_j)}} \le \min\set{\Var{\ind_{\Gamma_k \le \RT(t_i)}},\Var{\ind_{\Gamma_k \le \RT(t_j)}}}$ so that
	\begin{equation*}
		\Var[\bigg]{\sum_{i=1}^n \frac{a_i}{\sqrt{\Sigma_R}} \bigl(\ind_{\Gamma_k\le \RT(t_i)} - \lambda_{k,t_i}\bigr)}
		\le n\sum_{i=1}^n \Var[\Big]{ \frac{a_i}{\sqrt{\Sigma_R}}\bigl( \ind_{\Gamma_k \le \RT(t_i)} - \lambda_{k,t_i} \bigr) }.
	\end{equation*}
	Therefore, we conclude
	\begin{equation*}
		\limsup_{R\to\infty} \abs[\Big]{\varkappa^{(q)}\bigl(a_1\widehat{\Xi}_R(t_1) + \dotsb + a_n\widehat{\Xi}_R(t_n)\bigr)}
		\le \limsup_{R\to\infty} 2^q q! \biggl(\sum_{i=1}^n \frac{\abs{a_i}}{\sqrt{\Sigma_R}}\biggr)^{q-2} n\sum_{i=1}^n a_i^2 \Var[\big]{\widehat{\Xi}_R(t_i)}
		= 0.
	\end{equation*}
	This proves the convergence in finite-dimensional distributions.
\end{proof}
The convergence in finite-dimensional distributions can be upgraded to convergence in the Skorokhod topology under a mild continuity condition on the expectation $t\mapsto \RT(t)$.
\begin{proposition}\label{thm:FCLTSkorokhod}
	Under the assumptions of \cref{lem:FCLTFDD} and further assuming that there exist constants $\epsilon>0$, $C>0$ and $R_0\in\bR_+$ such that for all $R\ge R_0$ and $s,t\in J$ with $s\le t \le s+1$,
	\begin{equation}\label{eq:MeanCondition}
		\RT(t) - \RT(s) \le C \Sigma_R(t-s)^{\frac{1}{2}+\epsilon},
	\end{equation}
	then, as $R\to\infty$, the following convergence in the Skorokhod topology holds:
	\begin{equation*}
		\bigl(\widehat{\Xi}_R(t)\bigr)_{t\in J} \xrightarrow{\cS(J)} \bigl(G_t\bigr)_{t\in J}.
	\end{equation*}
\end{proposition}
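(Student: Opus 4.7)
The plan is to combine the finite-dimensional convergence established in \cref{lem:FCLTFDD} with tightness of $(\widehat\Xi_R)_R$ in the Skorokhod space $\cS(J)$, invoking a moment-based tightness criterion (e.g.\ Billingsley's Theorem~13.5 in \emph{Convergence of Probability Measures}). The key estimate I aim to establish is
\begin{equation*}
\Ex[\big]{\abs{\widehat\Xi_R(t)-\widehat\Xi_R(s)}^2 \abs{\widehat\Xi_R(u)-\widehat\Xi_R(t)}^2} \le C\,(u-s)^{1+2\epsilon}, \qquad s\le t\le u\in J,
\end{equation*}
uniformly for $R$ sufficiently large. Writing $Y_i := \widehat\Xi_R(t_i)-\widehat\Xi_R(t_{i-1}) = \Sigma_R^{-1/2}\sum_{k=1}^N W_k^{(i)}$ with $(t_0,t_1,t_2)=(s,t,u)$ and $W_k^{(i)} = \ind_{\RT(t_{i-1})<\Gamma_k\le \RT(t_i)} - p_k^{(i)}$ (centred Bernoulli increments bounded by $1$), the two structural inputs are: the pairs $\{(W_k^{(1)}, W_k^{(2)})\}_k$ are independent across $k$, and the events defining $W_k^{(1)}$ and $W_k^{(2)}$ are mutually exclusive for each $k$.

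Expanding $\Ex{Y_1^2 Y_2^2}$ as a 4-fold sum over indices $(k_1,k_2,l_1,l_2)$ and using independence together with centring to discard configurations in which some index appears exactly once, the dominant pair-of-pairs contribution is
\begin{equation*}
\Sigma_R^{-2}\sum_{k\ne l}\Var{W_k^{(1)}}\Var{W_l^{(2)}} \le \frac{(\RT(t)-\RT(s))(\RT(u)-\RT(t))}{\Sigma_R^2} \le C^2 (t-s)^{1/2+\epsilon}(u-t)^{1/2+\epsilon} \le C^2(u-s)^{1+2\epsilon}
\end{equation*}
by the assumption \eqref{eq:MeanCondition} and the AM--GM inequality. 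The cross-pairing term $\Sigma_R^{-2}\bigl(\sum_k p_k^{(1)} p_k^{(2)}\bigr)^2$ is of the same order, using $\sum_k p_k^{(1)} p_k^{(2)} \le \tfrac12(\RT(u)-\RT(s))$ and applying AM--GM again.

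The main obstacle is the fully-diagonal contribution $\Sigma_R^{-2}\sum_k \Ex{(W_k^{(1)}W_k^{(2)})^2}$. A direct case-by-case calculation using the disjointness of the underlying events gives $\Ex{(W_k^{(1)}W_k^{(2)})^2} = p_k^{(1)}p_k^{(2)}\bigl(p_k^{(1)}+p_k^{(2)}-3p_k^{(1)}p_k^{(2)}\bigr) \le 2 p_k^{(1)}p_k^{(2)}$, so this contribution is bounded by $C\Sigma_R^{-1}(u-s)^{1/2+\epsilon}$: while uniformly small in $R$, this does not fit the $(u-s)^{1+\beta}$ scaling required by the criterion. I would resolve this by a two-scale analysis: on intervals with $(u-s)\ge \Sigma_R^{-\gamma}$ (for an appropriately chosen $\gamma>0$) the pair-of-pairs term dominates the fully-diagonal one and the target bound holds; for shorter intervals I would instead exploit that the jumps of $\widehat\Xi_R$ have size $\Sigma_R^{-1/2}\to 0$ together with the fact that the expected number of jumps in such a window is bounded, yielding direct control of the modified Skorokhod modulus $w''$ (which tolerates one jump per sub-interval). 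Combining this tightness with \cref{lem:FCLTFDD} then delivers the claimed convergence in $\cS(J)$.
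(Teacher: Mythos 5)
Your computations are correct (including the identity $\Ex{(W_k^{(1)}W_k^{(2)})^2}=p_k^{(1)}p_k^{(2)}(p_k^{(1)}+p_k^{(2)}-3p_k^{(1)}p_k^{(2)})$), and you have correctly identified the genuine obstruction: the fully-diagonal term, which is only $O(\Sigma_R^{-1}(u-s)^{\frac{1}{2}+\epsilon})$ and so violates the $(u-s)^{1+\beta}$ scaling precisely when $u-s\lesssim \Sigma_R^{-2/(1+2\epsilon)}$. The paper resolves this differently and more cleanly: instead of the raw pairing expansion, it decomposes $\Ex{Y_1^2Y_2^2}=\Var{Y_1}\Var{Y_2}+2\Cov{Y_1}{Y_2}^2+\varkappa(Y_1,Y_1,Y_2,Y_2)$, computes the fourth joint cumulant exactly as a sum over $k$ of an explicit polynomial in the increments $\lambda_{k,t}-\lambda_{k,s}$ and $\lambda_{k,u}-\lambda_{k,t}$, and then splits into the same two regimes you identify (threshold $(u-s)^{\frac{1}{2}+\epsilon}\gtrless \frac{1}{2C\Sigma_R}$). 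On long intervals the prefactor $\Sigma_R^{-2}$ is absorbed into $(u-s)^{1+2\epsilon}$ exactly as in your pair-of-pairs estimate; on short intervals the per-$k$ cumulant factors as $-(\lambda_{k,t}-\lambda_{k,s})(\lambda_{k,u}-\lambda_{k,t})(1-2(\lambda_{k,t}-\lambda_{k,s}))(1-2(\lambda_{k,u}-\lambda_{k,t}))\le 0$, so the whole problematic contribution can simply be discarded by sign. The cumulant grouping thus bundles your diagonal term with the $k=l$ diagonals of the other pairings in a way that acquires a sign, and the moment criterion (Jacod--Shiryaev VI.4.1, supplemented by tightness at one time point and continuity in probability, both of which follow from the variance bound) applies on all scales with no auxiliary argument.

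Your alternative fallback for short intervals is the standard empirical-process workaround and can be made to work, but as written it has an under-justified step: a \emph{bounded expected number} of jumps in a window of length $\delta_R=\Sigma_R^{-2/(1+2\epsilon)}$ does not by itself control the modulus $w''$ (or $w$); you need a statement holding with high probability \emph{uniformly over the $\sim\delta_R^{-1}$ windows covering $J$}, which requires a Chernoff-type concentration bound for the Poisson--binomial jump count in each window, plus the observation that the deterministic drift over such a window is $O(\Sigma_R^{-1/2})$. You would also need to verify that your moment criterion of choice tolerates restricting the fourth-moment bound to $u-s\ge\delta_R$. All of this is feasible, but it is several extra pages of work that the cumulant sign argument renders unnecessary.
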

\begin{proof}
	By \cref{lem:FCLTFDD}, $(\widehat{\Xi}_R(t))_{t\in J}$ converges in finite-dimensional distributions to $(G(t))_{t\in J}$. Thus, it remains to show tightness. We prove this by using the following Kolmogorov criterion \cite[Theorem VI.4.1]{JS03StochasticProcesses}: Fix $t_0\in J$.
	\begin{enumerate}
		\item\label{itm:Tight1} The collection of random variables $(\widehat{\Xi}_R(t_0))_{R\in\bR_+}$ is tight.
		\item\label{itm:Tight2} It holds $\lim\limits_{t\to t_0} \limsup\limits_{R\to\infty} \Pr[\big]{\abs[\big]{\widehat{\Xi}_R(t) - \widehat{\Xi}_R(t_0)} > \epsilon} = 0$ for all $\epsilon>0$.
		\item\label{itm:Tight3} There exist constants $C,R_0\in\bR_+$ and $\beta>1$ such that
	\begin{equation*}
		\Ex[\Big]{\abs[\big]{\widehat{\Xi}_R(t)-\widehat{\Xi}_R(s)}^2 \abs[\big]{\widehat{\Xi}_R(u)-\widehat{\Xi}_R(t)}^2}
		\le C(u-s)^\beta
	\end{equation*}
	for all $s\le t\le u$, $s,t,u\in J$ and all $R\ge R_0$.
	\end{enumerate}

	Concerning \ref{itm:Tight1}, the claim follows immediately from the convergence in distribution of $\widehat{\Xi}_R(t_0)$ for $R\to\infty$. For \ref{itm:Tight2} notice that, for $R>0$, the condition \eqref{eq:MeanCondition} on the means implies
	\begin{align*}
		\Var[\big]{\widehat{\Xi}_R(t) - \widehat{\Xi}_R(t_0)}
		&= \frac{1}{\Sigma_R} \sum_{k=1}^N (\lambda_{k,t} - \lambda_{k,t_0})\bigl(1- (\lambda_{k,t} - \lambda_{k,t_0})\bigl)\\
		&\le \frac{1}{\Sigma_R} \sum_{k=1}^N (\lambda_{k,t} - \lambda_{k,t_0})
		= \frac{\RT(t)-\RT(t_0)}{\Sigma_R}\\
		&\le C(t-t_0)^{\frac{1}{2}+\epsilon}.
	\end{align*}
	Next, apply Chebyshev's inequality and the above variance bound to obtain
	\begin{equation*}
		\limsup_{R\to\infty} \Pr[\big]{\abs[\big]{\widehat{\Xi}_R(t) - \widehat{\Xi}_R(t_0)} > \delta}
		\le \limsup_{R\to\infty} \frac{\Var[\big]{\widehat{\Xi}_R(t) - \widehat{\Xi}_R(t_0)}}{\delta^2}
		\le \frac{C(t-t_0)^{\frac{1}{2}+\epsilon}}{\delta^2}.
	\end{equation*}
	The last term clearly converges to zero as $t\to t_0$.

	Lastly, we show that \ref{itm:Tight3} holds. Notice that
	\begin{align*}
		\MoveEqLeft \Ex[\Big]{\abs[\big]{\widehat{\Xi}_R(t)-\widehat{\Xi}_R(s)}^2 \abs[\big]{\widehat{\Xi}_R(u)-\widehat{\Xi}_R(t)}^2}\\
		&= \Var[\big]{\widehat{\Xi}_R(t)-\widehat{\Xi}_R(s)}\Var[\big]{\widehat{\Xi}_R(u)-\widehat{\Xi}_R(t)} + 2\Cov[\big]{\widehat{\Xi}_R(t)-\widehat{\Xi}_R(s)}{\widehat{\Xi}_R(u)-\widehat{\Xi}_R(t)}^2\\
		&\qquad + \varkappa\bigl(\widehat{\Xi}_R(t)-\widehat{\Xi}_R(s),\widehat{\Xi}_R(t)-\widehat{\Xi}_R(s),\widehat{\Xi}_R(u)-\widehat{\Xi}_R(t),\widehat{\Xi}_R(u)-\widehat{\Xi}_R(t)\bigr),
	\end{align*}
	where the last term denotes the joint cumulant (see \cite[Proposition 6.16]{JLR00RandomGraphs}). By the above variance bound, the first two summands are bounded by
	\begin{align*}
		\Var[\big]{\widehat{\Xi}_R(t)-\widehat{\Xi}_R(s)}\Var[\big]{\widehat{\Xi}_R(u)-\widehat{\Xi}_R(t)}
		&\le C^2(t-s)^{\frac{1}{2}+\epsilon} (u-t)^{\frac{1}{2}+\epsilon}
		\shortintertext{and}
		2\Cov[\big]{\widehat{\Xi}_R(t)-\widehat{\Xi}_R(s)}{\widehat{\Xi}_R(u)-\widehat{\Xi}_R(t)}^2
		&\le 2C^2(t-s)^{\frac{1}{2}+\epsilon} (u-t)^{\frac{1}{2}+\epsilon}.
	\end{align*}
	As $(u-t)^{\frac{1}{2}+\epsilon} (t-s)^{\frac{1}{2}+\epsilon} \le \frac{(u-s)^{1+2\epsilon}}{4}$, it only remains to bound the joint cumulant term:
	\begin{align*}
		\MoveEqLeft \varkappa \bigl(\widehat{\Xi}_R(t)-\widehat{\Xi}_R(s),\widehat{\Xi}_R(t)-\widehat{\Xi}_R(s),\widehat{\Xi}_R(u)-\widehat{\Xi}_R(t),\widehat{\Xi}_R(u)-\widehat{\Xi}_R(t)\bigr)\\
		&= \frac{1}{\Sigma_R^2} \sum_{k=1}^N \varkappa\bigl(\ind_{\RT(s) < \Gamma_k \le \RT(t)},\ind_{\RT(s) < \Gamma_k \le \RT(t)},\ind_{\RT(t) < \Gamma_k \le \RT(u)},\ind_{\RT(t) < \Gamma_k \le \RT(u)}\bigr)\\
		&= \frac{1}{\Sigma_R^2} \sum_{k=1}^N \Bigl( -6(\lambda_{k,t} - \lambda_{k,s})^2(\lambda_{k,u} - \lambda_{k,t})^2 + 2(\lambda_{k,t} - \lambda_{k,s})^2(\lambda_{k,u} - \lambda_{k,t})\\
		&\qquad\qquad\qquad + 2(\lambda_{k,t} - \lambda_{k,s})(\lambda_{k,u} - \lambda_{k,t})^2 - (\lambda_{k,t} - \lambda_{k,s})(\lambda_{k,u} - \lambda_{k,t}) \Bigr).
	\end{align*}
	Let us distinguish two cases.

	\emph{Case 1: $\frac{1}{2C\Sigma_R}\le (u-s)^{\frac{1}{2}+\epsilon}$.} Then, the above bound on the variance yields
	\begin{align*}
		\MoveEqLeft \varkappa \bigl(\widehat{\Xi}_R(t)-\widehat{\Xi}_R(s),\widehat{\Xi}_R(t)-\widehat{\Xi}_R(s),\widehat{\Xi}_R(u)-\widehat{\Xi}_R(t),\widehat{\Xi}_R(u)-\widehat{\Xi}_R(t)\bigr)\\
		&\le \frac{2C(u-s)^{\frac{1}{2}+\epsilon}}{\Sigma_R} \sum_{k=1}^N (\lambda_{k,t} - \lambda_{k,s})(\lambda_{k,u} - \lambda_{k,t})\\
		&= -2C(u-s)^{\frac{1}{2}+\epsilon} \Cov[\big]{\widehat{\Xi}_R(t)-\widehat{\Xi}_R(s)}{\widehat{\Xi}_R(u)-\widehat{\Xi}_R(t)}\\
		&\le 2C(u-s)^{\frac{1}{2}+\epsilon} \sqrt{\Var[\big]{\widehat{\Xi}_R(t)-\widehat{\Xi}_R(s)}\Var[\big]{\widehat{\Xi}_R(u)-\widehat{\Xi}_R(t)}}\\
		&\le C^2(u-s)^{1+2\epsilon}.
	\end{align*}
	This proves the claim in the first case.

	\emph{Case 2: $(u-s)^{\frac{1}{2}+\epsilon}<\frac{1}{2C\Sigma_R}$.} Then,
	\begin{align*}
		\lambda_{k,u} - \lambda_{k,s}
		\le \sum_{k=1}^N \bigl(\lambda_{k,u} - \lambda_{k,s}\bigr)
		= \RT(u) - \RT(s)
		\le C\Sigma_R (u-s)^{\frac{1}{2}+\epsilon}
		\le \frac{1}{2}
	\end{align*}
	which in particular implies that
	\begin{align*}
		\MoveEqLeft -6(\lambda_{k,t} - \lambda_{k,s})^2(\lambda_{k,u} - \lambda_{k,t})^2 + 2(\lambda_{k,t} - \lambda_{k,s})^2(\lambda_{k,u} - \lambda_{k,t})\\
		&\qquad\qquad + 2(\lambda_{k,t} - \lambda_{k,s})(\lambda_{k,u} - \lambda_{k,t})^2 - (\lambda_{k,t} - \lambda_{k,s})(\lambda_{k,u} - \lambda_{k,t})\\
		&\le -(\lambda_{k,t} - \lambda_{k,s})(\lambda_{k,u} - \lambda_{k,t}) \bigl(1-2(\lambda_{k,t} - \lambda_{k,s})\bigr) \bigl(1-2(\lambda_{k,u} - \lambda_{k,t})\bigr)
		\le 0.
	\end{align*}
	Hence, $\varkappa \bigl(\widehat{\Xi}_R(t)-\widehat{\Xi}_R(s),\widehat{\Xi}_R(t)-\widehat{\Xi}_R(s),\widehat{\Xi}_R(u)-\widehat{\Xi}_R(t),\widehat{\Xi}_R(u)-\widehat{\Xi}_R(t)\bigr) \le 0$ in this case and the claim follows.
\end{proof}
\begin{lemma}\label{lem:CovarianceAsymptotic}
	Under \cref{NA}, it holds for any $s,t\in J$ with $s<t$ as $R\to\infty$ that
	\begin{equation*}
		\Cov[\big]{\Xi_{\RT(s)}^{(N)}}{\Xi_{\RT(t)}^{(N)}}
		= \int_0^N \Phi\Bigl(\frac{x-\vartheta\RT(s)}{\Sigma_{\RT(s)}}\Bigr) \biggl( 1 - \Phi\Bigl(\frac{x-\vartheta\RT(t)}{\Sigma_{\RT(t)}}\Bigr) \biggr) \dif x + O(1).
	\end{equation*}
\end{lemma}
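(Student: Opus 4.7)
The plan is to exploit the independence of the $(\Gamma_k)_{k\in\bN}$ and the ordering $s<t$ to reduce the covariance to a sum of products, and then convert this sum to the claimed integral via Euler--Maclaurin.

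\textbf{Step 1 (reduction to a sum of products).} By the independence of $(\Gamma_k)_{k\in\bN}$,
\[
\Cov[\big]{\Xi_{\RT(s)}^{(N)}}{\Xi_{\RT(t)}^{(N)}} = \sum_{k=1}^N \Cov[\big]{\ind_{\Gamma_k\le \RT(s)}}{\ind_{\Gamma_k\le \RT(t)}}.
\]
Since $\RT$ is increasing and $s<t$, we have $\RT(s)\le \RT(t)$, so $\ind_{\Gamma_k\le\RT(s)} \cdot \ind_{\Gamma_k\le \RT(t)} = \ind_{\Gamma_k\le \RT(s)}$. Writing $\lambda_{k,s}=\Pr{\Gamma_k\le \RT(s)}$ and $\lambda_{k,t}=\Pr{\Gamma_k\le \RT(t)}$, the covariance is
\[
\sum_{k=1}^N \bigl( \lambda_{k,s} - \lambda_{k,s}\lambda_{k,t} \bigr) = \sum_{k=1}^N \lambda_{k,s}\bigl(1-\lambda_{k,t}\bigr).
\]

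\textbf{Step 2 (insertion of the smooth profile).} By \cref{NA}, we expand, for $\tau\in\{s,t\}$,
\[
\lambda_{k,\tau} = \Phi\Bigl(\tfrac{k-\vartheta \RT(\tau)}{\Sigma_{\RT(\tau)}}\Bigr) + \tfrac{1}{\Sigma_{\RT(\tau)}}\Psi\Bigl(\tfrac{k-\vartheta \RT(\tau)}{\Sigma_{\RT(\tau)}}\Bigr) + O(\theta_{\RT(\tau),k}).
\]
Plugging this into the product $\lambda_{k,s}(1-\lambda_{k,t})$ and using that $\Phi,\Psi$ are bounded with $\Psi\in L^1(\bR)$, all cross-terms either carry a factor $1/\Sigma_{\RT}$ with an $L^1$ function, or the error $\theta$. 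Specifically, the sum decomposes as
\[
\sum_{k=1}^N \Phi_s(1-\Phi_t) \;+\; \tfrac{1}{\Sigma_{\RT(s)}}\sum_k \Psi_s(1-\Phi_t) \;-\; \tfrac{1}{\Sigma_{\RT(t)}}\sum_k \Phi_s\Psi_t \;+\; O\Bigl(\sum_k \theta_{\RT(s),k}+\theta_{\RT(t),k}\Bigr),
\]
using the shorthand $\Phi_\tau = \Phi(\tfrac{k-\vartheta\RT(\tau)}{\Sigma_{\RT(\tau)}})$ and likewise for $\Psi_\tau$. The error sum is $o(1)$ by \cref{NA}, and each Riemann-like sum $\tfrac{1}{\Sigma_{\RT(\tau)}}\sum_k \Psi_\tau(\cdot)$ is $O(1)$ by \eqref{eq:EulerMaclaurinReduced} applied to a bounded function with $L^1$ derivative (the derivative being $\frac{1}{\Sigma_{\RT(\tau)}}\Psi'_{\tau}\cdot(\cdot)\pm \frac{1}{\Sigma_{\RT(\tau')}}\Psi_\tau \Phi'_{\tau'}$, integrable with $O(1)$ norm after change of variables).

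\textbf{Step 3 (Euler--Maclaurin on the leading sum).} Set $f(u)=\Phi\bigl(\tfrac{u-\vartheta\RT(s)}{\Sigma_{\RT(s)}}\bigr)\bigl(1-\Phi\bigl(\tfrac{u-\vartheta\RT(t)}{\Sigma_{\RT(t)}}\bigr)\bigr)$. Then $f$ is absolutely continuous with
\[
f'(u) = \tfrac{1}{\Sigma_{\RT(s)}}\Phi'\bigl(\tfrac{u-\vartheta\RT(s)}{\Sigma_{\RT(s)}}\bigr)\bigl(1-\Phi_t(u)\bigr) - \tfrac{1}{\Sigma_{\RT(t)}}\Phi_s(u)\Phi'\bigl(\tfrac{u-\vartheta\RT(t)}{\Sigma_{\RT(t)}}\bigr).
\]
Changing variables $x=(u-\vartheta\RT(\tau))/\Sigma_{\RT(\tau)}$ in each piece bounds $\|f'\|_{L^1}\le 2\int|\Phi'(x)|\dif x = 2$, since $\Phi'$ is a probability density. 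Applying \eqref{eq:EulerMaclaurinReduced} therefore gives
\[
\sum_{k=1}^N f(k) = \int_0^N f(u)\dif u + O(1),
\]
which is exactly the stated integral up to the $O(1)$ error. Combining Steps 1--3 yields the claim.

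The only nontrivial point is the uniform $O(1)$ control of the $\Psi$- and $\theta$-corrections; this works because the assumption $\Psi\in W^{1,1}(\bR)$ (or the extension of \cref{rk:BV}) and the integrability of the error $\theta$ make each correction an $O(1)$ Riemann sum. No finer expansion is needed because only a covariance (not a cumulant generating function) is being estimated.
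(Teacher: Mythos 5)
Your proposal is correct and follows essentially the same route as the paper: reduce the covariance to $\sum_{k}\lambda_{k,s}(1-\lambda_{k,t})$ by independence and nesting, insert the expansion \eqref{cond}, dispose of the $\Psi/\Sigma$ and $\theta$ corrections as $O(1)$ and $o(1)$ via \eqref{eq:EulerMaclaurinReduced} and the summability of the errors, and apply Euler--Maclaurin to the leading sum. No gaps.
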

\begin{proof}
	In this proof we treat the case of infinite and finite point processes simultaneously as presented in \eqref{def:XiN}. Recall that in the infinite case, we have $N=\infty$. Let $s,t\in J$ with $s\le t$. By \eqref{cond} we obtain
	\begin{align*}
		\MoveEqLeft[1] \Cov[\big]{\Xi_{\RT(s)}}{\Xi_{\RT(t)}}
		= \sum_{k=1}^N \Pr{\Gamma_k \le \RT(s)} \Pr{\Gamma_k > \RT(t)}\\
		&= \sum_{k=1}^N \Bigl(\Phi\bigl(\tfrac{k-\vartheta\RT(s)}{\Sigma_{\RT(s)}}\bigr) + \tfrac{1}{\Sigma_{\RT(s)}} \Psi\bigl(\tfrac{k-\vartheta\RT(s)}{\Sigma_{\RT(s)}}\bigr) + O\bigl( \theta_{R,k}\bigr)\Bigr)
		\Bigl(1- \Phi\bigl(\tfrac{k-\vartheta\RT(t)}{\Sigma_{\RT(t)}}\bigr) - \tfrac{1}{\Sigma_{\RT(t)}} \Psi\bigl(\tfrac{k-\vartheta\RT(t)}{\Sigma_{\RT(t)}}\bigr) + O\bigl( \theta_{R,k}\bigr)\Bigr).
	\end{align*}
	Since $\Phi$ and $\Psi$ are bounded and the error terms $\theta_{R,k}$ satisfy $\sum_{k=1}^N \theta_{R,k} \to 0$ as $R\to\infty$, we obtain
	\begin{equation}\label{eq:CovExpansion1}
	\begin{split}
		\Cov[\big]{\Xi_{\RT(s)}}{\Xi_{\RT(t)}}
		&= \sum_{k=1}^N \biggl(\Phi\Bigl(\frac{k-\vartheta\RT(s)}{\Sigma_{\RT(s)}}\Bigr) + \frac{1}{\Sigma_{\RT(s)}} \Psi\Bigl(\frac{k-\vartheta\RT(s)}{\Sigma_{\RT(s)}}\Bigr)\biggr)\\
		&\qquad\qquad \biggl(1- \Phi\Bigl(\frac{k-\vartheta\RT(t)}{\Sigma_{\RT(t)}}\Bigr) - \frac{1}{\Sigma_{\RT(t)}} \Psi\Bigl(\frac{k-\vartheta\RT(t)}{\Sigma_{\RT(t)}}\Bigr)\biggr) + o(1).
	\end{split}
	\end{equation}
	Recall that, by \cref{NA}, $\Psi\in W^{1,1}(\bR)$. Hence, we can apply the Euler-Maclaurin formula \eqref{eq:EulerMaclaurinReduced} such that it holds
	\begin{equation*}
		\sum_{k=1}^N \frac{1}{\Sigma_{\RT(t)}} \Psi\Bigl(\frac{k-\vartheta\RT(t)}{\Sigma_{\RT(t)}}\Bigr)
		= \int_0^N \frac{1}{\Sigma_{\RT(t)}} \Psi\Bigl(\frac{x-\vartheta\RT(t)}{\Sigma_{\RT(t)}}\Bigr) \dif x + O\Bigl(\frac{1}{\Sigma_{\RT(t)}}\Bigr).
	\end{equation*}
	By a change of variables and using that $\Psi\in L^1(\bR)$, this shows that
	\begin{equation*}
		\sum_{k=1}^N \frac{1}{\Sigma_{\RT(t)}} \Psi\Bigl(\frac{k-\vartheta\RT(t)}{\Sigma_{\RT(t)}}\Bigr)
		\le \norm{\Psi}_{L^1} + O\Bigl(\frac{1}{\Sigma_{\RT(t)}}\Bigr)
	\end{equation*}
	is uniformly bounded. Combining this with \eqref{eq:CovExpansion1} and using that $\Phi$ and $1-\Phi$ are bounded we obtain
	\begin{equation*}
		\Cov[\big]{\Xi_{\RT(s)}}{\Xi_{\RT(t)}}
		= \sum_{k=1}^N \Phi\Bigl(\frac{k-\vartheta\RT(s)}{\Sigma_{\RT(s)}}\Bigr) \biggl(1- \Phi\Bigl(\frac{k-\vartheta\RT(t)}{\Sigma_{\RT(t)}}\Bigr)\biggr) + O(1).
	\end{equation*}
	Since $\Phi$ is absolutely continuous, applying again \eqref{eq:EulerMaclaurinReduced} implies that
	\begin{equation*}
		\Cov[\big]{\Xi_{\RT(s)}}{\Xi_{\RT(t)}}
		= \int_0^N \Phi\Bigl(\frac{x-\vartheta\RT(s)}{\Sigma_{\RT(s)}}\Bigr) \biggl( 1 - \Phi\Bigl(\frac{x-\vartheta\RT(t)}{\Sigma_{\RT(t)}}\Bigr) \biggr) \dif x + O(1).\qedhere
	\end{equation*}
\end{proof}
\begin{remark}
	When working under the more general assumptions from \cref{rk:BV}, we do use a direct Riemann sum approximation instead of the Euler-Maclaurin formula for $\Psi_M$ which leads to the same results.
\end{remark}

We now combine \cref{lem:FCLTFDD} and \cref{thm:FCLTSkorokhod} with \cref{lem:CovarianceAsymptotic} to conclude \cref{thm:FCLTPlanar}
\begin{proof}[New proof of {\cref{thm:FCLTPlanar}}]
	Recall that in \cref{thm:FCLTPlanar} we have $\vartheta=1$ and $\Sigma_R = o(R)$. We treat the two cases $\RT(t) = tR$ and $\RT(t) = R+t\Sigma_R$ separately.

	Let us consider first the macroscopic scaling $\RT(t)=tR$ in which we choose $J=\bR_+$. By a change of variables, \cref{lem:CovarianceAsymptotic} implies that
	\begin{equation}\label{eq:CovarianceAsymptotic2}
		\Cov[\Big]{\frac{\Xi_{sR}}{\sqrt{\Sigma_R}}}{\frac{\Xi_{tR}}{\sqrt{\Sigma_R}}}
		= \frac{\Sigma_{tR}}{\Sigma_R} \int_\bR \ind_{\intcc[\big]{-\frac{tR}{\Sigma_{tR}},\frac{N-tR}{\Sigma_{tR}}}}(x) \Phi\Bigl(\frac{\Sigma_{tR}x+tR-sR}{\Sigma_{sR}}\Bigr) \Bigl( 1 - \Phi(x) \Bigr) \dif x + o(1).
	\end{equation}
	Notice that since $R\mapsto\Sigma_R$ is non-decreasing and $\Phi$ is non-increasing, we can bound the integrand by $\Phi(x)\ind_{x\ge 0} + (1-\Phi(x))\ind_{x<0}$. Recall that $\Phi(x) = \Pr{Z>x}$ with $\Ex{\abs{Z}}<\infty$. As in \eqref{eq:BoundExpectationZ} this yields that $x\mapsto \Phi(x)\ind_{x\ge 0} + (1-\Phi(x))\ind_{x<0}$ is integrable, and we can apply a dominated convergence theorem in \eqref{eq:CovarianceAsymptotic2}. This shows
	\begin{equation*}
		\lim_{R\to\infty} \Cov[\Big]{\frac{\Xi_{sR}}{\sqrt{\Sigma_R}}}{\frac{\Xi_{tR}}{\sqrt{\Sigma_R}}}
		= g_t^2 \int_{-\infty}^{a^+(t)} \Phi(x) \bigl( 1 - \Phi(x) \bigr)\ind_{s=t} \dif x + o(1),
	\end{equation*}
	where for $N<\infty$ that $a^+=\lim_{N\to\infty} \frac{N-tR}{\Sigma_{tR}}$ and $a^+=\infty$ for $N=\infty$. By \cref{lem:FCLTFDD} this implies the convergence in finite dimensional distributions.

	Consider now the microscopic scaling $\RT(t) = R+t\Sigma_R$. We choose $J=\bR$ in \eqref{Xihat}. By a change of variables, \cref{lem:CovarianceAsymptotic} implies that
	\begin{equation}\label{eq:CovarianceAsymptotic3}
	\begin{split}
		\MoveEqLeft \Cov[\Big]{\frac{\Xi_{R+s\Sigma_R}}{\sqrt{\Sigma_R}}}{\frac{\Xi_{R+t\Sigma_R}}{\sqrt{\Sigma_R}}}\\
		&= \frac{\Sigma_{R+t\Sigma_R}}{\Sigma_R} \int_\bR \ind_{\intcc[\big]{-\frac{R+t\Sigma_R}{\Sigma_{R+t\Sigma_R}},\frac{N-R-t\Sigma_R}{\Sigma_{R+t\Sigma_R}}}}(x) \Phi\Bigl(\frac{\Sigma_{R+t\Sigma_R}x+t\Sigma_R-s\Sigma_R}{\Sigma_{R+s\Sigma_R}}\Bigr) \Bigl( 1 - \Phi(x) \Bigr) \dif x + o(1).
	\end{split}
	\end{equation}
	Using the same bound as in case $\RT(t)=tR$ shows that we can apply the dominated convergence theorem in \eqref{eq:CovarianceAsymptotic3}. Observe that for any $\epsilon>0$, it holds for $R$ sufficiently large,
	\begin{equation*}
		\frac{\Sigma_{R(1-\epsilon)}}{\Sigma_R} < \frac{\Sigma_{R+t\Sigma_R}}{\Sigma_R} < \frac{\Sigma_{R(1+\epsilon)}}{\Sigma_R}.
	\end{equation*}
	Since $\frac{\Sigma_{R(1\pm\epsilon)}}{\Sigma_R} \to g(1\pm \epsilon)$ as $R\to \infty$ and $g$ is continuous at $1$, this shows that $\frac{\Sigma_{R+t\Sigma_R}}{\Sigma_R} \to 1$. Combining this with \eqref{eq:CovarianceAsymptotic3} yields
	\begin{equation*}
		\lim_{R\to\infty} \Cov[\bigg]{\frac{\Xi_{R+s\Sigma_R}}{\sqrt{\Sigma_R}}}{\frac{\Xi_{R+t\Sigma_R}}{\sqrt{\Sigma_R}}}
		= \int_{I-t} \Phi(x+t-s) \bigl(1-\Phi(x)\bigr) \dif x
		= \int_I \Phi(x-s) \bigl(1-\Phi(x-t)\bigr) \dif x,
	\end{equation*}
	where $I$ is defined according to \eqref{interval}.
	The condition on the mean \eqref{eq:MeanCondition} is clearly satisfied for our choice of $\RT(t)=R+t\Sigma_R$ with $\epsilon=\frac{1}{2}$. Thus, we conclude the claimed convergence in the Skorokhod topology from \cref{thm:FCLTSkorokhod}.
\end{proof}

We now combine \cref{lem:FCLTFDD} with \cref{lem:CovarianceAsymptotic} to conclude \cref{thm:FCLTHyperbolic}.
\begin{proof}[Proof of {\cref{thm:FCLTHyperbolic}}]
	In this case $\vartheta = 0$, and we choose $\RT(t)= tR$, $t_0 =1$ and $J=\bR_+$ in \eqref{Xihat}. Then, for all $0<s\le t$, by a change of variables, \cref{lem:CovarianceAsymptotic} implies that
	\begin{equation*}
		\Cov[\Big]{\frac{\Xi_{sR}}{\sqrt{\Sigma_R}}}{\frac{\Xi_{tR}}{\sqrt{\Sigma_R}}}
		= \frac{\Sigma_{tR}}{\Sigma_R} \int_0^\frac{N}{\Sigma_{tR}} \Phi\Bigl(\frac{\Sigma_{tR}x}{\Sigma_{sR}}\Bigr) \Bigl( 1 - \Phi(x) \Bigr) \dif x + o(1).
	\end{equation*}
	Recall that in the hyperbolic setting $\Sigma_R \sim R$. The same bound as given for \eqref{eq:CovarianceAsymptotic2} shows that we can apply the dominated convergence theorem to get
	\begin{equation*}
		\lim_{R\to\infty} \Cov[\Big]{\frac{\Xi_{sR}}{\sqrt{\Sigma_R}}}{\frac{\Xi_{tR}}{\sqrt{\Sigma_R}}}
		= t \int_{I/t} \Phi\Bigl(\frac{tx}{s}\Bigr) \Bigl(1-\Phi(x)\Bigr) \dif x
		= \int_I \Phi\Bigl(\frac{x}{s}\Bigr) \Bigl(1-\Phi\Bigl(\frac{x}{t}\Bigr)\Bigr) \dif t.
	\end{equation*}
	The condition on the mean \eqref{eq:MeanCondition} is clearly satisfied for $\RT(t) = tR$ and $\Sigma_R \sim R$ with $\epsilon=\frac{1}{2}$. Thus, the claimed convergence in the Skorokhod topology follows from \cref{thm:FCLTSkorokhod}.
\end{proof}


\section{Applications to different models} \label{sec:proofA}

In this section, we show that the models presented in \cref{sec:Applications} satisfy \cref{NA}, so that we can apply the results of \cref{sec:MainResults}.


\subsection{Proofs of the results in \texorpdfstring{\cref{sec:Introduction} and \cref{sec:FiniteGinibre}}{Section \ref*{sec:Introduction} and Section \ref*{sec:FiniteGinibre}}}
\label{sec:GinibreProof}

Recall for the Ginibre ensemble that $T(r) = \Ex{\xi(D_r)} = r^2$ and that the random variables $\Gamma_k$ from \eqref{def:Gamma} with $\alpha=0$ are gamma-distributed with shape~$k$ and rate~$1$. The proof relies on \cref{lem:A} and it is split in two parts: First we provide elementary tail-bounds (\cref{lem:GinibreBounds}) for the random variables $\Gamma_k$. Then, using the identity $\Gamma_k \overset{\mathrm{d}}{=} \sum_{n=1}^k Z_n$ with independent exponentially-distributed random variables $Z_n$, we deduce the asymptotics \eqref{cond} from an Edgeworth expansion.

\begin{lemma}\label{lem:GinibreBounds}
	For all $k \ge R$, it holds
	\begin{equation*}
		e^{-R} \frac{R^{k}}{k!}
		\le \Pr[\big]{\Gamma_k \le R}
		\le e^{k-R} \Bigl(\frac{R}{k}\Bigr)^k
		\le 3\sqrt{ k} e^{-R} \frac{R^k}{k!}.
	\end{equation*}
	Similarly, for all $1\le k\le R$ it holds
	\begin{equation*}
		e^{-R} \frac{R^{k-1}}{(k-1)!}
		\le \Pr[\big]{\Gamma_k > R}
		\le e^{k-R} \Bigl(\frac{R}{k}\Bigr)^k
		\le 3\sqrt{k} e^{-R} \frac{R^k}{k!}.
	\end{equation*}
\end{lemma}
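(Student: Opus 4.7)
The plan is to exploit two classical facts: the Poisson–gamma duality, which gives an exact series representation of $\Pr[\Gamma_k \le R]$ and $\Pr[\Gamma_k > R]$, together with the standard Chernoff bound for the MGF $(1-t)^{-k}$ of a gamma random variable.

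First I would recall the identity, obtained by repeated integration by parts on the gamma density, that for any $R > 0$ and $k \in \bN$,
\begin{equation*}
	\Pr[\Gamma_k \le R] = \sum_{j\ge k} e^{-R} \frac{R^j}{j!}
	\qquad\text{and}\qquad
	\Pr[\Gamma_k > R] = \sum_{j=0}^{k-1} e^{-R} \frac{R^j}{j!}.
\end{equation*}
The two \emph{lower bounds} of the lemma follow immediately by keeping only the $j=k$ term in the first sum, respectively the $j=k-1$ term in the second.

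For the first \emph{upper bound}, I would apply the Chernoff inequality: for $\theta > 0$,
\begin{equation*}
	\Pr[\Gamma_k \le R] \le e^{\theta R}\Ex{e^{-\theta \Gamma_k}} = e^{\theta R}(1+\theta)^{-k}.
\end{equation*}
Since $k \ge R$, the optimum is attained at $\theta = k/R - 1 \ge 0$, which yields $e^{k-R}(R/k)^k$. Symmetrically, for the case $1 \le k \le R$, the bound $\Pr[\Gamma_k > R] \le e^{-\theta R}(1-\theta)^{-k}$ is valid for $0 < \theta < 1$ and is optimized at $\theta = 1 - k/R$, giving exactly the same expression.

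The final comparison $e^{k-R}(R/k)^k \le 3\sqrt{k}\, e^{-R} R^k/k!$ reduces, after cancelling the factor $e^{-R} R^k$, to $k! \le 3\sqrt{k}\,(k/e)^k$. This is a consequence of Stirling's formula: the sharp version $k! \le \sqrt{2\pi k}\,(k/e)^k e^{1/(12k)}$ together with $\sqrt{2\pi}\,e^{1/12} < 3$ closes the argument for all $k \ge 1$ (with the small values $k=1,2$ easily checked by hand). No step presents a real obstacle; the statement is essentially bookkeeping around the Poisson–gamma duality and a Chernoff optimization.
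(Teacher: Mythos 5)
Your proposal is correct and follows essentially the same route as the paper: the lower bounds come from the Poisson--gamma duality (keeping a single term of the sum), the upper bound is the Chernoff/Markov bound on the exponential moment of $\Gamma_k$ optimized at $\theta = k/R - 1$ (resp.\ $1-k/R$), and the final inequality is Stirling's bound $k! \le 3\sqrt{k}\,(k/e)^k$. The only cosmetic difference is that you spell out the Stirling constant $\sqrt{2\pi}\,e^{1/12} < 3$, which the paper simply quotes.
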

\begin{proof}
	We only prove the result for $k\ge R$. The bounds for $k\in[1,R]$ follow in a similar way. For the lower bound we use the following relation between the Poisson and the gamma distribution: For any $k\in\bN$ and any $x\in\bR_+$, by integration by parts it holds
	\begin{equation}\label{eq:GammaPoisson}
		\Pr{\Gamma_k \le x}
		= \int_0^x \frac{y^{k-1}}{(k-1)!}e^{-y} \dif y
		= 1-e^{-x} \sum_{\ell=0}^{k-1} \frac{x^\ell}{\ell!}
		= \Pr[\big]{\Poissondist(x) \ge k}.
	\end{equation}
	Hence, we have for any $k\in \bN$,
	\begin{equation*}
		\Pr[\big]{\Gamma_k \le R} = \Pr[\big]{\Poissondist(R) \ge k} \ge \Pr[\big]{\Poissondist(R) = k} = e^{-R} \frac{R^{k}}{k!}.
	\end{equation*}
	For the upper bound, if $k>R$, by Markov's inequality with $x=\frac{k}{R}-1 >0$,
	\begin{equation*}
		\Pr[\big]{\Gamma_k \le R} \le \Ex[\big]{e^{-x\Gamma_k}} e^{xR} = e^{xR} (1+x)^{-k} = e^{k-R} \Bigl(\frac{k}{R}\Bigr)^{-k}.
	\end{equation*}
	By continuity of $\Pr{\Gamma_k \le R}$ for $k>0$, this bound holds true when $k=R$ as well.
For the last step, we use the bound $k! \le 3\sqrt{k} k^k e^{-k}$.
\end{proof}

We now discuss the Edgeworth expansion. Consider a sequence $(X_n)_{n\in\bN}$ of real-valued random variables together with a reference random variables $X$ and denote by $\varkappa_n^{(k)}$ and $\varkappa^{(k)}$ the respective $k$-th cumulant.
Informally, if $\varkappa_n^{(k)}$ is close to $\varkappa^{(k)}$, we have the expansion
\begin{equation*}
	\Ex[\big]{e^{\i zX_n}} = \Ex[\big]{e^{\i zX}} \exp\biggl(\sum_{k=1}^\infty \bigl(\varkappa_n^{(k)} - \varkappa^{(k)}\bigr) \frac{(\i z)^k}{k!}\biggr)
\end{equation*}
which, by Fourier inversion, implies that the distribution functions satisfy
\begin{equation*}
	F_{X_n}(x) = F_X(x) + \sum_{k=1}^\infty B_k\bigl(\varkappa_n^{(1)}-\varkappa^{(1)}, \dotsc, \varkappa_n^{(k)}-\varkappa^{(k)}\bigr) \frac{(-1)^k}{k!}F_X^{(k)}(x) ,
\end{equation*}
where $B_k$ denotes the $k$-th Bell polynomial.
This shows that the distribution function $F_{X_n}$ can be approximated by $F_X$ provided that the series can be treated as an error.
In particular, if $X_n$ is a sum of $n$ i.i.d.~well-behaved random variables, then $X$ is chosen to be Gaussian (according to the central limit theorem) and the expansion can be controlled in terms of negative powers of $\sqrt{n}$. This is made rigorous by the so-called Berry-Esseen expansion, see \cite[Theorem XVI.4.1]{F71ProbabilityTheory}: It holds uniformly for all $R>0$ as $k\to\infty$,
\begin{equation*}
	\Pr[\big]{\Gamma_k \le R}
	= \Pr[\Big]{\frac{\sum_{n=1}^k Z_n - k}{\sqrt{k}} \le \frac{R-k}{\sqrt{k}}}
	= \Phi_0\Bigl(\frac{k-R}{\sqrt{k}}\Bigr) + \frac{1}{3\sqrt{k}} \Phi_0'''\Bigl(\frac{k-R}{\sqrt{k}}\Bigr) + O\Bigl(\frac{1}{k}\Bigr).
\end{equation*}
Together with the bounds from \cref{lem:GinibreBounds}, this implies the assumptions from \cref{lem:A} with $\Phi =\Phi_0$ (the error function) and $\Upsilon = \Phi_0'''$ for any fixed $\epsilon< \frac{1}{10}$.
Indeed, $\Phi_0$ satisfies the required smoothness and integrability conditions and letting
$\eta_{k,R} = 3\sqrt{k} e^{-R} \frac{R^k}{k!} \ind_{|k-R| > R^{\frac{1}{2}+\epsilon}} + O(R^{-1}) \ind_{|k-R| \le R^{\frac{1}{2}+\epsilon}} $, we have
\[
{\textstyle \sum_{k=1}^\infty \eta_{R,k} } = O\bigl( R^{\epsilon-\frac{1}{2}}\bigr) \qquad \text{as }R\to\infty.
\]
Note that in principle, this Edgeworth expansion would allow us to compute higher order terms in \eqref{Eexp} and thus to obtain further correction terms in the deviation probabilities from \cref{thm:ModPhiConclusions}.
To summarize, by \cref{lem:A}, we obtain the following result.

\begin{proposition} 
 For any $0<\epsilon<1/10$, the radii $(\Gamma_k)_{k\in\bN}$ of the Ginibre ensemble satisfy \cref{NA} with $\vartheta=1$, speed $\Sigma_R = \sqrt{R}$,
	\begin{equation*}
		\Phi_0(x) = \frac{1}{\sqrt{2\pi }} \int_x^\infty e^{-\frac{y^2}{2}} \dif y
		\qquad\text{and}\qquad
		\Psi_0(x) = \frac{2-5x^2}{6\sqrt{2\pi}}e^{-\frac{x^2}{2}}.
	\end{equation*}
\end{proposition}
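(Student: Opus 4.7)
This is a direct application of \cref{lem:A}, with essentially all ingredients already assembled in the preceding discussion. The plan is to choose $\Phi = \Phi_0$ (the standard Gaussian tail) and $\Upsilon = \Phi_0'''/3$, after which \cref{NA} holds with $\Psi(x) = \frac{x^2}{2}\Phi_0'(x) + \Upsilon(x)$; a computation using $\Phi_0'(x) = -\frac{1}{\sqrt{2\pi}} e^{-x^2/2}$ and $\Phi_0'''(x) = \frac{1-x^2}{\sqrt{2\pi}} e^{-x^2/2}$ collapses this to
\begin{equation*}
	\Psi(x) = \frac{e^{-x^2/2}}{\sqrt{2\pi}} \Bigl(-\tfrac{x^2}{2} + \tfrac{1-x^2}{3}\Bigr) = \tfrac{2-5x^2}{6\sqrt{2\pi}} e^{-x^2/2} = \Psi_0(x),
\end{equation*}
which is the claimed formula.

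In the central window $\abs{k-R} \le R^{1/2+\epsilon}$, the required expansion is exactly the Edgeworth/Berry--Esseen estimate for the i.i.d.\ sum $\Gamma_k \overset{\mathrm{d}}{=} Z_1 + \dotsb + Z_k$ with $Z_n$ standard exponentials (having a smooth density, all cumulants finite, and $\varkappa^{(3)}(Z_1) = 2$); by \cite[Theorem XVI.4.1]{F71ProbabilityTheory} this yields, uniformly in $R$,
\begin{equation*}
	\Pr[\big]{\Gamma_k \le R} = \Phi_0\Bigl(\tfrac{k-R}{\sqrt{k}}\Bigr) + \tfrac{1}{3\sqrt{k}}\Phi_0'''\Bigl(\tfrac{k-R}{\sqrt{k}}\Bigr) + O(k^{-1}) \qquad\text{as }k\to\infty.
\end{equation*}
Outside the central window the tail bounds of \cref{lem:GinibreBounds} control $\Pr{\Gamma_k \le R}$ (for $k>R$) or $\Pr{\Gamma_k > R}$ (for $k<R$) by $3\sqrt{k}\, e^{-R} R^k/k!$. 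Setting
\begin{equation*}
	\eta_{R,k} = 3\sqrt{k}\, e^{-R} \frac{R^k}{k!} \ind_{\abs{k-R}>R^{1/2+\epsilon}} + \frac{C}{k}\ind_{\abs{k-R}\le R^{1/2+\epsilon}},
\end{equation*}
a Chernoff bound on a Poisson$(R)$ variable shows the outer contribution is super-polynomially small in $R$, while the inner contribution is $O(R^{\epsilon-1/2})$; hence $\sum_k \eta_{R,k}\to 0$ as $R\to\infty$.

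The remaining structural requirements of \cref{lem:A} are immediate: $\Phi_0 \in C^\infty(\bR)$ with $x^{3+\delta}\Phi_0''(x)$ uniformly bounded for every $\delta>0$ by Gaussian decay; $\Upsilon = \Phi_0'''/3$ is of Schwartz class, hence in $W^{1,1}(\bR)$ and Hölder of any order $\alpha<1$, which comfortably exceeds $2\epsilon$ for $\epsilon < 1/10$; and $Z \sim \Normaldist_{0,1}$ has all moments, so the moment hypothesis $\Ex{\abs{Z}^{m_\epsilon}}<\infty$ with $m_\epsilon > 3 + 1/(2\epsilon)$ is automatic. The only genuinely non-routine input is the uniform-in-$R$ Edgeworth expansion above; but that is delivered directly by the smoothed form of Feller's theorem in view of the regularity of the exponential density, so no further work is required.
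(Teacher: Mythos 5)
Your proposal is correct and follows essentially the same route as the paper: the Edgeworth expansion for the i.i.d.\ exponential sum via \cite[Theorem XVI.4.1]{F71ProbabilityTheory} inside the window $\abs{k-R}\le R^{\frac12+\epsilon}$, the tail bounds of \cref{lem:GinibreBounds} (equivalently a Poisson--Chernoff bound) outside it, and then \cref{lem:A} with $\Phi=\Phi_0$ and $\Upsilon=\tfrac13\Phi_0'''$, yielding exactly the stated $\Psi_0$. The choice of $\eta_{R,k}$ and the verification of the regularity and moment hypotheses match the paper's argument, so nothing further is needed.
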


Hence, by our main results, we conclude that the counting statistics of both the finite and infinite Ginibre ensemble converges in the mod-phi sense of \cref{thm:ModPhiGeneral}. In particular, the precise deviations from \cref{thm:GinibrePreciseAsymptotics} follow from \cref{thm:ModPhiConclusions} and the functional central limit theorems presented in \cref{thm:GinibreFCLTFDD} and \cref{thm:GinibreFCLTSkorokhod} follow directly from \cref{thm:FCLTPlanar}.

As for the results of \cref{sec:FiniteGinibre}, the only difference comes from the finite-size effects occurring at the edge.
To obtain \cref{thm:FiniteGinibreModPhi}, \cref{thm:FiniteGinibreFCLTFDD} and \cref{thm:FiniteGinibreFCLTSkorokhod}, it is simply worth observing that according to \eqref{Ncond}--\eqref{interval} and \eqref{var}--\eqref{eq:VarXi}, we have
$\Var{\Xi_R^{(N)}}= \sqrt{R}\Lambda''(0) + O(1)$ as $R\to\infty$ where
\begin{equation*}
\Lambda''(0) = \begin{cases}
{\displaystyle \int_\bR\Phi_0 (x)(1-\Phi_0 (x)) \dif x = \frac{1}{\sqrt{\pi}}}
&i)\ \text{if } N=\infty \text{ or } N<\infty \text{ and } R(N) = \gamma N \text{ with }\gamma\in(0,1),\\
{\displaystyle \int_{-\infty}^{a^+} \Phi_0 (x)(1-\Phi_0 (x)) \dif x }
&ii)\ \text{if } N<\infty \text{ and } R(N) = N\bigl(1-\frac{a^+}{\sqrt{N}}+\frac{(a^+)^2}{2N}\bigr) \text{ with } a^+ \in\bR, \\
0
&iii)\ \text{if } N<\infty \text{ and } (R(N) - N) \gg \sqrt{N \log N} \text{ as } N\to\infty.
\end{cases}
\end{equation*}

Note that in case $ii)$, at the edge, the condition on $ R(N) = \gamma N$ comes from solving for $\gamma>0$ the equation \eqref{Ncond} which reads
\begin{equation*}
		\frac{N-R(N)}{\Sigma_{R(N)}}
		= (\gamma^{-1/2}-\gamma^{1/2}) \sqrt{N}
		= \bigl( (1-\gamma) + \tfrac{(1-\gamma )^2}{2} + O((1-\gamma )^3)\bigr) \sqrt{N} = a^+ +O(N^{-1/2}).
\end{equation*}
Hence, the solution has the asymptotic $\gamma = 1-\frac{a^+}{\sqrt{N}}+\frac{(a^+)^2}{2N} + O(N^{-1})$ as $N\to\infty$.
This completes the proofs of our results for counting statistics of both the infinite and finite Ginibre ensembles. In the next section, we turn to show that the Ginibre $\alpha$-type processes satisfy \cref{NA} for general $\alpha\in\bN_0$.

\subsection{Proofs of the results in \texorpdfstring{\cref{sec:Ginibre}}{Section \ref*{sec:Ginibre}}}
\label{sec:GinibreTypeProof}

Recall that for the Ginibre $\alpha$-type ensemble associated with higher-Landau levels, the random variables $(\Gamma_k^{(\LL)})_{k\in\bN}$ which represent the radii of the points have densities for $\LL\in\bN_0$ and $k\in\bN$,
\begin{equation} \label{LLLdensity}
	L_\LL^{(k-\LL-1)}(x)^2 x^{k-\LL-1} e^{-x} \ind_{x\ge0}.
\end{equation}
For $\alpha\ge1$, we cannot apply the Edgeworth expansion described in the previous section since the random variables $\Gamma_k^{(\LL)}$ are not infinitely divisible.
Nevertheless, it follows from \eqref{LLLdensity} that the Laplace transforms of $\Gamma_k^{(\LL)}$ are explicit, and we can use Fourier analysis methods to obtain the expansion \eqref{Eexp}.

Shirai showed in \cite[Proposition 4.1]{S15GinibreType} that for all $z\in\bC$ with $\Re z<1$,
\begin{equation}\label{eq:GinibreTypeMGF}
	\Ex[\big]{e^{z\Gamma_{k-\LL}^{(\LL)}}} = (1-z)^{-k} \sum_{\ell=0}^{\infty} \binom{\LL}{\ell} \binom{k-\LL-1}{\ell} z^{2\ell}.
\end{equation}

Our strategy consists in computing the asymptotics of $\Ex[\big]{e^{\i x\Gamma_{k-\LL}^{(\LL)}}}$ as $k\to\infty$ up to an error that converges to 0 in $L^1(\bR)$ in order to deduce \eqref{Eexp} by using Fourier's inversion formula. This argument is inspired from the proof of the Berry-Esseen expansion for sums of i.i.d.\ random variables, \cite[Chap.~XVI]{F71ProbabilityTheory}.
The next lemma is motivated by the fact that it holds locally uniformly as $k\to\infty$,
\[
\sqrt{k!} L_\LL^{(k)}(k+x\sqrt{k}) \to (-1)^\alpha H_\alpha(x) ,
\]
where $H_\alpha$, $\alpha\in\bN_0$, denote the (orthonormal) Hermite polynomials; see e.g.\ \cite{C78ZerosLaguerre}\footnote{Note that our normalizations are different from that of \cite{C78ZerosLaguerre} and the locally uniform convergence follows directly from the convergence of the zeros of these polynomials.}.
Let us recall that the random variables $Z_\alpha$, $\alpha\in\bN_0$ have densities $h_\alpha^2(x) = H_\alpha^2(x) e^{-\frac{x^2}{2}}/\sqrt{2\pi}$ for $x\in\bR$ and probability tail function $\Phi_\alpha$.

\begin{lemma}\label{lem:GinibreTypeBE}
 It holds uniformly for all $x\in\bR$, as $k\to\infty$,
	\begin{equation} \label{GinibreTypeBE}
		\Pr[\bigg]{\frac{\Gamma_{k-\LL}^{(\LL)}-k}{\sqrt{k}} \le x}
		= \Pr[\big]{Z_\LL \le x} + \frac{1}{\sqrt{3k}}\Phi_\alpha'''(x) + O_\LL\Bigl(\frac{1}{k}\Bigr).
	\end{equation}
\end{lemma}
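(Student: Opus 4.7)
My strategy is to first establish the characteristic-function expansion \eqref{GFexpansion} using the explicit moment generating function \eqref{eq:GinibreTypeMGF}, and then invert it by Fourier analysis to deduce the distributional expansion \eqref{GinibreTypeBE}. Substituting $z=\i t/\sqrt{k}$ in \eqref{eq:GinibreTypeMGF} and multiplying by the centring factor $e^{-\i t\sqrt{k}}$ gives
\[
\phi_k(t):=\Ex[\big]{e^{\i t\widetilde\Gamma_k^{(\LL)}}}=e^{-\i t\sqrt{k}}\Bigl(1-\tfrac{\i t}{\sqrt{k}}\Bigr)^{-k}\sum_{\ell=0}^{\LL}\binom{\LL}{\ell}\binom{k-\LL-1}{\ell}\Bigl(\tfrac{\i t}{\sqrt{k}}\Bigr)^{2\ell}.
\]
For $|t|\le k^{1/4}$, Taylor-expanding yields $-k\log(1-\i t/\sqrt{k})-\i t\sqrt{k}=-t^2/2+(\i t)^3/(3\sqrt{k})+O((1+t^4)/k)$, so that $e^{-\i t\sqrt{k}}(1-\i t/\sqrt{k})^{-k}=e^{-t^2/2}\bigl(1+(\i t)^3/(3\sqrt{k})+O_\LL((1+t^8)/k)\bigr)$. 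Since the sum has at most $\LL+1$ terms, $\binom{k-\LL-1}{\ell}(\i t/\sqrt{k})^{2\ell}=(-t^2)^{\ell}/\ell!+O_\LL(t^{2\ell}/k)$ for each $\ell\le\LL$. Combining, the leading order reduces to $e^{-t^2/2}\sum_{\ell=0}^{\LL}\binom{\LL}{\ell}(-t^2)^{\ell}/\ell!$, which I would then identify with $\psi_\LL(t):=\Ex[\big]{e^{\i tZ_\LL}}$ by directly computing the Fourier transform of $h_\LL^2$ via the Rodrigues representation for $H_\LL$.

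For the complementary range $|t|>k^{1/4}$, both $|\phi_k(t)|$ and $|\psi_\LL(t)|$ must decay faster than any fixed inverse polynomial in $t$, uniformly in $k$. The decay of $\psi_\LL$ is immediate because $h_\LL^2$ lies in the Schwartz class, while the uniform polynomial decay of $\phi_k$ is obtained by repeated integration by parts against the explicit density \eqref{LLLdensity}, with the exponential factor $e^{-x}$ ensuring that boundary terms vanish and that the required derivatives of the density remain integrable with bounds independent of $k$. Together these two regimes deliver \eqref{GFexpansion} in the sharper form $\phi_k(t)=\psi_\LL(t)\bigl(1-\i t^3/(3\sqrt{k})\bigr)+O_\beta\bigl(k^{-1}(1+t^2)^{-\beta}\bigr)$ for any $\beta>0$.

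Given \eqref{GFexpansion}, Fourier inversion at the level of densities (valid since $\widetilde\Gamma_k^{(\LL)}$ has a smooth Lebesgue density $q_k$ for $k>\LL$) yields
\[
q_k(y)-h_\LL^2(y)=\frac{-1}{6\pi\sqrt{k}}\int_{\bR}e^{-\i ty}\,\i t^3\psi_\LL(t)\dif t+O_\LL(k^{-1})
\]
uniformly in $y\in\bR$. The leading integral equals $\frac{1}{3\sqrt{k}}\partial_y^3 h_\LL^2(y)=-\frac{1}{3\sqrt{k}}\Phi_\LL''''(y)$, so integrating in $y$ from $-\infty$ to $x$ and using $\Phi_\LL'''(-\infty)=0$ recovers the assertion of \eqref{GinibreTypeBE}.

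The main technical obstacle will be the uniformity of the $O(k^{-1})$ remainder in $x$: the polynomial prefactors produced in the Taylor expansion of $\phi_k$ on $|t|\le k^{1/4}$ are not individually integrable against the Fourier kernel $e^{-\i ty}$. The resolution is the split at $|t|=k^{1/4}$ together with the Schwartz decay of $\psi_\LL$, which absorbs those prefactors; the corresponding $k$-uniform polynomial decay of $\phi_k$ on the complementary region is the most delicate input and will hinge on an integration-by-parts argument tailored to the structure of \eqref{LLLdensity}.
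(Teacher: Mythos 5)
Your overall strategy is the same as the paper's: use the explicit Laplace transform \eqref{eq:GinibreTypeMGF} together with $\Ex{e^{zZ_\LL}}=L_\LL(-z^2)e^{z^2/2}$ to expand the characteristic function of $\widetilde\Gamma_k^{(\LL)}$, then pass to the density by Fourier inversion and integrate. The computation on $\abs{t}\le k^{1/4}$ and the identification of the limit with $L_\LL(t^2)e^{-t^2/2}$ are correct. However, there are two genuine gaps.

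First, your treatment of $\abs{t}>k^{1/4}$ rests on the claim that $\abs{\phi_k(t)}$ decays faster than any inverse polynomial uniformly in $k$. This is false as stated: $\abs{\phi_k(t)}=(1+t^2/k)^{-k/2}\abs{\sum_\ell\cdots}$ behaves like $C_k\,t^{-(k-2\LL)}$ as $t\to\infty$ for each fixed $k$, i.e.\ only polynomial decay (of $k$-dependent order). What is actually needed, and what the paper exploits, is that $k\mapsto(1+t^2/k)^{-k/2}$ is non-increasing in $k$, so one may replace $k$ by a fixed $m=2\LL+6$ and obtain a bound $C_\LL(1+t^{2\LL})(1+t^2/m)^{-m/2}\le C_\LL(1+t^2)^{-3}$ valid for all $k\ge m$ and all $t\in\bR$. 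This single observation makes the error in the characteristic-function expansion integrable uniformly in $k$ without any splitting of the domain, whereas your proposed route --- repeated integration by parts against the density \eqref{LLLdensity} --- requires uniform-in-$k$ control of $L^1$ norms of derivatives of the rescaled density, which is precisely the nontrivial content you defer; the factors $(k-\LL-1)x^{k-\LL-2}$ produced by differentiation do not obviously yield $k$-independent bounds.

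Second, your last step does not follow: from an $L^1(\dif t)$ bound of size $O(1/k)$ on the characteristic-function error, Fourier inversion gives only a \emph{sup-norm} bound $\abs{q_k(y)-h_\LL^2(y)-\tfrac{1}{3\sqrt{k}}(h_\LL^2)'''(y)}\le C_\LL/k$, and integrating a pointwise bound over the unbounded interval $\intoc{-\infty,x}$ produces nothing. You need an $L^1(\dif y)$ bound on the density error. The paper repairs this by restricting the integration to $A_k=\set{\abs{y}\le\sqrt{2\log k}}$, where the pointwise bound contributes $O(\abs{A_k}/k)$, and by using the Gaussian tails of $h_\LL^2$ and its derivatives outside $A_k$; some such truncation (or an Esseen smoothing argument applied directly to the distribution functions) must be added to your proof before the final integration is legitimate.
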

\begin{proof}
Recall that the classical Laguerre polynomials satisfy
$L_\LL(x) = \sum_{\ell=0}^\LL \binom{\LL}{\ell} \frac{(-x)^\ell}{\ell!}$ and that the Laplace transform of the random variable $Z_\alpha$ is explicitly given by
	\begin{equation} \label{cfHermite}
		\Ex[\big]{e^{z Z_\LL}} = L_\alpha\bigl(-z^2\bigr) e^{\frac{z^2}{2}} , \qquad z\in\bC.
	\end{equation}
This follows from instance from \cite[(4.9)]{S15GinibreType}.
Observe that for any $\alpha\in\bN_0$, it holds uniformly for all $\ell \in\bN_0$,
\[
\binom{k-\LL-1}{\ell} = \frac{k^\ell}{\ell!}\Bigl(1+O_\LL\Bigl(\frac{1}{k}\Bigr)\Bigr).
\]
By \eqref{eq:GinibreTypeMGF}, this implies that it holds uniformly for all $x\in\bR$, as $k\to+\infty$,
\[
\Ex[\big]{e^{\i x\Gamma_{k-\LL}^{(\LL)}}} = (1-\i x)^{-k} L_\LL(\sqrt{k}x^2) \Bigl(1+O_\LL\Bigl(\frac{1}{k}\Bigr)\Bigr).
\]
This shows that the characteristic function of the normalized $\Gamma_{k-\LL}^{(\LL)}$ random variables is given by
\begin{equation} \label{asympcf1}
\Ex[\bigg]{\exp\biggl(\i x\frac{\Gamma_{k-\LL}^{(\LL)}-k}{\sqrt{k}}\biggr)}
= \Bigl(1-\frac{\i x}{\sqrt{k}}\Bigr)^{-k} e^{-\i x\sqrt{k}} L_\alpha(x^2) \Bigl(1+O_\LL\Bigl(\frac{1}{k}\Bigr)\Bigr).
\end{equation}
Let us now compute the asymptotics of the first two factors on the RHS of \eqref{asympcf1}.
Recall that it holds $\abs{e^z - e^w} \le \abs{z-w} e^{\max\set{\Re(z),\Re(w)}}$ for all $z,w\in\bC$.
Using this bound, we obtain
\[
\abs[\bigg]{ \Bigl(1-\frac{\i x}{\sqrt{k}}\Bigr)^{-k} e^{-\i x\sqrt{k}} - e^{- \frac{x^2}{2} - \frac{\i x^3}{3\sqrt{k}}} }
\le k \abs[\bigg]{\log\Bigl(1-\frac{\i x}{\sqrt{k}}\Bigr) + \frac{\i x}{\sqrt{k}} + \frac{1}{2}\Bigl(\frac{\i x}{\sqrt{k}}\Bigr)^2 + \frac{1}{3}\Bigl(\frac{\i x}{\sqrt{k}}\Bigr)^3} e^{-\min\set{k \log|1-\frac{\i x}{\sqrt{k}}| , \frac{x^2}{2}} } ,
\]
where we used the principle branch of $\log(\cdot)$.
Observe that
\[
k \log\abs[\Big]{1-\frac{\i x}{\sqrt{k}}} = \frac{k}{2} \log\Bigl(1+\frac{x^2}{k}\Bigr) \le \frac{x^2}{2}
\]
and $\abs{\log(1-ix) + \sum_{\ell=1}^4 \frac{(ix)^\ell}{\ell}} \le C x^4$ for all $x\in\bR$, so that it holds for all $k>0$ and $x\in\bR$,
\[
\abs[\bigg]{\Bigl(1-\frac{\i x}{\sqrt{k}}\Bigr)^{-k} e^{-\i x\sqrt{k}} - e^{- \frac{x^2}{2} - \frac{\i x^3}{3\sqrt{k}}} }
\le \frac{C x^4}{k} \Bigl(1+\frac{x^2}{k} \Bigr)^{-\frac k2}.
\]
By applying the same bounds, we also have
\[
\abs[\bigg]{ e^{ - \frac{\i x^3}{3\sqrt{k}}} -1 + \frac{\i x^3}{3\sqrt{k}} } \le C \frac{x^6}{k} \sqrt{1+\frac{x^6}{9k} }.
\]
By the triangle inequality, this shows that for all $k>0$ and $x\in\bR$,
\[
\abs[\bigg]{ \Bigl(1-\frac{\i x}{\sqrt{k}}\Bigr)^{-k} e^{-\i x\sqrt{k}} - e^{- \frac{x^2}{2}}\biggl(1 - \frac{\i x^3}{3\sqrt{k}} \biggr) }
\le \frac{C x^4}{k} \Bigl(1+\frac{x^2}{k}\Bigr)^{-\frac k2} + C \frac{x^6}{k} \sqrt{1+\frac{x^6}{9k} } e^{-\frac{x^2}{2}}.
\]
As the function $ k\mapsto (1+\frac{x^2}{k} )^{-\frac k2} $ is non-increasing, we can choose $k= 2\alpha+6$ on the RHS of the previous estimate.
According to \eqref{asympcf1}, this shows that as $k\to+\infty$,
\begin{equation*}
\abs[\bigg]{ \Ex[\bigg]{\exp\biggl(\i x\frac{\Gamma_{k-\LL}^{(\LL)}-k}{\sqrt{k}}\biggr)} - e^{- \frac{x^2}{2}}\biggl(1 - \frac{\i x^3}{3\sqrt{k}} \biggr)
L_\LL(x^2) \biggl(1+O_\LL\Bigl(\frac{1}{k}\Bigr)\biggr) }
\le \frac{Cx^4 | L_\LL(x^2)| }{k} \biggl( (1+x^2 )^{-\alpha-3} + x^2 (1+x^3) e^{-\frac{x^2}{2}} \biggr)
\end{equation*}
where the RHS is in $L^1(\bR)$.
Since the function $x\mapsto \Ex{e^{\i x \Gamma_{k-\LL}^{(\LL)}}} $ is in $L^1(\bR)$, by the Fourier inversion formula, the probability density function of the normalized random variable $ \Gamma_{k-\LL}^{(\LL)}$ is given by
\[
f_{\alpha,k}(x) = \frac{1}{2\pi} \int_{\bR} \Ex[\bigg]{\exp\biggl(\i \xi \frac{ \Gamma_{k-\LL}^{(\LL)}-k}{\sqrt{k}}\biggr)} e^{-\i \xi x} \dif\xi , \qquad x\in\bR.
\]

Then, our previous bound implies that uniformly for all $x\in\bR$,
\[
\bigg| f_{\alpha,k}(x) - \Bigl(1+O_\LL\Bigl(\frac{1}{k}\Bigr)\Bigr) \frac{1}{2\pi} \int_{\bR} e^{- \frac{\xi^2}{2}}\biggl(1 - \frac{\i \xi^3}{3\sqrt{k}} \biggr)L_\LL(\xi^2) e^{-\i \xi x} \dif\xi \bigg| \le \frac{C_\alpha}{k}.
\]
By \eqref{cfHermite}, since $h^2_{\alpha}$ is the probability density function of $Z_\alpha$ and the functions $e^{- \frac{\xi^2}{2}}\bigl(1 + |\xi|^n\bigr)L_\LL(\xi^2)$ are integrable on $\bR$ for any $n\in\bN$, we also have
\[
\diff[n]{}{x}h^2_{\alpha}(x) = \frac{1}{2\pi} \int_{\bR} e^{- \frac{\xi^2}{2}} (-\i \xi)^nL_\LL(\xi^2) e^{-\i \xi x} \dif\xi , \qquad x\in\bR.
\]
Hence, we conclude that
\begin{equation} \label{asympcf2}
\abs[\Big]{ f_{\alpha,k}(x) - h^2_{\alpha}(x) - \frac{1}{3\sqrt{k}}\bigl(h_\LL^2(x)\bigr)''' } \le \frac{C_\alpha}{k} ,
\end{equation}
for a larger constant $C_\alpha$.

Let us now finish the proof. Let $A_k =\set{x\in\bR \given \abs{x} \le \sqrt{2\log k} }$ and recall that
$\int_{\bR\setminus A_k} e^{-\frac{x^2}{2}} \dif x \le \frac{1/k}{\sqrt{(\log k)/2}} $.
Then, if $k$ is sufficiently large (depending on $\alpha$),
\[
 \int_{\bR\setminus A_k} h^2_{\alpha}(x) + \abs[\big]{ \bigl(h_\LL^2(x)\bigr)''' } \dif x \le \frac 1k.
\]
This tail bound with \eqref{asympcf2} implies that for any $x\in\bR$,
\[
\int_{-\infty}^x \abs{ f_{\alpha,k}(t) - h^2_{\alpha}(t) - \frac{1}{3\sqrt{k}}\bigl(h_\LL^2(t)\bigr)''' } \dif t\le \frac{C_\alpha |A_k|+1}{k}.
\]
Since $\Pr[\big]{Z_\LL \le x} = 1-\Phi_\alpha(x) = \int_{-\infty}^x h^2_{\alpha}(t) \dif t$,
this completes the proof with the required uniformity.
\end{proof}

From \cref{lem:GinibreTypeBE}, we can (almost) verify that the random variables
$(\Gamma_k^{(\LL)})_{k\in\bN}$ satisfy the assumptions from \cref{lem:A}.
Indeed, the tail functions $\Phi_\alpha$ satisfy the required smoothness and integrability conditions for any $\alpha\in\bN$, the obstacle being essentially that the errors $O_\alpha(1/k)$ are not summable.
We could actually push the expansion \eqref{GinibreTypeBE} one order further so as to have summable errors but it suffices instead to produce tail-bounds for the random variables $\Gamma_{k}^{(\LL)}$.
Moreover, these bounds will also be crucial in \cref{sec:JLMProof} to prove the JLM asymptotics from \cref{thm:ldp}.

The proof of \cref{lem:GinibreTypeBounds} proceeds by comparing the probability density functions of $\Gamma_{k}^{(\LL)}$ with that of standard gamma random variables.

\begin{lemma}\label{lem:GinibreTypeBounds}
For any $\alpha\in\bN_0$, it holds for all $k\ge R>2\alpha+2$ that
\[ \begin{aligned}
\Pr[\big]{\Gamma_k^{(\LL)} \le R}
\le 3^{2\alpha+1} R^{\alpha+1} \Pr[\big]{\Gamma_{k-2\alpha-2} \le R}.
\end{aligned}\]
and for all $1\le k\le R$ that
\[
\Pr[\big]{\Gamma_k^{(\LL)} \ge R} \le 4^\alpha k^{\alpha} \Pr[\big]{\Gamma_{k+\alpha} \ge R}.
\]
\end{lemma}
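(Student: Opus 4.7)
Both bounds proceed by a pointwise comparison of the density
$f_{\alpha,k}(x) = L_\alpha^{(k-\alpha-1)}(x)^2 x^{k-\alpha-1} e^{-x}$ of
$\Gamma_k^{(\alpha)}$ against the density of a standard gamma random variable.
The starting point is a uniform estimate on the squared Laguerre polynomial.
Since $L_\alpha^{(k-\alpha-1)}(-x)$ has non-negative coefficients for $x\ge 0$, the
triangle inequality gives $|L_\alpha^{(k-\alpha-1)}(x)|\le L_\alpha^{(k-\alpha-1)}(-x)$,
and using $\binom{k-1}{\alpha-\ell}\le (k-1)^{\alpha-\ell}/(\alpha-\ell)!$ together
with the binomial identity, I would collapse the resulting sum to obtain the clean bound
\begin{equation*}
L_\alpha^{(k-\alpha-1)}(x)^2 \;\le\; \frac{(k-1+x)^{2\alpha}}{\alpha!\,(k-1)!}, \qquad x\ge 0.
\end{equation*}

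For the second inequality, in the regime $x\ge R\ge k$ we have $k-1+x\le 2x$, hence
$(k-1+x)^{2\alpha}\le 4^\alpha x^{2\alpha}$. Plugging in and integrating directly gives
\begin{equation*}
\Pr{\Gamma_k^{(\alpha)}\ge R}
\le \frac{4^\alpha}{\alpha!(k-1)!}\int_R^\infty x^{k+\alpha-1}e^{-x}\dif x
= 4^\alpha\binom{k+\alpha-1}{\alpha}\Pr{\Gamma_{k+\alpha}\ge R},
\end{equation*}
and the elementary combinatorial bound $\binom{k+\alpha-1}{\alpha}\le k^\alpha$
(multisets of size $\alpha$ from $k$ elements are at most $k^\alpha$) yields the claimed factor $4^\alpha k^\alpha$.

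For the first inequality, in the regime $x\in[0,R]$ with $R\le k$, I would insert the
Laguerre bound, expand $(k-1+x)^{2\alpha}$ by the binomial theorem, and integrate term by term.
The key trick—which controls the power of $R$—is to write
$x^{k-\alpha-1+j}e^{-x}=x^{\alpha+j+1}\,x^{k-2\alpha-2}\,e^{-x}$ and bound
$x^{\alpha+j+1}\le R^{\alpha+j+1}$ on $[0,R]$, so that the leftover integral is
$(k-2\alpha-2)!\,\Pr{\Gamma_{k-2\alpha-1}\le R}\le (k-2\alpha-2)!\,\Pr{\Gamma_{k-2\alpha-2}\le R}$.
Resumming the binomial series then produces
\begin{equation*}
\Pr{\Gamma_k^{(\alpha)}\le R}
\le \frac{R^{\alpha+1}\,(k-2\alpha-2)!\,\bigl((k-1)+R\bigr)^{2\alpha}}{\alpha!\,(k-1)!}\,\Pr{\Gamma_{k-2\alpha-2}\le R}.
\end{equation*}

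The main technical obstacle is then to bound the prefactor uniformly by $3^{2\alpha+1}$
for $k\ge R>2\alpha+2$. I plan to combine the estimate $(k-1)+R\le 3(k-1)$ (valid since $R\le k$)
with the identity
$(k-2\alpha-2)!/(k-1)!=1/\prod_{j=1}^{2\alpha+1}(k-j)$ and a careful comparison of
$(k-1)^{2\alpha}$ against that product, ultimately reducing the constant to an expression of the form
$C^\alpha/\alpha!$ which lies below $3^{2\alpha+1}$ for every $\alpha\ge 0$. The hypothesis
$R>2\alpha+2$ enters exactly at this step to guarantee that $(k-2\alpha-2)!$ is a bona fide factorial
with strictly positive factors; for $k$ close to $2\alpha+3$ the denominator is smallest but the
combinatorial prefactor $(k-1+R)^{2\alpha}$ is correspondingly controlled, and a short case split
handles the finitely many small values of $k$ not covered by the asymptotic estimate.
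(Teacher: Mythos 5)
Your proposal is correct, and for the second inequality it coincides with the paper's argument: the same pointwise bound $L_\LL^{(k-\LL-1)}(x)^2 \le \frac{(k-1+x)^{2\LL}}{\LL!\,(k-1)!}$ (obtained by the same term-by-term estimate and binomial collapse), the same observation $k-1+x\le 2x$ on $\intco{R,\infty}$, and the same combinatorial bound $\binom{k+\LL-1}{\LL}\le k^\LL$. For the first inequality your route is genuinely different. The paper uses a second Laguerre estimate tailored to $\abs{x}\le R$, which yields $\Pr{\Gamma_k^{(\LL)}\le R}\le 3^{2\LL}k^\LL\,\Pr{\Gamma_{k-\LL-1}\le R}$, and then invokes \cref{lem:GinibreBounds} to shift the gamma index down by a further $\LL+1$, trading the factor $k^\LL$ for $R^{\LL+1}$. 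You instead stay with the single Laguerre bound, expand $(k-1+x)^{2\LL}$ binomially inside the integral, and extract the powers of $R$ by hand; this is self-contained (it does not reuse \cref{lem:GinibreBounds}) but shifts the burden onto a uniform control of the prefactor, which is the one step you leave as a sketch. That step does close: after using $k-1+R\le 3(k-1)$ (valid since $R\le k$ and $k\ge 3$), what remains is to check
\begin{equation*}
	\frac{(k-1)^{2\LL}}{\LL!\,\prod_{j=1}^{2\LL+1}(k-j)} \le 3 \qquad\text{for all integers } k\ge 2\LL+3 .
\end{equation*}
The left-hand side is non-increasing in $k$, since the ratio of consecutive values is $\bigl(\tfrac{k}{k-1}\bigr)^{2\LL}\tfrac{k-2\LL-1}{k}\le 1$ for $k\ge 2\LL+1$, so the worst case is $k=2\LL+3$, where the quantity equals $\frac{(2\LL+2)^{2\LL}}{\LL!\,(2\LL+2)!}$; this is at most about $1.15$ (attained near $\LL=4$) and tends to $0$ by Stirling, so no case split on $k$ is actually needed—only the single value $k=2\LL+3$ must be examined for each $\LL$, and Stirling handles all but finitely many $\LL$. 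With that verification supplied, your argument gives the stated bound (indeed the paper's own proof ends with the slightly stronger exponent $R^{\LL+\frac12}$, so both arguments overshoot the statement's $R^{\LL+1}$).
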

\begin{proof}
Recall the expression \eqref{LLLdensity} for the probability density function of the random variable $\Gamma_{k}^{(\LL)}$ where the generalized Laguerre polynomial is given by
\[
	L_\LL^{(k)}(x) = \sqrt{\frac{\alpha!}{\Gamma(k+\alpha+1)}}\sum_{\ell=0}^\alpha \binom{\alpha+k}{\alpha-\ell} \frac{(-x)^\ell}{\ell!} = \sqrt{\frac{\Gamma(k+\alpha+1)}{\alpha!}} \sum_{\ell=0}^\alpha \binom{\alpha}{\ell} \frac{(-x)^\ell}{\Gamma(k+\ell+1)}.
\]
For the first claim, we can use the bound valid for all $|x|\le R$,
\[
	\abs[\big]{ L_\LL^{(k)}(x) }
	\le \sqrt{\Gamma(k+\alpha+1)}	 \sum_{\ell=0}^\alpha \binom{\alpha}{\ell} \frac{R^\ell}{\Gamma(k+1) k^\ell}
	\le \frac{\sqrt{\Gamma(k+\alpha+1)}}{\Gamma(k+1)} \Bigl(1+ \frac Rk \Bigr)^{\alpha}
\]
By \eqref{LLLdensity}, this immediately implies that for $k>2(\alpha+1)$,
\[ \begin{aligned}
\Pr{\Gamma_k^{(\LL)} \le R}
&\le \Bigl(1 + \frac{R}{k-\alpha-1} \Bigr)^{2\alpha} \frac{\Gamma(k)}{\Gamma(k-\alpha)^2}
 \int_0^R x^{k-\LL-1} e^{-x} \dif x
\le 3^{2\alpha} k^\alpha \Pr[\big]{\Gamma_{k-\alpha-1} \le R},
\end{aligned}\]
where we have used that the probability density function of $\Gamma_{k}$ is $x\mapsto\frac{x^{k-1} e^{-x}}{\Gamma(k)}$ on $\bR_+$.
Now, by \cref{lem:GinibreBounds}, it holds for any $\gamma>0$,
\[
\Pr[\big]{\Gamma_{k+\gamma +1} \le R} \le \frac{3R^{\gamma+1}}{\sqrt{k+\gamma+1}} k^{-\gamma} \Pr[\big]{\Gamma_k \le R}.
\]
By combining these bounds, we conclude that
\[
\Pr{\Gamma_k^{(\LL)} \le R} \le 3^{2\alpha+1} R^{\alpha+\frac{1}{2}} \Pr[\big]{\Gamma_{k-2\alpha-2} \le R}.
\]

For the second claim, we can similarly bound for all $x \ge 0$ and $k\ge -\alpha$,
\[
\abs[\big]{ L_\LL^{(k)}(x) } \le \frac{1}{\sqrt{\alpha! \Gamma(k+\alpha+1)}} \sum_{\ell=0}^\alpha \binom{\alpha}{\ell} (k+\alpha)^{\alpha-\ell}x^\ell = \frac{(k+\alpha+x)^\alpha}{\sqrt{\alpha! \Gamma(k+\alpha+1)}}
\]
If $k\ge 1$, by \eqref{LLLdensity}, this shows that for any $1\le k\le R$ that
\[
\Pr{\Gamma_k^{(\LL)} \ge R} \le \frac{1}{\alpha!\Gamma(k)} \int_R^\infty (k-1+x)^{2\alpha} x^{k-\LL-1} e^{-x} \dif x
 \le 2^{2\alpha} k^{\alpha} \int_R^\infty \frac{x^{k+\alpha-1} e^{-x}}{\Gamma(k+\alpha)} \dif x
= 2^{2\alpha} k^{\alpha} \Pr{\Gamma_{k+\alpha} \ge R},
\]
where we used that $\frac{\Gamma(k+\alpha)}{\Gamma(k)} \le \alpha! k^\alpha$.
This completes the proof.
\end{proof}

By combining \cref{lem:A}, \cref{lem:GinibreTypeBE} and \cref{lem:GinibreTypeBounds}, we immediately obtain the following asymptotics.

\begin{proposition}\label{lem:GinibreTypeAssumptionA}
 For any $\alpha\in\bN_0$ and $0<\epsilon<1/10$, the radii $(\Gamma_k^{(\LL)})_{k\in\bN}$ of the Ginibre $\alpha$-type ensemble satisfy \cref{NA} with $\vartheta=1$, speed $\Sigma_R = \sqrt{R}$,
	\begin{equation*}
		\Phi_\LL (x) = \int_x^\infty h_\LL^2(t) \dif t
		\qquad\text{and}\qquad
		\Psi_\LL(x) = \frac{x^2}{2}\Phi_\LL'(x) + \frac{1}{3}\Phi_\LL'''(x) = -\frac{x^2}{2}h_\LL^2(x)-\frac{1}{3}\bigl(h_\LL^2(x)\bigr)''.
	\end{equation*}
\end{proposition}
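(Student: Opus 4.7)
The plan is to verify each of the three groups of hypotheses of \cref{lem:A} for the sequence $(\Gamma_k^{(\alpha)})_{k\in\bN}$, taking as input the Berry-Esseen-type expansion \cref{lem:GinibreTypeBE} together with the comparison estimates \cref{lem:GinibreTypeBounds}. With these in hand, the conclusion is then an immediate application of \cref{lem:A} with $\Phi=\Phi_\alpha$, $\Upsilon=\tfrac{1}{3}\Phi_\alpha'''$, so that $\Psi_\alpha(x)=\tfrac{x^2}{2}\Phi_\alpha'(x)+\tfrac{1}{3}\Phi_\alpha'''(x)$ as claimed, and the alternative expression in terms of $h_\alpha^2$ follows from $\Phi_\alpha'(x)=-h_\alpha^2(x)$ and $\Phi_\alpha'''(x)=-(h_\alpha^2)''(x)$.

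For the near-mean expansion in the window $|k-R|\le R^{\frac12+\epsilon}$, I would start from \cref{lem:GinibreTypeBE}. Setting $x=\tfrac{R-k}{\sqrt{k}}$ and using the symmetry $h_\alpha^2(-x)=h_\alpha^2(x)$, which gives $\Pr{Z_\alpha\le x}=\Phi_\alpha(-x)=1-\Phi_\alpha(x)$ as well as $\Phi_\alpha'''(-x)=\Phi_\alpha'''(x)$, the expansion rewrites as
\[
\Pr[\big]{\Gamma_{k-\alpha}^{(\alpha)} \le R} = \Phi_\alpha\Bigl(\tfrac{k-R}{\sqrt{k}}\Bigr) + \tfrac{1}{3\sqrt{k}}\Phi_\alpha'''\Bigl(\tfrac{k-R}{\sqrt{k}}\Bigr) + O_\alpha(1/k).
\]
Shifting the index by $\alpha$ (a fixed integer) and observing that for $|k-R|\le R^{\frac12+\epsilon}$ the changes $\sqrt{k}\mapsto\sqrt{k+\alpha}$ perturb the arguments by $O(\alpha/\sqrt R)$ and hence $\Phi_\alpha$ by $O(1/\sqrt R)$ (uniformly bounded derivatives of $\Phi_\alpha$), these shifts are absorbed into the error. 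This yields the form \eqref{Eexp} with $\Upsilon=\tfrac13\Phi_\alpha'''$ and error $O_\alpha(1/k)$, whose sum over the window is $O(R^{\epsilon-\frac12})\to 0$.

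For the tail regimes, \cref{lem:GinibreTypeBounds} reduces the problem to $\Pr{\Gamma_{k'}\le R}$ or $\Pr{\Gamma_{k'}\ge R}$ for the standard gamma random variable $\Gamma_{k'}$ at a shifted index $k'=k\pm O(\alpha)$, up to a polynomial prefactor $R^{O(\alpha)}$ or $k^{O(\alpha)}$. Feeding in the explicit Poisson-tail bounds from \cref{lem:GinibreBounds}, the resulting summands are dominated by $C_\alpha R^{c_\alpha}\sqrt k\,e^{-R}R^k/k!$ (and the symmetric bound on the other side). Standard estimates on Poisson tails (Chernoff's inequality for $\Poissondist(R)$) give $\sum_{|k-R|>R^{\frac12+\epsilon}} e^{-R} R^k/k! \le C e^{-cR^{2\epsilon}}$, so after multiplication by the polynomial prefactors the total contribution remains $O(R^{c_\alpha'} e^{-cR^{2\epsilon}})=o(1)$ as $R\to\infty$, yielding the summable error sequence $(\eta_{R,k})$ required by \cref{lem:A}.

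Finally, the regularity and moment conditions of \cref{lem:A} are immediate from the explicit form of $\Phi_\alpha$: since $h_\alpha(x)=H_\alpha(x)e^{-x^2/4}/(2\pi)^{1/4}$ decays super-polynomially together with all its derivatives, one has $\Phi_\alpha\in C^\infty(\bR)$ with $x^{3+\delta}\Phi_\alpha''(x)$ uniformly bounded for any $\delta>0$, and $\Upsilon=\tfrac13\Phi_\alpha'''\in W^{1,1}(\bR)$ is smooth (hence Hölder-continuous of any order); moreover $\Ex{|Z_\alpha|^m}<\infty$ for any $m$, so the moment hypothesis $m_\epsilon>3+\tfrac{1}{2\epsilon}$ is available for every $\epsilon<1/10$. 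The only technical subtlety, which I view as bookkeeping rather than a genuine obstacle, is the careful accounting of signs and of the $O(\alpha/\sqrt R)$ shift coming from the relabelling $\Gamma_{k-\alpha}^{(\alpha)}\leadsto\Gamma_k^{(\alpha)}$; these contributions are of the correct order to be absorbed into the Berry-Esseen error $O_\alpha(1/k)$ on the window, and do not affect the conclusion.
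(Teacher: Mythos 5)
Your proposal follows the same route as the paper's one\nobreakdash-line proof (combine \cref{lem:A}, \cref{lem:GinibreTypeBE} and \cref{lem:GinibreTypeBounds}); the tail estimates via \cref{lem:GinibreTypeBounds} plus Poisson--Chernoff bounds and the regularity/moment checks for $\Phi_\alpha$ are fine. There is, however, a genuine quantitative gap in your treatment of the index shift. \cref{lem:GinibreTypeBE} concerns $\Gamma_{k-\alpha}^{(\alpha)}$ centred at its mean $k$; rewritten for $\Gamma_j^{(\alpha)}$ (i.e.\ $k=j+\alpha$, so $\Ex{\Gamma_j^{(\alpha)}}=j+\alpha$) the argument of $\Phi_\alpha$ becomes $\frac{j+\alpha-R}{\sqrt{j+\alpha}}$, whereas \eqref{Eexp} requires $\frac{j-R}{\sqrt{j}}$. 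On the window the two arguments differ by $\frac{\alpha}{\sqrt{j}}+O(R^{\epsilon-1})$, so
\begin{equation*}
	\Phi_\alpha\Bigl(\tfrac{j+\alpha-R}{\sqrt{j+\alpha}}\Bigr)
	=\Phi_\alpha\Bigl(\tfrac{j-R}{\sqrt{j}}\Bigr)+\frac{\alpha}{\sqrt{j}}\,\Phi_\alpha'\Bigl(\tfrac{j-R}{\sqrt{j}}\Bigr)+O\bigl(R^{\epsilon-1}\bigr).
\end{equation*}
The middle term is of order $R^{-1/2}$ uniformly over the $\asymp R^{\frac12+\epsilon}$ indices of the window, so its total contribution is $\asymp R^{\epsilon}$, which does not vanish. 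It therefore cannot be \emph{absorbed into the Berry--Esseen error $O_\alpha(1/k)$}, nor into the summable errors $\eta_{R,k}$ of \cref{lem:A}: it is of exactly the same order as the $\frac{1}{\sqrt{k}}\Upsilon$ correction and must be carried inside $\Upsilon$, yielding $\Upsilon=\alpha\Phi_\alpha'+\tfrac13\Phi_\alpha'''$ and hence $\Psi_\alpha=\bigl(\tfrac{x^2}{2}+\alpha\bigr)\Phi_\alpha'+\tfrac13\Phi_\alpha'''$ when $\alpha\ge1$.

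To be fair, the paper's own ``immediately obtain'' glosses over the same point, and the discrepancy is invisible in every downstream statement: since $\int_\bR \Phi_\alpha'(x)\,\dot\kappa_{\Phi_\alpha(x)}(z)\dif x=\kappa_0(z)-\kappa_1(z)=0$, an extra multiple of $\Phi_\alpha'$ in $\Psi_\alpha$ does not change $\psi_\alpha$, the deviation estimates, or the functional CLTs (which depend only on $\Phi_\alpha$). But as a verification of the literal formula for $\Psi_\alpha$ stated in the proposition, your argument does not close: you must either track the $\alpha\Phi_\alpha'$ term explicitly in $\Upsilon$, or recentre the expansion of \cref{lem:GinibreTypeBE} so that it is already expressed in the variable $\frac{j-R}{\sqrt{j}}$ before invoking \cref{lem:A}. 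For $\alpha=0$ there is no shift and your proof is complete as written.
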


Hence, according to our main results (\cref{thm:ModPhiGeneral} and \cref{thm:FCLTPlanar}), we have proven the mod-phi convergence from \cref{thm:GinibreTypeModPhi} and the functional central limit theorems from \cref{thm:FCLTPlanar}.
To complete this section, it remains to prove \cref{thm:arealaw} about the asymptotics of entanglement entropies.
The proof is also based on \cref{NA} together with Riemann sum approximations. Before, we a useful lemma relating the tails of $\Phi_\LL$ and $\Psi_\LL$ with the Gaussian tail.
\begin{lemma}\label{lem:TailsPsiPhiPrimePhi}
	For all $\LL\ge0$ and all $\epsilon>0$ there exist constants $c,C>0$ such that for all $\abs{x}\le R^\epsilon$ it holds
	\begin{align*}
		\max\set{\abs{\Psi_\LL(x)},\abs{\Psi'_\LL(x)},\abs{\Phi'_\LL(x)}}
		&\le C R^{(2\LL+2)\epsilon} e^{-\frac{x^2}{2}},\\
		\Phi_\LL(x)\bigl(1-\Phi_\LL(x)\bigr)
		&\ge c R^{-\epsilon} e^{-\frac{x^2}{2}}.
	\end{align*}
\end{lemma}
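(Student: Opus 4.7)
The plan is to reduce everything to polynomial-times-Gaussian bounds on $\Phi_\LL'=-h_\LL^2$ and its derivatives, using the fact that $h_\LL(x)=H_\LL(x)e^{-x^2/4}/(2\pi)^{1/4}$ with $H_\LL$ a polynomial of degree~$\LL$.

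First, I would observe that $\Phi_\LL'(x)=-h_\LL^2(x)=-Q_\LL(x)e^{-x^2/2}$ where $Q_\LL=H_\LL^2/\sqrt{2\pi}$ is a polynomial of exact degree $2\LL$. More generally, a direct induction using $(Pe^{-x^2/2})'=(P'-xP)e^{-x^2/2}$ shows that the $k$-th derivative of $\Phi_\LL'$ is a polynomial of degree $2\LL+k$ times $e^{-x^2/2}$, so in particular there exists a constant $C_\LL$ with
\begin{equation*}
	\abs{\Phi_\LL'(x)}\le C_\LL(1+\abs{x})^{2\LL}e^{-x^2/2},\qquad \abs{(h_\LL^2)''(x)}\le C_\LL(1+\abs{x})^{2\LL+2}e^{-x^2/2}
\end{equation*}
for all $x\in\bR$. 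Since $\Psi_\LL=-\tfrac{x^2}{2}h_\LL^2-\tfrac{1}{3}(h_\LL^2)''$ and $\Psi_\LL'$ is obtained by a single further differentiation, both $\abs{\Psi_\LL}$ and $\abs{\Psi_\LL'}$ are dominated by (a constant multiple of) $(1+\abs{x})^{2\LL+3}e^{-x^2/2}$. For $\abs{x}\le R^\epsilon$ with $R\ge 1$, this gives $\max\{\abs{\Psi_\LL(x)},\abs{\Psi_\LL'(x)},\abs{\Phi_\LL'(x)}\}\le C R^{(2\LL+2)\epsilon}e^{-x^2/2}$ after absorbing the surplus factor $(1+|x|)\le (1+R^\epsilon)$ into the constant $C$ (which may depend on $\epsilon$).

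For the lower bound I would exploit the symmetry $h_\LL^2(-x)=h_\LL^2(x)$, valid because $H_\LL$ has fixed parity, which gives $\Phi_\LL(-x)=1-\Phi_\LL(x)$ and therefore reduces matters to bounding $\Phi_\LL(x)$ from below on $\intco{0,R^\epsilon}$, where automatically $1-\Phi_\LL(x)\ge 1-\Phi_\LL(0)=\tfrac{1}{2}$. For $x$ in a bounded interval, say $\intcc{0,1}$, $\Phi_\LL(x)$ is bounded below by a positive constant and $e^{-x^2/2}$ by another, so the bound is trivial. For $x\ge 1$, I would use a Laplace-type lower bound: since the leading coefficient of $H_\LL^2$ is positive, there is a constant $c_\LL>0$ such that $H_\LL^2(t)\ge c_\LL t^{2\LL}$ for $t\ge 1$, and then integration by parts yields
\begin{equation*}
	\int_x^\infty t^{2\LL}e^{-t^2/2}\dif t\ge x^{2\LL-1}e^{-x^2/2}.
\end{equation*}
Hence $\Phi_\LL(x)\ge c'_\LL x^{2\LL-1}e^{-x^2/2}$ on $\intco{1,R^\epsilon}$. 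For $\LL\ge 1$ this gives $\Phi_\LL(x)\ge c'_\LL e^{-x^2/2}$ uniformly; for $\LL=0$, we have $x^{-1}\ge R^{-\epsilon}$ on the same range, which is precisely what produces the advertised factor $R^{-\epsilon}$. Combining the two regimes and applying the symmetry, we obtain $\Phi_\LL(x)(1-\Phi_\LL(x))\ge c R^{-\epsilon}e^{-x^2/2}$ for all $\abs{x}\le R^\epsilon$.

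The only mildly delicate step is getting the correct power of $R^\epsilon$ in the upper bound: since $\Psi_\LL'$ naturally produces a polynomial of degree $2\LL+3$, one either loosens the stated exponent by one or absorbs a single factor $|x|\le R^\epsilon$ into $C$ while keeping track of dependence on $\epsilon$. Everything else is direct polynomial-times-Gaussian bookkeeping.
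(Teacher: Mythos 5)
Your upper bound follows the same polynomial-times-Gaussian bookkeeping as the paper's proof, and you are right to flag that $\Psi_\LL'$ is a polynomial of degree $2\LL+3$ times $e^{-x^2/2}$ rather than $2\LL+2$ (a point the paper's own proof glosses over). However, your proposed repair of ``absorbing the surplus factor $(1+\abs{x})\le(1+R^\epsilon)$ into the constant $C$'' does not work: that factor is unbounded as $R\to\infty$, so it cannot be hidden in a constant, even one depending on $\epsilon$. The correct resolution is your other option, namely accepting the exponent $(2\LL+3)\epsilon$, which is harmless where the lemma is applied since $\epsilon$ is at one's disposal there.

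The genuine gap is in the lower bound for $\LL=0$, which is precisely the case where the factor $R^{-\epsilon}$ is actually needed. Integration by parts gives
\begin{equation*}
	\int_x^\infty t^{2\LL}e^{-t^2/2}\dif t = x^{2\LL-1}e^{-x^2/2} + (2\LL-1)\int_x^\infty t^{2\LL-2}e^{-t^2/2}\dif t,
\end{equation*}
so your inequality $\int_x^\infty t^{2\LL}e^{-t^2/2}\dif t\ge x^{2\LL-1}e^{-x^2/2}$ is valid for $\LL\ge1$ (where $2\LL-1>0$) but is \emph{reversed} for $\LL=0$: there one has $\int_x^\infty e^{-t^2/2}\dif t\le x^{-1}e^{-x^2/2}$. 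The fix is the Mills-ratio lower bound $\int_x^\infty e^{-t^2/2}\dif t\ge\frac{x}{1+x^2}e^{-x^2/2}\ge\frac{1}{2x}e^{-x^2/2}$ for $x\ge1$, which still yields the advertised $R^{-\epsilon}$ and is exactly what the paper uses (after bounding $h_\LL^2(y)\ge e^{-y^2/2}$ for $\abs{y}$ beyond a fixed $L$). Apart from this, your parity reduction $\Phi_\LL(-x)=1-\Phi_\LL(x)$ and the two-regime split are sound, and for $\LL\ge1$ your route even gives the sharper bound $\Phi_\LL(x)\bigl(1-\Phi_\LL(x)\bigr)\ge c\,e^{-x^2/2}$ with no $R^{-\epsilon}$ loss, which the paper's uniform treatment does not exhibit.
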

\begin{proof}
	Notice that all $\Psi_\LL$, $\Psi'_\LL$ and $\Phi_\LL'$ are of the form $x\mapsto p_\LL(x) e^{-\frac{x^2}{2}}$ for a polynomial $p_\LL$ of degree at most $2\LL+2$. Hence, for all $\abs{x} \le R^\epsilon$ it holds
	\begin{equation*}
		\max\set{\abs{\Psi_\LL(x)},\abs{\Psi'_\LL(x)},\abs{\Phi'_\LL(x)}}
		\le C_\LL R^{(2\LL+2)\epsilon} e^{-\frac{x^2}{2}}
	\end{equation*}
	with a constant $C_\LL>0$.
	
	We now prove the lower bound for $\Phi_\LL(1-\Phi_\LL)$. 
	Recall that $\Phi_\LL(x) = \int_x^\infty h_\LL(y)^2 \dif y$. Notice that for any $L>0$ there exists $\delta_L>0$ such that
	\begin{equation*}
		\inf_{\abs{x} \le L} \Phi_\LL(x) \bigl(1-\Phi_\LL(x)\bigr) \ge \delta_L.
	\end{equation*}
	Choose $L$ large enough such that the Hermite polynomial satisfies $H_\LL(x)^2 \ge \sqrt{2\pi}$ for all $\abs{x}\ge L$. The Gaussian tail bound $\frac{\abs{x}}{1+x^2}e^{-\frac{x^2}{2}} \le \int_x^\infty e^{-\frac{y^2}{2}} \dif y$ for $x\in\bR$ together with $\Phi_\LL(0)=\frac{1}{2}$ implies for all $L\le x\le R^\epsilon$ that
	\begin{equation*}
		\Phi_\LL(x)\bigl(1-\Phi_\LL(x)\bigr)
		\ge \frac{1}{2} \int_x^\infty h_\LL(y)^2 \dif y
		\ge \frac{1}{2} \int_x^\infty e^{-\frac{y^2}{2}} \dif y
		\ge \frac{x}{2(1+x^2)}e^{-\frac{x^2}{2}}
		\ge \frac{1}{4R^\epsilon} e^{-\frac{x^2}{2}}.
	\end{equation*}
	By symmetry, the same bound holds for $-R^\epsilon \le x\le -L$. By combining the bounds for $\abs{x}\le L$ and $L\le \abs{x}\le R^\epsilon$, there exists a constant $c_\LL>0$ such that for all $\abs{x}\le R^\epsilon$ it holds
	\begin{equation*}
		\Phi_\LL(x)\bigl(1-\Phi_\LL(x)\bigr)
		\ge c_\LL R^{-\epsilon} e^{-\frac{x^2}{2}}.
	\end{equation*}
\end{proof}

\begin{proof}[Proof of {\cref{thm:arealaw}}]
	Notice that by definition we have
	\begin{equation*}
		S_\LL^\beta(D_r)
		= \sum_{k\in\bN} f_\beta\bigl(\Pr{\Gamma_k^{(\LL)} \le R}\bigr)
	\end{equation*}
	with $r^2=R$ and recall that $f_\beta(x) = \frac{\log(x^\beta+(1-x)^\beta)}{1-\beta}$ if $\beta\ne1$ and $f_1(x) = -x\log(x)-(1-x)\log(1-x)$ for $x\in\intcc{0,1}$. Notice that $f_\beta(0) = f_\beta(1) = 0$. Within this proof we extend $f_\beta$ by continuity as $f_\beta(x)=0$ for all $x\in\bR\setminus\intcc{0,1}$. Let us first show that only terms for $k$ near $R$ contribute significantly to the above sum. It holds for all $\beta>0$ that $f_\beta(x) = f_\beta(1-x)$. Moreover, $f_\beta$ is $\beta\wedge1$-Hölder continuous for $\beta\ne1$ and $1^-$-Hölder continuous for $\beta=1$. In the following, we use the slightly weaker condition that $f_\beta$ is $\Delta_\beta$-Hölder continuous for $\Delta_\beta\in\intoo{\frac{1}{2},\beta\wedge1}$ for all $\beta>0$. In particular, this shows for all $x\in\intcc{0,1}$ that
	\begin{equation*}
		\abs{f_\beta(x)} \le C_\beta \min\set{x^{\Delta_\beta},(1-x)^{\Delta_\beta}}.
	\end{equation*}
	Fix $\epsilon < \min\set{\Delta_\beta-\frac{1}{2},\frac{1}{8\LL+14}}$. By combining \cref{lem:GinibreTypeBounds} with \cref{lem:GinibreBounds} and using that $\log(1-x) \le -x-\frac{x^2}{2}$ for all $x\in\intco{0,1}$, we have for any $\ell>0$ that
	\begin{align*}
		\Pr[\big]{\Gamma_{R+\ell}^{(\LL)} \le R}
		&\le C_\LL R^{\LL+1} \Pr[\big]{\Gamma_{R+\ell-2\LL-2} \le R}\\
		&\le C_\LL (R+\ell)^{2\LL+2} e^{\ell +(R+\ell)\log(1-\frac{\ell}{R+\ell})}\\
		&\le C_\LL (R+\ell)^{2\LL+2} e^{-\frac{\ell^2}{2(R+\ell)}}.
	\end{align*}
	Thus, we obtain as $R\to\infty$ that
	\begin{equation*}
		\sum_{k\ge R+R^{\frac{1}{2}+\epsilon}} f_\beta\bigl(\Pr[\big]{\Gamma_k^{(\LL)} \le R}\bigr)
		\le C_{\LL,\beta} \sum_{\ell=R^{\frac{1}{2}+\epsilon}}^\infty (R+\ell)^{(2\LL+2)\Delta_\beta} e^{-\frac{\Delta_\beta \ell^2}{2(R+\ell)}}
		\to 0.
	\end{equation*}
	Similarly, we obtain by \cref{lem:GinibreTypeBounds} and \cref{lem:GinibreBounds} for any $1\le k\le R-R^{\frac{1}{2}+\epsilon}$ that
	\begin{equation*}
		\Pr[\big]{\Gamma_k^{(\LL)} > R}
		\le C_\LL k^{\LL} \Pr[\big]{\Gamma_{k+\LL} > R}
		\le C_\LL R^{\LL} e^{k-R} \Bigl(\frac{R}{k}\Bigr)^k
		\le C_\LL R^{\LL} e^{-R^{\frac{1}{2}+\epsilon}} \Bigl(\frac{R}{R-R^{\frac{1}{2}+\epsilon}}\Bigr)^{R-R^{\frac{1}{2}+\epsilon}}.
	\end{equation*}
	Since $\log(1-x) \ge -x-\frac{4}{5}x^2$ for $0<x<\frac{1}{2}$, we find for $R$ large enough that
	\begin{equation*}
		e^{-R^{\frac{1}{2}+\epsilon}} \Bigl(\frac{R}{R-R^{\frac{1}{2}+\epsilon}}\Bigr)^{R-R^{\frac{1}{2}+\epsilon}}
		= e^{-R^{\frac{1}{2}+\epsilon} - (R-R^{\frac{1}{2}})\log(1-R^{\epsilon-\frac{1}{2}})}
		\le e^{-\frac{1}{5}R^{2\epsilon}-R^{3\epsilon-\frac{1}{2}}}
		\le e^{-\frac{1}{5}R^{2\epsilon}}
	\end{equation*}
	which implies for $R\to\infty$ that
	\begin{equation*}
		\sum_{k\le R-R^{\frac{1}{2}+\epsilon}} f_\beta\bigl(\Pr[\big]{\Gamma_k^{(\LL)} \le R}\bigr)
		\le C_\beta \sum_{k\le R-R^{\frac{1}{2}+\epsilon}} \Pr[\big]{\Gamma_k^{(\LL)} > R}^{\Delta_\beta}
		\le C_{\LL,\beta} R^{\LL \Delta_\beta+1} e^{-\frac{1}{5}\Delta_\beta R^{2\epsilon}}
		\to 0.
	\end{equation*}
	Hence, we proved for $R\to\infty$ that
	\begin{equation}\label{eq:EntanglementTruncated}
		S_\LL^\beta(D_r)
		= \sum_{\abs{k-R}\le R^{\frac{1}{2}+\epsilon}} f_\beta\bigl(\Pr[\big]{\Gamma_k^{(\LL)} \le R}\bigr) + o_{\LL,\beta}(1).
	\end{equation}

	Next, we show that we can replace $\Pr{\Gamma_k^{(\LL)} \le R}$ in the right-hand side of \eqref{eq:EntanglementTruncated} by its asymptotic expansion \eqref{cond} with $\Phi_\LL$ and $\Psi_\LL$ given in \cref{lem:GinibreTypeAssumptionA}. We use that the radii $(\Gamma_k^{(\LL)})_{k\in\bN}$ of the Ginibre $\LL$-type ensemble satisfy \eqref{cond} with the explicit error $\theta_{R,k} = \frac{C_\LL}{R}$ for all $k\in\intcc{R-R^{\frac{1}{2}+\epsilon},R+R^{\frac{1}{2}+\epsilon}}$ as proven in \cref{lem:GinibreTypeBE}. Since $f_\beta$ is bounded for all $\beta>0$, the Hölder continuity implies for all $k\in\intcc{R-R^{\frac{1}{2}+\epsilon},R+R^{\frac{1}{2}+\epsilon}}$ that
	\begin{equation}\label{eq:EntanglementBetaOneHalf}
		f_\beta\bigl(\Pr[\big]{\Gamma_k^{(\LL)} \le R}\bigr)
		= f_\beta\biggl(\Phi_\LL\Bigl(\frac{k-R}{\sqrt{R}}\Bigr) + \frac{1}{\sqrt{R}}\Psi_\LL\Bigl(\frac{k-R}{\sqrt{R}}\Bigr)\biggr) + O\bigl(R^{-\Delta_\beta}\bigr).
	\end{equation}
	Since $\epsilon < \Delta_\beta - \frac{1}{2}$ (here we use that $\beta>\frac{1}{2}$), the error in the above equation is summable. This yields
	\begin{equation}\label{eq:EntanglementApprox}
		S_\LL^\beta(D_r)
		= \sum_{\abs{k-R}\le R^{\frac{1}{2}+\epsilon}} f_\beta\biggl(\Phi_\LL\Bigl(\frac{k-R}{\sqrt{R}}\Bigr) + \frac{1}{\sqrt{R}}\Psi_\LL\Bigl(\frac{k-R}{\sqrt{R}}\Bigr)\biggr) + o_{\LL,\beta}(1).
	\end{equation}

	In the following, we further expand the right-hand side of \eqref{eq:EntanglementApprox} and approximate it by its leading order term. Since $f_\beta$ is smooth on $\intoo{0,1}$ for all $\beta>0$, a Taylor expansion shows that
	\begin{align*}
		f_\beta\biggl(\Phi_\LL\Bigl(\frac{k-R}{\sqrt{R}}\Bigr) + \frac{1}{\sqrt{R}}\Psi_\LL\Bigl(\frac{k-R}{\sqrt{R}}\Bigr)\biggr)
		&= f_\beta\Bigl(\Phi_\LL\Bigl(\frac{k-R}{\sqrt{R}}\Bigr)\Bigr)
		+ \frac{1}{\sqrt{R}}\Psi_\LL\Bigl(\frac{k-R}{\sqrt{R}}\Bigr) f'_\beta\Bigl(\Phi_\LL\Bigl(\frac{k-R}{\sqrt{R}}\Bigr)\Bigr)\\
		&\qquad + \frac{1}{R}\Psi_\LL\Bigl(\frac{k-R}{\sqrt{R}}\Bigr)^2 f''_\beta\bigl(x_{k,R}\bigr)
	\end{align*}
	with $x_{k,R}\in\intoo{0,1}$. By \cref{lem:TailsPsiPhiPrimePhi} and because $(2\alpha+3)\epsilon<\frac{1}{2}$, we have in particular that $\abs{\Psi_\LL(x)} \le \frac{\sqrt{R}}{2} \Phi_\LL(x)(1-\Phi_\LL(x))$ for all $\abs{x}\le R^\epsilon$ and all $R$ large enough. This shows that $\frac{\Phi_\LL(x)}{2}\le x_{k,R}\le 1-\frac{1-\Phi_\LL(x)}{2}$. A direct computation yields for all $\beta>0$ and all $x\in\intoo{0,1}$ that $\abs{f''_\beta(x)} \le C_\beta (x^{\Delta_\beta-2} + (1-x)^{\Delta_\beta-2})$. Hence, \cref{lem:TailsPsiPhiPrimePhi} implies for all $k\in\intcc{R-R^{\frac{1}{2}+\epsilon},R+R^{\frac{1}{2}+\epsilon}}$ that
	\begin{align*}
		\abs[\bigg]{\frac{1}{R}\Psi_\LL\Bigl(\frac{k-R}{\sqrt{R}}\Bigr)^2 f''_\beta\bigl(x_{k,R}\bigr)}
		&\le \frac{C_{\LL,\beta}}{R} \abs[\Big]{\Psi_\LL\Bigl(\frac{k-R}{\sqrt{R}}\Bigr)}^2 \biggl(1 + \Phi_\LL\Bigl(\frac{k-R}{\sqrt{R}}\Bigr)^{\Delta_\beta-2} + \Bigl(1-\Phi_\LL\Bigl(\frac{k-R}{\sqrt{R}}\Bigr)\Bigr)^{\Delta_\beta-2}\biggr)\\
		&\le C_{\LL,\beta} R^{-1+(4\LL+6)\epsilon} \exp\Bigl(-\Delta_\beta\frac{(k-R)^2}{2R}\Bigr).
	\end{align*}
	Because we chose $-1+(4\alpha+6)\epsilon<-\frac{1}{2}-\epsilon$, this proves for all $k\in\intcc{R-R^{\frac{1}{2}+\epsilon},R+R^{\frac{1}{2}+\epsilon}}$ that
	\begin{align*}
		\MoveEqLeft f_\beta\biggl(\Phi_\LL\Bigl(\frac{k-R}{\sqrt{R}}\Bigr) + \frac{1}{\sqrt{R}}\Psi_\LL\Bigl(\frac{k-R}{\sqrt{R}}\Bigr)\biggr)\\
		&= f_\beta\Bigl(\Phi_\LL\Bigl(\frac{k-R}{\sqrt{R}}\Bigr)\Bigr)
		+ \frac{1}{\sqrt{R}}\Psi_\LL\Bigl(\frac{k-R}{\sqrt{R}}\Bigr) f'_\beta\Bigl(\Phi_\LL\Bigl(\frac{k-R}{\sqrt{R}}\Bigr)\Bigr) + o_{\LL,\beta}\bigl(R^{-\frac{1}{2}-\epsilon}\bigr)
	\end{align*}
	and therefore \eqref{eq:EntanglementApprox} implies
	\begin{equation}\label{eq:EntanglementApprox2}
		S_\LL^\beta(D_r)
		= \sum_{\abs{k-R}\le R^{\frac{1}{2}+\epsilon}} f_\beta\Bigl(\Phi_\LL\Bigl(\frac{k-R}{\sqrt{R}}\Bigr)\Bigr) + \frac{1}{\sqrt{R}}\Psi_\LL\Bigl(\frac{k-R}{\sqrt{R}}\Bigr) f'_\beta\Bigl(\Phi_\LL\Bigl(\frac{k-R}{\sqrt{R}}\Bigr)\Bigr) + o_{\LL,\beta}(1).
	\end{equation}

	We now want to apply the Euler-Maclaurin formula \eqref{eq:EulerMaclaurinReduced} to calculate the sum. To do so, let us first consider the error term. A direct computation shows for all $\beta>0$ and all $x\in\intoo{0,1}$ that $f'_\beta(x) \le C_\beta(x^{\Delta_\beta-1}+(1-x)^{\Delta_\beta-1})$.
	Using again \cref{lem:TailsPsiPhiPrimePhi}, we obtain in a similar way as before for all $\abs{x}\le R^\epsilon$ that
	\begin{equation*}
		\abs[\Big]{\frac{1}{\sqrt{R}}\Bigl(\Psi'_\LL(x) f'_\beta\bigl(\Phi(x)\bigr) + \Psi_\LL(x) f''_\beta\bigl(\Phi_\LL(x)\bigr) \Phi'_\LL(x)\Bigr)}
		\le C_{\LL,\beta} R^{-\frac{1}{2}+(2\LL+3)\epsilon} e^{-\Delta_\beta\frac{x^2}{2}}.
	\end{equation*}
	By our choice of $\epsilon$, Euler-Maclaurin's formula \eqref{eq:EulerMaclaurinReduced} implies as $R\to\infty$ that
	\begin{equation*}
		\sum_{\abs{k-R}\le R^{\frac{1}{2}+\epsilon}} \frac{1}{\sqrt{R}}\Psi_\LL\Bigl(\frac{k-R}{\sqrt{R}}\Bigr) f'_\beta\Bigl(\Phi_\LL\Bigl(\frac{k-R}{\sqrt{R}}\Bigr)\Bigr)
		= \int_{-R^\epsilon}^{R^\epsilon} \Psi_\LL(u) f'_\beta\bigl(\Phi_\LL(u)\bigr) \dif u + o_{\LL,\beta}(1).
	\end{equation*}
	Because $x\mapsto f'(\Phi(x))$ is odd and $x\mapsto \Psi(x) = \frac{x^2}{2}\Phi_\LL'(x) + \frac{1}{3}\Phi_\LL'''(x) $ is even, it holds
	\begin{equation*}
		\int_{-R^\epsilon}^{R^\epsilon} \Psi_\LL(u) f'_\beta\bigl(\Phi_\LL(u)\bigr) \dif u
		= 0.
	\end{equation*}
	Combining this with \eqref{eq:EntanglementApprox2} yields
	\begin{equation}\label{eq:EntanglementApprox3}
		S_\LL^\beta(D_r)
		= \sum_{\abs{k-R}\le R^{\frac{1}{2}+\epsilon}} f_\beta\Bigl(\Phi_\LL\Bigl(\frac{k-R}{\sqrt{R}}\Bigr)\Bigr) + o_{\LL,\beta}(1).
	\end{equation}

	Let us finally apply the Euler-Maclaurin formula \eqref{eq:EulerMaclaurin} to the remaining term in \eqref{eq:EntanglementApprox3}. We find
	\begin{equation}\label{eq:EntanglementLeadingOrder}
	\begin{split}
		\sum_{\abs{k-R}\le R^{\frac{1}{2}+\epsilon}} f_\beta\Bigl(\Phi_\LL\Bigl(\frac{k-R}{\sqrt{R}}\Bigr)\Bigr)
		&= \sqrt{R} \int_{-R^\epsilon}^{R^\epsilon} f_\beta\bigl(\Phi_\LL(u)\bigr) \dif u + \frac{f_\beta(\Phi_\LL(R^\epsilon)) + f_\beta(\Phi_\LL(-R^\epsilon))}{2}\\
		&\qquad + \int_{-R^\epsilon}^{R^\epsilon} \Bigl(\fracpart{\sqrt{R}u}-\frac{1}{2}\Bigr) \Phi'_\LL(u) f'_\beta\bigl(\Phi_\LL(u)\bigr) \dif u.
	\end{split}
	\end{equation}
	For the first term we combine the Hölder continuity of $f_\beta$ with the tail bounds
	\begin{align*}
		\Phi_\LL(x) &= \int_x^\infty H_\LL(y)^2 \frac{e^{-\frac{y^2}{2}}}{\sqrt{2\pi}} \dif y
		\le C_\LL \int_x^\infty y^{2\LL} e^{-\frac{y^2}{2}} \dif y
		\le C_\LL x^{2\LL-1} e^{-\frac{x^2}{2}},\\
		1-\Phi_\LL(x) &= \int_{-\infty}^{-x} H_\LL(y)^2 \frac{e^{-\frac{y^2}{2}}}{\sqrt{2\pi}} \dif y
		\le C_\LL \int_{-\infty}^{-x} \abs{y}^{2\LL} e^{-\frac{y^2}{2}} \dif y
		\le C_\LL x^{2\LL-1} e^{-\frac{x^2}{2}}
	\end{align*}
	for $x>0$ to obtain as $R\to\infty$ that
	\begin{equation*}
		\sqrt{R} \int_{-R^\epsilon}^{R^\epsilon} f_\beta\bigl(\Phi_\LL(u)\bigr) \dif u
		= \sqrt{R} \int_{-\infty}^{\infty} f_\beta\bigl(\Phi_\LL(u)\bigr) \dif u + o_{\LL,\beta}(1).
	\end{equation*}
	The second term in \eqref{eq:EntanglementLeadingOrder} clearly converges to zero as $R\to\infty$. The last term converges to zero by a generalized Riemann-Lebesgue lemma (apply e.g.\ \cite[Theorem 1]{CFM16RiemannLebesgueLemma} with $g_R(x) = (\fracpart{\sqrt{R}x}-\frac{1}{2})\ind_{\abs{x}\le R^\epsilon}$).
\end{proof}
\begin{remark}
	Our argument uses that $\beta>\frac{1}{2}$ only to sum the error in \eqref{eq:EntanglementBetaOneHalf}. For $\beta\le \frac{1}{2}$ the error can be improved by continuing the expansion in \cref{lem:GinibreTypeBE} up to higher order:
	\begin{equation*}
		\Pr[\bigg]{\frac{\Gamma_{k-\LL}^{(\LL)}-k}{\sqrt{k}} \le x}
		= \Pr[\big]{Z_\LL \le x} + \frac{1}{\sqrt{3k}}\Phi_\alpha'''(x) + \frac{1}{k}\Psi_{\LL,2}(x) + \frac{1}{k^\frac{3}{2}}\Psi_{\LL,3}(x) + \dotsb
	\end{equation*}
	This would allow us to prove
	\begin{equation*}
		S_\LL^\beta(D_r)
		= \sqrt{R} \int_{-\infty}^\infty f_\beta\bigl(\Phi_\LL(u)\bigr) \dif u + o_{\LL,\beta}(1)
	\end{equation*}
	for all $\beta>0$.
\end{remark}

\subsection{Proofs of the results in \texorpdfstring{\cref{sec:hyperbolic}}{Section \ref*{sec:hyperbolic}}}
\label{sec:HyperbolicProof}

In this section, we analyse the hyperbolic ensembles $\zeta_\rho$, $\rho>0$, introduced in \cref{sec:hyperbolic}.
Let us recall that the intensity of the point process $\zeta_\rho$ corresponds to the volume form of the Poincar\'e disk with curvature $-\frac{2\pi}{\rho}$ and by \eqref{GammaHyper}, the radii $(\Gamma_k^{(\DD)})_{k\in\bN}$ of the points of $\zeta_\rho$ are $\DD \Betaprimedist(k,\DD)$-distributed.
In this case, we obtain the following asymptotics.

\begin{lemma}\label{lem:HyperbolicExpansion}
	For any $\DD>0$, it holds for all $k\in\bN$, as $R\to\infty$,
\begin{equation} \label{hyper0}
\Pr[\big]{\Gamma_k^{(\DD)} \le R}
= \Phi_\DD\Bigl(\frac{k}{R}\Bigr) + \frac{1}{R} \Psi_\DD\Bigr(\frac{k}{R}\Bigr) +O_\rho\Bigl( k^{-2} \wedge R^{-(2\wedge \rho)}\Bigr) ,
	\end{equation}
	where the probability tail function $\Phi_\DD$ and the function $\Psi_\DD$ are as in \cref{thm:HyperbolicModPhi}.
\end{lemma}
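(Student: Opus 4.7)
The plan is to exploit the representation $\Gamma_k^{(\DD)} \overset{\mathrm{d}}{=} \DD\,U/V$ with independent $U \sim \Gammadist(k,1)$ and $V \sim \Gammadist(\DD,1)$. Conditioning on $V$ and substituting $\tau = V/\DD$ produces the integral formula
\begin{equation*}
	\Pr[\big]{\Gamma_k^{(\DD)} \le R} = \int_0^\infty F_k(R\tau)\, f_{Y_\DD}(\tau)\,\dif\tau,
\end{equation*}
where $F_k$ is the distribution function of $\Gammadist(k,1)$ and $f_{Y_\DD}$ is the density of $Y_\DD \sim \Gammadist(\DD,\DD)$. Setting $t := k/R$ and decomposing $F_k(R\tau) = \ind_{\tau>t} + (F_k(R\tau) - \ind_{\tau>t})$ extracts the leading term $\Phi_\DD(t)$, so the task reduces to analysing the correction
\begin{equation*}
	I_{k,R} = \int_0^\infty \bigl(F_k(R\tau) - \ind_{\tau>t}\bigr)\, f_{Y_\DD}(\tau)\,\dif\tau,
\end{equation*}
whose integrand is concentrated near $\tau = t$ at scale $\sqrt{k}/R$ when $k$ is large.

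The next step is to Taylor-expand $f_{Y_\DD}(\tau)$ around $\tau = t$ and evaluate the moments $M_{j,k,R} := \int_0^\infty (F_k(R\tau) - \ind_{\tau>t})(\tau-t)^j\,\dif\tau$. The substitution $x = R\tau - k$ together with a Fubini argument of the type used in the proof of \cref{lem:GinibreBounds} reduces $M_{j,k,R}$, up to sign, to $\Ex{(\Gammadist(k,1)-k)^{j+1}}/((j+1)R^{j+1})$. The mean identity forces $M_{0,k,R} = 0$, while $\Var{\Gammadist(k,1)} = k$ yields $M_{1,k,R} = -k/(2R^2)$. Hence the $1/R$-contribution to $I_{k,R}$ is $-\frac{t}{2R}f_{Y_\DD}'(t)$, and the log-derivative relation $f_{Y_\DD}'(t)/f_{Y_\DD}(t) = (\DD-1)/t - \DD$ identifies this contribution with $\Psi_\DD(t)/R$.

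The main obstacle is the uniform error $O_\DD(k^{-2} \wedge R^{-(2\wedge\DD)})$, which I would handle by splitting at the threshold $k \sim R^{(2\wedge\DD)/2}$. In the \emph{bulk regime} $k \gg R^{(2\wedge\DD)/2}$, the quadratic Taylor remainder is controlled by $\norm{f_{Y_\DD}''}_{L^\infty[t/2,2t]} \cdot M_{2,k,R}$; using the third central moment $\Ex{(\Gammadist(k,1)-k)^3} = 2k$ gives $M_{2,k,R} = O(k/R^3)$, which translates into an error of order $k^{-2}$ after rescaling. In the \emph{edge regime} $k \lesssim R^{(2\wedge\DD)/2}$ (hence $t \to 0$), Taylor expansion is unreliable because the derivatives $f_{Y_\DD}^{(j)}(t)$ diverge like $t^{\DD-1-j}$; instead one estimates $\Pr{\Gamma_k^{(\DD)} > R}$ directly by integrating the beta-prime tail $(1+x/\DD)^{-k-\DD}$, producing a bound of order $O_k(R^{-\DD})$, and compares with the explicit magnitudes $\Phi_\DD(t), \Psi_\DD(t)/R = O(t^{\DD-1})$. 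The most delicate point is verifying compatibility of the two estimates at the transition $k \asymp R^{(2\wedge\DD)/2}$; this entails tracking the $t$-dependence of the Taylor remainder weights and truncating the expansion to a window of width $\asymp \sqrt{k}/R$ around $\tau = t$, while treating the complementary region via crude tail bounds on $F_k$ and on $f_{Y_\DD}$.
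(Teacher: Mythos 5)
Your route --- the representation $\Gamma_k^{(\DD)} \overset{\mathrm{d}}{=} \DD U/V$ with $U\sim\Gammadist(k,1)$, $V\sim\Gammadist(\DD,1)$, conditioning on $V$, and the exact Fubini identity $M_{j,k,R} = -\Ex{(U-k)^{j+1}}/\bigl((j+1)R^{j+1}\bigr)$ --- is genuinely different from the paper's proof, which instead Taylor-expands the explicit beta-prime integrand $(x/(1+x))^k(1+x)^{-\DD}$ and the Gamma-function ratio inside \eqref{GammaHyper}. Your main-term computation is correct: $M_{0,k,R}=0$, $M_{1,k,R}=-t/(2R)$ with $t=k/R$, and the resulting coefficient $-\tfrac{t}{2}f_{Y_\DD}'(t)=\tfrac{\DD^\DD}{2\Gamma(\DD)}(\DD t+1-\DD)t^{\DD-1}e^{-\DD t}$ agrees with the paper's \emph{integral} formula for $\Psi_\DD$ and with an exact check at $\DD=2$ (where $\Pr{\Gamma_k^{(2)}\le R}=Y^k(k+1-kY)$, $Y=R/(R+2)$); the closed form $(\DD x+\DD-1)$ printed in \cref{thm:HyperbolicModPhi} appears to carry a sign typo on the $(\DD-1)$ term, so you need not worry about that discrepancy.

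The gaps are in the error analysis. First, the quadratic Taylor remainder is $\int(F_k(R\tau)-\ind_{\tau>t})\tfrac12 f_{Y_\DD}''(\xi_\tau)(\tau-t)^2\dif\tau$ with $\xi_\tau$ depending on $\tau$, so it cannot be bounded by $\norm{f_{Y_\DD}''}_{\infty}\,\abs{M_{2,k,R}}$: one needs the absolute version $\int\abs{F_k(R\tau)-\ind_{\tau>t}}(\tau-t)^2\dif\tau=\Ex{\abs{U-k}^3}/(3R^3)$, which is of order $k^{3/2}/R^3$, not $k/R^3$; the signed third central moment $2k$ is irrelevant here. As written this gives only $O(k^{-3/2})$ in the bulk. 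The fix is to expand one order further: the signed term $\tfrac12 f_{Y_\DD}''(t)M_{2,k,R}$ is itself $O(t^{\DD}k^{-2})$ and absorbable into the error, and the cubic remainder is controlled by $\Ex{\abs{U-k}^4}/R^4=O(k^2/R^4)$.

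Second, the edge regime as you set it up does not close. Bounding $\Pr{\Gamma_k^{(\DD)}>R}$, $1-\Phi_\DD(t)$ and $\Psi_\DD(t)/R$ separately costs an error of order $k^\DD R^{-\DD}$ (the constant in your ``$O_k(R^{-\DD})$'' grows like $k^\DD$), which exceeds the required $R^{-(2\wedge\DD)}$ for all $2\le k\le R^{(2\wedge\DD)/2}$ when $\DD\le 2$, and already for $k$ beyond $R^{1-2/\DD}$ when $\DD>2$: the cancellation between the two tails is thrown away. In fact the regime split is unnecessary. Although $f_{Y_\DD}^{(j)}(t)$ diverges like $t^{\DD-1-j}$ as $t\to0$, the $j$-th absolute moment over the window is of order $\Ex{\abs{U-k}^{j+1}}/R^{j+1}=O\bigl(t^{j+1}k^{-(j+1)/2}\bigr)$, so each term of the third-order expansion is $O\bigl(t^{\DD}e^{-\DD t/4}k^{-(j+1)/2}\bigr)$, uniformly in $t$ --- which is $O(k^{-2}\wedge R^{-(2\wedge\DD)})$ as required. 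What remains is the off-window contribution ($\tau\notin[t/2,2t]$), of size $O(e^{-ck}t^{\DD})$ because $\int_0^{t/2}f_{Y_\DD}$ itself contributes a factor $t^{\DD}$; this is admissible for $k\ge K_0(\DD)$, and the finitely many remaining $k$ can be handled by your crude tail bound. If you rebuild the argument along these lines it becomes a clean alternative to the paper's direct expansion of the integrand, which avoids any splitting because its errors interpolate automatically through $\int_0^{\DD k/R}u^{\DD-1+\beta}e^{-u}\dif u\le C\min\set{(k/R)^{\DD+\beta},1}$.
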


\begin{proof}
	By \eqref{GammaHyper}, we can rewrite for $k\in\bN$ and $R>0$,
	\begin{equation} \label{hyper1}
		\Pr[\big]{\Gamma_k^{(\DD)} \le R}
		= \Pr[\Big]{\Betaprimedist(k,\DD) \le \frac{R}{\DD}}
		= 1 - \frac{\Gamma(k+\DD)}{\Gamma(k)\Gamma(\DD)} \int_\frac{R}{\DD}^\infty
		\Bigl(\frac{x}{1+x}\Bigr)^k \frac{\dif x}{x(1+x)^\rho}.
	\end{equation}
Taylor expansions show that it holds for all $x\ge 1$
	\begin{align*}
		\Bigl(\frac{x}{1+x}\Bigr)^k
		&= \exp\biggl(-k\log\Bigl(1+\frac{1}{x}\Bigr)\biggr)
		= \exp\biggl(-\frac{k}{x} + \frac{k}{2x^2} + O\Bigl(\frac{k}{x^3}\Bigr)\biggr)\\
		&= \exp\Bigl(-\frac{k}{x}\Bigr) \biggl(1+\frac{k}{2x^2} + O\Bigl( \frac{k^2(x^{-1}+k^{-1})}{x^3}\Bigr)\biggr),
	\end{align*}
where the error is independent of $k$ and
\[
(1+x)^{-\DD} = x^{-\DD} \bigl(1 - \DD x^{-1} + O_{\rho}\bigl(x^{-2}\bigr)\bigr).
 \]
By \eqref{hyper1} and using that $\frac{k}{x} \le 1+\frac{k^2}{x^2}$, this implies that
\begin{equation*}
	\begin{aligned}
\Pr[\big]{\Gamma_k^{(\DD)} \le R}
		&= 1 - \frac{\Gamma(k+\DD)}{\Gamma(k)\Gamma(\DD)}
 \int_\frac{R}{\DD}^\infty x^{-\DD-1} e^{-\frac{k}{x}} \biggl( 1 - \frac{\DD}{x} + \frac{k}{2x^2} + O_\rho\Bigl( \frac{1}{x^2}+\frac{k^2}{x^4}\Bigr) \biggr) \dif x \\
		&= 1 - \frac{\Gamma(k+\DD)}{k^\DD\Gamma(k)\Gamma(\DD)} \int_0^{\frac{\DD k}{R}} u^{\DD-1} e^{-u} \biggl( 1 - \frac{\DD u}{k} + \frac{u^2}{2k} + O_\rho\Bigl(\frac{u^2(1+u^2)}{k^2} \Bigr) \biggr)\dif u ,
	\end{aligned}
\end{equation*}
where we made the change of variable $u = k/x$.
Since for any $\beta \ge 0$, it holds for all $x>0$
 \begin{equation} \label{hyper2}
 \int_0^x u^{\DD-1+\beta} e^{-u} \dif u \le C_\rho \min\set{x^{\rho+\beta}, 1}
 \end{equation}
and $\frac{\Gamma(k+\DD)}{k^\DD \Gamma(k)\Gamma(\DD)} \le C_\rho$ for all $k\in\bN$, this shows that
\begin{equation} \label{hyper3}
\Pr[\big]{\Gamma_k^{(\DD)} \le R} = 1- \frac{\Gamma(k+\DD)}{k^\DD\Gamma(k)\Gamma(\DD)} \int_0^{\frac{\DD k}{R}} u^{\DD-1} e^{-u} \biggl( 1 - \frac{\DD u}{k} + \frac{u^2}{2k} \biggr)\dif u + O_\rho\Bigl( k^{-2} \wedge R^{-2}\Bigr).
\end{equation}

It remains to plug in the asymptotics of the combinatorial factor in \eqref{hyper3}:
\begin{equation*}
		\frac{\Gamma(k+\DD)}{k^\DD\Gamma(k)}
		= \biggl( 1+\frac{\DD(\DD-1)}{2k} + O_\rho\Bigl(\frac{1}{k^2}\Bigr) \biggr)
		\qquad \text{as } k\to\infty.
\end{equation*}
Using \eqref{hyper2} again, we conclude that for any $k\in\bN$ and $R>0$,
\[
\Pr[\big]{\Gamma_k^{(\DD)} \le R} = 1- \frac{1 }{\Gamma(\DD)} \int_0^{\frac{\DD k}{R}} u^{\DD-1} e^{-u} \biggl( 1+\frac{\DD(\DD-1)}{2k} + \frac{u^2}{2k} - \frac{\DD u}{k} \biggr)\dif u + O_\rho\Bigl( k^{-2} \wedge R^{-(2\wedge \rho)}\Bigr).
\]
If $Y_\rho \overset{\dif }{=} \Gammadist(\DD,\DD) $ is a random variable, 	$\Phi_\DD$ denotes its probability tail function, and we let for any $x\in\bR$
\[ \begin{aligned}
\Psi_\rho (x)
& = - \frac{1 }{\Gamma(\DD)} \frac{ \ind_{x> 0}}{2x} \int_0^{\rho x} u^{\DD-1} e^{-u} \bigl( u^2 -2\rho u + \DD(\DD-1) \bigr) \dif u \\
&= \ind_{x> 0} \frac{\rho^{\rho} }{2\Gamma(\DD)} \bigl( \rho x +\rho-1 \bigr) x^{\rho-1} e^{-\rho x},
\end{aligned}\]
then we obtain the asymptotic expansion \eqref{hyper0}.
\end{proof}

For any $\rho>0$,
\cref{lem:HyperbolicExpansion} shows that the radii $(\Gamma_k^{(\DD)})_{k\in\bN}$ of the hyperbolic ensemble $\zeta_\rho$ satisfy \eqref{cond} with $\vartheta=0$, $\Sigma_R = R$ for $R>0$ and $\theta_{k,R} = O_\rho\bigl( k^{-2} \wedge R^{-(2\wedge \rho)}\bigr)$.
In particular, we verify that $\sum_{k\in\bN} \theta_{R,k}= O_\rho\bigl( R^{- 1\wedge(\rho/2)} \bigr)$
and $\Phi_\DD(x) = \Pr{Y_\rho\le x}$, where $Y_\DD$ is a gamma-distributed random variable with shape $\DD$ and rate $\DD$ so that $Y_\rho$ is absolutely continuous and positive with $\Ex{Y_\rho}<\infty$.
Hence, since the function $\Psi_\rho \in W^{1,1}(\bR)$ if $\rho\ge 1$, \cref{NA} are satisfied in this case, and we can apply the results from \cref{sec:MainResults}.

Nonetheless, let us observe that for $\rho<1$, we can decompose
$\Psi_\rho (x) = \Psi_{AC}(x) + \Psi_{M}(x)$
with $\Psi_{AC}(x) = \ind_{x> 0} \frac{\rho^{\rho+1}}{2\Gamma(\rho)} x^\rho e^{-\rho x} \in W^{1,1}(\bR)$ and $\Psi_{M}(x) = \ind_{x> 0} \frac{\rho^{\rho}(\rho-1)}{2\Gamma(\rho)} x^{\rho-1} e^{-\rho x} \in L^1(\bR_+)$ being increasing on $(0,\infty)$.
So, by \cref{rk:BV}, we can still apply our main results when $\rho<1$.

Hence, by applying \cref{thm:ModPhiGeneral} and \cref{thm:FCLTHyperbolic}, this concludes the proofs of \cref{thm:HyperbolicModPhi} and \cref{thm:HyperbolicFCLT}.



\section{Proof of \texorpdfstring{\cref{thm:ldp}}{Theorem \ref{thm:ldp}}}
\label{sec:JLMProof}

In this section, we work under the \cref{ass:LD} and define for $R, k>0$,
\begin{equation}\label{eq:DefPk}
	\PP(k) = \max\set{k,R}^\beta e^{k-R} \Bigl(\frac{R}{k}\Bigr)^k ,
\end{equation}
for a constant $\beta>0$.
Using \eqref{Xialpha} and the independence of the random variables $\Gamma_{k}^{(\LL)}$, it formally holds
\begin{equation} \label{pTheta}
\Pr[\big]{\Xi_R^{(\LL)} \ge \Theta_R} = \sum_{I \subseteq \bN, |I| \ge \Theta_R} \prod_{k\in I}	\Pr{\Gamma_k^{(\LL)} \le R} \prod_{k\notin I}	\Pr{\Gamma_k^{(\LL)} > R}.
\end{equation}
We claim that the large deviations estimates from \cref{thm:ldp} only depend on the tails of the random variables, so that we can use the function $\PP(k)$ to control the probabilities on the RHS of \eqref{pTheta}.
Namely, by \cref{lem:GinibreBounds} and \cref{lem:GinibreTypeBounds}, we see that for any $\LL>0$, there exists a constant $\beta>0$ such that
\begin{equation} \label{p}
\begin{aligned}
	\Pr{\Gamma_k^{(\LL)} \le R} \le \PP(k) \qquad\text{for all }k\ge R, \\
	\Pr{\Gamma_k^{(\LL)} > R} \le \PP(k) \qquad\text{for all }k\le R.
\end{aligned}
\end{equation}
Moreover, the function $\PP(k)$ is monotone\footnote{Since the random variables $\bigl(\Gamma_k^{(\LL)}\bigr)_{k\in\bN}$ are not stochastically ordered, it is very convenient to replace $\Pr{\Gamma_k^{(\LL)} \le R} $ by $\PP(k)$ in the expansion \eqref{pTheta}.}: There exists a constant $c_\beta>0$ such that $k\mapsto \PP(k)$ is non-decreasing for $k\le R$ and non-increasing for $k\ge R+c_\beta$.
We also make crucial use of the following bound:
 Using that $\log(1-t) \le -t - \frac{t^2}{2}$ for all $t\in\intco{0,1}$, we obtain
 \begin{equation}\label{eq:JLMSizePOutsideWindow}
 \begin{aligned}
	\PP(R+x\Theta_R)
	& = (R+x\Theta_R)^\beta \exp\biggl(x\Theta_R + (R+x\Theta_R)\log\Bigl(1-\frac{x\Theta_R}{R+x\Theta_R}\Bigr)\biggr) \\
	&\le (R+x\Theta_R)^\beta \exp\biggl(- \frac{x^2 \Theta_R^2}{2R} \wedge \frac{x\Theta_R}{2}\biggr).
	\end{aligned}
\end{equation}
Under the \cref{ass:LD}, the RHS of \eqref{eq:JLMSizePOutsideWindow} converges to zero faster than any power of $(R+\Theta_R)$ as $R\to\infty$.

We explained in \cref{sec:Ginibre} that the function $J_\gamma$ which governs the large deviations is coming from the exponential tail of the random variables $\Gamma_k^{(\LL)}$, see \cref{thm:gammaJ}.
It also appears as follows:

\begin{proposition}\label{lem:JLMDeviationPrinciples}
Under the \cref{ass:LD}, for all $C>0$, it holds uniformly for all $x\in\intcc{0,C}$
	\begin{equation} \label{cvgJ}
		- \lim_{R\to\infty} \frac{1}{v_R} \log \PP(R+x\Theta_R) = J_\gamma(x).
	\end{equation}
\end{proposition}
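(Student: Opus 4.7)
The plan is to reduce the claim to the asymptotic analysis of a single elementary function. Since $k = R+x\Theta_R \ge R$ when $x \ge 0$, the definition \eqref{eq:DefPk} simplifies to
\begin{equation*}
-\log \PP(R+x\Theta_R) = R\,\phi\!\Bigl(\frac{x\Theta_R}{R}\Bigr) - \beta \log(R+x\Theta_R),
\qquad\text{where } \phi(t) = (1+t)\log(1+t) - t.
\end{equation*}
I would first check that the $\beta\log$ term is negligible in the sense that $\log(R+x\Theta_R) = o(v_R)$ uniformly for $x \in [0,C]$ in each of the three regimes of \eqref{J}: in regime (i) this uses $v_R = \Theta_R^2/R \gg \log R$ (which follows from $\Theta_R \gg \sqrt{R\log R}$); in regimes (ii) and (iii) it is immediate from $v_R \gtrsim \Theta_R \to \infty$.

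Next I would handle the main term $R\phi(x\Theta_R/R)/v_R$ regime by regime, using the appropriate local expansion of $\phi$. In regime (i), $t := x\Theta_R/R \to 0$ uniformly for $x\in[0,C]$, and $\phi(t) = \tfrac{t^2}{2} + O(t^3)$ yields $R\phi(x\Theta_R/R) = \tfrac{x^2}{2}\cdot\tfrac{\Theta_R^2}{R}(1+o(1)) = \tfrac{x^2}{2} v_R(1+o(1))$. In regime (ii), $t \to x$ uniformly and $R/\Theta_R \to 1$, so $R\phi(x\Theta_R/R)/\Theta_R \to \phi(x) = (1+x)\log(1+x)-x$ by continuity. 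In regime (iii), $t\to\infty$ for $x>0$ and the expansion $\phi(t) = t\log t - t + O(\log t)$ gives
\begin{equation*}
R\phi\!\Bigl(\frac{x\Theta_R}{R}\Bigr) = x\Theta_R \log\!\Bigl(\frac{x\Theta_R}{R}\Bigr) - x\Theta_R + O\!\bigl(R\log(x\Theta_R/R)\bigr).
\end{equation*}
Dividing by $v_R = \Theta_R \log\Theta_R$ and using that $\log\Theta_R/\log R \to \gamma/2$ by the regular variation hypothesis, the leading contribution becomes $x\bigl(1 - 2/\gamma\bigr) = J_\gamma(x)$, while the remaining terms are $O(R/(\Theta_R\log\Theta_R)) + O(x/\log\Theta_R) = o(1)$.

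The main subtle point—the only one that is not completely routine—is uniformity near $x = 0$ in regime (iii), where $\log(x\Theta_R/R)$ contains a potentially singular $\log x$. This is resolved by observing that the relevant quantity is $x\log x / \log\Theta_R$, and since $x\mapsto x\log x$ extends continuously to $0$ on $[0,C]$, this vanishes uniformly as $R \to \infty$. The boundary case $x=0$ itself is trivial since $\PP(R) = R^\beta$ gives $-\log\PP(R)/v_R = -\beta\log R/v_R \to 0 = J_\gamma(0)$ in all three regimes. Combining the three cases establishes the uniform convergence \eqref{cvgJ} on $[0,C]$.
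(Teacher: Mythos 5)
Your proposal is correct and follows essentially the same route as the paper: the same identity $\log\PP(R+x\Theta_R)=\beta\log(R+x\Theta_R)-RJ_2(x\Theta_R/R)$ with $J_2(t)=(1+t)\log(1+t)-t$, disposal of the $\beta\log$ term via $\log(R+C\Theta_R)=o(v_R)$, and the same regime-by-regime Taylor/continuity/large-argument expansions of $J_2$. Your extra care about uniformity near $x=0$ in the third regime is a welcome refinement of a point the paper passes over quickly.
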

\begin{proof}
With $J_2(x)= (1+x)\log(1+x)-x$, observe that for all $x>0$,
	\begin{equation*}
		\log\PP(R+x\Theta_R) = \beta\log(R+x\Theta_R) - R J_2\Bigl(\frac{x\Theta_R}{R}\Bigr).
	\end{equation*}
Under the \cref{ass:LD}, $\frac{\log(R+C\Theta_R)}{v_R} \to 0$ as $R\to\infty$, so it suffices to show that $\frac{R}{v_R} J_2(\frac{x\Theta_R}{R}) \to J_\gamma(x)$ uniformly for all $x\in\intcc{0,C}$.
In case $\frac{\Theta_R}{R} \to 1$, as $v_R =\Theta_R$ and $J_2$ is continuous on $\intco{0,\infty}$, the claim is straightforward.

Consider now the case $\frac{\Theta_R}{R} \to 0$ and $v_R = \frac{\Theta_R^2}{R}$. A Taylor expansion shows that $J_2(t) = -\frac{t^2}{2} + O(t^3)$ as $t\to0$ which yields as $R\to\infty$,
	\begin{equation*}
		\frac{R}{v_R} f\Bigl(\frac{x\Theta_R}{R}\Bigr) = -\frac{x^2}{2} + O\Bigl(C^3\frac{\Theta_R}{R}\Bigr).
	\end{equation*}

	In case $\frac{\Theta_R}{R}\to\infty$ and $v_R = \Theta_R \log\Theta_R$, using the asymptotics of $J_2$, we have
	\begin{equation*}
		\frac{R}{v_R} J_2\Bigl(\frac{x\Theta_R}{R}\Bigr)
		= \frac{x}{\log \Theta_R} \log\Bigl(\frac{x\Theta_R}{R}\Bigr) + O\Bigl(\frac{C}{\log \Theta_R}\Bigr)
		= x\Bigl(1- \frac{2}{\gamma}\Bigr)+ o(1)
	\end{equation*}
	uniformly for all $x\in\intcc{0,C}$.
\end{proof}

\begin{corollary}\label{lem:JLMLeadingOrderTerm}
	 Under the \cref{ass:LD}, it holds for all $c\ge 0$ and $x>0$ that
	\begin{equation*}
		\lim_{R\to\infty} \frac{1}{\Theta_R v_R} \log \Biggl(\prod_{k=R+c}^{R+x\Theta_R} \PP(k) \Biggr)
		= -\int_0^x J_\gamma(t) \dif t.
	\end{equation*}
\end{corollary}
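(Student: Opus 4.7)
Proof strategy. The identity $\log \prod_{k=R+c}^{R+x\Theta_R} \PP(k) = \sum_{k=\lceil R+c\rceil}^{\lfloor R+x\Theta_R\rfloor} \log \PP(k)$ reduces the claim to a Riemann sum analysis. Writing $k = R + t\Theta_R$ with $t = (k-R)/\Theta_R$, we partition the interval $[0,x]$ with mesh $1/\Theta_R$, and the sum becomes
\begin{equation*}
\frac{1}{\Theta_R v_R}\sum_{k=\lceil R+c\rceil}^{\lfloor R+x\Theta_R\rfloor} \log \PP(k)
= \frac{1}{\Theta_R}\sum_{k} \frac{\log \PP(R+t_k\Theta_R)}{v_R} , \qquad t_k = \frac{k-R}{\Theta_R}.
\end{equation*}
The plan is to apply \cref{lem:JLMDeviationPrinciples} uniformly on $\intcc{0,x}$ to replace each summand by $-J_\gamma(t_k)$ up to $o(1)$ errors, and then recognize the resulting expression as a Riemann sum converging to $-\int_0^x J_\gamma(t)\dif t$.

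Concretely, fix $\varepsilon>0$. By \cref{lem:JLMDeviationPrinciples} applied with $C = x$, there exists $R_0$ such that for all $R\ge R_0$ and all $t\in\intcc{0,x}$,
\begin{equation*}
\abs[\Big]{v_R^{-1}\log \PP(R+t\Theta_R) + J_\gamma(t)} \le \varepsilon .
\end{equation*}
Summing over the (at most $x\Theta_R + 1$) indices $k$ in the range and dividing by $\Theta_R$ produces an error bounded by $\varepsilon(x + \Theta_R^{-1})$. The main term becomes the Riemann sum $\frac{1}{\Theta_R}\sum_k J_\gamma(t_k)$ over the partition $\set{t_k}$ of mesh $1/\Theta_R$ contained in $\intcc{c/\Theta_R, x}$. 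Since $J_\gamma$ is continuous on $\intcc{0,x}$ in all three cases of \eqref{J} (quadratic, the function $t\mapsto (1+t)\log(1+t)-t$, or linear), this Riemann sum converges to $\int_{0}^{x} J_\gamma(t)\dif t$ as $R\to\infty$; the fixed offset $c$ contributes only $O(1/\Theta_R)$ to the lower endpoint, which is negligible.

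Combining these two observations gives
\begin{equation*}
\limsup_{R\to\infty}\abs[\bigg]{\frac{1}{\Theta_R v_R}\sum_{k=\lceil R+c\rceil}^{\lfloor R+x\Theta_R\rfloor} \log \PP(k) + \int_0^x J_\gamma(t)\dif t} \le \varepsilon x ,
\end{equation*}
and since $\varepsilon>0$ is arbitrary, the claimed limit follows. The only delicate point is the uniformity in $t\in\intcc{0,x}$ in \cref{lem:JLMDeviationPrinciples} (which is exactly what its statement provides), together with the integrability of $J_\gamma$ on bounded intervals; the step from uniform pointwise convergence to convergence of Riemann sums is then routine, so this Corollary is essentially a direct corollary of the preceding Proposition.
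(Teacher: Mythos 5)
Your proof is correct and follows the same route as the paper: the paper's own argument is precisely "a Riemann sum approximation" based on \cref{lem:JLMDeviationPrinciples}, with the paper likewise stressing that the uniformity of the limit \eqref{cvgJ} on $\intcc{0,C}$ is the essential point since $\PP$ depends on $R$. You have simply written out the details that the paper leaves implicit.
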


\begin{proof}
This follows directly from \cref{lem:JLMDeviationPrinciples} by a Riemann sum approximation. Since the function $\PP$ depends on $R$, it is important to note that the limit \eqref{cvgJ} is uniform for all $x\in[0,C]$.
\end{proof}

Let us now prove the upper bound in \cref{thm:ldp}.

\begin{proposition}\label{prop:JLMUB}
	It holds for all $x>0$ that
	\begin{equation*}
		 \limsup_{R\to\infty} \frac{-1}{\Theta_R v_R} \log \Pr{\Xi_R \ge x\Theta_R}
		\le \int_0^x J^\gamma(t) \dif t.
	\end{equation*}
\end{proposition}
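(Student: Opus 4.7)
The statement is the LDP lower bound on $\Pr{\Xi_R^{(\alpha)} \ge x\Theta_R}$ (equivalently, an asymptotic upper bound on the rate function), and we establish it by exhibiting an explicit favourable configuration.

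Fix $M=\lceil R+x\Theta_R\rceil$ and let $E := \{\Gamma_k^{(\alpha)} \le R \text{ for all } k \le M\}$. On $E$ the counting statistic $\sum_k \ind_{\Gamma_k^{(\alpha)} \le R}$ is at least $M$, so $\Xi_R^{(\alpha)} \ge x\Theta_R$; by independence of the $\Gamma_k^{(\alpha)}$,
\[
\Pr{\Xi_R^{(\alpha)} \ge x\Theta_R} \ge \prod_{k=1}^M \lambda_k, \qquad \lambda_k := \Pr{\Gamma_k^{(\alpha)} \le R}.
\]
It therefore suffices to prove $-\log \prod_{k\le M}\lambda_k \le (1+o(1))\Theta_R v_R \int_0^x J_\gamma(t)\dif t$, which we do by splitting the product at $k=R$.

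For the low-index block ($k\le R$), we claim $\sum_{k \le R}(1-\lambda_k)=O_\alpha(\sqrt{R})$. Cutting at $|k-R|=R^{1/2+\delta}$, the Edgeworth-type expansion from \cref{lem:GinibreTypeBE} yields a Gaussian-integrable contribution of order $\sqrt R$ inside the window, while the Chernoff-type estimates from \cref{lem:GinibreBounds} combined with \cref{lem:GinibreTypeBounds} give a super-polynomially small contribution outside. Since \cref{ass:LD} implies $\Theta_R v_R \gg \sqrt{R}$ in all three regimes, one concludes $-\log \prod_{k \le R}\lambda_k = O(\sqrt{R}) = o(\Theta_R v_R)$.

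For the high-index block ($R<k\le M$), we need a matching lower bound $\lambda_k \ge cR^{-\beta'}\PP(k)$. For $\alpha=0$ this follows from \cref{lem:GinibreBounds} via Stirling (with $\beta'=1/2$); for $\alpha\ge 1$ one argues analogously to \cref{lem:GinibreTypeBounds}, by comparing the density $L_\alpha^{(k-\alpha-1)}(x)^2 x^{k-\alpha-1}e^{-x}$ with the $\Gamma_{k+\alpha}$ density. \cref{lem:JLMLeadingOrderTerm} then gives $-\log\prod_{R<k\le M}\PP(k)=(1+o(1))\Theta_R v_R \int_0^x J_\gamma(t)\,\dif t$, while the polynomial correction $\lceil x\Theta_R\rceil \log(cR^{-\beta'})=O(\Theta_R \log R)$ is $o(\Theta_R v_R)$ because \cref{ass:LD} ensures $v_R/\log R\to\infty$ in each regime. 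Combining the two blocks yields the claimed limsup.

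The principal obstacle is the lower bound $\lambda_k \gtrsim R^{-\beta'}\PP(k)$ for $\alpha \ge 1$: the Laguerre factor $L_\alpha^{(k-\alpha-1)}$ has up to $\alpha$ zeros whose positions depend on $k$, so a pointwise density estimate at $x\approx R$ is not immediately available. One has to argue that for each $R$ only a bounded number of indices $k\in(R,M]$ produce a zero of $L_\alpha^{(k-\alpha-1)}$ near $R$, handling them separately and noting that even a crude bound on such exceptional indices contributes only a subleading polynomial factor to the overall product.
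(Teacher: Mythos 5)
Your plan follows essentially the same route as the paper's proof: lower-bound the probability by the event that all $\Gamma_k^{(\alpha)}$ with $k\le R+x\Theta_R$ fall below $R$, show that the block $k\le R$ contributes only $o(\Theta_R v_R)$ to the logarithm, and evaluate the block $R<k\le R+x\Theta_R$ via \cref{lem:JLMLeadingOrderTerm} after comparing $\Pr{\Gamma_k^{(\alpha)}\le R}$ with $\PP(k)$. Your flagging of the lower bound $\Pr{\Gamma_k^{(\alpha)}\le R}\ge cR^{-\beta'}\PP(k)$ for $\alpha\ge1$ as the principal obstacle is well taken: the paper's proof passes from $\prod_k\Pr{\Gamma_k^{(\alpha)}\le R}$ to $\prod_k\PP(k)$ without supplying this bound (\cref{lem:GinibreTypeBounds} only gives upper bounds), so your proposed treatment of the Laguerre factor --- using \cref{lem:GinibreTypeBE} for $|k-R|=O(\sqrt R)$ and the location of the zeros for larger $k-R$ --- is a genuine completion of that step rather than a redundancy.
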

\begin{proof}
First, observe that
	\begin{equation*}
		\set[\big]{\Xi_R \ge x\Theta_R}
		\supseteq \set[\big]{\Gamma_k^{(\LL)} \le R \text{ for all } k \le R+x\Theta_R } \cap \set[\big]{ \Gamma_k^{(\LL)} > R \text{ for all } k > R+x\Theta_R }.
	\end{equation*}
	By independence of the random variables $\Gamma_k^{(\LL)}$, this implies that
	\begin{equation}\label{lb}
		\Pr[\big]{\Xi_R \ge x\Theta_R}
		 \ge \prod_{k=1}^{R+x\Theta_R} \Pr[\big]{\Gamma_k^{(\LL)} \le R} \prod_{k=R+x\Theta_R+1}^\infty \Pr[\big]{\Gamma_k^{(\LL)} > R}.
	\end{equation}
	We show that the leading contribution to the right-hand side is given by $\prod_{k=R}^{R+x\Theta_R} \Pr{\Gamma_k \le R}$.

By \eqref{p}, we have
	\begin{equation*}
		\prod_{k=R+x\Theta_R+1}^\infty \Pr{\Gamma_k^{(\LL)}>R}
		\ge \prod_{k=R+x\Theta_R+1}^\infty \bigl(1-\PP(k)\bigr).
	\end{equation*}
	Since $\PP$ is decreasing for all $k\ge R+x\Theta_R$ if $R$ is large enough and $\PP(R+x\Theta_R) \to 0$ as $R\to\infty$, we can apply the bound $\abs{\log(1-y)} \le 2y$ for $0\le y\le 0.7$ to $y=\PP(k)$ to obtain
	\begin{equation*}
		\abs[\Bigg]{\log\Biggl( \prod_{k=R+x\Theta_R+1}^\infty \Pr[\big]{\Gamma_k^{(\LL)} > R}\Biggr)}
		\le 2 \sum_{k>R+x\Theta_R} \PP(k)
		\le 2 \Theta_R \int_x^\infty \PP(R+t \Theta_R) \dif t.
	\end{equation*}
Using the bound \eqref{eq:JLMSizePOutsideWindow}, this shows that
\begin{equation} \label{est1}
\abs[\Bigg]{\log\Biggl( \prod_{k=R+x\Theta_R+1}^\infty \Pr[\big]{\Gamma_k^{(\LL)} > R} \Biggr)}
\le 2 \Theta_R \int_x^\infty (R+t\Theta_R)^\beta \exp\biggl(- \frac{t^2 \Theta_R^2}{2R} \wedge \frac{t\Theta_R}{2}\biggr) \dif t
\end{equation}
and the right-hand side converges to 0 because of our condition $\frac{\Theta_R}{\sqrt{R\log R}} \to \infty$ for $R\to\infty$.

	Let us now treat the first product in \eqref{lb}.
	Note that by \eqref{p},
	\begin{equation*}
		\prod_{k=1}^{R-\Theta_R} \Pr{\Gamma_k^{(\LL)}\le R}
		\ge \prod_{k=1}^{R-\Theta_R} (1-\PP(k)).
	\end{equation*}
Since, $\PP(k)$ is increasing for $k\le R$ the bound $\abs{\log(1-y)} \le 2y$ for $0\le y\le 0.7$ applied to $y=\PP(k)$ yields
	\begin{equation} \label{estlog}
		\abs[\Bigg]{\log\Biggl( \prod_{k=1}^{R-\Theta_R} \Pr[\big]{\Gamma_k^{(\LL)} \le R}\Biggr)}
		\le 2 \sum_{k=1}^{R-\Theta_R} \PP(k)
		\le 2 R \PP(R-\widetilde{\Theta}_R).
	\end{equation}
A similar computation as in \eqref{eq:JLMSizePOutsideWindow} shows that $\PP(R-\Theta_R) \to 0$ converges to 0 faster than any power of~$R$.
Hence, the right-hand side of \eqref{estlog} converges to 0 and it follows from \eqref{lb} and \eqref{est1} that
\begin{equation} \label{est2}
	\liminf_{R\to\infty} \log \Pr[\big]{\Xi_R \ge x\Theta_R} =
	 	\liminf_{R\to\infty} \log\Biggl( \prod_{k=R-\Theta_R}^{R+x\Theta_R} \Pr[\big]{\Gamma_k^{(\LL)} \le R} 	\Biggr)
\end{equation}

	By \cref{lem:GinibreTypeBE} (choosing $x= \frac{R-k}{\sqrt{R}}$ and using the uniformity), for any small $\epsilon>0$, it holds for $k\in\intcc{R-\widetilde{\Theta}_R,R}\cap\bN$ that
\[
\Pr{\Gamma_k^{(\LL)} \le R} \ge \Pr[\Big]{Z_\LL \le \frac{R-k-\alpha}{\sqrt{k+\alpha}}} -\epsilon.
\]
Using that the function $k\mapsto \frac{R-k-\alpha}{\sqrt{k+\alpha}}$ is decreasing, this shows that
$\Pr{\Gamma_k^{(\LL)} \le R} \ge \Pr[\big]{Z_\LL \le \frac{-\alpha}{\sqrt{R+\alpha}}} - \epsilon$, so that for $\epsilon>0$ small enough (depending on $\alpha$)
	\[ \begin{aligned}
		\liminf_{R\to\infty} \frac{1}{\Theta_R v_R} \log\Biggl( \prod_{k=R-\Theta_R}^R \Pr[\big]{\Gamma_k^{(\LL)} \le R}\Biggr)
		& \ge \liminf_{R\to\infty} \frac{1}{\Theta_R v_R} \sum_{k=R-\Theta_R}^R \log\bigl( \Pr[\big]{Z_\LL \le -\alpha} - \epsilon \bigr) \\
		&\ge c_\alpha \liminf_{R\to\infty} \frac{\Theta_R}{\Theta_R v_R}
		\ge c_\alpha \liminf_{R\to\infty} \frac{1}{v_R}= 0.
	\end{aligned}\]
	By combining the previous estimates with \eqref{est2}, we conclude that
	\begin{align*}
		\liminf_{R\to\infty} \frac{1}{\Theta_R v_R} \log \Pr{\Xi_R \ge x\Theta_R}
		& \ge \liminf_{R\to\infty} \frac{1}{\Theta_R v_R} \log\Biggl( \prod_{k=R}^{R+x\Theta_R} \Pr{\Gamma_k \le R}\Biggr).
	\end{align*}
	By \cref{lem:JLMLeadingOrderTerm}, this completes the proof.
\end{proof}

Let us finish the proof of \cref{thm:ldp} by proving the complementary lower bound.

\begin{proposition} \label{prop:JLMLB}
	It holds for all $x>0$ that
	\[
		\liminf_{R\to\infty} \frac{-1}{\Theta_R v_R} \log \Pr{\Xi_R \ge x\Theta_R}
		\ge \int_0^x J_\gamma(y) \dif y.
	\]
\end{proposition}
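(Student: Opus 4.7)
The plan is to establish this upper bound on $\Pr{\Xi_R \ge x\Theta_R}$ via Chernoff's exponential Markov inequality, applied after a reduction to a sum of independent Bernoullis supported on $\{k > R\}$. Taking $R$ to be a positive integer (the general case following by a monotonicity argument in $R$), write $\Xi_R = N_+ - N_-$, where $N_+ = \sum_{k > R} \ind_{\Gamma_k^{(\LL)} \le R}$ and $N_- = R - \sum_{k \le R} \ind_{\Gamma_k^{(\LL)} \le R} \ge 0$. Since $N_- \ge 0$, the event $\{\Xi_R \ge x\Theta_R\}$ is contained in $\{N_+ \ge x\Theta_R\}$, so it suffices to prove the claim with $N_+$ in place of $\Xi_R$.

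Since $N_+$ is a sum of independent Bernoullis with parameters $r_k = \Pr{\Gamma_k^{(\LL)} \le R}$ and $r_k \le \PP(k)$ for $k > R$ by \eqref{p}, Chernoff's inequality for any $\theta > 0$---using that $p \mapsto \log(1 + p(e^\theta - 1))$ is non-decreasing---gives
\[
	\log \Pr{N_+ \ge x\Theta_R} \le -\theta x\Theta_R + \sum_{k > R} \log\bigl(1 + \PP(k)(e^\theta - 1)\bigr).
\]
The optimal choice by Legendre duality will be $\theta = v_R J_\gamma(x) > 0$ (positive since $J_\gamma$ is strictly increasing on $\bR_+$ with $J_\gamma(0) = 0$). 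The plan is to then apply the Euler-Maclaurin formula \eqref{eq:EulerMaclaurinReduced} after the substitution $k = R + t\Theta_R$ and invoke \cref{lem:JLMDeviationPrinciples}, which gives $\log\PP(R+t\Theta_R) = -v_R J_\gamma(t)(1+o(1))$ locally uniformly in $t \in \bR_+$. Consequently $\PP(R+t\Theta_R)(e^\theta - 1) \approx e^{v_R(J_\gamma(x) - J_\gamma(t))}$, and a case analysis yields $\log(1 + \cdot) \approx v_R(J_\gamma(x) - J_\gamma(t))$ for $t < x$, while for $t > x$, $\log(1+\cdot)$ is super-polynomially small in $v_R$. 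This produces the asymptotic
\[
	\sum_{k > R} \log\bigl(1 + \PP(k)(e^\theta - 1)\bigr) \le \Theta_R v_R \Bigl(x J_\gamma(x) - \int_0^x J_\gamma(t)\,dt\Bigr) + o(\Theta_R v_R).
\]
Combined with $-\theta x\Theta_R = -v_R x J_\gamma(x)\Theta_R$, the Chernoff bound simplifies to $\log \Pr{N_+ \ge x\Theta_R} \le -\Theta_R v_R \int_0^x J_\gamma(t)\,dt + o(\Theta_R v_R)$, which is the desired bound after dividing by $-\Theta_R v_R$ and taking $\liminf_{R\to\infty}$.

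The main obstacle will be to justify the integral approximations rigorously in a way that is uniform across the three regimes of \cref{ass:LD}. Concretely, one must (i) control the polynomial prefactor $(k \vee R)^\beta$ in $\PP(k)$, which contributes at most $O(\Theta_R \log R) = o(\Theta_R v_R)$ since $v_R \gg \log R$ in all three regimes under the assumption $\Theta_R \gg \sqrt{R\log R}$; (ii) bound the contribution of $t$ outside a large compact set using the super-polynomial decay established in \eqref{eq:JLMSizePOutsideWindow}; and (iii) handle the transition zone $t \approx x$, where $\PP(R+t\Theta_R)(e^\theta - 1)$ is of order $1$, which only contributes lower-order terms but requires careful interpolation between the two asymptotic regimes. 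The uniformity of \cref{lem:JLMDeviationPrinciples} together with the monotonicity of $J_\gamma$ is precisely what is needed for the case analysis to work, and the argument then proceeds identically in all three regimes once these technicalities are dispatched.
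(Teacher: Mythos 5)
Your proof is correct in outline, but it takes a genuinely different route from the paper. The paper bounds $\Pr{\Xi_R \ge x\Theta_R}$ by a union bound over all finite sets $I\subseteq\bN$ with $\abs{I}\ge R+x\Theta_R$ of indices $k$ with $\Gamma_k^{(\LL)}\le R$, then counts configurations explicitly (the parametrization by $(r_1,r_2,r_3,m)$ and binomial coefficients in \eqref{estimate1}) and shows the dominant contribution is $\prod_{k=R+c_\beta}^{R+x\Theta_R}\PP(k)$, to which \cref{lem:JLMLeadingOrderTerm} applies. You instead reduce to $N_+=\sum_{k>R}\ind_{\Gamma_k^{(\LL)}\le R}$ via $\Xi_R\le N_+$ and run a Chernoff bound with the Legendre-optimal tilt $\theta=v_RJ_\gamma(x)$; the identity $-xJ_\gamma(x)+\int_0^x(J_\gamma(x)-J_\gamma(t))\dif t=-\int_0^xJ_\gamma(t)\dif t$ then recovers the rate. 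Both arguments consume exactly the same inputs --- the tail bound \eqref{p}, the super-polynomial decay \eqref{eq:JLMSizePOutsideWindow}, and the locally uniform asymptotics of $\log\PP$ from \cref{lem:JLMDeviationPrinciples} --- so neither is stronger; your approach buys a shorter, more standard argument for sums of independent Bernoullis and avoids the combinatorial bookkeeping, at the cost of the three-way case analysis in $t$ (the $\log(1+u)\le\log 2+(\log u)_+$ splitting on a compact window plus $\log(1+u)\le u$ on the tail), whereas the paper's counting argument is more hands-on but needs no optimization step. Two small points to tidy up: for non-integer $R$ the reduction should be $\Xi_R\le\sum_{k>\floor{R}}\ind_{\Gamma_k^{(\LL)}\le R}$ (the one possible index in $(\floor{R},R)$ contributes at most $\theta=o(\Theta_Rv_R)$ to the Chernoff exponent); and your claim that $\theta>0$ fails in the degenerate case $\gamma=2$ with $\Theta_R/R\to\infty$, where $J_\gamma\equiv 0$ --- but there the proposition asserts only $\liminf\ge 0$, which is trivial.
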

\begin{proof}
Here, we use the convention that for any $m>0$ and $k\in\bN$, $\binom{m}{k} = \ind_{k\le \floor{m}} \binom{\floor{m}}{k}$.
We also use the shorthand notation $\theta_R = R+x \Theta_R$. Recall that by \eqref{def:Xi}, the random variable $\Xi_R$ is centred for any $R>0$. In particular, it has finite expectation such that $\Pr{\Xi_R = \infty} = 0$, and we have the inclusion for any $x>0$,
	\[
		\set[\big]{\Xi_R \ge x\Theta_R} \subseteq \bigcup_{I\in S_0} \set[\big]{ \Gamma_k^{(\LL)} \le R \text{ for all } k \in I } \cup \set[\big]{\Xi_R=\infty},
	\]
	where $S_0 = \set{ I \subseteq \bN \given \theta_R \le \abs{I} < \infty }$. Since the random variables $\Gamma_k^{(\LL)}$ are independent, by a union bound, this shows that
	\begin{equation} \label{estimate0}
		\Pr[\big]{\Xi_R \ge x\Theta_R} \le \sum_{I\in S_0} \prod_{k\in I} \Pr[\big]{\Gamma_k^{(\LL)} \le R}
		\le \sum_{I\in S_0} \prod_{\substack{k\in I\\k\ge R}} \Pr[\big]{\Gamma_k^{(\LL)} \le R}.
	\end{equation}

	Recall \eqref{eq:DefPk} and \eqref{p} and that the function $\PP$ is decreasing for $k\ge R+c_\beta$. For any interval $I \in S_0$, if we let
	\[
		m = \sup I,\qquad
		r_1 = | \intco{1,R+c_\beta} \cap I^c |, \qquad
		r_2 = | \intco{R+c_\beta, \theta_R} \cap I^c| ,
		\quad\text{and}\quad
		r_3 = | I \cap \intco{\theta_R,m} | ,
	\]
	then we can bound
	\[
 		\prod_{k\in I, k\ge R} \hspace{-0.2cm} \Pr{\Gamma_k \le R}
		\le \Biggl(\prod_{k\in I \cap \intco{R+c_\beta, \theta_R}} \hspace{-0.7cm} \PP(k)\Biggr) \PP(\theta_R)^{r_3} \PP(m)
		\le \Biggl( \prod_{k\in \intco{R+c_\beta, \theta_R}} \PP(k) \bigg) \PP(\theta_R)^{(r_3-r_2)} \PP(m),
	\]
	where we used that $\prod_{k\in\intco{R+c_\beta, \theta_R} \cap I^c} \PP(k) \ge \PP(\theta_R)^{r_2}$ and that $r_1+r_2 \le r_3$ (this constraint follows from the fact that $\abs{I} \ge \theta_R$ for $I\in S_0$) to get the second bound.
	We assume that $x\in[0,c]$.
	Since there are at most $\binom{R+c_\beta}{r_1} \binom{c\Theta_R}{r_2} \binom{m}{r_3}$ many ways to choose an interval $I\in S_0$ with given parameters $(r_1,r_2,r_3,m)$, according to \eqref{estimate0}, this shows that
	\begin{equation}\label{estimate1}
		\Pr{\Xi_R \ge x\Theta_R}
		\le \Biggl( \prod_{k=R+c_\beta}^{\theta_R} \PP(k) \Biggr) \sum_{\substack{r_1+r_2 \le r_3 \le m \\ r_1+r_2\le \theta_R \le m}} \binom{R+c_\beta}{r_1} \binom{c\Theta_R}{r_2} \binom{m}{r_3} \PP(\theta_R)^{(r_3-r_2)} \PP(m).
	\end{equation}
	Let us emphasize that this bound is very crude, but \ it suffices to control large deviation probabilities.
	First note that if $R$ is sufficiently large (depending only on $\beta$), Stirling's formula implies that $\PP(m) \le m^{\beta+1} e^{-R} \frac{R^m}{m!} $ for all $m\ge R$. Using this together with $(a+b)!\ge a!b!$ for all $a,b\in\bN$ and $(a+b)^\beta \le (2ab)^\beta$ for all $a,b\ge1$ and all $\beta \ge 0$ we obtain
	\begin{equation} \label{estimate2}
		\sum_{m = r_3}^\infty \binom{m}{r_3} \PP(m)
		\le e^{-R} \sum_{k=0}^\infty \frac{R^{r_3+k} }{r_3!k!} (r_3+k)^{\beta+1}
		\le 2^{\beta+1}\frac{R^{r_3}r_3^{\beta+1}}{r_3!}e^{-R} \sum_{k=0}^\infty \frac{R^k k^{\beta+1}}{k!}
		\le C_\beta\frac{R^{r_3+\beta+1}r_3^{\beta+1}}{r_3!}
	\end{equation}
	for some constant $C_\beta>0$.
Second, since $r_2 \le c\Theta_R$ for $x\in[0,c]$ and $r_1 +r_2 \le \theta_R$, we also have
	\begin{equation*}
		\sum_{r_3=r_2+r_1}^\infty \frac{R^{r_3} r_3^{\beta+1}}{r_3!} \PP(\theta_R)^{(r_3-r_2)}
		\le C_\beta \theta_R^{\beta+1} \frac{\PP(\theta_R)^{r_1}}{r_1!} R^{r_1+c\Theta} \biggl( 1+\sum_{k=1}^\infty \frac{\bigl(R \PP(\theta_R)\bigr)^{k} k^{\beta+1}}{k!} \biggr).
	\end{equation*}

	Using the condition \eqref{eq:JLMSizePOutsideWindow}, we check that $\sum_{k=1}^\infty \frac{R^k \PP(\theta_R)^{k} k^{\beta+1}}{k!} \to0$ as $R\to\infty$,
this shows that
		\begin{equation}\label{estimate3}
		\sum_{r_3=r_2+r_1}^\infty \frac{R^{r_3+\beta+1} r_3^{\beta+1}}{r_3!} \PP(\theta_R)^{(r_3-r_2)}
		\le C_\beta \theta_R^{2\beta+2} R^{c\Theta_R} \frac{R^{r_1}\PP(\theta_R)^{r_1}}{r_1!}
	\end{equation}

	By combining the estimates \eqref{estimate1}--\eqref{estimate3}, we have shown when $R$ is sufficiently large that
	\begin{align*}
		\Pr{\Xi_R \ge x \Theta_R}
		& \le C_\beta R^{c\Theta_R} \theta_R^{2\beta+2} \biggl( \prod_{k=R+c_\beta}^{\theta_R} \PP(k) \biggr) \sum_{r_1,r_2\ge 0} \binom{R+c_\beta}{r_1} \frac{\bigl(R \PP(\theta_R)\bigr)^{r_1}}{r_1!} \binom{x\Theta_R}{r_2} \\
		& \le C_\beta (2R)^{c\Theta_R} \theta_R^{2\beta+2} \biggl( \prod_{k=R+c_\beta}^{\theta_R} \PP(k) \biggr) \sum_{r_1\ge 0} \binom{R+c_\beta}{r_1} \frac{\bigl(R \PP(\theta_R)\bigr)^{r_1}}{r_1!} \\
		&\le 2C_\beta (2R)^{c\Theta_R} \theta_R^{2\beta+2} \biggl(\prod_{k=R+c_\beta}^{\theta_R} \PP(k) \biggr),
	\end{align*}
	where we used that $\sum_{r_1\ge 0} \binom{R+c_\beta}{r_1} \frac{(R \PP(\theta_R))^{r_1}}{r_1!} \to 1$ as $R\to\infty$ because of the condition \eqref{eq:JLMSizePOutsideWindow}.
	Since we assume that $\frac{\Theta_R}{\sqrt{R \log R}} \to \infty$, by definition of $v_R$, we obtain
	\[
		\limsup_{R\to\infty} \frac{\log(2C_\beta (2R)^{x\Theta_R} (\theta_R)^{2\beta+2})}{\Theta_R v_R}
		= \limsup_{R\to\infty} \frac{\log R}{v_R} = 0.
	\]

By \cref{lem:JLMLeadingOrderTerm}, we conclude that for any $x\ge 0$
\[
\limsup_{R\to\infty} \frac{1}{\Theta_R v_R} \log \Pr{\Xi_R \ge x\Theta_R}
\le \limsup_{R\to\infty} \frac{1}{\Theta_R v_R} \log \biggl(\prod_{k=R+c_\beta}^{\theta_R} \PP(k) \biggr)
\le -\int_0^x J_\gamma(y) \dif y.
\]
This completes the proof of \cref{prop:JLMLB} and of \cref{thm:ldp} (see \cref{prop:JLMUB}).
\end{proof}


\appendix

\section{Proof of \texorpdfstring{\cref{lem:ModulusPoints}}{Lemma \ref{lem:ModulusPoints}}}
\label{sec:ModulusPoints}

A point process $\xi$ on $\bC$ is a random Borel measure which satisfies $\xi(A) \in \bN_0$ for any bounded Borel set $A \subseteq \bC$. The distribution of $\xi$ is uniquely characterized by its finite-dimensional distributions $(\xi(A_1), \dotsc, \xi(A_n))$ for disjoint bounded sets $A_1, \dotsc, A_n$, $n\in\bN$. For an introduction to the basic theory of point processes we refer to \cite[Chapter 9]{DV08IntroductionPointProcessesII}.
Let $\mu$ be a reference Radon measure on $\bC$ and let $\mathcal{Z} = \supp(\xi)$ denote the atoms of $\xi$.
If they exist, we can define the intensity functions $(f_n)_{n\in\bN}$ of $\xi$ with respect to $\mu$ via
\begin{equation} \label{corrfct}
	\Ex[\bigg]{\sum_{\set{z_1, \dotsc, z_n} \subseteq \mathcal{Z}} g(z_1,\dotsc,z_n)}
	= \frac{1}{n!} \int_{\bC^n} g(z_1,\dotsc,z_n) f_n(z_1,\dotsc,z_n) \dif\mu(z_1) \dotsm \dif\mu(z_n),
\end{equation}
for any (symmetric) function $g\colon\bC^n \to \bR_+$ with compact support and $n\in\bN$. In general, the intensity functions need not uniquely characterize a point process.
However, if $(A_k)_{k\in\bN}$ is an increasing sequence of sets exhausting $\bC$ such that
\begin{equation} \label{cnk}
c_{n,k} = \int_{A_k^n} f_n(z_1,\dotsc,z_n) \dif\mu(z_1)\dotsm \dif\mu(z_n) =
n! \Ex[\big]{\# \set[\big]{ \set{ z_1, \dotsc, z_n} \subseteq \mathcal{Z}\cap A_k}}
\end{equation}
satisfy $\limsup_{n\to\infty} (c_{n,k}/n!)^{1/n} < \infty$ for all $k\in\bN$, then by \cite[Proposition 5.4.VII]{DV03IntroductionPointProcessesI}, the joint intensities uniquely characterize the point process $\xi$.

\begin{remark} \label{rk:lawuniqueness}
Recall that for a determinantal process with a kernel $K$ with respect to $\mu$, the joint intensities are given by $f_n(z_1,\dotsc,z_n) = \det_{n\times n} K(z_i,z_j)$ for $z_i\in\bC$.
In particular, if the kernel $K$ is locally trace class, we verify that
$c_{n,k} \le \tr(KA_k)^n <\infty$ for all $n\in\bN$ and any sequence of bounded sets $(A_k)_{k\in\bN}$. Hence, if $K$ is locally trace class, it uniquely characterizes the law of the determinantal process $\xi$.
Moreover, it is not required that the kernel $K$ is Hermitian symmetric.
\end{remark}

\begin{proof}[Proof of \cref{lem:ModulusPoints}]
Let $\mu$ be a rotation-invariant Radon measure on $\bC$ and denote $\mathcal{Y} = \set[\big]{ (k,j) \given j\in\set{1, \dotsc, \ell_k}, k\in\bZ }$.
Let us consider the family of projection kernels (with respect to $\mu$),
\[
K_S(z,w)=\sum_{(k,j)\in S } \rho_{k,j}\bigl(\abs{z}\bigr) z^k \rho_{k,j}\bigl(\abs{w}\bigr)\bar{w}^k , \qquad S\subseteq \mathcal{Y},
\]
where $\rho_{k,j} \colon \bR_+ \to \bR$ satisfy $\int_{\bC} \rho_{k,j}^2(\abs{z}) \abs{z}^{2k} \dif \mu(z) =1$ for all $(k,j)\in \mathcal{Y}$.
Our goal is to characterize the law of the radii of the atoms of the determinantal process $\xi$ with correlation kernel $K_{\mathcal{Y}}$.
The intensity of this point process is $\nu( z) = K_\mathcal{Y}(z,z)$ is a rotation-invariant function, and we assume that $\nu \in L^1_{\operatorname{loc}}(\bC, \mu)$---this is equivalent to the local trace-class condition.
The proof of \cref{lem:ModulusPoints} is inspired from that of Kostlan's \cref{thm:Kostlan}.
Since $\mu$ is rotation-invariant, by the disintegration theorem, we can write $\dif\mu(z) = \dif\widehat{\mu}(r) \frac{\dif \theta}{2\pi}$ for $z= r e^{\i \theta}$, where $\frac{\dif \theta}{2\pi}$ denotes the uniform measure on $\bT = [0,2\pi]$.
Then $\widehat{\mu}$ is a Radon measure and
$\dif\lambda_u(r) = \rho_u^2(r) r^{2k} \dif\widehat{\mu}(r)$ are probability measures on $\bR_+$ for all $u=(k,j)\in \mathcal{Y}$.
Moreover, we will use that $\sum_{u\in \mathcal{Y}} \frac{\dif \lambda_u}{\dif \widehat{\mu}} = \widehat\nu$ with $\widehat\nu \in L^1_{\operatorname{loc}}(\bR_+, \widehat\mu)$ is given by
$\dif\nu(z) = \dif\widehat{\nu}(r) \frac{\dif \theta}{2\pi}$ (according to the disintegration theorem).

We let $\set{Z_u}_{u\in \mathcal{Y}}$ be the collection of atoms of $\xi$. Then, $\set{|Z_u|}_{u\in \mathcal{Y}}$ form a point process on $\bR_+$ and it is rather straightforward to verify that its intensity functions (with respect to the measure $\widehat{\mu}$ defined above) are given by
\begin{equation} \label{fn}
f_n(r_1,\dots, r_n) = \int_{\bT^n} \det_{n \times n} \bigl[ K_\mathcal{Y}(r_i e^{\i \theta_i} , r_j e^{\i \theta_j}) \bigr] \frac{\dif\theta_1}{2\pi}\cdots \frac{\dif\theta_n}{2\pi} ,
\qquad r_1,\dots, r_n \in \bR_+ , \qquad n\in\bN.
\end{equation}
This follows from rotation-invariance and the decomposition of $\mu$.
Note that $f_n \colon \bR_+^n \mapsto \bR_+$ are locally integrable symmetric functions.
In order to prove \cref{lem:ModulusPoints}, it suffices to show that for any $n\in\bN$,
\begin{equation} \label{pcf}
f_n = \sum_{\substack{S_n \subseteq \mathcal{Y}\\|S_n| = n}} \sum_{\sigma\in\cS(S_n)} \prod_{u \in S_n} \frac{\dif \lambda_{\sigma(u)}}{\dif \widehat{\mu}},
\end{equation}
where $\cS(S_n)$ is the group of permutations of the set $S_n \subseteq \mathcal{Y}$.
Indeed, by \eqref{corrfct}, we verify that the RHS of \eqref{pcf} are exactly the intensity functions of the point process $ \set{X_u}_{u \in \mathcal{Y}}$, where $X_u$ are independent (positive) random variables with law $\lambda_u$.
Note that by assumptions, \eqref{pcf} defines locally integrable functions. Indeed, we have
$0 \le f_n(r_1,\dots, r_n) \le \prod_{j=1}^n \sum_{u\in \mathcal{Y}} \frac{\dif \lambda_u}{\dif \widehat{\mu}}(r_j) = \widehat\nu(r_1)\cdots\widehat\nu(r_n)$ for any $r_1,\dots, r_n \in \bR_+$ and $n\in\bN$.
Moreover, using the notation \eqref{cnk}, this bound shows that
$c_{n,k} \le \widehat\nu(A_k)^n$ for all $n\in\bN$ and any sequence of bounded sets $(A_k)_{k\in\bN}$.
Hence, by the above discussion, the point process $ \set{X_u}_{u\in \mathcal{Y}}$ is uniquely characterized by its intensity functions.

Fix $n\in\bN$. By the Cauchy-Binet formula, it holds for any $z_1, \dots, z_n\in\bC$,
\begin{equation} \label{cbf}
\det_{n \times n} \bigl[ K_\mathcal{Y}(z_i , z_j) \bigr]
= \sum_{\substack{S_n \subseteq \mathcal{Y}\\|S_n| = n}}
\det_{n\times n}\big[ K_{S_n}(z_i,z_j) \big].
\end{equation}

Note that since the kernels $ K_{S_n}$ are all positive definite, the sum on the RHS of \eqref{cbf} converges uniformly as a locally integrable function.
Moreover, by combining \eqref{fn} and \eqref{cbf}, it follows from Fubini's theorem that for any $ r_1,\dots, r_n \in \bR_+$,
\begin{equation} \label{fn2}
f_n(r_1,\dots, r_n) = \sum_{\substack{S_n \subseteq Y\\|S_n| = n}} \int_{\bT^n} \det_{n\times n}\big[ K_{S_n}(r_i e^{\i \theta_i} , r_j e^{\i \theta_j}) \big] \frac{\dif\theta_1}{2\pi}\cdots \frac{\dif\theta_n}{2\pi}.
\end{equation}

Then with $S_n = \set{u_1, \dotsc, u_n}$ and $u_j =(k_j, \cdot)$ for $j\in\set{1, \dotsc, n}$, by expanding the previous determinant, it holds
 \begin{align} \notag
 \det_{n\times n}\big[ K_{S_n}(r_i e^{\i \theta_i} , r_j e^{\i \theta_j}) \big]
& = \det_{n\times n} \bigl[ \rho_{u_j}(r_i) (r_i e^{\i \theta_i})^{k_j} \bigr] \overline{\det_{n\times n} \bigl[ \rho_{u_j}(r_i) (r_i e^{\i \theta_i})^{k_j} \bigr] } \\
& \label{Lexp} = \sum_{\sigma, \tau\in\cS_n} \prod_{j \in \set{1, \dotsc, n}} \rho_{u_{\sigma(j)}}(r_k) \rho_{u_{\tau(j)}}(r_k) (r_j e^{\i \theta_j})^{k_{\sigma(j)}} \overline{(r_j e^{\i \theta_j})^{k_{\tau(j)}}},
\end{align}
where $\cS_n = \cS(\set{1, \dotsc, n})$.
Using the orthogonality condition
$\int_{\bT} e^{\i k \theta} \overline{\textstyle e^{\i j \theta}} \frac{\dif\theta}{2\pi} = \delta_{kj}$ for $k,j \in \bZ$, by Fubini's theorem, only the terms on the RHS of \eqref{Lexp} for which $\sigma =\tau$ contribute to the integrals \eqref{fn2}. This shows that for any $ r_1,\dots, r_n \in \bR_+$,
\begin{equation*}
f_n(r_1,\dots, r_n) = \sum_{\substack{S_n \subseteq \mathcal{Y}\\|S_n| = n}} \sum_{\sigma \in\cS_n} \prod_{j \in \set{1, \dotsc, n}}
\rho_{u_{\sigma(j)}}^2(r_k) r_j^{2k_{\sigma(j)}}.
\end{equation*}
By the definitions of the probability measures $(\lambda_u)_{u\in \mathcal{Y}}$, we obtain \eqref{pcf} which completes the proof.
\end{proof}

\section{Explicit covariance computation for the Ginibre ensemble}
\label{sec:CovarianceGinibre}

Interestingly, there exists an explicit formula for the covariance of the infinite Ginibre ensemble.
\begin{proposition}\label{lem:GinibreCovariance}
	Consider the infinite Ginibre ensemble $\xi$. For $0\le s\le t$, it holds
	\begin{equation*}
		\Cov[\big]{\xi(D_s)}{\xi(D_t)}
		= e^{-s^2-t^2}\bigl( s^2I_0(2st) + stI_1(2st) \bigr) - \bigl(t^2-s^2\bigr)K(s^2,t^2),
	\end{equation*}
	where $K(s,t) = \int_0^s e^{-t-x} I_0(2\sqrt{tx}) \dif x$ and $I_\nu$ denotes the modified Bessel function of kind $\nu\in \bN_0$. In particular, as $r\to\infty$,
	\begin{equation*}
		\Var[\big]{\xi(D_r)}
		= r^2e^{-2r^2}\bigl( I_0(2r^2)+I_1(2r^2) \bigr)
		= \frac{r}{\sqrt{\pi}} + O(\frac{1}{r}).
	\end{equation*}
\end{proposition}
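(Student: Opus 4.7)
The plan is to compute the covariance via Kostlan's theorem and then massage the resulting sum with Bessel function identities. Specifically, by \cref{thm:Kostlan}, for $s\le t$ the events $\{\Gamma_k\le s^2\}$ and $\{\Gamma_k \le t^2\}$ are nested along the independent sequence, so
\[
\Cov[\xi(D_s),\xi(D_t)] = \sum_{k=1}^\infty \Pr[\Gamma_k\le s^2]\Pr[\Gamma_k > t^2].
\]
Inserting the Poisson representation $\Pr[\Gamma_k\le x]=\sum_{i\ge k} e^{-x} x^i/i!$ and $\Pr[\Gamma_k>x]=\sum_{j<k} e^{-x}x^j/j!$, I would swap the order of summation. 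For fixed indices $i\ge 1$ and $0\le j<i$, the number of $k$ with $j<k\le i$ is precisely $i-j$, which yields
\[
\Cov[\xi(D_s),\xi(D_t)] = e^{-s^2-t^2}\sum_{i\ge 1}\sum_{0\le j<i}(i-j)\frac{s^{2i}t^{2j}}{i!\,j!}.
\]

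Next I would reindex by $n=i-j\ge 1$, $j\ge 0$, and recognize the inner sum as a modified Bessel function via $I_n(2st)=(st)^n\sum_{j\ge0}(st)^{2j}/[(n+j)!\,j!]$. This gives the compact expression
\[
\Cov[\xi(D_s),\xi(D_t)] = e^{-s^2-t^2}\sum_{n=1}^\infty n\Bigl(\frac{s}{t}\Bigr)^n I_n(2st).
\]
Setting $a=s/t$ and $b=2st$, I would apply the recurrence $2nI_n(b)=b\bigl(I_{n-1}(b)-I_{n+1}(b)\bigr)$ to get
\[
\sum_{n\ge 1}na^n I_n(b) = \frac{b}{2}\Bigl[(a-a^{-1})\sum_{m\ge 0}a^m I_m(b) + a^{-1}I_0(b)+I_1(b)\Bigr]
\]
by telescoping (this is the step that has to be executed carefully). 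Substituting $a-a^{-1}=(s^2-t^2)/(st)$, $a^{-1}=t/s$ and $b/2=st$, and splitting off the $m=0$ term of the series, produces
\[
\Cov[\xi(D_s),\xi(D_t)] = e^{-s^2-t^2}\bigl[s^2 I_0(2st)+st\,I_1(2st)\bigr] -(t^2-s^2)e^{-s^2-t^2}\sum_{m\ge 1}(s/t)^m I_m(2st).
\]

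The remaining task is to match the tail series with $K(s^2,t^2)$. Expanding $I_0(2t\sqrt{x})$ as a power series in the integrand of $K$ gives
\[
K(s^2,t^2) = e^{-t^2}\sum_{k\ge 0}\frac{t^{2k}}{(k!)^2}\int_0^{s^2}e^{-x}x^k\dif x = 1 - e^{-s^2-t^2}\sum_{k\ge 0}\sum_{0\le j\le k}\frac{s^{2j}t^{2k}}{j!\,k!},
\]
where the last step uses $\int_0^{s^2}e^{-x}x^k/k!\,\dif x = 1-e^{-s^2}\sum_{j\le k}s^{2j}/j!$. Comparing with $e^{s^2+t^2}=\sum_{a,b\ge 0}s^{2a}t^{2b}/(a!\,b!)$, the difference consists exactly of the terms with $a>b$, which by the same Bessel identity equals $\sum_{m\ge 1}(s/t)^m I_m(2st)$. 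This yields $e^{-s^2-t^2}\sum_{m\ge 1}(s/t)^m I_m(2st) = K(s^2,t^2)$ and delivers the stated closed form.

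Specializing to $s=t=r$ kills the $(t^2-s^2)K$ term and leaves $\Var[\xi(D_r)]=r^2 e^{-2r^2}\bigl(I_0(2r^2)+I_1(2r^2)\bigr)$. The asymptotic $r/\sqrt{\pi}+O(r^{-1})$ then follows from the standard large-argument expansion $I_\nu(x)=\frac{e^x}{\sqrt{2\pi x}}\bigl(1-\tfrac{4\nu^2-1}{8x}+O(x^{-2})\bigr)$: the leading $1/\sqrt{2\pi\cdot 2r^2}$ factors combine to $1/(r\sqrt{\pi})$ (times $r^2$), and the $\nu=0$ and $\nu=1$ subleading terms ($+1/(16r^2)$ and $-3/(16r^2)$ respectively) combine into an $O(r^{-2})$ correction. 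The main bookkeeping obstacle is the Bessel telescoping and power-series identification in the step that equates $\sum_{m\ge 1}(s/t)^m I_m(2st)$ with $e^{s^2+t^2}K(s^2,t^2)$; once that identity is verified the rest is assembly.
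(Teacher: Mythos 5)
Your proof is correct and follows the same overall route as the paper: Kostlan's theorem, the Poisson--gamma duality $\Pr{\Gamma_k\le x}=\Pr{\Poissondist(x)\ge k}$, a reordering of the resulting double sum, and identification of the pieces as Bessel series and as $K(s^2,t^2)$. The only difference is the intermediate algebra --- you reindex by $n=i-j$ and telescope with the recurrence $2nI_n(b)=b\bigl(I_{n-1}(b)-I_{n+1}(b)\bigr)$, whereas the paper splits the weight $i-j$ directly into three sums; both manipulations check out (I verified the telescoping identity, the series identity $e^{-s^2-t^2}\sum_{m\ge1}(s/t)^mI_m(2st)=K(s^2,t^2)$, and the final asymptotics), so the argument is complete.
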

\begin{proof}
	The proof follows from the identity \eqref{eq:GammaPoisson} and a careful treatment of sums. Denote $S=s^2$ and $T=t^2$. By reordering the sum one obtains
	\begin{align*}
		\Cov[\big]{\xi (D_s)}{\xi (D_t)}
		&= \sum_{k=1}^\infty \Pr[\big]{\Gamma_k \le S}\Pr[\big]{\Gamma_k > T}\\
 		&= \sum_{k=1}^\infty \sum_{i=k}^\infty \sum_{j=0}^{k-1} \frac{e^{-S}S^i}{i!}\frac{e^{-T}T^j}{j!}\\
 		&= \sum_{j=0}^\infty \sum_{i=j+1}^\infty (i-j) \frac{e^{-S}S^i}{i!}\frac{e^{-T}T^j}{j!}\\
 		&= \sum_{j=0}^\infty S \frac{e^{-S}S^j}{j!}\frac{e^{-T}T^j}{j!} + \sum_{j=0}^\infty T \frac{e^{-S}S^{j+1}}{(j+1)!}\frac{e^{-T}T^j}{j!} - (T-S)\sum_{j=0}^\infty \sum_{i=j+1}^\infty
 		\frac{e^{-S}S^i}{i!}\frac{e^{-T}T^j}{j!}.
 	\end{align*}
 	Next, recall the definition of the modified Bessel function of first kind:
 	\begin{equation*}
		I_\nu(x) = \sum_{k=0}^\infty \frac{1}{(k+\nu)!k!}(\frac{x}{2})^{2k+\nu}
	\end{equation*}
	for $\nu\in\bN_0$. Hence, the first two summands are given by $Se^{-S-T}I_0(2\sqrt{ST})$ and $\sqrt{ST}e^{-S-T}I_1(2\sqrt{ST})$, respectively. For the last summand, notice that
 	\begin{align*}
 		\MoveEqLeft (T-S)\sum_{j=0}^\infty \sum_{i=j+1}^\infty \frac{e^{-S}S^i}{i!}\frac{e^{-T}T^j}{j!}
 		= (T-S)\sum_{j=0}^\infty \frac{e^{-T}T^j}{j!} \int_0^S e^{-x}\frac{x^j}{j!}\dif x\\
 		&= (T-S) \int_0^S e^{-T-x} I_0(2\sqrt{Tx}) \dif x
 		= (T-S)K(S,T).
	\end{align*}
	Putting all this together the claimed formula for the covariance follows. To obtain the expansion for the variance use the classical expansion of the Bessel functions
	\begin{equation}\label{eq:BesselAsymptotic}
		I_\nu(r) = \frac{e^r}{\sqrt{2\pi r}}\Bigl(1+O_\nu\Bigl(\frac{1}{r}\Bigr)\Bigr),
	\end{equation}
	as $r\to\infty$ for $\nu\in\bN_0$.
\end{proof}

The explicit formula of the covariance allows us to present an alternative proof for the functional central limit theorems \cref{thm:GinibreFCLTFDD} and \cref{thm:GinibreFCLTSkorokhod} by calculating the covariance asymptotic by hand and applying \cref{lem:FCLTFDD} and \cref{thm:FCLTSkorokhod} directly.
\begin{lemma}
	For $0<s\le t$, as $r\to\infty$, it holds that
	\begin{equation*}
		\Cov[\big]{\xi(D_{sr})}{\xi(D_{tr})}
		= \frac{tr}{\sqrt{\pi}}\ind_{s=t}\bigl(1+o(1)\bigr).
	\end{equation*}
	For $-\infty<s\le t<\infty$, as $r\to\infty$, it holds that
	\begin{equation*}
		\Cov[\big]{\xi(D_{r+s})}{\xi(D_{r+t})}
		= \Bigl(\frac{r}{\sqrt{\pi}} e^{-(t-s)^2} - 2r(t-s)\Pr[\big]{\Normaldist_{0,1} > \sqrt{2}(t-s)}\Bigr) \bigl(1+o(1)\bigr).
	\end{equation*}
\end{lemma}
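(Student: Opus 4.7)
The plan is to plug the explicit formula from \cref{lem:GinibreCovariance} into the scalings $(s,t)\mapsto(sr,tr)$ and $(s,t)\mapsto(r+s,r+t)$ and extract the leading asymptotics using the classical Bessel asymptotic \eqref{eq:BesselAsymptotic}, together with a Laplace-type analysis of the auxiliary integral $K(S,T)=\int_0^S e^{-T-x}I_0(2\sqrt{Tx})\dif x$. The unifying observation is that in the first (\emph{Bessel}) term the relevant exponent
\[
-s^2-t^2+2st=-(t-s)^2
\]
appears naturally when one substitutes $I_0(2st),I_1(2st)\sim e^{2st}/\sqrt{4\pi st}$; while in the $K$-term the change of variables $x=y^2$ transforms the integrand into $\frac{2y}{\sqrt{4\pi\sqrt{T}y}}e^{-(\sqrt{T}-y)^2}(1+o(1))$, so that a further translation $y=\sqrt{T}+u$ turns $K(S,T)$ into a Gaussian integral with upper limit $\sqrt{S}-\sqrt{T}$.

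For the \emph{macroscopic} case $(sr,tr)$ with $0<s\le t$, substitution gives $\sqrt{T}-\sqrt{S}=(t-s)r$. The Bessel term equals $e^{-(t-s)^2 r^2}$ times a polynomial prefactor, which is $o(r)$ whenever $s<t$. A Gaussian upper bound on the $K$-integral (via the same change of variables) yields $(t^2-s^2)r^2 K(s^2r^2,t^2r^2)=O(r^2 e^{-(t-s)^2 r^2})=o(r)$ for $s<t$. When $s=t$, the $K$-term vanishes identically and the Bessel term gives $t^2 r^2 e^{-2t^2 r^2}(I_0(2t^2r^2)+I_1(2t^2r^2))=\frac{tr}{\sqrt\pi}(1+o(1))$ directly from \eqref{eq:BesselAsymptotic}, concluding the first part.

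For the \emph{microscopic} case $(r+s,r+t)$ with $s\le t$, the exponent $-(r+s)^2-(r+t)^2+2(r+s)(r+t)=-(t-s)^2$ is \emph{bounded}, and the Bessel prefactor becomes
\[
\frac{(r+s)^2+(r+s)(r+t)}{\sqrt{4\pi(r+s)(r+t)}}=\frac{r}{\sqrt\pi}(1+o(1)),
\]
so the first term contributes $\frac{r}{\sqrt\pi}e^{-(t-s)^2}(1+o(1))$. For the second term, the change of variables $x=y^2$ followed by $y=(r+t)+u$ in $K((r+s)^2,(r+t)^2)$ gives, after applying \eqref{eq:BesselAsymptotic} on the range where $\sqrt{T}y\to\infty$ and bounding the contribution of a neighbourhood of $y=0$ by $e^{-(r+t)^2}$:
\[
K\bigl((r+s)^2,(r+t)^2\bigr)=\frac{1}{\sqrt\pi}\int_{-\infty}^{s-t}e^{-u^2}\dif u+o(1)=\Pr\bigl[\Normaldist_{0,1}>\sqrt2(t-s)\bigr]+o(1),
\]
where the last equality uses the substitution $v=u\sqrt2$. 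Multiplying by $((r+t)^2-(r+s)^2)=(2r+s+t)(t-s)\sim 2r(t-s)$ yields the claimed second summand. Combining the two contributions yields the second asymptotic.

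The main obstacle is controlling the $K$-integral in the microscopic regime: one needs uniformity of the Bessel asymptotic on the bulk of the integration range together with a negligible estimate for the tail $y\in[0,\delta(r+t)]$ (where \eqref{eq:BesselAsymptotic} fails), and a sufficiently sharp control of the $O(1/(\sqrt T y))$ remainder so that it does not destroy the non-trivial order-1 limit. Once this is in place, the remaining steps are direct substitutions and elementary dominated-convergence arguments.
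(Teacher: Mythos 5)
Your proposal is correct and follows the same overall route as the paper: both start from the explicit covariance formula of \cref{lem:GinibreCovariance} and extract the asymptotics term by term, using \eqref{eq:BesselAsymptotic} for the Bessel part and an exponential bound on $K$ in the macroscopic regime (your Gaussian upper bound on the $K$-integral is essentially the paper's elementary estimate $I_0(x)\le e^x/\sqrt{x}$ giving $K(r^2s^2,r^2t^2)\le e^{-r^2(t-s)^2}\,s/t$). The one genuine difference is the microscopic $K$-term: the paper simply cites the tabulated asymptotic for $K(x^2,y^2)$ from Luke's integrals of Bessel functions, whereas you rederive it by the Laplace-type substitutions $x=y^2$, $y=(r+t)+u$, arriving at $K((r+s)^2,(r+t)^2)\to\Pr[\Normaldist_{0,1}>\sqrt{2}(t-s)]$. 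Your version is self-contained at the cost of the uniformity and tail estimates you correctly identify as the main technical work (uniform validity of \eqref{eq:BesselAsymptotic} on the bulk, the crude bound $I_0(z)\le e^z$ near $y=0$, and the fact that an additive $o(1)$ error in $K$ is a relative $o(1)$ error since the limit is strictly positive); the paper's version outsources exactly this to the reference. Both are valid, and nothing in your sketch would fail when the flagged estimates are carried out.
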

\begin{proof}
	We evaluate the different summands in the explicit formula for the covariance in \cref{lem:GinibreCovariance} separately Consider first the macroscopic regime $(\xi(D_{tr}))_{t\in\bR_+}$. If $s=t$ we have seen already that
	\begin{equation*}
		\Var[\big]{\xi(D_{tr})} = \frac{tr}{\sqrt{\pi}}\bigl(1+o(1)\bigr).
	\end{equation*}
	In case $s<t$, the asymptotic of the Bessel functions \eqref{eq:BesselAsymptotic} implies that
	\begin{equation*}
		e^{-r^2(s^2+t^2)} \bigl(r^2s^2I_0(2r^2st) + r^2stI_1(2r^2st)\bigr)
		= e^{-r^2(t-s)^2}\Bigl(\frac{s^2}{\sqrt{4\pi st}} + \frac{1}{\sqrt{4\pi st}}\Bigr)\bigl(1+o(1)\bigr).
	\end{equation*}
	For the other summand notice $K(x,y)\in \intcc{0,1}$ for all $x,y\in \bR_+$ and apply $I_0(x) \le \frac{e^x}{\sqrt{x}}$ for all $x\in \bR_+$ so that
	\begin{align*}
		\MoveEqLeft K\bigl(r^2s^2,r^2t^2\bigr)
		= \int_0^{r^2s^2} e^{-r^2t^2-x} I_0\bigl(2rt\sqrt{x}\bigr) \dif x
		\le \int_0^{r^2s^2} \frac{1}{2rt\sqrt{x}} e^{-(rt-\sqrt{x})^2} \dif x\\
		&\le e^{-r^2(t-s)^2} \int_0^{r^2s^2} \frac{1}{2rt\sqrt{x}} \dif x
		= e^{-r^2(t-s)^2} \frac{s}{t}.
	\end{align*}
	Hence, for $s<t$ the covariance converges to zero exponentially fast and \cref{lem:FCLTFDD} applies as claimed.

	Now, consider the microscopic regime $(\xi(D_{r+t}))_{t\in\bR}$. Fix $s\le t$ and assume that $r$ is large enough such that $r+s>0$ and $r+t>0$. For the first summand we apply again the asymptotic \eqref{eq:BesselAsymptotic} to obtain
	\begin{align*}
		\MoveEqLeft e^{-(r+s)^2-(r+t)^2} \Bigl( (r+s)^2 I_0\bigl(2(r+s)(r+t)\bigr) + (r+s)(r+t)I_1\bigl(2(r+s)(r+t)\bigr) \Bigr)\\
		&= \frac{r}{\sqrt{\pi}} e^{-(t-s)^2} \bigl(1+o(1)\bigr).
	\end{align*}
	For the second summand, the asymptotic of $K$ can be found in \cite[12.1.5 formula 8]{L62IntegralsOfBesselFunctions} and is given by
	\begin{equation*}
		K\bigl(x^2,y^2\bigr) = -\frac{1}{2}e^{-(y-x)^2}\Bigl( e^{-2xy}I_0(2xy) - 2e^{(y-x)^2}\Pr[\big]{\Normaldist_{0,1}\ge \sqrt{2}(y-x)} + O\Bigl(\frac{1}{xy}\Bigr) \Bigr),
	\end{equation*}
	as $xy \to \infty $ with $\frac{y}{x}\ge 1$ and $\frac{(y-x)^2}{xy} = o(1)$. Applying this yields
	\begin{equation*}
		\bigl((r+t)^2-(r+s)^2\bigr) K\bigl((r+s)^2,(r+t)^2\bigr)
		= 2r(t-s)\Pr[\big]{\Normaldist_{0,1} \ge \sqrt{2}(t-s)} \bigl(1+o(1)\bigr).
	\end{equation*}
	The claimed asymptotic for the covariance of $(\xi (D_{n+\sqrt{n}t}))_{t\in \bR }$ can be concluded by combining the above asymptotic results.
\end{proof}


\bibliography{./DPP_Literature}

\end{document}